%
%

\documentclass[12pt]{amsart}

%
%


\usepackage{
 a4wide, 
  amsmath,
  amsfonts,
  amssymb,
  graphicx, 
  times,
  color,
  hyphenat, 
  appendix, stmaryrd, datetime, bbm, tikz-cd, thmtools, verbatim, dirtytalk, multicol
}
\usepackage[boxsize=1.2ex]{ytableau}

\usepackage[shortlabels]{enumitem}

\usepackage[scr=boondox,  
            cal=esstix]   
           {mathalpha}




\usepackage{dynkin-diagrams}
\usepackage{mathdots}
\usepackage[all]{xy}
\usepackage{euscript,mathrsfs}
\usepackage{bbm,xspace}
\usepackage[final]{hyperref, bookmark}
\usepackage{cleveref}
\hypersetup{colorlinks=true, pdfstartview=FitV, linkcolor=blue, citecolor=blue, urlcolor=blue}

\usepackage{dirtytalk}

\usepackage{tikz}
\usetikzlibrary{cd}
\usetikzlibrary{decorations}
\usetikzlibrary{decorations.markings}
\usetikzlibrary{decorations.pathreplacing}
\usetikzlibrary{decorations.pathmorphing}
\usetikzlibrary{arrows.meta,shapes,positioning,matrix,calc}
\usetikzlibrary{shapes.callouts}
\tikzstyle{densely dotted}=[dash pattern=on \pgflinewidth off 0.5pt]
\tikzset{anchorbase/.style={baseline={([yshift=-0.5ex]current bounding box.center)}},
tinynodes/.style={font=\tiny,text height=0.25ex,text depth=0.05ex},
smallnodes/.style={font=\scriptsize,text height=0.75ex,text depth=0.15ex},
usual/.style={line width=0.9,color=black},
dusual/.style={line width=0.9,color=spinach,densely dashed},
pole/.style={line width=3.0,color=specialgray},
crossline/.style={preaction={draw=white,line width=5.0pt,-},preaction={draw=black,line width=0.9pt,-}},
crosspole/.style={preaction={draw=white,line width=6.0pt,-},preaction={draw=specialgray,line width=3.0pt,-}},
mor/.style={line width=0.75,color=black,fill=cream},
blob/.style={circle,fill,minimum size=5.0pt,inner sep=0pt,outer sep=0pt},
blobed/.style n args={3}{decoration={markings,post length=0.5mm,pre length=0.5mm,
mark=at position #1 with {\node[blob,#3,label=left:$#2\!$]at (0,0){};}
},postaction={decorate}},
rblobed/.style n args={3}{decoration={markings,post length=0.5mm,pre length=0.5mm,
mark=at position #1 with {\node[blob,#3,label=right:$\!#2$]at (0,0){};}
},postaction={decorate}},
}



\tikzstyle{tikzdot}=[fill, circle, inner sep=2pt]
\tikzstyle{smoltikzdot}=[fill=white, draw=black, circle, inner sep=1pt]

\tikzset{
    partial ellipse/.style args={#1:#2:#3}{
        insert path={+ (#1:#3) arc (#1:#2:#3)}
    }
}


%
%
%
\input xy
\xyoption{all}
\setcounter{tocdepth}{2}
\allowdisplaybreaks

%
%



\definecolor{myblue}{rgb}{0,.5,1}
\definecolor{myred}{rgb}{0.9,0,0}
\definecolor{mygreen}{rgb}{0,0.667,0.267}
\definecolor{mypurple}{rgb}{0.5,0,0.5}
\definecolor{myyellow}{rgb}{1,0.8,0}
\definecolor{mycyan}{rgb}{0,1,0.8}
\definecolor{myorange}{rgb}{1,0.4,0}


\newcommand{\eqskip}{{\vskip2mm\noindent}} 
\newcommand{\nn}{\nonumber}

\DeclareMathOperator{\ev}{ev}

\newcommand{\Ev}{\mathcal{Ev}}

\newcommand{\affu}[1]{\tilde{\mathcal{U}}_{\triangle}({#1})}

\newcommand{\naffu}[1]{\tilde{\mathcal{U}}({#1})}

\newcommand{\taffu}[1]{\mathcal{U}({#1})}

\newcommand{\oneid}{\mathbf{1}} 


%

\newcommand{\E}{\mathscr{E}}
\newcommand{\F}{\mathscr{F}}

\newcommand{\sln}{{\mathfrak{sl}_{n}}}
\newcommand{\gln}{{\mathfrak{gl}_{n}}}
\newcommand{\asln}{\widehat{\mathfrak{sl}}_{n}}

\newcommand{\agln}{\widehat{\mathfrak{gl}}_{n}}

\newcommand{\dotuagl}[1]{\dot{\bf U}(\widehat{\mathfrak{gl}}_{#1})}

\newcommand{\dotugl}[1]{\dot{\bf U}(\mathfrak{gl}_{#1})}

\newcommand{\dotu}[1]{\dot{\bf U}({#1})}
\newcommand{\dotau}[1]{\dot{\bf U}_{\triangle}({#1})}

\newcommand{\olambda}{\overline{\lambda}}

\newcommand{\levl}{{\vert\lambda\vert}}

\DeclareMathOperator{\End}{End}





\newtheorem{thm}{Theorem}[section]
\newtheorem{lem}[thm]{Lemma}

\newtheorem{prop}[thm]{Proposition}

\theoremstyle{definition}
\newtheorem{defn}[thm]{Definition}

\newtheorem{rem}[thm]{Remark}


\newcommand{\bZ}{\mathbb{Z}}
\newcommand{\bQ}{\mathbb{Q}}

\newcommand{\cC}{\mathcal{C}}

\newcommand{\cK}{K}

\DeclareMathOperator{\Hom}{Hom}

%
%
\catcode`\@=11
\long\def\@makecaption#1#2{%
    \vskip 10pt
    \setbox\@tempboxa\hbox{%
\small{#1: }\ignorespaces #2}%
    \ifdim \wd\@tempboxa >\captionwidth {%
        \rightskip=\@captionmargin\leftskip=\@captionmargin
        \unhbox\@tempboxa\par}%
      \else
        \hbox to\hsize{\hfil\box\@tempboxa\hfil}%
    \fi}
\newdimen\@captionmargin\@captionmargin=2\parindent
\newdimen\captionwidth\captionwidth=\hsize
\catcode`\@=12
%
%

%
\def\makeautorefname#1#2{\expandafter\def\csname#1autorefname\endcsname{#2}}
%
%
\makeautorefname{lem}{Lemma}%
\makeautorefname{prop}{Proposition}%
\makeautorefname{rem}{Remark}%
\makeautorefname{section}{Section}%
\makeautorefname{subsection}{Section}%
\makeautorefname{subsubsection}{Section}%
\makeautorefname{equation}{Eq.}%


%
\title{Evaluation 2-functors for Kac--Moody 2-categories of type $A_2$}
\author{Marco Mackaay}
\address{M.M.: Centro de An\'{a}lise Matem\'{a}tica, Geometria e Sistemas Din\^{a}micos, Departamento de Matem\'{a}tica, Instituto Superior T\'{e}cnico, 1049-001 Lisboa, PORTUGAL \&
Departamento de Matem\'{a}tica, FCT, Universidade do Algarve, Campus de Gambelas,
8005-139 Faro, PORTUGAL \& Center for Research and Development in Mathematics and Applications (CIDMA), Department of Mathematics, University of Aveiro, 3810-193 Aveiro, PORTUGAL, \newline \href{https://fct.ualg.pt/bio/mmackaay}{https://fct.ualg.pt/bio/mmackaay}, \href{https://orcid.org/0000-0001-9807-6991}{ORCID 0000-0001-9807-6991}} 
\email{mmackaay@ualg.pt}
\author{James Macpherson}
\address{J.M.: entro de An\'{a}lise Matem\'{a}tica, Geometria e Sistemas Din\^{a}micos, Departamento de Matem\'{a}tica, Instituto Superior T\'{e}cnico, 1049-001 Lisboa, PORTUGAL,\newline 
\href{https://orcid.org/0009-0006-4091-8619}{ORCID 0009-0006-4091-8619}}
\email{jdam2@cantab.net}
\author{Pedro Vaz}
\address{P.V.: Institut de Recherche en Math{\'e}matique et Physique, 
Universit{\'e} Catholique de Louvain, Chemin du Cyclotron 2,  
1348 Louvain-la-Neuve, Belgium, \newline \href{https://perso.uclouvain.be/pedro.vaz}{https://perso.uclouvain.be/pedro.vaz}, \href{https://orcid.org/0000-0001-9422-4707}{ORCID 0000-0001-9422-4707}}
\email{pedro.vaz@uclouvain.be}

\usepackage{pinlabel}
\usepackage[pagewise]{lineno}
\begin{document}
%
%

\begin{abstract} 
We construct a 2-functor from the Kac--Moody 2-category for the extended quantum affine $\mathfrak{sl}_3$ to the homotopy 2-category of bounded chain complexes with values in the Kac--Moody 2-category for quantum $\mathfrak{gl}_3$, categorifying the evaluation map between the corresponding quantum Kac--Moody algebras. Our approach establishes and exploits a categorical analogue of the well-known relation between the evaluation map and Lusztig's internal braid group action for quantum $\mathfrak{gl}_3$.
\end{abstract}

\maketitle

{\hypersetup{hidelinks}
\tableofcontents 
}

\pagestyle{myheadings}
\markboth{\footnotesize M.~Mackaay,~J.~Macpherson,~P.~Vaz }{\footnotesize Evaluation 2-functors for Kac--Moody 2-categories }
%
%

\section{Introduction}

In the 90s, Chari and Pressley launched a systematic study of finite-dimensional representations of quantum affine algebras, starting with affine $\mathfrak{sl}_2$ in~\cite{chari-pressley91}. Since then, these representations have been studied intensively and continue to be an active research topic with important open questions and interesting links to other research areas, e.g. mathematical physics and cluster algebras, see e.g.~\cite{hernandez-leclerc20} for more information. 

In affine type $A$, there is a special class of irreducible finite-dimensional representations, the so-called {\em evaluation representations}. These are obtained by pulling back irreducible representations of finite type $A$ through a so-called {\em evaluation map}, which is an algebra homomorphism $\ev_{a,n} \colon {\mathbf U}_{\triangle}(n)\to \mathbf{U}(n)$, where $a\in \mathbb{C}^\times$ is a scalar, ${\mathbf U}_{\triangle}(n)$ is the so-called {\em extended quantum affine $\sln$} and $\mathbf{U}(n)={\mathbf U}_q(\gln)$, see~\autoref{Sec:Decategorified}. As we will recall in more detail in that section, the level zero weight lattice of the former quantum affine Kac--Moody algebra can be identified with the $\gln$-weight lattice. The fact that we have to pass from $\sln$ to $\gln$ is important and has a categorical counterpart, as we will explain below. For more information on evaluation maps and  evaluation representations in general, see e.g.~\cite{chari-pressley94, chari-pressley942, Du-Fu}. Since we wish to categorify this construction, we must pass to the idempotented forms of the above algebras, which can be considered as categories, and the evaluation map can therefore be considered as a functor.

Quantum Kac--Moody algebras were {\em categorified} by Khovanov and Lauda~\cite{Kh-L}, and independently by Rouquier~\cite{rouquier2008}. We call these 2-categories {\em Kac--Moody 2-categories} after \cite{brundan16on}. The ones of interest to us in this paper are $\affu{n}$ and $\naffu{n}$, which categorify ${\mathbf U}_{\triangle}(n)$ and $\mathbf{U}(n)$, respectively. The tilde indicates that our choice of signs in their definition differs from Khovanov and Lauda's original choices, see below for more comments on this. In finite Dynkin types, all irreducible finite-dimensional representations can be categorified by certain quotients of the Kac--Moody 2-categories, which nowadays go under the name of {\em cyclotomic KLR algebras}. In other Dynkin types, e.g. affine Dynkin types, this is not true. In particular, evaluation representations in affine type $A$ cannot be categorified by cyclotomic KLR-algebras, because the latter categorify highest weight representations and evaluation representations do not have a highest weight. However, we conjecture that the evaluation map (considered as an evaluation functor) $\ev_n^{\,t}:=\ev_{q^t,n}$, for any $t\in \mathbb{Z}$ and $n\in \mathbb{N}_{> 2}$, can be categorified by an evaluation 2-functor $\Ev_n^{\;t}\colon \affu{n}\to \cK^b(\naffu{n})$, which can be used to define evaluation 2-representations (i.e. categorified evaluation representations) of $\affu{n}$ by pulling back \say{irreducible} 2-representations (i.e. cyclotomic KLR algebras) of $\naffu{n}$. Here $\cK^b(\naffu{n})$ denotes the homotopy $2$-category of bounded complexes in $\naffu{n}$, so the $1$-morphisms of $\affu{n}$ act by composing with bounded complexes in $\naffu{n}$. As a matter of fact, we not only conjecture $\Ev_n^{\;t}$ to exist but also an extension 
of it to $\cK^b(\affu{n})$. 

In this paper, we prove the first conjecture for $\affu{3}$ and hope that it serves as the base case for an inductive proof for $\affu{n}$, when $n > 3$, in a forthcoming paper. Proving that there is no obstruction to extending $\Ev_n^{\;t}$ to $\cK^b(\affu{n})$ is not easy and certainly beyond the scope of this paper and its sequel. 

There are two good reasons for publishing the case $n=3$ separately. Firstly, in this case there is a close relation with 
the categorification of the internal braid group action on ${\mathbf U}_q(\mathfrak{gl}_3)$ in \cite{abram2022categorification} (strictly speaking, in that paper they consider ${\mathbf U}_q(\mathfrak{sl}_3)$, so part of our work consists in adapting their results to our setting - see the following paragraph for more details). This is the categorical analogue of a relation between the evaluation map and the braid group action on the decategorified level, which is certainly known to experts, although we couldn't find a reference in the literature. We therefore spell it out in \autoref{sec:DecatT}, because it is not completely straightforward. Its categorification is conceptually clear, but requires solving multiple non-trivial sign problems, which we do by using certain $2$-isomorphisms. This is also why we define two versions of the evaluation $2$-functor, denoted $\Ev$ and $\Ev'$, respectively. The former uses relatively nice sign conventions, whereas the signs in the definition of $\Ev'$ are much more complicated. However, the latter are easier to match with the signs in the categorified internal braid group action (for our choice of signs in $\affu{3}$ and $\naffu{3}$), which is necessary to prove that $\Ev'$ is well-defined in our approach, see  \autoref{thm:BigThmPrime} and its proof in \autoref{sec:BigThPrime}. The relation between $\Ev$ and $\Ev'$, given in \autoref{lem:EvRel}, guarantees that well-definedness of the latter implies well-definedness of the former. In principle, all of this should also work for $n>3$, but only if the categorified braid group action extends to $\cK^b(\naffu{n})$ (to include the action of longer braids), which has been conjectured to be the case but not yet proved (see \cite[Conjecture 1.2]{abram2022categorification}). This is why our approach for $n>3$ will be completely different. We hope that presenting the base case $n=3$ here will prepare the ground for the general case and also keep the size of the forthcoming paper within reasonable bounds. 

The second reason for publishing this case separately, is that it reveals the need to pass from $\sln$ to $\gln$ once more, but now on the categorical level. Recall that the definition of a Kac--Moody 2-category depends on a choice of invertible scalars and compatible bubble parameters, see e.g.~\cite{lauda2020parameters}. In finite type $A$ all choices yield essentially the same $2$-category, i.e. up to $2$-isomorphism, but in affine type $A$ they don't. In particular, Khovanov and Lauda's original affine type $A$ {\em unsigned} Kac--Moody 2-category in~\cite{Kh-L}, with all scalars and bubble parameters equal to one, and the Kac--Moody 2-category defined in~\cite{mt-affine-schur}, with non-trivial bubble parameters depending on level zero $\widehat{\mathfrak{gl}}_n$-weights (instead of level zero $\widehat{\mathfrak{sl}}_n$-weights), are not 2-isomorphic when $n$ is odd. This was mentioned in \cite{Kh-L-2} without proof and, therefore, we prove it in \autoref{thm:NoIsomorphism}. Although this does not by itself imply that there is no evaluation 2-functor for trivial scalars and bubble parameters when $n=3$, we failed to find one. More generally, it seems that one is forced to use the scalars and level zero $\widehat{\mathfrak{gl}}_n$-bubble parameters from~\cite{mt-affine-schur} when $n$ is odd. When $n$ is even, everything is simpler because in that case both choices of scalars and bubble parameters yield essentially the same Kac--Moody 2-category, see \autoref{sec:No2Iso}.

There is an analogous story for the affine Hecke algebra and its finite-dimensional representations. The categorification of the corresponding evaluation map was carried out in~\cite{mmv22} and was technically less challenging than the categorification of the evaluation map for the affine type A Kac--Moody algebra. In both cases, the target (2-)category of the evaluation (2-)functor is a homotopy category of bounded complexes and, as was argued in~\cite{mmv22}, one motivation for defining and studying evaluation 2-representations is that they might provide some important clues for the development of triangulated 2-representation theory, which at the moment is very poorly understood, even at the most basic level. For example, it was shown in~\cite{mmv22} that every evaluation 2-representation of extended affine Soergel bimodules has a {\em finitary cover}, somehow relating finitary and triangulated 2-representations. The same might hold for the evaluation 2-representations of $\affu{n}$, but that question is outside the scope of this paper. Also in both cases, one would like to categorify tensor products of evaluation representations, which play a fundamental role in the finite-dimensional representation theory of the affine quantum algebras in question, see e.g.~\cite[Chapter 12, Section 2C]{chari-pressley942} for 
the case of $\affu{n}$. However, it is far from clear how to do that at this point. Perhaps it is possible to somehow adapt Webster's tensor algebras of Stendhal diagrams~\cite{webster17} in that case. We hope to address these and some other interesting questions about evaluation 2-representations in the future.

The structure of the paper is as follows. \autoref{Sec:Decategorified} reviews the evaluation map/functor $\ev^t_3$ and \autoref{Sec:CategorifiedDefn} presents the definitions of the affine and finite type A Kac--Moody 2-categories $\affu{3}$ and $\naffu{3}$ that we will be working with. In \autoref{Sec:Eval2Fr} we define the two evaluation 2-functors $\Ev$ and $\Ev'$ and prove their relationship to each other. We translate the categorified braid group actions to our choice of scalars in \autoref{sec:BraidAction}, and then in \autoref{sec:MainProof} we prove \autoref{thm:BigThmPrime}, that $\Ev'$ is a well-defined 2-functor that decategorifies to $\ev^t_3$, from which \autoref{thm:BigThmMain} follows. We finish the paper with \autoref{sec:No2Iso}, where we justify our choice of the scalars and bubble parameters in the definition of $\affu{3}$ over a choice in \cite{Kh-L} by proving in \autoref{thm:NoIsomorphism} that the two choices are not related by a 2-isomorphism that fixes objects and 1-morphisms.

\vspace{0.1in}

\noindent{\bf Acknowledgments.} M.M. and J.M. were supported by Funda\c{c}\~{a}o para a Ci\^{e}ncia e a Tecnologia (Portugal) (\href{https://ror.org/00snfqn58}{https://ror.org/00snfqn58}), project UIDB/04459/2020 (Center for Mathematical Analysis, Geometry and Dynamical Systems - CAMGSD) with DOI identifier 10-54499/UIDP/ 04459/2020. M.M. was additionally funded by CIDMA (\href{https://ror.org/05pm2mw36}{https://ror.org/05pm2mw36}) under the FCT (Portugal) grant UID/04106/2025 (\href{https://doi.org/10.54499/UID/04106/2025}{https://doi.org/10.54499/UID/04106/2025}) and UID/PRR/ 04106/2025, and J.M. was additionally funded by FCT (Portugal) through project BL94/2022-IST-ID. P.V. was supported by the Fonds de la Recherche Scientifique-FNRS (Belgium) under Grant no. J.0189.23. 

We thank the referee for the suggestion of making the relation between the categorified braid group action and the evaluation 2-functor more explicit.

\vspace{0.1in}

\labelmargin-{-8pt}

\section{The decategorified setting}\label{Sec:Decategorified}
Our main reference for this section is \cite{Du-Fu}, though the evaluation map was first considered in \cite{jimbo1986q}. Note that we are interested in the idempotented version of some of the quantum algebras in that paper, so we have to adapt Du and Fu's definitions. We use the idempotented versions because these are the ones that are categorified by Kac--Moody 2-categories.

\subsection{Finite type \texorpdfstring{$\dotu{n}$}{un} and affine type \texorpdfstring{$\dotau{n}$}{aun} of level zero}\label{sec:decat}

Throughout this paper we identify both the (integral) $\gln$-weight lattice and the level-zero (integral) $\agln$-weight lattice with $\bZ^{n}$, denoting either sort of (integral) weight by e.g. $\lambda=(\lambda_1,\ldots,\lambda_n)\in \mathbb{Z}^n$. The simple $\agln$-roots $\alpha_1,\dotsc,\alpha_{n}$ are then given by
\[
\alpha_i =
\begin{cases}
(0,\dots,0,1,-1,0,\dots,0) & 1\leq i\leq n-1,
\\
(-1,0,\dots,0,1) & i=n,
\end{cases}
\]
where the $1$ is always the $i$th entry. Note that $\alpha_1,\ldots,\alpha_{n-1}$ are the simple $\gln$-roots.

Under the above identification, the bilinear form 
on these weight lattices corresponds to the Euclidean inner product on $\mathbb{Z}^n$. Its restriction to the root lattices then reads  
\[
(\alpha_i,\alpha_j) = 
\begin{cases}
2, & \text{if}\; i=j, \\
-1, & \text{if}\; i \equiv j\pm 1 \bmod n,\\
0 & \text{else}, 
\end{cases}
\]
for $1\leq i,j\leq n$. Note that, in the affine case, the indices $1,\ldots, n$ are interpreted as representatives of the residue classes modulo $n$. From now on, we will always tacitly use this interpretation of the indices of affine weights and roots. We also recall the standard notation $i\cdot j:= (\alpha_i,\alpha_j)$, which we will often use below.

Finally, given $\lambda\in \mathbb{Z}^n$, define $\olambda=(\olambda_1,\ldots,\olambda_n)\in \mathbb{Z}^n$, where $\olambda_i=\lambda_i-\lambda_{i+1}$ for all $i=1,\ldots, n$. By the above convention for affine weights, we have $\olambda_n=\lambda_n-\lambda_1$, so $\olambda_1+\ldots+ \olambda_n=0$. In other words, $\olambda$ belongs to a rank $n-1$ sublattice of $\mathbb{Z}^n$, which can be identified with the level-zero integral $\asln$-weight lattice. The element $(\olambda_1,\ldots,\olambda_{n-1})\in \mathbb{Z}^{n-1}$ 
can then be identified with an (integral) $\mathfrak{sl}_n$-weight. 

For the definition below, recall that the quantum integer $[m]$, for $m\in \mathbb{Z}$, is defined as

\[
[m]=\dfrac{q^m-q^{-m}}{q-q^{-1}}.
\]
\begin{defn}\label{d:dotaun}
    The \emph{idempotented extended quantum affine $\sln$}, denoted by $\dotau{n}$, is the associative idempotented $\bQ(q)$-algebra generated by $1_\lambda$, $E_i1_\lambda$ and  $F_i1_\lambda$, for $\lambda\in\mathbb{Z}^n$ and $i=1,\dotsc ,n$, subject to the relations: 
\begingroup\allowdisplaybreaks
\begin{align*} 
    1_\lambda 1_{\mu} &= \delta_{\lambda,\mu} 1_\lambda ,  && 
\\[1ex]
E_i1_\lambda 1_{\lambda'}&=\delta_{\lambda,\lambda'} E_i1_\lambda , &&\\[1ex]
F_i1_\lambda 1_{\lambda'}&=\delta_{\lambda,\lambda'} F_i1_\lambda , &&\\[1ex]
1_{\mu} E_i1_\lambda &=\delta_{\mu,\lambda + \alpha_i} E_i1_\lambda , &&\\[1ex]
1_{\mu} F_i1_\lambda &=\delta_{\mu,\lambda-\alpha_i} F_i1_\lambda , &&\\[1ex]
E_iF_j1_\lambda - F_jE_i1_\lambda &= \delta_{i,j}[\olambda_i]1_\lambda , && 
\\[1ex]
E_i E_j 1_\lambda &= E_j E_i 1_\lambda  && \text{ if } i\cdot j =0,
\\[1ex]
F_i F_j 1_\lambda &= F_j F_i 1_\lambda && \text{ if } i\cdot j=0 ,
\\[1ex]
E_i^2 E_j 1_\lambda + E_jE_i^2 1_\lambda &= [2] E_i E_j E_i 1_\lambda && \text{ if } i\cdot j =-1 ,
\\[1ex]
F_i^2 F_j 1_\lambda + F_jF_i^2 1_\lambda &= [2] F_i F_j F_i 1_\lambda && \text{ if } i\cdot j =-1 .
\end{align*}    
\endgroup
 \end{defn}
Note that $E_i 1_\lambda = 1_{\lambda + \alpha_i} E_i 1_\lambda$, so we can use the notation $E_i E_j 1_\lambda := E_i 1_{\lambda +\alpha_j} \cdot E_j1_\lambda$ without ambiguity. Similarly, we will use the notation $1_{\mu} E_i = 1_{\mu} E_i 1_{\mu - \alpha_i}$ and $1_\mu F_i = 1_\mu F_i 1_{\mu + \alpha_i}$, so that $E_i 1_\lambda = 1_{\lambda + \alpha_i} E_i$ and $F_i 1_\lambda = 1_{\lambda - \alpha_i} F_i$.

\begin{defn}\label{d:dotun}
The \emph{idempotented quantum $\gln$}, denoted by $\dotu{n}$, is the idempotented subalgebra of $\dotau{n}$ generated by $1_{\lambda}$, $E_i1_{\lambda}$ and $F_i1_{\lambda}$, for $i=1,\ldots ,n-1$ and $\lambda\in\mathbb{Z}^n$.
\end{defn}
Note that $\dotau{n}$ and $\dotu{n}$ share the same idempotents, but, whereas $\dotu{n}=\dotugl{n}$, the idempotented algebra $\dotau{n}$ is only an idempotented subalgebra of $\dotuagl{n}$, which is why it is called 
the idempotented {\em extended} quantum affine $\sln$ and 
not the idempotented quantum affine $\gln$, see \cite[Section 2]{Du-Fu} for more details.

\begin{rem}\label{rem:IdempCat}
    Recall that these idempotented algebras can be seen as linear categories whose object sets are given by the sets of weights and whose hom-spaces are given by e.g. $$\Hom_{\dotu{n}}(\lambda,\mu)=1_\mu\dotu{n}1_\lambda$$ with composition corresponding to multiplication. This is why these idempotented algebras are categorified by 2-categories rather than categories.
\end{rem}

\subsection{Evaluation maps} Fix $t\in \mathbb{Z}$ and let $[X,Y]_{q^{\pm 1}} = XY-q^{\pm 1} YX$ be the $q^{\pm 1}$-commutator. From now on we will always assume that $n>2$.

\begin{defn}\label{d:evala}
The \emph{evaluation map} $\ev_{n}^{\,t}\colon \dotau{n}\to\dotu{n}$ is the homomorphism of idempotented algebras defined by
\begin{align}
    \ev_{n}^{\,t}(1_{\lambda}) &=   1_{\lambda} , 
\\[1ex] 
    \ev_{n}^{\,t}(E_i1_\lambda) &= E_i1_\lambda \quad \text{for }i\neq n,
    \\[1ex] 
 \ev_{n}^{\,t}(F_i1_\lambda) &= F_i1_\lambda \quad \text{for }i\neq n,
\\[1ex] \label{eq:evmape}
    \ev_{n}^{\,t}(E_n 1_\lambda) &= q^{\lambda_1+\lambda_n+t-1} [\dotsm [[F_1,F_2]_q,F_3]_q\dotsm]_q, F_{n-1}]_q 1_\lambda,
\\[1ex] \label{eq:evmapf}
    \ev_{n}^{\,t}(F_n 1_\lambda) &= q^{-\lambda_1-\lambda_n-t+1}[E_{n-1},[E_{n-2},[\dotsm [E_{2},E_1]_{q^{-1}}]_{q^{-1}}\dotsm]_{q^{-1}} 1_\lambda.    
\end{align}
\end{defn}

\begin{rem} Two quick observations:
\begin{enumerate}[wide,labelindent=0pt,itemsep=5pt,label=(\alph*)]

\item Note that 
\[
[\dotsm [[F_1,F_2]_q,F_3]_q\dotsm]_q, F_{n-1}]_q 1_\lambda = 
1_{\lambda - \alpha_1 - \dotsm -\alpha_{n-1}} [\dotsm [[F_1,F_2]_q,F_3]_q\dotsm]_q, F_{n-1}]_q, 
\]
so $\ev_{n}^{\,t}(E_n 1_\lambda)=\ev_{n}^{\,t}(1_{\lambda+\alpha_n}E_n 1_\lambda)$ is well defined, because $\alpha_1+\ldots+\alpha_{n-1}+\alpha_n=0$. The same is true for $\ev_{n}^{\,t}(F_n1_\lambda)=\ev_{n}^{\,t}(1_{\lambda-\alpha_n}F_n1_\lambda)$.
\item When we consider the idempotented algebras as categories as in \autoref{rem:IdempCat}, $\ev_n^t$ becomes a linear functor. This is why it is categorified by a 2-functor rather than a functor.
\end{enumerate}
\end{rem}

\vspace{0.1in}

 The expressions for $\ev_n^{\,t}(E_n 1_\lambda)$ and $\ev_n^{\,t}(F_n 1_\lambda)$ in~\eqref{eq:evmape} and ~\eqref{eq:evmapf} can be written 
as alternating sums, which will be important later on. 
For $\xi=(\xi_1,\dotsc,\xi_{n-2})\in\{0,1\}^{n-2}$ set  
\begin{align}
\label{eq:exi}
    E_\xi 1_\lambda &:= E_{n-1}^{1-\xi_{n-2}}E_{n-2}^{1-\xi_{n-3}}\dotsm E_2^{1-\xi_1}E_1 E_2^{\xi_{1}}\dotsm E_{n-2}^{\xi_{n-3}}
    E_{n-1}^{\xi_{n-2}}1_\lambda ,
    \\[1ex]
\label{eq:fxi}
    F_\xi 1_\lambda &:= F_{n-1}^{\xi_{n-2}}F_{n-2}^{\xi_{n-3}}\dotsm F_2^{\xi_1}F_1F_2^{1-\xi_{1}}\dotsm F_{n-2}^{1-\xi_{n-3}}
    F_{n-1}^{1-\xi_{n-2}}1_\lambda .
\end{align}
and let $\vert\xi\vert=\xi_1+\dotsm+\xi_{n-2}$. The following can be obtained by direct computation.
\begin{lem}
We have
\begin{align}\label{eq:evmapae}
\ev_n^{\,t}(E_n 1_\lambda) &=q^{\lambda_1+\lambda_n+t-1} \sum\limits_{\xi\in \{0,1\}^{n-2}}(-q)^{\vert\xi\vert}F_\xi 1_\lambda ,
\\[1ex] \label{eq:evmapaf}
\ev_n^{\,t}(F_n 1_\lambda) &= q^{-\lambda_1-\lambda_n-t+1}\sum\limits_{\xi\in \{0,1\}^{n-2}}(-q)^{-\vert\xi\vert}E_\xi 1_\lambda .    
\end{align}
\end{lem}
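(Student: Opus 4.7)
The plan is a direct expansion of the two nested $q^{\pm 1}$-commutators by induction on the nesting depth, with the scalar factors $q^{\lambda_1+\lambda_n+t-1}$ and $q^{-\lambda_1-\lambda_n-t+1}$ simply carried along from \autoref{d:evala}.

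For the $E_n$-formula, I would introduce auxiliary elements
\[
C_k \;:=\; [\cdots[[F_1,F_2]_q,F_3]_q,\cdots,F_k]_q \qquad (2\le k\le n-1),
\]
with the convention $C_1:=F_1$, so that $\ev_n^{\,t}(E_n1_\lambda)=q^{\lambda_1+\lambda_n+t-1}C_{n-1}1_\lambda$. I would then prove by induction on $k$ that
\[
C_k \;=\; \sum_{\xi\in\{0,1\}^{k-1}}(-q)^{|\xi|}F_\xi^{(k)}, \qquad
F_\xi^{(k)} \;:=\; F_{k}^{\xi_{k-1}}\cdots F_2^{\xi_1}F_1F_2^{1-\xi_1}\cdots F_{k}^{1-\xi_{k-1}}.
\]
The base case $k=1$ is trivial. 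The inductive step uses $C_{k+1}=C_kF_{k+1}-qF_{k+1}C_k$: terms in $C_kF_{k+1}$ correspond to new indices $\xi'=(\xi,0)\in\{0,1\}^k$ (placing $F_{k+1}$ on the right of $F_1$ with exponent $1-\xi'_k=1$), while terms in $-qF_{k+1}C_k$ correspond to $\xi'=(\xi,1)$ (placing $F_{k+1}$ on the left of $F_1$), with the extra $-q$ absorbed into $(-q)^{|\xi'|}=(-q)\cdot(-q)^{|\xi|}$. Setting $k=n-1$ yields the first identity, since $F_\xi^{(n-1)}=F_\xi$.

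For the $F_n$-formula, a parallel induction on
\[
D_k \;:=\; [E_k,[E_{k-1},[\cdots,[E_2,E_1]_{q^{-1}}]_{q^{-1}}\cdots]_{q^{-1}}]_{q^{-1}}
\]
shows $D_k=\sum_{\xi\in\{0,1\}^{k-1}}(-q)^{-|\xi|}E_\xi^{(k)}$, where $E_\xi^{(k)}:=E_k^{1-\xi_{k-1}}\cdots E_2^{1-\xi_1}E_1E_2^{\xi_1}\cdots E_k^{\xi_{k-1}}$. The recursion $D_{k+1}=E_{k+1}D_k-q^{-1}D_kE_{k+1}$ produces the new bit $\xi'_k=0$ from the first summand (placing $E_{k+1}$ on the left of $E_1$, matching exponent $1-\xi'_k=1$) and $\xi'_k=1$ from the second, with the factor $-q^{-1}=(-q)^{-1}$ combining with $(-q)^{-|\xi|}$ to give exactly $(-q)^{-|\xi'|}$. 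Taking $k=n-1$ produces the second identity.

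No real obstacle is expected: the only subtlety is the bookkeeping convention for the indices, namely that for $F_\xi$ a value $\xi_j=1$ places $F_{j+1}$ to the \emph{left} of $F_1$, whereas for $E_\xi$ a value $\xi_j=1$ places $E_{j+1}$ to the \emph{right} of $E_1$. This asymmetry is dictated by the base case and by which side of the commutator $F_k$ (resp.\ $E_k$) appears in the defining nested expression, and it is precisely this asymmetry that makes the two sums interchangeable via the involution $\omega$ together with~\eqref{eq:twoevmaps}; thus one could alternatively deduce the second formula from the first, but running the two inductions in parallel is shorter and keeps the argument self-contained.
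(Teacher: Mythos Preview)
Your argument is correct and is exactly the ``direct computation'' the paper invokes without spelling out: the induction on the nesting depth of the $q^{\pm1}$-commutators, with the new bit of $\xi$ recording on which side the outermost generator is placed, is the natural way to expand these nested brackets. The only caveat is that your aside about deducing the second identity from the first via $\omega$ and~\eqref{eq:twoevmaps} is not quite right as stated (that equation compares $\ev_n^{\,t}$ with Du--Fu's map rather than relating the $E_n$- and $F_n$-images to each other), but since you run both inductions explicitly this does not affect the proof.
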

\noindent For more details on the evaluation map, see \cite[Section 5]{Du-Fu}. 

\subsubsection{Connection with the braid group action for \texorpdfstring{$n=3$}{n=3}}\label{sec:DecatT}
For each $i=1,\ldots, n-1$ and $e=\pm 1$, Lusztig defined algebra automorphisms $T'_{i,e}$ 
and $T''_{i,e}$ of $\dotu{\mathfrak{sl}_n}$, see e.g.~\cite[Section 37.1]{lusztig2010introduction} for their definition, which we can adapt to $\dotu{n}$ without issue. The two automorphisms 
are related by the equation $(T'_{i,e})^{-1}=T''_{i,-e}$ (see \cite[Proposition 37.1.2]{lusztig2010introduction}) and, for a fixed choice 
of $e$, the $T'_{1,e},\ldots, T'_{n-1,e}$, resp. the $T''_{1,e},\ldots, T''_{n-1,e}$, satisfy the 
braid relations (see~\cite[Theorem 39.4.3]{lusztig2010introduction}) and, therefore, define two actions of the braid group $B_n$ on $\dotu{n}$, called the {\em internal braid group actions}. 

Let $n=3$ and $t\in\mathbb{Z}$, and set $\ev=\ev^t_3$. Comparison of the expressions in~\cite[Subsection 37.1.3]{lusztig2010introduction} with the ones in \autoref{d:evala} shows that $\ev$ can be partially expressed in terms of the above algebra automorphisms.
For $i=1,3$ and $\lambda=(\lambda_1, \lambda_2,\lambda_3)\in \mathbb{Z}^3$, we have 
\begin{eqnarray*}\label{eq:evbraid13}
\ev(E_3 1_{\lambda})&=&q^{\lambda_1+\lambda_3+t-1}\, T'_{1,-1}(F_2 1_{s_1(\lambda)}),\\
\ev(F_3 1_{\lambda})&=&q^{-\lambda_1-\lambda_3-t+1}\, T'_{1,-1}(E_2 1_{s_1(\lambda)}),\\
\ev(E_1 1_{\lambda})&=&-q^{\lambda_1-\lambda_2}\, T'_{1,-1}(F_1 1_{s_1(\lambda)}),\\
\ev(F_1 1_{\lambda})&=&-q^{-\lambda_1+\lambda_2+2}\, T'_{1,-1}(E_1 1_{s_1(\lambda)}),
\end{eqnarray*}
where $s_1(\lambda)=(\lambda_2, \lambda_1,\lambda_3)$. 
For $i=2,3$ and $\lambda=(\lambda_1, \lambda_2,\lambda_3)\in \mathbb{Z}^3$, we have 
\begin{eqnarray*}\label{eq:evbraid23}
\ev(E_3 1_{\lambda})&=&q^{\lambda_1+\lambda_3+t-1}\, T''_{2,1}(F_1 1_{s_2(\lambda)}),\\
\ev(F_3 1_{\lambda})&=&q^{-\lambda_1-\lambda_3-t+1}\, T''_{2,1}(E_1 1_{s_2(\lambda)}),\\
\ev(E_2 1_{\lambda})&=&-q^{-\lambda_2+\lambda_3+2}\, T''_{2,1}(F_2 1_{s_2(\lambda)}),\\
\ev(F_2 1_{\lambda})&=&-q^{\lambda_2-\lambda_3}\, T''_{2,1}(E_2 1_{s_2(\lambda)}),
\end{eqnarray*}
where $s_2(\lambda)=(\lambda_1, \lambda_3,\lambda_2)$. Using the fact that 
$T'_{1,-1}$ and $T'_{2,1}$ are well-defined algebra automorphisms of $\dotu{3}$, it is easy to prove that  $\ev\colon \dotau{3}\to \dotu{3}$ is a well-defined algebra homomorphism. Specifically, the fact that $T'_{1,-1}$ is an algebra automorphisms implies that $\ev$ preserves the relations in \autoref{d:dotaun} for $i=1,3$, the fact that $T''_{2,1}$ is an algebra automorphisms implies that $\ev$ preserves the relations in \autoref{d:dotaun} for $i=2,3$, and $\ev$ preserves the relations in \autoref{d:dotaun} for $i=1,2$ by definition, of course. Since all relations in \autoref{d:dotaun} involve either one colour $i$ or two colours $i,j$, and it's very easy to check that $\ev$ preserves the one-colour relations directly, we see that $\ev$ preserves all relations in $\dotau{3}$ and is therefore a well-defined algebra homomorphism. 

Of course, one can also prove that $\ev$ preserves the relations in $\dotau{3}$ directly, but that is besides the point. To show that the evaluation 2-functor $\Ev$ preserves the relations in $\affu{3}$, we will follow the same reasoning as above for all one- and two-colour KLR relations, taking advantage of the categorification of $T'_{i,1}$ in~\cite{abram2022categorification}. For the three-colour KLR relations, the results in that paper cannot be used and we will give a direct proof. 

\section{Kac--Moody 2-categories}\label{Sec:CategorifiedDefn}

We will move on to recalling in detail the $2$-categories $\naffu{n}$ and $\affu{n}$ as defined in ~\cite[Definition 3.1]{msv-schur} and~\cite[Definition 3.19]{mt-affine-schur}, respectively. These decategorify
to $\dotu{n}$ and $\dotau{n}$. 
 
\subsection{Definition} We define $\affu{n}$ and $\naffu{n}$ simultaneously, because only the range of the indices of the $1$-morphisms and of the colours of the $2$-morphisms differ. For concreteness, we will work over $\bQ$, but any field of characteristic zero would serve equally well.

\begin{defn}\label{defn:KLR-2cats} The $2$-category $\affu{n}$ (resp. $\naffu{n}$) is the graded $\bQ$-linear $2$-category with:
\begin{itemize}[wide,labelindent=0pt,itemsep=5pt]
\item Objects: $\lambda\in \mathbb{Z}^n$,
\item $1$-morphisms: formal direct sums of shifts of 
\[
\oneid_\lambda,\quad \E_i\oneid_\lambda=\oneid_{\lambda+\alpha_i} \E_i\oneid_{\lambda}=\oneid_{\lambda+\alpha_i}\E_i,\quad \E_i\oneid_\lambda=\oneid_{\lambda-\alpha_i} \E_i\oneid_{\lambda}=\oneid_{\lambda-\alpha_i}\E_i,
\]
for $\lambda\in \mathbb{Z}^n$ and for $i\in\{1,\dots,n\}$ (resp. $i\in\{1,\dots,n-1\}$),
\item $2$-morphisms: equivalence classes of $\bQ$-linear combinations of diagrams obtained by horizontally concatenating and vertically gluing the generators below. By convention, a $2$-morphism   
$\alpha\colon X\langle r\rangle\to Y\langle s\rangle$, for $r,s\in \mathbb{Z}$, is given by a linear combination of homogeneous diagrams 
of degree $s-r$, as defined in \cite{Kh-L}. 
\begingroup\allowdisplaybreaks
\begin{align*}
    \xy (0,0)*{
			\labellist
			\small\hair 2pt
                \pinlabel \scalebox{0.7}{$i$} at 2 -5
			\pinlabel \scalebox{0.7}{$\lambda+\alpha_i$} at -15 12
			\pinlabel \scalebox{0.9}{$\lambda$} at 12 12
			\endlabellist 
			\centering 
			\includegraphics[scale=1.3]{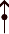}
	}\endxy 
 \quad\; 
 &
 \colon \E_i\oneid_{\lambda}\to \E_i\oneid_{\lambda}\langle 2\rangle ,
 &  
\xy (0,0)*{\labellist
			\small\hair 2pt
                 \pinlabel \scalebox{0.7}{$i$} at 2 -5
			\pinlabel \scalebox{0.7}{$\lambda-\alpha_i$} at -15 12
			\pinlabel \scalebox{0.9}{$\lambda$} at 12 12
			\endlabellist 
             \centering 
			\includegraphics[scale=1.3]{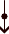}
	}\endxy 
 \quad\; 
 &\colon \F_i\oneid_{\lambda}\to \F_i\oneid_{\lambda}\langle 2\rangle , 
 \\[5ex]
  \xy (0,0)*{
			\labellist
			\small\hair 2pt
   \pinlabel \scalebox{0.7}{$i$} at -2 -5
   \pinlabel \scalebox{0.7}{$j$} at 15 -5
			\pinlabel \scalebox{0.9}{$\lambda$} at 22 9
			\endlabellist 
			\centering 
			\includegraphics[scale=1.3]{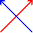}
	}\endxy 
 \quad\; 
 &
 \colon \E_i\E_j\oneid_{\lambda}\to \E_j\E_i\oneid_{\lambda}\langle -i\cdot j\rangle ,
 &  
\xy (0,0)*{
			\labellist
			\small\hair 2pt
   \pinlabel \scalebox{0.7}{$i$} at -2 -5
   \pinlabel \scalebox{0.7}{$j$} at 15 -5
			\pinlabel \scalebox{0.9}{$\lambda$} at 22 9
			\endlabellist 
			\centering 
			\includegraphics[scale=1.3]{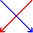}
	}\endxy 
 \quad\; 
 &
 \colon \F_iF_j\oneid_{\lambda}\to \F_i\F_j\oneid_{\lambda}\langle -i\cdot j\rangle ,
 \\[5ex]
  \xy (0,0)*{
			\labellist
			\small\hair 2pt
   \pinlabel \scalebox{0.7}{$i$} at -2 -5
   \pinlabel \scalebox{0.7}{$j$} at 15 -5
			\pinlabel \scalebox{0.9}{$\lambda$} at 22 9
			\endlabellist 
			\centering 
			\includegraphics[scale=1.3]{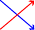}
	}\endxy 
 \quad\; 
 &
 \colon \E_i\F_j\oneid_{\lambda}\to \F_j\E_i\oneid_{\lambda} ,
 & 
\xy (0,0)*{
			\labellist
			\small\hair 2pt
   \pinlabel \scalebox{0.7}{$i$} at -2 -5
   \pinlabel \scalebox{0.7}{$j$} at 15 -5
			\pinlabel \scalebox{0.9}{$\lambda$} at 22 9
			\endlabellist 
			\centering 
			\includegraphics[scale=1.3]{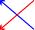}
	}\endxy 
 \quad\; 
 &
 \colon \F_iE_j\oneid_{\lambda}\to \E_i\F_j\oneid_{\lambda} ,
 \\[5ex]
  \xy (0,0)*{
			\labellist
			\small\hair 2pt
   \pinlabel \scalebox{0.7}{$i$} at 8 -5
			\pinlabel \scalebox{0.9}{$\lambda$} at 22 0
			\endlabellist 
			\centering 
			\includegraphics[scale=1.3]{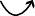}
	}\endxy 
 \quad\; 
 &\colon \oneid_{\lambda}\to \F_i\E_i\oneid_{\lambda}
 \langle 1+\overline{\lambda}_i \rangle ,
 & 
\xy (0,0)*{
			\labellist
			\small\hair 2pt
   \pinlabel \scalebox{0.7}{$i$} at 8 -5
			\pinlabel \scalebox{0.9}{$\lambda$} at 22 0
			\endlabellist 
			\centering 
			\includegraphics[scale=1.3]{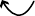}
	}\endxy 
 \quad\; &\colon \oneid_{\lambda}\to \E_i\F_j\oneid_{\lambda} 
 \langle 1-\overline{\lambda}_i \rangle ,
 \\[5ex]
  \xy (0,0)*{
			\labellist
			\small\hair 2pt
   \pinlabel \scalebox{0.7}{$i$} at 8 13
			\pinlabel \scalebox{0.9}{$\lambda$} at 22 9
			\endlabellist 
			\centering 
			\includegraphics[scale=1.3]{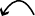}
	}\endxy 
 \quad\; 
 &
 \colon \F_i\E_1\oneid_{\lambda}\to \oneid_{\lambda} 
 \langle 1+\overline{\lambda}_i\rangle ,
 & 
\xy (0,0)*{
			\labellist
			\small\hair 2pt
   \pinlabel \scalebox{0.7}{$i$} at 8 13
			\pinlabel \scalebox{0.9}{$\lambda$} at 22 9
			\endlabellist 
			\centering 
			\includegraphics[scale=1.3]{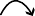}
	}\endxy 
 \quad\; 
 &
 \colon \E_iF_i\oneid_{\lambda}\to \oneid_{\lambda}\langle 1-\overline{\lambda}_i\rangle .
\end{align*}
\endgroup

\end{itemize}

The equivalence relation is defined by the equations below.

\begin{enumerate}[wide,labelindent=0pt,label=(KM\arabic*)]

\item Right and left adjunction:
\begin{equation}\label{eq:adjunction}
\begin{array}{ll}
 \xy (0,1)*{
			\labellist
			\small\hair 2pt
			\pinlabel \scalebox{0.7}{$i$} at 0 -5
			\pinlabel \scalebox{0.9}{$\lambda$} at 19 12
			\endlabellist 
			\centering 
			\includegraphics[scale=1.4]{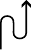}
	}\endxy
 \quad = \;\,
 \xy (0,0)*{
			\labellist
			\small\hair 2pt
			\pinlabel \scalebox{0.7}{$i$} at 2 -5
			\pinlabel \scalebox{0.9}{$\lambda$} at 9 12
			\endlabellist 
			\centering 
			\includegraphics[scale=1.4]{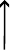}
	}\endxy
 \quad = \;\,
 \xy (0,0)*{
			\labellist
			\small\hair 2pt
			\pinlabel \scalebox{0.7}{$i$} at 15 -5
			\pinlabel \scalebox{0.9}{$\lambda$} at 19 12
			\endlabellist 
			\centering 
			\includegraphics[scale=1.4]{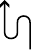}
	}\endxy
 \qquad
 &
\qquad 
\xy (0,0)*{
			\labellist
			\small\hair 2pt
			\pinlabel \scalebox{0.7}{$i$} at 13 -5
			\pinlabel \scalebox{0.9}{$\lambda$} at 19 12
			\endlabellist 
			\centering 
			\includegraphics[scale=1.4]{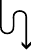}
	}\endxy
 \quad = \;\,
 \xy (0,0)*{
			\labellist
			\small\hair 2pt
			\pinlabel \scalebox{0.7}{$i$} at 0 -5
			\pinlabel \scalebox{0.9}{$\lambda$} at 9 12
			\endlabellist 
			\centering 
			\includegraphics[scale=1.4]{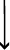}
	}\endxy
 \quad = \;\,
 \xy (0,0)*{
			\labellist
			\small\hair 2pt
			\pinlabel \scalebox{0.7}{$i$} at 2 -5
			\pinlabel \scalebox{0.9}{$\lambda$} at 19 12
			\endlabellist 
			\centering 
			\includegraphics[scale=1.4]{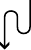}
	}\endxy
 \end{array}\vspace{2ex}
\end{equation}

\item Dot cyclicity:
\begin{equation}\label{eq:dotcyclicity}
 \xy (0,0)*{
			\labellist
			\small\hair 2pt
			\pinlabel \scalebox{0.7}{$i$} at 13 -5
			\pinlabel \scalebox{0.9}{$\lambda$} at 19 12
			\endlabellist 
			\centering 
			\includegraphics[scale=1.4]{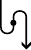}
	}\endxy
 \quad = \;\,
 \xy (0,0)*{
			\labellist
			\small\hair 2pt
			\pinlabel \scalebox{0.7}{$i$} at 2 -5
			\pinlabel \scalebox{0.9}{$\lambda$} at 9 12
			\endlabellist 
			\centering 
			\includegraphics[scale=1.4]{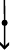}
	}\endxy
 \quad = \;\,
 \xy (0,0)*{
			\labellist
			\small\hair 2pt
			\pinlabel \scalebox{0.7}{$i$} at 1 -5
			\pinlabel \scalebox{0.9}{$\lambda$} at 19 12
			\endlabellist 
			\centering 
			\includegraphics[scale=1.4]{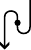}
	}\endxy \vspace{2ex}
\end{equation}

\item Crossing cyclicity:
\begin{equation}\label{eq:crossingcyclicity1}
 \xy (0,0)*{
			\labellist
			\small\hair 2pt
			\pinlabel \scalebox{0.7}{$\textcolor{red}{i}$} at 40 -5
                \pinlabel \scalebox{0.7}{$\textcolor{blue}{j}$} at 50 -5
			\pinlabel \scalebox{0.9}{$\lambda$} at 58 25
			\endlabellist 
			\centering 
			\includegraphics[scale=1.3]{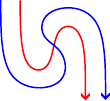}
	}\endxy
 \quad = \;\,
 \xy (0,0)*{
			\labellist
			\small\hair 2pt
			\pinlabel \scalebox{0.7}{$\textcolor{red}{i}$} at 0 -5
                \pinlabel \scalebox{0.7}{$\textcolor{blue}{j}$} at 14 -5
			\pinlabel \scalebox{0.9}{$\lambda$} at 20 8
			\endlabellist 
			\centering 
			\includegraphics[scale=1.3]{./RdlBdr}
	}\endxy
 \quad = \;\,
 \xy (0,0)*{
			\labellist
			\small\hair 2pt
			\pinlabel \scalebox{0.7}{$\textcolor{red}{i}$} at 0 -5
                \pinlabel \scalebox{0.7}{$\textcolor{blue}{j}$} at 11 -5
			\pinlabel \scalebox{0.9}{$\lambda$} at 58 25
			\endlabellist 
			\centering 
			\includegraphics[scale=1.3]{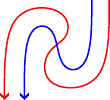}
	}\endxy
 \end{equation}

\begin{equation}\label{eq:crossingcyclicity2}
 \xy (0,0)*{
			\labellist
			\small\hair 2pt
			\pinlabel \scalebox{0.7}{$\textcolor{red}{i}$} at 37 -5
                \pinlabel \scalebox{0.7}{$\textcolor{blue}{j}$} at 49 -5
			\pinlabel \scalebox{0.9}{$\lambda$} at 53 25
			\endlabellist 
			\centering 
			\includegraphics[scale=1.3]{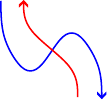}
	}\endxy
 \quad = \;\,
 \xy (0,0)*{
			\labellist
			\small\hair 2pt
			\pinlabel \scalebox{0.7}{$\textcolor{red}{i}$} at 0 -5
                \pinlabel \scalebox{0.7}{$\textcolor{blue}{j}$} at 14 -5
			\pinlabel \scalebox{0.9}{$\lambda$} at 20 8
			\endlabellist 
			\centering 
			\includegraphics[scale=1.3]{./RurBdr}
	}\endxy
 \quad = \;\,
 \xy (0,0)*{
			\labellist
			\small\hair 2pt
			\pinlabel \scalebox{0.7}{$\textcolor{red}{i}$} at 0 -5
                \pinlabel \scalebox{0.7}{$\textcolor{blue}{j}$} at 12 -5
			\pinlabel \scalebox{0.9}{$\lambda$} at 52 25
			\endlabellist 
			\centering 
			\includegraphics[scale=1.3]{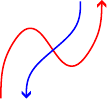}
	}\endxy
 \end{equation}

\begin{equation}\label{eq:crossingcyclicity3}
 \xy (0,0)*{
			\labellist
			\small\hair 2pt
			\pinlabel \scalebox{0.7}{$\textcolor{red}{i}$} at 37 -5
                \pinlabel \scalebox{0.7}{$\textcolor{blue}{j}$} at 49 -5
			\pinlabel \scalebox{0.9}{$\lambda$} at 53 25
			\endlabellist 
			\centering 
			\includegraphics[scale=1.3]{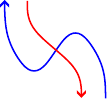}
	}\endxy
 \quad = \;\,
 \xy (0,0)*{
			\labellist
			\small\hair 2pt
			\pinlabel \scalebox{0.7}{$\textcolor{red}{i}$} at 0 -5
                \pinlabel \scalebox{0.7}{$\textcolor{blue}{j}$} at 14 -5
			\pinlabel \scalebox{0.9}{$\lambda$} at 20 8
			\endlabellist 
			\centering 
			\includegraphics[scale=1.3]{./RdlBul}
	}\endxy
 \quad = \;\,
 \xy (0,0)*{
			\labellist
			\small\hair 2pt
			\pinlabel \scalebox{0.7}{$\textcolor{red}{i}$} at 0 -5
                \pinlabel \scalebox{0.7}{$\textcolor{blue}{j}$} at 12 -5
			\pinlabel \scalebox{0.9}{$\lambda$} at 52 25
			\endlabellist 
			\centering 
			\includegraphics[scale=1.3]{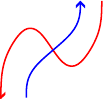}
	}\endxy \vspace{2ex}
 \end{equation}
\eqskip

\item Quadratic KLR:
\begin{equation}\label{eq:R2a}
\xy (0,0)*{
			\labellist
			\small\hair 2pt
			\pinlabel \scalebox{0.7}{$\textcolor{red}{i}$} at 0 -5
			\pinlabel \scalebox{0.7}{$\textcolor{blue}{j}$} at 16 -5
			\pinlabel \scalebox{0.9}{$\lambda$} at 18 16
			\endlabellist 
			\centering 
			\includegraphics[scale=1.3]{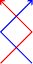}
	}\endxy
\quad =\begin{cases} 
\; 0 &\quad i=j , \\
\; \xy (0,0)*{
			\labellist
			\small\hair 2pt
			\pinlabel \scalebox{0.7}{$\textcolor{red}{i}$} at 0 -5
			\pinlabel \scalebox{0.7}{$\textcolor{blue}{j}$} at 13 -5
			\pinlabel \scalebox{0.9}{$\lambda$} at 17 16
			\endlabellist 
			\centering 
			\includegraphics[scale=1.3]{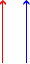}
	}\endxy
 & \quad i\cdot j=0 , 
 \\[5ex]
 \; \varepsilon(i,j)\left(\,\xy (0,0)*{
			\labellist
			\small\hair 2pt
			\pinlabel \scalebox{0.7}{$\textcolor{red}{i}$} at 0 -5
			\pinlabel \scalebox{0.7}{$\textcolor{blue}{j}$} at 13 -5
			\pinlabel \scalebox{0.9}{$\lambda$} at 17 16
			\endlabellist 
			\centering 
			\includegraphics[scale=1.3]{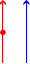}
	}\endxy
\ \ \  - \
 \xy (0,0)*{
			\labellist
			\small\hair 2pt
			\pinlabel \scalebox{0.7}{$\textcolor{red}{i}$} at 0 -5
			\pinlabel \scalebox{0.7}{$\textcolor{blue}{j}$} at 13 -5
			\pinlabel \scalebox{0.9}{$\lambda$} at 17 16
			\endlabellist 
			\centering 
			\includegraphics[scale=1.3]{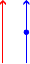}
	}\endxy\;\; \right)
 &\quad  i\cdot j=-1 ,
 \end{cases}\vspace{2ex}
 \end{equation}
 where $\varepsilon(i,j)=\begin{cases}
     1 & i = j+1 (\operatorname{mod} n)\\
     -1 & i = j-1 (\operatorname{mod} n)\\
     0 & \text{ else}
 \end{cases}$ 
 
\item Dot slide: 
 \begin{equation}\label{eq:NilHecke}
 \xy (0,0)*{
			\labellist
			\small\hair 2pt
			\pinlabel \scalebox{0.7}{$\textcolor{red}{i}$} at 0 -5
			\pinlabel \scalebox{0.7}{$\textcolor{blue}{j}$} at 16 -5
			\pinlabel \scalebox{0.9}{$\lambda$} at 18 7
			\endlabellist 
			\centering 
			\includegraphics[scale=1.3]{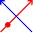}
	}\endxy
 \quad - \ 
    \xy (0,0)*{
			\labellist
			\small\hair 2pt
			\pinlabel \scalebox{0.7}{$\textcolor{red}{i}$} at 0 -5
			\pinlabel \scalebox{0.7}{$\textcolor{blue}{j}$} at 16 -5
			\pinlabel \scalebox{0.9}{$\lambda$} at 18 7
			\endlabellist 
			\centering 
			\includegraphics[scale=1.3]{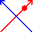}
	}\endxy
 \quad = \ 
 \xy (0,0)*{
			\labellist
			\small\hair 2pt
			\pinlabel \scalebox{0.7}{$\textcolor{red}{i}$} at 0 -5
			\pinlabel \scalebox{0.7}{$\textcolor{blue}{j}$} at 16 -5
			\pinlabel \scalebox{0.9}{$\lambda$} at 18 7
			\endlabellist 
			\centering 
			\includegraphics[scale=1.3]{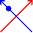}
	}\endxy
 \quad - \ 
 \xy (0,0)*{
			\labellist
			\small\hair 2pt
			\pinlabel \scalebox{0.7}{$\textcolor{red}{i}$} at 0 -5
			\pinlabel \scalebox{0.7}{$\textcolor{blue}{j}$} at 16 -5
			\pinlabel \scalebox{0.9}{$\lambda$} at 18 7
			\endlabellist 
			\centering 
			\includegraphics[scale=1.3]{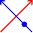}
	}\endxy
 \quad = \ \begin{cases}
   \;  \xy (0,0)*{
			\labellist
			\small\hair 2pt
			\pinlabel \scalebox{0.7}{$\textcolor{red}{i}$} at 12 -5
			\pinlabel \scalebox{0.7}{$\textcolor{red}{i}$} at 0 -5
			\pinlabel \scalebox{0.9}{$\lambda$} at 18 7
			\endlabellist 
			\centering 
			\includegraphics[scale=1.3]{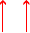}
	}\endxy
 & \quad i=j , \\
  & \\
 \; 0 & \quad i\neq j . 
 \end{cases} \vspace{2ex}
 \end{equation}

\item Cubic KLR:
 \begin{equation}\label{eq:R3}
     \xy (0,0)*{
			\labellist
			\small\hair 2pt
			\pinlabel \scalebox{0.7}{$\textcolor{red}{i}$} at 0 -5
			\pinlabel \scalebox{0.7}{$\textcolor{blue}{j}$} at 16 -5
            \pinlabel \scalebox{0.7}{$\textcolor{mygreen}{k}$} at 32 -5
			\pinlabel \scalebox{0.9}{$\lambda$} at 34 16
			\endlabellist 
			\centering 
			\includegraphics[scale=1.3]{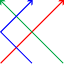}
	}\endxy
 \quad - \, 
 \xy (0,0)*{
			\labellist
			\small\hair 2pt
			\pinlabel \scalebox{0.7}{$\textcolor{red}{i}$} at 0 -5
			\pinlabel \scalebox{0.7}{$\textcolor{blue}{j}$} at 16 -5
            \pinlabel \scalebox{0.7}{$\textcolor{mygreen}{k}$} at 32 -5
			\pinlabel \scalebox{0.9}{$\lambda$} at 34 16
			\endlabellist 
			\centering 
			\includegraphics[scale=1.3]{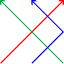}
	}\endxy
 \quad = \ \begin{cases}
    \; \varepsilon(i,j)\xy (0,0)*{
			\labellist
			\small\hair 2pt
			\pinlabel \scalebox{0.7}{$\textcolor{red}{i}$} at 0 -5
			\pinlabel \scalebox{0.7}{$\textcolor{blue}{j}$} at 12 -5
            \pinlabel \scalebox{0.7}{$\textcolor{red}{i}$} at 24 -5
			\pinlabel \scalebox{0.9}{$\lambda$} at 28 16
			\endlabellist 
			\centering 
			\includegraphics[scale=1.3]{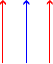}
	}\endxy
 & \quad i=k \text{ and } i\cdot j=-1 ,\\
  & \\
 \; 0 & \quad i\neq k \text{ or } i\cdot j\neq -1 . 
 \end{cases}\vspace{2ex}
 \end{equation}
Before we list more relations, first a useful piece of notation:

\begin{equation}\label{eq:bubbleconvention}
    \xy (0,0)*{
			\labellist
			\small\hair 2pt
			\pinlabel \scalebox{0.7}{$\textcolor{black}{i}$} at 0 14
			\pinlabel \scalebox{0.7}{$\textcolor{black}{+m}$} at 18 -1
			\pinlabel \scalebox{0.9}{$\lambda$} at 19 12
			\endlabellist 
			\centering 
			\includegraphics[scale=1.3]{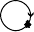}
	}\endxy
 \;\, := \;\,
 \xy (0,0)*{
			\labellist
			\small\hair 2pt
			\pinlabel \scalebox{0.7}{$\textcolor{black}{i}$} at 0 14
			\pinlabel \scalebox{0.7}{$\textcolor{black}{\olambda_i-1+m}$} at 27 -2
			\pinlabel \scalebox{0.9}{$\lambda$} at 19 12
			\endlabellist 
			\centering 
			\includegraphics[scale=1.3]{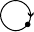}
	}\endxy
 \qquad \qquad
 \xy (0,0)*{
			\labellist
			\small\hair 2pt
			\pinlabel \scalebox{0.7}{$\textcolor{black}{i}$} at 0 14
			\pinlabel \scalebox{0.7}{$\textcolor{black}{+m}$} at 18 -2
			\pinlabel \scalebox{0.9}{$\lambda$} at 19 12
			\endlabellist 
			\centering 
			\includegraphics[scale=1.3]{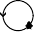}
	}\endxy
 \;\, := \;\, 
 \xy (0,0)*{
			\labellist
			\small\hair 2pt
			\pinlabel \scalebox{0.7}{$\textcolor{black}{i}$} at 0 14
			\pinlabel \scalebox{0.7}{$\textcolor{black}{-\olambda_i-1+m}$} at 27 -2
			\pinlabel \scalebox{0.9}{$\lambda$} at 19 12
			\endlabellist 
			\centering 
			\includegraphics[scale=1.3]{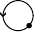}
	}\endxy
 \end{equation}
\eqskip Using this notation, the other relations on diagrams are:

\item Mixed EF:
 \begin{equation}\label{eq:R2b} 
 \xy (0,0)*{
			\labellist
			\small\hair 2pt
			\pinlabel \scalebox{0.7}{$\textcolor{red}{i}$} at 0 -5
			\pinlabel \scalebox{0.7}{$\textcolor{blue}{j}$} at 16 -5
			\pinlabel \scalebox{0.9}{$\lambda$} at 18 16
			\endlabellist 
			\centering 
			\includegraphics[scale=1.3]{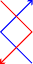}
	}\endxy
 \quad = \ \begin{cases}
     \; \xy (0,0)*{
			\labellist
			\small\hair 2pt
			\pinlabel \scalebox{0.7}{$\textcolor{red}{i}$} at 0 -5
			\pinlabel \scalebox{0.7}{$\textcolor{blue}{j}$} at 12 -5
			\pinlabel \scalebox{0.9}{$\lambda$} at 17 16
			\endlabellist 
			\centering 
			\includegraphics[scale=1.3]{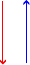}
	}\endxy
 & \quad i\neq j , \\
  & \\
 \; \; \xy (0,0)*{
			\labellist
			\small\hair 2pt
			\pinlabel \scalebox{0.7}{$\textcolor{red}{i}$} at 0 -5
			\pinlabel \scalebox{0.7}{$\textcolor{red}{i}$} at 12 -5
			\pinlabel \scalebox{0.9}{$\lambda$} at 17 18
			\endlabellist 
			\centering 
			\includegraphics[scale=1.3]{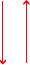}
	}\endxy
 \quad - \ 
 \sum\limits_{a+b+c=-\olambda_i-1} \xy (0,0)*{
			\labellist
			\small\hair 2pt
			\pinlabel \scalebox{0.7}{$\textcolor{red}{i}$} at 0 -5
			\pinlabel \scalebox{0.7}{$\textcolor{red}{i}$} at 0 31
            \pinlabel \scalebox{0.7}{$\textcolor{red}{i}$} at 19 22
            \pinlabel \scalebox{0.7}{$\textcolor{red}{+c}$} at 34 5
			\pinlabel \scalebox{0.9}{$\lambda$} at 37 16
            \pinlabel \scalebox{0.7}{$\textcolor{red}{a}$} at 4 8
            \pinlabel \scalebox{0.7}{$\textcolor{red}{b}$} at 4 18
			\endlabellist 
			\centering 
			\includegraphics[scale=1.3]{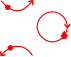}
	}\endxy
 & \quad i=j ,
 \end{cases}
 \end{equation}

 \begin{equation}\label{eq:R2c}
 \xy (0,0)*{
			\labellist
			\small\hair 2pt
			\pinlabel \scalebox{0.7}{$\textcolor{red}{i}$} at 0 -5
			\pinlabel \scalebox{0.7}{$\textcolor{blue}{j}$} at 16 -5
			\pinlabel \scalebox{0.9}{$\lambda$} at 18 16
			\endlabellist 
			\centering 
			\includegraphics[scale=1.3]{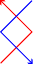}
	}\endxy
 \quad  = \ \begin{cases}
     \; \xy (0,0)*{
			\labellist
			\small\hair 2pt
			\pinlabel \scalebox{0.7}{$\textcolor{red}{i}$} at 0 -5
			\pinlabel \scalebox{0.7}{$\textcolor{blue}{j}$} at 12 -5
			\pinlabel \scalebox{0.9}{$\lambda$} at 17 16
			\endlabellist 
			\centering 
			\includegraphics[scale=1.3]{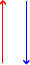}
	}\endxy
 & \quad i\neq j , \\
  & \\
 \; \; \xy (0,0)*{
			\labellist
			\small\hair 2pt
			\pinlabel \scalebox{0.7}{$\textcolor{red}{i}$} at 0 -5
			\pinlabel \scalebox{0.7}{$\textcolor{red}{i}$} at 12 -5
			\pinlabel \scalebox{0.9}{$\lambda$} at 17 18
			\endlabellist 
			\centering 
			\includegraphics[scale=1.3]{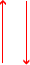}
	}\endxy
 \quad - \ 
 \sum\limits_{a+b+c=\olambda_i-1} \xy (0,0)*{
			\labellist
			\small\hair 2pt
			\pinlabel \scalebox{0.7}{$\textcolor{red}{i}$} at 0 -5
			\pinlabel \scalebox{0.7}{$\textcolor{red}{i}$} at 0 31
            \pinlabel \scalebox{0.7}{$\textcolor{red}{i}$} at 19 22
            \pinlabel \scalebox{0.7}{$\textcolor{red}{+c}$} at 34 5
			\pinlabel \scalebox{0.9}{$\lambda$} at 37 16
            \pinlabel \scalebox{0.7}{$\textcolor{red}{a}$} at 4 8
            \pinlabel \scalebox{0.7}{$\textcolor{red}{b}$} at 4 18
			\endlabellist 
			\centering 
			\includegraphics[scale=1.3]{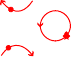}
	}\endxy
 & \quad i=j . 
 \end{cases}\vspace{2ex}
 \end{equation}

\item Bubble relations:
 \begin{equation}\label{eq:bubblerel} 
 \xy (0,0)*{
			\labellist
			\small\hair 2pt
			\pinlabel \scalebox{0.7}{$\textcolor{black}{i}$} at 0 14
			\pinlabel \scalebox{0.7}{$\textcolor{black}{+m}$} at 16 -1
			\pinlabel \scalebox{0.9}{$\lambda$} at 18 12
			\endlabellist 
			\centering 
			\includegraphics[scale=1.3]{./KClockClub}
	}\endxy 
 \ = \ \begin{cases}
     (-1)^{\lambda_{i+1}} & m=0 , \\
     0 & m<0 ,
 \end{cases}
 \quad , \quad
\xy (0,0)*{
			\labellist
			\small\hair 2pt
			\pinlabel \scalebox{0.7}{$\textcolor{black}{i}$} at 0 14
			\pinlabel \scalebox{0.7}{$\textcolor{black}{+m}$} at 16 -1
			\pinlabel \scalebox{0.9}{$\lambda$} at 18 12
			\endlabellist 
			\centering 
			\includegraphics[scale=1.3]{./KAClockClub}
	}\endxy 
 \ = \ \begin{cases}
     (-1)^{\lambda_{i+1}-1} & m=0 , \\
     0 & m<0 .
 \end{cases}
 \end{equation}

\item Infinite Grassmannian relation:
\eqskip
 \begin{equation}\label{eq:Grassmannian}\left(\xy (0,0)*{
			\labellist
			\small\hair 2pt
			\pinlabel \scalebox{0.7}{$\textcolor{black}{i}$} at 0 14
			\pinlabel \scalebox{0.7}{$\textcolor{black}{+0}$} at 16 -2
			\pinlabel \scalebox{0.9}{$\lambda$} at 18 12
			\endlabellist 
			\centering 
			\includegraphics[scale=1.3]{./KAClockClub}
	}\endxy
 \ +
 \xy (0,0)*{
			\labellist
			\small\hair 2pt
			\pinlabel \scalebox{0.7}{$\textcolor{black}{i}$} at 0 14
			\pinlabel \scalebox{0.7}{$\textcolor{black}{+1}$} at 16 -2
			\pinlabel \scalebox{0.9}{$\lambda$} at 18 12
			\endlabellist 
			\centering 
			\includegraphics[scale=1.3]{./KAClockClub}
	}\endxy
 \ \ t + \dots +
 \xy (0,0)*{
			\labellist
			\small\hair 2pt
			\pinlabel \scalebox{0.7}{$\textcolor{black}{i}$} at 0 14
			\pinlabel \scalebox{0.7}{$\textcolor{black}{+m}$} at 16 -2
			\pinlabel \scalebox{0.9}{$\lambda$} at 18 12
			\endlabellist 
			\centering 
			\includegraphics[scale=1.3]{./KClockClub}
	}\endxy \ \ t^m+\dots\right)\!\!
 \left(\xy (0,0)*{
			\labellist
			\small\hair 2pt
			\pinlabel \scalebox{0.7}{$\textcolor{black}{i}$} at 0 14
			\pinlabel \scalebox{0.7}{$\textcolor{black}{+0}$} at 16 -2
			\pinlabel \scalebox{0.9}{$\lambda$} at 18 12
			\endlabellist 
			\centering 
			\includegraphics[scale=1.3]{./KClockClub}
	}\endxy
 \ +
 \xy (0,0)*{
			\labellist
			\small\hair 2pt
			\pinlabel \scalebox{0.7}{$\textcolor{black}{i}$} at 0 14
			\pinlabel \scalebox{0.7}{$\textcolor{black}{+1}$} at 16 -2
			\pinlabel \scalebox{0.9}{$\lambda$} at 18 12
			\endlabellist 
			\centering 
			\includegraphics[scale=1.3]{./KClockClub}
	}\endxy
 \ \ t + \dots +
 \xy (0,0)*{
			\labellist
			\small\hair 2pt
			\pinlabel \scalebox{0.7}{$\textcolor{black}{i}$} at 0 14
			\pinlabel \scalebox{0.7}{$\textcolor{black}{+m}$} at 16 -2
			\pinlabel \scalebox{0.9}{$\lambda$} at 18 12
			\endlabellist 
			\centering 
			\includegraphics[scale=1.3]{./KClockClub}
	}\endxy \ \ t^m+\dots\right)\!=-1 .
 \vspace{2ex}
 \end{equation}

\end{enumerate}
\eqskip
This ends the definition of the $2$-category $\affu{n}$ (resp. $\naffu{n}$).
\end{defn}

\eqskip

\begin{rem}\label{rem:mingens}
Thanks to adjunction and cyclicity (equations~\eqref{eq:adjunction} through \eqref{eq:crossingcyclicity3}), the $2$-morphisms of $\affu{n}$ are already generated by 
\[
\xy (0,0)*{
			\labellist
			\small\hair 2pt
                \pinlabel \scalebox{0.7}{$i$} at 2 -5
			\pinlabel \scalebox{0.7}{$\lambda+\alpha_i$} at -15 12
			\pinlabel \scalebox{0.9}{$\lambda$} at 12 12
			\endlabellist 
			\centering 
			\includegraphics[scale=1.3]{./Kuo}
	}\endxy 
 \qquad \qquad 
  \xy (0,0)*{
			\labellist
			\small\hair 2pt
   \pinlabel \scalebox{0.7}{$i$} at -2 -5
   \pinlabel \scalebox{0.7}{$j$} at 15 -5
			\pinlabel \scalebox{0.9}{$\lambda$} at 22 9
			\endlabellist 
			\centering 
			\includegraphics[scale=1.3]{./RurBul}
	}\endxy 
 \qquad \qquad 
  \xy (0,0)*{
			\labellist
			\small\hair 2pt
   \pinlabel \scalebox{0.7}{$i$} at 8 -5
			\pinlabel \scalebox{0.9}{$\lambda$} at 22 0
			\endlabellist 
			\centering 
			\includegraphics[scale=1.3]{./KrCup1}
	}\endxy 
  \qquad \qquad 
\xy (0,0)*{
			\labellist
			\small\hair 2pt
   \pinlabel \scalebox{0.7}{$i$} at 8 -5
			\pinlabel \scalebox{0.9}{$\lambda$} at 22 0
			\endlabellist 
			\centering 
			\includegraphics[scale=1.3]{./KlCup1}
	}\endxy 
 \qquad \qquad 
  \xy (0,0)*{
			\labellist
			\small\hair 2pt
   \pinlabel \scalebox{0.7}{$i$} at 8 13
			\pinlabel \scalebox{0.9}{$\lambda$} at 22 9
			\endlabellist 
			\centering 
			\includegraphics[scale=1.3]{./KlCap1}
	}\endxy 
\qquad \qquad
\xy (0,0)*{
			\labellist
			\small\hair 2pt
   \pinlabel \scalebox{0.7}{$i$} at 8 13
			\pinlabel \scalebox{0.9}{$\lambda$} at 22 9
			\endlabellist 
			\centering 
			\includegraphics[scale=1.3]{./KrCap1}
	}\endxy 
\]
\eqskip
\noindent for $\lambda\in \mathbb{Z}^n$ and $i,j=1,\ldots, n$ (and similarly for $\naffu{n}$). It therefore suffices to define the evaluation functor on this smaller set of generators.
\end{rem}

\begin{rem}\label{rem:ChOfSc}
    The choice of signs in \autoref{defn:KLR-2cats} is not covered by \cite{abram2022categorification}. This choice of signs is referred to as a \emph{choice of scalars} and \emph{bubble parameters} - see \cite{lauda2020parameters} for an in-depth explanation. Since we will be adapting various proofs from \cite{abram2022categorification} for the proof of our main result, we will therefore need to take care when translating them across the different sign conventions. We will discuss difference choices of scalars and bubble parameters further in \autoref{sec:No2Iso}, since they might have implications for the existence of an evaluation 2-functor.
\end{rem}

\subsection{Some additional relations}
Some well-known consequences of the above relations are listed below. For the proofs, see e.g.~\cite{Beliakova-Habiro-Lauda-Webster16} and references therein.

 \begin{equation}\label{eq:curl}
 \xy (0,0)*{
			\labellist
			\small\hair 2pt
			\pinlabel \scalebox{0.7}{$\textcolor{black}{i}$} at 1 -5
			\pinlabel \scalebox{0.7}{$\textcolor{black}{m}$} at 22 16
			\pinlabel \scalebox{0.9}{$\lambda$} at 20 25
			\endlabellist 
			\centering 
			\includegraphics[scale=1.3]{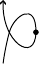}
	}\endxy
 \quad = \; -  
 \sum\limits_{a+b=m-\olambda_i} \xy (0,0)*{
			\labellist
			\small\hair 2pt
			\pinlabel \scalebox{0.7}{$\textcolor{black}{i}$} at 2 -5
			\pinlabel \scalebox{0.7}{$\textcolor{black}{i}$} at 9 24
            \pinlabel \scalebox{0.7}{$\textcolor{black}{a}$} at 5 14
            \pinlabel \scalebox{0.7}{$\textcolor{black}{+b}$} at 23 6
			\pinlabel \scalebox{0.9}{$\lambda$} at 26 24
			\endlabellist 
			\centering 
			\includegraphics[scale=1.3]{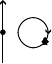}
	}\endxy
 \qquad\qquad \xy (0,0)*{
			\labellist
			\small\hair 2pt
			\pinlabel \scalebox{0.7}{$\textcolor{black}{i}$} at 17 -5
			\pinlabel \scalebox{0.7}{$\textcolor{black}{m}$} at -3 16
			\pinlabel \scalebox{0.9}{$\lambda$} at 20 25
			\endlabellist 
			\centering 
			\includegraphics[scale=1.3]{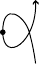}
	}\endxy
 \ =  \
 \sum\limits_{a+b=m+\olambda_i} \xy (0,0)*{
			\labellist
			\small\hair 2pt
			\pinlabel \scalebox{0.7}{$\textcolor{black}{i}$} at 24 -5
			\pinlabel \scalebox{0.7}{$\textcolor{black}{i}$} at 1 24
            \pinlabel \scalebox{0.7}{$\textcolor{black}{a}$} at 28 14
            \pinlabel \scalebox{0.7}{$\textcolor{black}{+b}$} at 15 6
			\pinlabel \scalebox{0.9}{$\lambda$} at 27 24
			\endlabellist 
			\centering 
			\includegraphics[scale=1.3]{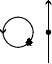}
	}\endxy
 \end{equation}

\begin{equation}\label{eq:bubblesliderla}
\xy (0,0)*{
			\labellist
			\small\hair 2pt
			\pinlabel \scalebox{0.7}{$\textcolor{blue}{j}$} at 1 -5
			\pinlabel \scalebox{0.7}{$\textcolor{red}{i}$} at 9 24
            \pinlabel \scalebox{0.7}{$\textcolor{red}{+m}$} at 23 6
			\pinlabel \scalebox{0.9}{$\lambda$} at 26 24
			\endlabellist 
			\centering 
			\includegraphics[scale=1.3]{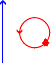}
	}\endxy
 \ = \ \begin{cases}
 \; -\sum\limits_{a+b=m} (a+1)\; \xy (0,0)*{
			\labellist
			\small\hair 2pt
			\pinlabel \scalebox{0.7}{$\textcolor{red}{i}$} at 24 -5
			\pinlabel \scalebox{0.7}{$\textcolor{red}{i}$} at 1 24
            \pinlabel \scalebox{0.7}{$\textcolor{red}{a}$} at 27 14
            \pinlabel \scalebox{0.7}{$\textcolor{red}{+b}$} at 15 6
			\pinlabel \scalebox{0.9}{$\lambda$} at 27 24
			\endlabellist 
			\centering 
			\includegraphics[scale=1.3]{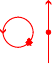}
	}\endxy & \quad i=j , \\
 & \\
 \;-\varepsilon(i,j)\left( \xy (0,0)*{
			\labellist
			\small\hair 2pt
			\pinlabel \scalebox{0.7}{$\textcolor{blue}{j}$} at 31 -5
			\pinlabel \scalebox{0.7}{$\textcolor{red}{i}$} at 1 24
                \pinlabel \scalebox{0.7}{$\textcolor{red}{+m-1}$} at 15 6
			\pinlabel \scalebox{0.9}{$\lambda$} at 35 24
			\endlabellist 
			\centering 
			\includegraphics[scale=1.3]{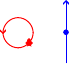}
	}\endxy
 \; - \; 
 \xy (0,0)*{
			\labellist
			\small\hair 2pt
			\pinlabel \scalebox{0.7}{$\textcolor{blue}{j}$} at 23 -5
			\pinlabel \scalebox{0.7}{$\textcolor{red}{i}$} at 1 24
                \pinlabel \scalebox{0.7}{$\textcolor{red}{+m}$} at 15 6
			\pinlabel \scalebox{0.9}{$\lambda$} at 27 24
			\endlabellist 
			\centering 
			\includegraphics[scale=1.3]{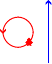}
	}\endxy\;\right) & \quad i\cdot j=-1 , \\
  & \\ 
 \; \xy (0,0)*{
			\labellist
			\small\hair 2pt
			\pinlabel \scalebox{0.7}{$\textcolor{blue}{j}$} at 23 -5
			\pinlabel \scalebox{0.7}{$\textcolor{red}{i}$} at 1 24
            \pinlabel \scalebox{0.7}{$\textcolor{red}{+m}$} at 17 6
			\pinlabel \scalebox{0.9}{$\lambda$} at 27 24
			\endlabellist 
			\centering 
			\includegraphics[scale=1.3]{./RAClockClubBuu}
	}\endxy & \quad i\cdot j=0 , 
 \end{cases}
 \end{equation}
 
\begin{equation}\label{eq:bubbleslidelra} 
\xy (0,0)*{
			\labellist
			\small\hair 2pt
			\pinlabel \scalebox{0.7}{$\textcolor{blue}{j}$} at 23 -5
			\pinlabel \scalebox{0.7}{$\textcolor{red}{i}$} at 1 24
            \pinlabel \scalebox{0.7}{$\textcolor{red}{+m}$} at 15 6
			\pinlabel \scalebox{0.9}{$\lambda$} at 27 24
			\endlabellist 
			\centering 
			\includegraphics[scale=1.3]{./RAClockClubBuu}
	}\endxy
 \ = \ \begin{cases}
    -\; \xy (0,0)*{
			\labellist
			\small\hair 2pt
			\pinlabel \scalebox{0.7}{$\textcolor{red}{i}$} at 2 -5
			\pinlabel \scalebox{0.7}{$\textcolor{red}{i}$} at 9 24
            \pinlabel \scalebox{0.7}{$\textcolor{red}{2}$} at 5 14
            \pinlabel \scalebox{0.7}{$\textcolor{red}{+m-2}$} at 27 6
			\pinlabel \scalebox{0.9}{$\lambda$} at 26 24
			\endlabellist 
			\centering 
			\includegraphics[scale=1.3]{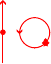}
	}\endxy
 \quad + \ 
 2\; \xy (0,0)*{
			\labellist
			\small\hair 2pt
			\pinlabel \scalebox{0.7}{$\textcolor{red}{i}$} at 2 -5
			\pinlabel \scalebox{0.7}{$\textcolor{red}{i}$} at 9 24
            \pinlabel \scalebox{0.7}{$\textcolor{red}{+m-1}$} at 27 6
			\pinlabel \scalebox{0.9}{$\lambda$} at 26 24
			\endlabellist 
			\centering 
			\includegraphics[scale=1.3]{./RuouRAClockClub}
	}\endxy
 \quad - \ 
 \xy (0,0)*{
			\labellist
			\small\hair 2pt
			\pinlabel \scalebox{0.7}{$\textcolor{red}{i}$} at 2 -5
			\pinlabel \scalebox{0.7}{$\textcolor{red}{i}$} at 9 24
            \pinlabel \scalebox{0.7}{$\textcolor{red}{+m}$} at 23 6
			\pinlabel \scalebox{0.9}{$\lambda$} at 26 24
			\endlabellist 
			\centering 
			\includegraphics[scale=1.3]{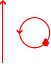}
	}\endxy & \quad  i=j , \\
  & \\
  \;\varepsilon(i,j)\sum\limits_{a+b=m}\;\xy (0,0)*{
			\labellist
			\small\hair 2pt
			\pinlabel \scalebox{0.7}{$\textcolor{blue}{j}$} at 1 -5
			\pinlabel \scalebox{0.7}{$\textcolor{red}{i}$} at 9 24
            \pinlabel \scalebox{0.7}{$\textcolor{blue}{a}$} at 5 14
            \pinlabel \scalebox{0.7}{$\textcolor{red}{+b}$} at 23 6
			\pinlabel \scalebox{0.9}{$\lambda$} at 26 24
			\endlabellist 
			\centering 
			\includegraphics[scale=1.3]{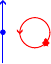}
	}\endxy & \quad i\cdot j=-1 , 
 \end{cases}
 \end{equation}

 \begin{equation}\label{eq:bubbleslidelrb}
 \xy (0,0)*{
			\labellist
			\small\hair 2pt
			\pinlabel \scalebox{0.7}{$\textcolor{blue}{j}$} at 22 -5
			\pinlabel \scalebox{0.7}{$\textcolor{red}{i}$} at 1 24
            \pinlabel \scalebox{0.7}{$\textcolor{red}{+m}$} at 15 6
			\pinlabel \scalebox{0.9}{$\lambda$} at 27 24
			\endlabellist 
			\centering 
			\includegraphics[scale=1.3]{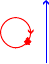}
	}\endxy
 \ = \ \begin{cases}
     \; -\sum\limits_{a+b=m} (a+1)\; \xy (0,0)*{
			\labellist
			\small\hair 2pt
			\pinlabel \scalebox{0.7}{$\textcolor{red}{i}$} at 2 -5
			\pinlabel \scalebox{0.7}{$\textcolor{red}{i}$} at 9 24
            \pinlabel \scalebox{0.7}{$\textcolor{red}{a}$} at 5 14
            \pinlabel \scalebox{0.7}{$\textcolor{red}{+b}$} at 23 6
			\pinlabel \scalebox{0.9}{$\lambda$} at 26 24
			\endlabellist 
			\centering 
			\includegraphics[scale=1.3]{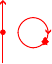}
	}\endxy & \ i=j , \\
  & \\
  -\varepsilon(i,j)\left(\; \xy (0,0)*{
			\labellist
			\small\hair 2pt
			\pinlabel \scalebox{0.7}{$\textcolor{blue}{j}$} at 1 -5
			\pinlabel \scalebox{0.7}{$\textcolor{red}{i}$} at 9 24
            \pinlabel \scalebox{0.7}{$\textcolor{red}{+m-1}$} at 27 6
			\pinlabel \scalebox{0.9}{$\lambda$} at 26 24
			\endlabellist 
			\centering 
			\includegraphics[scale=1.3]{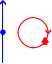}
	}\endxy
 \quad - \ 
 \xy (0,0)*{
			\labellist
			\small\hair 2pt
			\pinlabel \scalebox{0.7}{$\textcolor{blue}{j}$} at 1 -5
			\pinlabel \scalebox{0.7}{$\textcolor{red}{i}$} at 9 24
            \pinlabel \scalebox{0.7}{$\textcolor{red}{+m}$} at 23 6
			\pinlabel \scalebox{0.9}{$\lambda$} at 26 24
			\endlabellist 
			\centering 
			\includegraphics[scale=1.3]{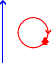}
	}\endxy\;\right)  & \quad  i\cdot j=-1 , \\
 & \\
 \; \xy (0,0)*{
			\labellist
			\small\hair 2pt
			\pinlabel \scalebox{0.7}{$\textcolor{blue}{j}$} at 1 -5
			\pinlabel \scalebox{0.7}{$\textcolor{red}{i}$} at 9 24
            \pinlabel \scalebox{0.7}{$\textcolor{red}{+m}$} at 23 6
			\pinlabel \scalebox{0.9}{$\lambda$} at 26 24
			\endlabellist 
			\centering 
			\includegraphics[scale=1.3]{./BuuRClockClub}
	}\endxy &\quad i\cdot j=0 , 
 \end{cases}
 \end{equation}

 \begin{equation}\label{eq:bubblesliderlb}
 \xy (0,0)*{
			\labellist
			\small\hair 2pt
			\pinlabel \scalebox{0.7}{$\textcolor{blue}{j}$} at 1 -5
			\pinlabel \scalebox{0.7}{$\textcolor{red}{i}$} at 9 24
            \pinlabel \scalebox{0.7}{$\textcolor{red}{+m}$} at 23 6
			\pinlabel \scalebox{0.9}{$\lambda$} at 26 24
			\endlabellist 
			\centering 
			\includegraphics[scale=1.3]{./BuuRClockClub}
	}\endxy
 \ = \ \begin{cases}
     -\; \xy (0,0)*{
			\labellist
			\small\hair 2pt
			\pinlabel \scalebox{0.7}{$\textcolor{red}{i}$} at 30 -5
			\pinlabel \scalebox{0.7}{$\textcolor{red}{i}$} at 1 24
            \pinlabel \scalebox{0.7}{$\textcolor{red}{2}$} at 33 14
            \pinlabel \scalebox{0.7}{$\textcolor{red}{+m-2}$} at 19 6
			\pinlabel \scalebox{0.9}{$\lambda$} at 34 24
			\endlabellist 
			\centering 
			\includegraphics[scale=1.3]{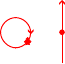}
	}\endxy
 \; + 2\; \xy (0,0)*{
			\labellist
			\small\hair 2pt
			\pinlabel \scalebox{0.7}{$\textcolor{red}{i}$} at 30 -5
			\pinlabel \scalebox{0.7}{$\textcolor{red}{i}$} at 1 24
            \pinlabel \scalebox{0.7}{$\textcolor{red}{+m-1}$} at 19 6
			\pinlabel \scalebox{0.9}{$\lambda$} at 34 24
			\endlabellist 
			\centering 
			\includegraphics[scale=1.3]{./RClockClubRuouWide}
	}\endxy
 \; - \; 
 \xy (0,0)*{
			\labellist
			\small\hair 2pt
			\pinlabel \scalebox{0.7}{$\textcolor{red}{i}$} at 23 -5
			\pinlabel \scalebox{0.7}{$\textcolor{red}{i}$} at 1 24
            \pinlabel \scalebox{0.7}{$\textcolor{red}{+m}$} at 15 6
			\pinlabel \scalebox{0.9}{$\lambda$} at 27 24
			\endlabellist 
			\centering 
			\includegraphics[scale=1.3]{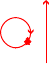}
	}\endxy & \quad i=j , \\
  & \\
  \; \varepsilon(i,j)\sum\limits_{a+b=m} \; \xy (0,0)*{
			\labellist
			\small\hair 2pt
			\pinlabel \scalebox{0.7}{$\textcolor{blue}{j}$} at 22 -5
			\pinlabel \scalebox{0.7}{$\textcolor{red}{i}$} at 1 24
            \pinlabel \scalebox{0.7}{$\textcolor{blue}{a}$} at 27 14
            \pinlabel \scalebox{0.7}{$\textcolor{red}{+b}$} at 15 6
			\pinlabel \scalebox{0.9}{$\lambda$} at 27 24
			\endlabellist 
			\centering 
			\includegraphics[scale=1.3]{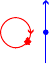}
	}\endxy & \quad i\cdot j=-1 , 
 \end{cases}\end{equation}

 \begin{equation}\label{eq:R3b}
 \xy (0,0)*{
			\labellist
			\small\hair 2pt
			\pinlabel \scalebox{0.7}{$\textcolor{red}{i}$} at 0 -5
			\pinlabel \scalebox{0.7}{$\textcolor{blue}{j}$} at 16 -5
            \pinlabel \scalebox{0.7}{$\textcolor{mygreen}{k}$} at 32 -5
			\pinlabel \scalebox{0.9}{$\lambda$} at 30 16
			\endlabellist 
			\centering 
			\includegraphics[scale=1.3]{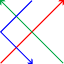}
	}\endxy
 \mspace{5mu} - \mspace{-2mu}
 \xy (0,0)*{
			\labellist
			\small\hair 2pt
			\pinlabel \scalebox{0.7}{$\textcolor{red}{i}$} at 0 -5
			\pinlabel \scalebox{0.7}{$\textcolor{blue}{j}$} at 16 -5
            \pinlabel \scalebox{0.7}{$\textcolor{mygreen}{k}$} at 32 -5
			\pinlabel \scalebox{0.9}{$\lambda$} at 34 16
			\endlabellist 
			\centering 
			\includegraphics[scale=1.3]{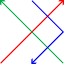}
	}\endxy
 \  \ \ =  \begin{cases}
     \sum\limits_{a+b+c+d=\olambda_i} 
     \xy (0,0)*{
			\labellist
			\small\hair 2pt
			\pinlabel \scalebox{0.7}{$\textcolor{red}{i}$} at 1 -5
			\pinlabel \scalebox{0.7}{$\textcolor{red}{i}$} at 1 35
            \pinlabel \scalebox{0.7}{$\textcolor{red}{i}$} at -2 18
            \pinlabel \scalebox{0.7}{$\textcolor{red}{i}$} at 28 -5
            \pinlabel \scalebox{0.7}{$\textcolor{red}{a}$} at 3 7
            \pinlabel \scalebox{0.7}{$\textcolor{red}{b}$} at 4 23
            \pinlabel \scalebox{0.7}{$\textcolor{red}{c}$} at 31 16
            \pinlabel \scalebox{0.7}{$\textcolor{red}{+d}$} at 16 7
			\pinlabel \scalebox{0.9}{$\lambda$} at 33 22
			\endlabellist 
			\centering 
			\includegraphics[scale=1.3]{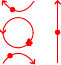}
	}\endxy \quad + \! \sum\limits_{a+b+c+d=-\olambda_i-2} \xy (0,0)*{
			\labellist
			\small\hair 2pt
			\pinlabel \scalebox{0.7}{$\textcolor{red}{i}$} at 12 -5
			\pinlabel \scalebox{0.7}{$\textcolor{red}{i}$} at 12 35
            \pinlabel \scalebox{0.7}{$\textcolor{red}{i}$} at 10 18
            \pinlabel \scalebox{0.7}{$\textcolor{red}{i}$} at 0 -5
            \pinlabel \scalebox{0.7}{$\textcolor{red}{a}$} at 15 7
            \pinlabel \scalebox{0.7}{$\textcolor{red}{b}$} at 15 23
            \pinlabel \scalebox{0.7}{$\textcolor{red}{c}$} at 5 16
            \pinlabel \scalebox{0.7}{$\textcolor{red}{+d}$} at 27 7
			\pinlabel \scalebox{0.9}{$\lambda$} at 33 20
			\endlabellist 
			\centering 
			\includegraphics[scale=1.3]{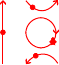}
	}\endxy & \quad  i=j=k , \\
  & \\
 \; 0 & \quad \text{else}.
 \end{cases}
 \end{equation}

 \section{Two versions of the evaluation 2-functor for \texorpdfstring{$n=3$}{n=3}}\label{Sec:Eval2Fr}

In this section, we will define the 2-functors $\Ev$ and $\Ev'$ discussed in the introduction, which are $\bQ$-linear monoidal functors  
\[
\Ev, \Ev'\colon \affu{3}\to \cK^b(\naffu{3}),
\]
defined in the next pages. Note that in this case, \autoref{d:evala} is particularly simple, because \eqref{eq:evmape} and \eqref{eq:evmapf} only involve one $q$-commutator each:
\begin{eqnarray*}
\ev(E_31_\lambda)&=&q^{\lambda_1+\lambda_3+t-1} (F_1F_21_\lambda - qF_2F_1 1_\lambda),\\ 
\ev(F_31_\lambda)&=&q^{-\lambda_1-\lambda_3-t+1} (E_2E_11_\lambda - q^{-1}E_1E_2 1_\lambda),
\end{eqnarray*}
for $\lambda=(\lambda_1,\lambda_2,\lambda_3)\in \mathbb{Z}^3$ (recall that $t\in \mathbb{Z}$ is arbitrary but fixed). For the remainder of this paper, we set $S(\lambda)=\lambda_1+\lambda_3+t-1$, and we suppress the $\lambda$ when there is no confusion. We also 
use the notation $\E_{i_1i_2\ldots i_k}\oneid_\lambda=\E_{i_1} \E_{i_2} \ldots \E_{i_k}\oneid_\lambda$ and $\F_{i_1i_2\ldots i_k}\oneid_\lambda=\F_{i_1} \F_{i_2} \ldots \F_{i_k}\oneid_\lambda$. 

\subsection{The definition of \texorpdfstring{$\Ev$}{E}} 
In the definition below, we underline the $1$-morphism in homological degree zero in each complex. 
\begin{itemize}[wide,labelindent=0pt,itemsep=5pt]
\item For objects, we define $\Ev (\lambda)=\lambda$, where $\lambda\in \mathbb{Z}^3$.
\item For 1-morphisms, we define the action of $\Ev $ on generating 1-morphisms and extend it to all 1-morphisms via composition and direct sums, using the standard composition of complexes. 
\begin{itemize}[\textopenbullet]
	\item $\Ev (\E_1 \oneid_\lambda)=\underline{\E_1 \oneid_\lambda}$,
$\Ev (\E_2 \oneid_\lambda)=\underline{\E_2 \oneid_\lambda}$,
\vskip1mm
 \item $\Ev (\F_1 \oneid_\lambda)=\underline{\F_1 \oneid_\lambda}$,
$\Ev (\F_2 \oneid_\lambda)=\underline{\F_2 \oneid_\lambda}$,
 \item $\Ev (\E_3 \oneid_\lambda)=
 \xymatrix{\underline{\F_{12}\oneid_\lambda}\langle S\rangle \ar[rr]^(0.45){\xy (0,0)*{
		\labellist
		\small\hair 2pt
		\pinlabel \scalebox{0.7}{$\textcolor{red}{2}$} at 0 -5
		\pinlabel \scalebox{0.7}{$\textcolor{blue}{1}$} at 16 -5
		\pinlabel \scalebox{0.9}{$\lambda$} at 16 7
		\endlabellist
		\centering
		\includegraphics[scale=1.3]{./RdlBdr}
	}\endxy} & & \F_{21}\oneid_\lambda\langle S+1\rangle}$,
\item $\Ev (\F_3 \oneid_\lambda)=
 \xymatrix{\E_{12}\oneid_\lambda\langle -S-1\rangle 
 \ar[rr]^(0.52){\xy (0,0)*{
 		\labellist
		\small\hair 2pt
		\pinlabel \scalebox{0.7}{$\textcolor{red}{2}$} at 0 -5
		\pinlabel \scalebox{0.7}{$\textcolor{blue}{1}$} at 16 -5
		\pinlabel \scalebox{0.9}{$\lambda$} at 16 7
		\endlabellist
		\centering
		\includegraphics[scale=1.3]{./RurBul}
	}\endxy} & & \underline{\E_{21}\oneid_\lambda}\langle -S\rangle}$.
\end{itemize} 
\eqskip
\item We set $\levl=\lambda_1+\lambda_2+\lambda_3$ and call it the {\em Schur level} of $\lambda$.
\end{itemize}

For compatibility with later proofs, we give the definition of $\Ev$ on downwards-pointing generating 2-morphisms (that is, the horizontally mirrored versions of the 2-morphisms in \autoref{rem:mingens}). For generating 2-morphisms consisting only of strands between $\E_1\oneid_\lambda$, $\E_2\oneid_\lambda$, $\F_1\oneid_\lambda$ and $\F_2\oneid_\lambda$, the $2$-functor $\Ev $ acts as the identity, with the following exceptions: 
\begin{equation}
\Ev\left(\xy (0,0)*{
				\labellist
				\small\hair 2pt
				\pinlabel \scalebox{0.7}{$\textcolor{blue}{1}$} at 16 -5
				\pinlabel \scalebox{0.7}{$\textcolor{red}{2}$} at 0 -5
               	\pinlabel \scalebox{0.9}{$\lambda$} at 18 8
				\endlabellist
				\centering
				\includegraphics[scale=1.3]{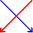}
		}\endxy\;\;\right) =(-1)^{\overline{\lambda}_1(\overline{\lambda}_2+1)}\xy (0,0)*{
				\labellist
				\small\hair 2pt
				\pinlabel \scalebox{0.7}{$\textcolor{blue}{1}$} at 16 -5
				\pinlabel \scalebox{0.7}{$\textcolor{red}{2}$} at 0 -5
               	\pinlabel \scalebox{0.9}{$\lambda$} at 18 8
				\endlabellist
				\centering
				\includegraphics[scale=1.3]{./RdrBdl}
		}\endxy
        \mspace{100mu}
        \Ev\left(\xy (0,0)*{
				\labellist
				\small\hair 2pt
				\pinlabel \scalebox{0.7}{$\textcolor{blue}{1}$} at 0 -5
				\pinlabel \scalebox{0.7}{$\textcolor{red}{2}$} at 16 -5
               	\pinlabel \scalebox{0.9}{$\lambda$} at 18 8
				\endlabellist
				\centering
				\includegraphics[scale=1.3]{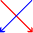}
		}\endxy \;\;\right)=(-1)^{\overline{\lambda}_1(\overline{\lambda}_2+1)}\xy (0,0)*{
				\labellist
				\small\hair 2pt
				\pinlabel \scalebox{0.7}{$\textcolor{blue}{1}$} at 0 -5
				\pinlabel \scalebox{0.7}{$\textcolor{red}{2}$} at 16 -5
               	\pinlabel \scalebox{0.9}{$\lambda$} at 18 8
				\endlabellist
				\centering
				\includegraphics[scale=1.3]{./BdrRdl}
		}\endxy
        \end{equation}
\eqskip
For the remaining generating $2$-morphisms, we define the images as follows, which we emphasise are commutative diagrams:
\begingroup
\allowdisplaybreaks
\begin{equation}
\Ev \left(\xy (0,0)*{
			\labellist
			\small\hair 2pt
			\pinlabel \scalebox{0.7}{$\textcolor{black}{3}$} at 1.5 -5
               	\pinlabel \scalebox{0.9}{$\lambda$} at 10 8
			\endlabellist
			\centering
			\includegraphics[scale=1.1]{./Kdo}
		}\endxy\;\;\right)
= 
  \xymatrix { \E_{12} \oneid_\lambda\langle -S+1\rangle \ar[rr]^-{\xy (0,0)*{
				\labellist
				\small\hair 2pt
				\pinlabel \scalebox{0.7}{$\textcolor{blue}{1}$} at 0 -5
				\pinlabel \scalebox{0.7}{$\textcolor{red}{2}$} at 16 -5
               	\pinlabel \scalebox{0.9}{$\lambda$} at 18 8
				\endlabellist
				\centering
				\includegraphics[scale=1.3]{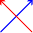}
		}\endxy} & & \underline{\E_{21} \oneid_\lambda}\langle -S+2\rangle \\ & & \\ \E_{12} \oneid_\lambda\langle -S-1\rangle \ar[uu]^{\xy (0,0)*{
		\labellist
		\small\hair 2pt
		\pinlabel \scalebox{0.7}{$\textcolor{blue}{1}$} at 1 -5
		\pinlabel \scalebox{0.7}{$\textcolor{red}{2}$} at 9 -5
       	\pinlabel \scalebox{0.9}{$\lambda$} at 14 8
		\endlabellist
		\centering
		\includegraphics[scale=1.3]{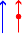}
	}\endxy} \ar[rr]_-{\xy (0,0)*{
		\labellist
		\small\hair 2pt
		\pinlabel \scalebox{0.7}{$\textcolor{blue}{1}$} at 0 -5
		\pinlabel \scalebox{0.7}{$\textcolor{red}{2}$} at 16 -5
       	\pinlabel \scalebox{0.9}{$\lambda$} at 18 8
            \endlabellist
		\centering
		\includegraphics[scale=1.3]{./BurRul}
}\endxy} 
		& & \underline{\E_{21} \oneid_\lambda}\langle -S\rangle \ar[uu]_{\xy (0,0)*{
				\labellist
				\small\hair 2pt
				\pinlabel \scalebox{0.7}{$\textcolor{blue}{1}$} at 9 -5
				\pinlabel \scalebox{0.7}{$\textcolor{red}{2}$} at 1 -5
               	\pinlabel \scalebox{0.9}{$\lambda$} at 14 8
				\endlabellist
				\centering
				\includegraphics[scale=1.3]{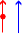}
		}\endxy}}
\end{equation}
\endgroup
\begingroup
\allowdisplaybreaks[0]
\begin{multline}
\Ev \bigl(\,\xy (0,0)*{
	\labellist
	\small\hair 2pt
	\pinlabel \scalebox{0.7}{$\textcolor{black}{3}$} at 0 -5
	\pinlabel \scalebox{0.7}{$\textcolor{black}{3}$} at 16 -5
        \pinlabel \scalebox{0.9}{$\lambda$} at 19 8
	\endlabellist
	\centering
	\includegraphics[scale=1]{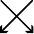}}\endxy\;\,\bigr) =\\[-1ex]
\scalebox{0.95}{ \xymatrix@C=3mm{ & &\E_{2112} \oneid_\lambda\langle -2S-3\rangle \ar[rrrd]^{\xy (0,0)*{
			\labellist
			\small\hair 2pt
			\pinlabel \scalebox{0.55}{$\textcolor{blue}{1}$} at 9 -5
			\pinlabel \scalebox{0.55}{$\textcolor{red}{2}$} at 1 -5
			\pinlabel \scalebox{0.55}{$\textcolor{red}{2}$} at 32 -5
			\pinlabel \scalebox{0.55}{$\textcolor{blue}{1}$} at 16 -5
                \pinlabel \scalebox{0.9}{$\lambda$} at 36 8   
			\endlabellist
			\centering
			\includegraphics[scale=1]{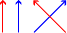}
	}\endxy}
	& & & \\
	\E_{1212} \oneid_\lambda\langle -2S-4\rangle \ar[rru]^{\hspace{-4ex}\xy (0,0)*{
			\labellist
			\small\hair 2pt
			\pinlabel \scalebox{0.55}{$\textcolor{blue}{1}$} at 0 -5
			\pinlabel \scalebox{0.55}{$\textcolor{red}{2}$} at 16 -5
			\pinlabel \scalebox{0.55}{$\textcolor{red}{2}$} at 31 -5
			\pinlabel \scalebox{0.55}{$\textcolor{blue}{1}$} at 23 -5
                \pinlabel \scalebox{0.9}{$\lambda$} at 36 8
                \endlabellist
			\centering
			\includegraphics[scale=1]{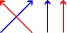}
	}\endxy}
	\ar[rrrd]^(0.35){\xy (0,0)*{
				\labellist
				\small\hair 2pt
                    \pinlabel \scalebox{0.8}{$-$} at -3 7    
				\pinlabel \scalebox{0.55}{$\textcolor{blue}{1}$} at 1 -5
				\pinlabel \scalebox{0.55}{$\textcolor{red}{2}$} at 9 -5
				\pinlabel \scalebox{0.55}{$\textcolor{red}{2}$} at 32 -5
				\pinlabel \scalebox{0.55}{$\textcolor{blue}{1}$} at 16 -5
                    \pinlabel \scalebox{0.9}{$\lambda$} at 36 8    
				\endlabellist
				\centering
				\includegraphics[scale=1]{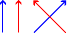}
		}\endxy}
	& & & & & \underline{\E_{2121} \oneid_\lambda}\langle -2S-2\rangle	\\
	& & & \E_{1221} \oneid_\lambda\langle -2S-3\rangle 
	\ar[rru]^(.42){\xy (0,0)*{
			\labellist
			\small\hair 2pt
			\pinlabel \scalebox{0.55}{$\textcolor{blue}{1}$} at 0 -5
			\pinlabel \scalebox{0.55}{$\textcolor{red}{2}$} at 16 -5
			\pinlabel \scalebox{0.55}{$\textcolor{red}{2}$} at 23 -5
			\pinlabel \scalebox{0.55}{$\textcolor{blue}{1}$} at 31 -5
                \pinlabel \scalebox{0.9}{$\lambda$} at 36 8   
			\endlabellist
			\centering
			\includegraphics[scale=1]{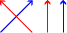}
	}\endxy}
	& &\\
	& & & & &\\
	& &\E_{2112} \oneid_\lambda\langle -2S-1\rangle \ar[rrrd]^(.7){\xy (0,0)*{
			\labellist
			\small\hair 2pt
			\pinlabel \scalebox{0.55}{$\textcolor{blue}{1}$} at 9 -5
			\pinlabel \scalebox{0.55}{$\textcolor{red}{2}$} at 1 -5
			\pinlabel \scalebox{0.55}{$\textcolor{red}{2}$} at 32 -5
			\pinlabel \scalebox{0.55}{$\textcolor{blue}{1}$} at 16 -5
                \pinlabel \scalebox{0.9}{$\lambda$} at 36 8   
			\endlabellist
			\centering
			\includegraphics[scale=1]{./RuBuBurRul}
	}\endxy}
	\ar[uuuu]^(.35){\xy (0,-0.5)*{
			\labellist
			\small\hair 2pt
			\pinlabel \scalebox{0.55}{$\textcolor{blue}{1}$} at 8 -5
			\pinlabel \scalebox{0.55}{$\textcolor{red}{2}$} at 1 -5
			\pinlabel \scalebox{0.55}{$\textcolor{red}{2}$} at 31 -5
			\pinlabel \scalebox{0.55}{$\textcolor{blue}{1}$} at 24 -5
                \pinlabel \scalebox{0.9}{$\lambda$} at 36 8   
			\endlabellist
			\centering
			\includegraphics[scale=1]{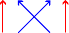}
	}\endxy\;}
	\ar[uur]^(.4){-\xy (0,-0.8)*{
		\labellist
		\small\hair 2pt
		\pinlabel \scalebox{0.55}{$\textcolor{blue}{1}$} at 8 -5
		\pinlabel \scalebox{0.55}{$\textcolor{red}{2}$} at 0 -5
		\pinlabel \scalebox{0.55}{$\textcolor{red}{2}$} at 32 -5
		\pinlabel \scalebox{0.55}{$\textcolor{blue}{1}$} at 24 -5
            \pinlabel \scalebox{0.9}{$\lambda$} at 36 8  
		\endlabellist
		\centering
		\includegraphics[scale=1]{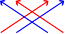}
}\endxy}
	& & & \\
	\E_{1212} \oneid_\lambda\langle -2S-2\rangle \ar[rru]^{\xy (0,0)*{
			\labellist
			\small\hair 2pt
			\pinlabel \scalebox{0.55}{$\textcolor{blue}{1}$} at 0 -5
			\pinlabel \scalebox{0.55}{$\textcolor{red}{2}$} at 16 -5
			\pinlabel \scalebox{0.55}{$\textcolor{red}{2}$} at 31 -5
			\pinlabel \scalebox{0.55}{$\textcolor{blue}{1}$} at 23 -5
                \pinlabel \scalebox{0.9}{$\lambda$} at 36 8   
			\endlabellist
			\centering
			\includegraphics[scale=1]{./BurRulBuRu}
	}\endxy}
	\ar[uuuu]^{\xy (0,0)*{
				\labellist
				\small\hair 2pt
				\pinlabel \scalebox{0.55}{$\textcolor{blue}{1}$} at 0 -5
				\pinlabel \scalebox{0.55}{$\textcolor{red}{2}$} at 8 -5
				\pinlabel \scalebox{0.55}{$\textcolor{red}{2}$} at 32 -5
				\pinlabel \scalebox{0.55}{$\textcolor{blue}{1}$} at 24 -5
                    \pinlabel \scalebox{0.9}{$\lambda$} at 36 8    
				\endlabellist
				\centering
				\includegraphics[scale=1]{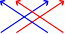}
		}\endxy\;\;}
	\ar[rrrd]_{-\xy (0,0)*{
			\labellist
			\small\hair 2pt
			\pinlabel \scalebox{0.55}{$\textcolor{blue}{1}$} at 1 -5
			\pinlabel \scalebox{0.55}{$\textcolor{red}{2}$} at 9 -5
			\pinlabel \scalebox{0.55}{$\textcolor{red}{2}$} at 32 -5
			\pinlabel \scalebox{0.55}{$\textcolor{blue}{1}$} at 16 -5
                \pinlabel \scalebox{0.9}{$\lambda$} at 36 8   
			\endlabellist
			\centering
			\includegraphics[scale=1]{./BuRuBurRul}
	}\endxy}
	& & & & & \underline{\E_{2121} \oneid_\lambda}\langle -2S\rangle	
	\ar[uuuu]_{\!\!-\xy (0,0)*{
			\labellist
			\small\hair 2pt
			\pinlabel \scalebox{0.55}{$\textcolor{blue}{1}$} at 8 -5
			\pinlabel \scalebox{0.55}{$\textcolor{red}{2}$} at 0 -5
			\pinlabel \scalebox{0.55}{$\textcolor{red}{2}$} at 24 -5
			\pinlabel \scalebox{0.55}{$\textcolor{blue}{1}$} at 32 -5
                \pinlabel \scalebox{0.9}{$\lambda$} at 34 8   
			\endlabellist
			\centering
			\includegraphics[scale=1]{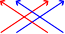}
	}\endxy}\\
	& & & \E_{1221} \oneid_\lambda\langle -2S-1\rangle 
	\ar[rru]_{\xy (0,0)*{
			\labellist
			\small\hair 2pt
			\pinlabel \scalebox{0.55}{$\textcolor{blue}{1}$} at 0 -5
			\pinlabel \scalebox{0.55}{$\textcolor{red}{2}$} at 16 -5
			\pinlabel \scalebox{0.55}{$\textcolor{red}{2}$} at 23 -5
			\pinlabel \scalebox{0.55}{$\textcolor{blue}{1}$} at 31 -5
                \pinlabel \scalebox{0.9}{$\lambda$} at 36 8   
			\endlabellist
			\centering
			\includegraphics[scale=1]{./BurRulRuBu}
	}\endxy}
	\ar[uuuu]_(.75){\hspace{-1ex}\xy (0,0)*{
				\labellist
				\small\hair 2pt
				\pinlabel \scalebox{0.55}{$\textcolor{blue}{1}$} at 1 -5
				\pinlabel \scalebox{0.55}{$\textcolor{red}{2}$} at 8 -5
				\pinlabel \scalebox{0.55}{$\textcolor{red}{2}$} at 24 -5
				\pinlabel \scalebox{0.55}{$\textcolor{blue}{1}$} at 31 -5
                    \pinlabel \scalebox{0.9}{$\lambda$} at 36 8    
				\endlabellist
				\centering
				\includegraphics[scale=1]{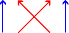}
		}\endxy}
	\ar@/_1pc/[uuuuuul]_(0.8){\hspace{-2ex}-\xy (0,-.8)*{
			\labellist
			\small\hair 2pt 
			\pinlabel \scalebox{0.55}{$\textcolor{blue}{1}$} at 0 -5
			\pinlabel \scalebox{0.55}{$\textcolor{red}{2}$} at 8 -5
			\pinlabel \scalebox{0.55}{$\textcolor{red}{2}$} at 24 -5
			\pinlabel \scalebox{0.55}{$\textcolor{blue}{1}$} at 32 -5
                \pinlabel \scalebox{0.9}{$\lambda$} at 36 8   
			\endlabellist
			\centering
			\includegraphics[scale=1]{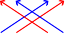}
	}\endxy}
	& &\\
}}
\end{multline}
\endgroup
%
\begingroup\allowdisplaybreaks
\begin{align}
\Ev \left(\xy (0,0)*{
		\labellist
		\small\hair 2pt
		\pinlabel \scalebox{0.7}{$\textcolor{black}{3}$} at 0 -5
		\pinlabel \scalebox{0.7}{$\textcolor{blue}{1}$} at 16 -5
       	\pinlabel \scalebox{0.9}{$\lambda$} at 18 9  
		\endlabellist
		\centering
		\includegraphics[scale=1]{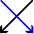}}\endxy\;\right) 
&=
  \xymatrix{\F_1\E_{12} \oneid_\lambda\langle -S\rangle \ar[rr]^-{\xy (0,0)*{
				\labellist
				\small\hair 2pt
				\pinlabel \scalebox{0.7}{$\textcolor{blue}{1}$} at 1 -5
				\pinlabel \scalebox{0.7}{$\textcolor{blue}{1}$} at 8 -5
				\pinlabel \scalebox{0.7}{$\textcolor{red}{2}$} at 24 -5
            	\pinlabel \scalebox{0.9}{$\lambda$} at 25 8
                    \endlabellist
				\centering
				\includegraphics[scale=1.3]{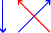}
		}\endxy} & & \underline{\F_1\E_{21} \oneid_\lambda}\langle -S+1\rangle \\
		& & \\
		\E_{12}\F_1 \oneid_\lambda\langle -S\rangle \ar[rr]_{-\xy (0,0)*{
				\labellist
				\small\hair 2pt
				\pinlabel \scalebox{0.7}{$\textcolor{blue}{1}$} at 0 -5
				\pinlabel \scalebox{0.7}{$\textcolor{blue}{1}$} at 23 -5
				\pinlabel \scalebox{0.7}{$\textcolor{red}{2}$} at 16 -5
               	\pinlabel \scalebox{0.9}{$\lambda$} at 27 8
				\endlabellist
				\centering
				\includegraphics[scale=1.3]{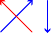}
		}\endxy} 
		\ar[uu]^{-\xy (0,-.2)*{
						\labellist
						\small\hair 2pt
						\pinlabel \scalebox{0.7}{$\textcolor{red}{2}$} at 12 -5
						\pinlabel \scalebox{0.7}{$\textcolor{blue}{1}$} at 0 -5
 						\pinlabel \scalebox{0.7}{$\textcolor{blue}{1}$} at 23 -5
                    	\pinlabel \scalebox{0.9}{$\lambda$} at 25 8
						\endlabellist
						\centering
                        \includegraphics[scale=1.3]{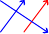}
				}\endxy
			}  & & \underline{\E_{21}\F_1 \oneid_\lambda}\langle -S+1\rangle 
		\ar[uu]_{\xy (0,-.2)*{
						\labellist
						\small\hair 2pt
						\pinlabel \scalebox{0.7}{$\textcolor{red}{2}$} at 0 -5
						\pinlabel \scalebox{0.7}{$\textcolor{blue}{1}$} at 12 -5
						\pinlabel \scalebox{0.7}{$\textcolor{blue}{1}$} at 23 -5
                    	\pinlabel \scalebox{0.9}{$\lambda$} at 25 8
						\endlabellist
						\centering
						\includegraphics[scale=1.3]{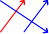}
				}\endxy
			}}
\\[1ex]
\Ev \left(\xy (0,0)*{
			\labellist
			\small\hair 2pt
			\pinlabel \scalebox{0.7}{$\textcolor{black}{3}$} at 16 -5
			\pinlabel \scalebox{0.7}{$\textcolor{blue}{1}$} at 0 -5
          	\pinlabel \scalebox{0.9}{$\lambda$} at 18 9
			\endlabellist
			\centering
			\includegraphics[scale=1]{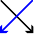}}\endxy\;\right)
&=
\xymatrix{\E_{12}\F_1 \oneid_\lambda\langle -S+1\rangle \ar[rr]^-{-\xy (0,0)*{
					\labellist
					\small\hair 2pt
					\pinlabel \scalebox{0.7}{$\textcolor{blue}{1}$} at 0 -5
					\pinlabel \scalebox{0.7}{$\textcolor{blue}{1}$} at 23 -5
					\pinlabel \scalebox{0.7}{$\textcolor{red}{2}$} at 16 -5
                 	\pinlabel \scalebox{0.9}{$\lambda$} at 27 8
					\endlabellist
					\centering
					\includegraphics[scale=1.3]{./BurRulBd}
			}\endxy} & & \underline{\E_{21}\F_1 \oneid_\lambda}\langle -S+2\rangle \\
			& & \\
			\F_1\E_{12} \oneid_\lambda\langle -S-1\rangle \ar[rr]_-{\xy (0,0)*{
					\labellist
					\small\hair 2pt
					\pinlabel \scalebox{0.7}{$\textcolor{blue}{1}$} at 1 -5
					\pinlabel \scalebox{0.7}{$\textcolor{blue}{1}$} at 8 -5
					\pinlabel \scalebox{0.7}{$\textcolor{red}{2}$} at 24 -5
                 	\pinlabel \scalebox{0.9}{$\lambda$} at 25 8
					\endlabellist
					\centering
					\includegraphics[scale=1.3]{./BdBurRul}
			}\endxy} 
			\ar[uu]^{-\xy (0,0)*{
					\labellist
					\small\hair 2pt
					\pinlabel \scalebox{0.7}{$\textcolor{red}{2}$} at 24 -5
					\pinlabel \scalebox{0.7}{$\textcolor{blue}{1}$} at 0 -5
					\pinlabel \scalebox{0.7}{$\textcolor{blue}{1}$} at 12 -5
                 	\pinlabel \scalebox{0.9}{$\lambda$} at 25 8
					\endlabellist
					\centering
					\includegraphics[scale=1.3]{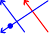}
			}\endxy
            \ \;\, + \xy (0,0)*{
					\labellist
					\small\hair 2pt
					\pinlabel \scalebox{0.7}{$\textcolor{red}{2}$} at 24 -5
					\pinlabel \scalebox{0.7}{$\textcolor{blue}{1}$} at 0 -5
					\pinlabel \scalebox{0.7}{$\textcolor{blue}{1}$} at 12 -5
                 	\pinlabel \scalebox{0.9}{$\lambda$} at 25 8
					\endlabellist
					\centering
					\includegraphics[scale=1.3]{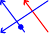}
			}\endxy}  & & \underline{\F_1\E_{21} \oneid_\lambda}\langle -S\rangle
			\ar[uu]_{\xy (0,0)*{
					\labellist
					\small\hair 2pt
					\pinlabel \scalebox{0.7}{$\textcolor{red}{2}$} at 12 -5
					\pinlabel \scalebox{0.7}{$\textcolor{blue}{1}$} at 0 -5
					\pinlabel \scalebox{0.7}{$\textcolor{blue}{1}$} at 23 -5
                 	\pinlabel \scalebox{0.9}{$\lambda$} at 25 8
					\endlabellist
					\centering
					\includegraphics[scale=1.3]{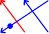}
				}\endxy
                \ \;\, - \xy (0,0)*{
					\labellist
					\small\hair 2pt
					\pinlabel \scalebox{0.7}{$\textcolor{red}{2}$} at 12 -5
					\pinlabel \scalebox{0.7}{$\textcolor{blue}{1}$} at 0 -5
					\pinlabel \scalebox{0.7}{$\textcolor{blue}{1}$} at 23 -5
                 	\pinlabel \scalebox{0.9}{$\lambda$} at 25 8
					\endlabellist
					\centering
					\includegraphics[scale=1.3]{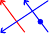}
				}\endxy
		}}
\\[1ex]
\Ev \left(\xy (0,0)*{
		\labellist
		\small\hair 2pt
		\pinlabel \scalebox{0.7}{$\textcolor{black}{3}$} at 16 -5
		\pinlabel \scalebox{0.7}{$\textcolor{red}{2}$} at 0 -5
       	\pinlabel \scalebox{0.9}{$\lambda$} at 18 9
		\endlabellist
		\centering
		\includegraphics[scale=1]{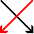}}\endxy\;\right)
  &=
  \xymatrix{\E_{12}\F_2 \oneid_\lambda\langle -S-1\rangle \ar[rr]^-{\xy (0,0)*{
				\labellist
				\small\hair 2pt
				\pinlabel \scalebox{0.7}{$\textcolor{blue}{1}$} at 0 -5
				\pinlabel \scalebox{0.7}{$\textcolor{red}{2}$} at 24 -5
				\pinlabel \scalebox{0.7}{$\textcolor{red}{2}$} at 16 -5
               	\pinlabel \scalebox{0.9}{$\lambda$} at 27 8
				\endlabellist
				\centering
				\includegraphics[scale=1.3]{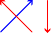}
		}\endxy} & & \underline{\E_{21}\F_2\oneid_\lambda}\langle -S\rangle \\
		& & \\
		\F_2\E_{12} \oneid_\lambda\langle -S-1\rangle \ar[rr]_-{\xy (0,0)*{
					\labellist
					\small\hair 2pt
					\pinlabel \scalebox{0.7}{$\textcolor{blue}{1}$} at 8 -5
					\pinlabel \scalebox{0.7}{$\textcolor{red}{2}$} at 0 -5
					\pinlabel \scalebox{0.7}{$\textcolor{red}{2}$} at 23 -5
                	\pinlabel \scalebox{0.9}{$\lambda$} at 25 8 
					\endlabellist
					\centering
					\includegraphics[scale=1.3]{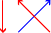}
			}\endxy} 
		\ar[uu]^{\xy (0,0)*{
					\labellist
					\small\hair 2pt
					\pinlabel \scalebox{0.7}{$\textcolor{red}{2}$} at 23 -5
					\pinlabel \scalebox{0.7}{$\textcolor{red}{2}$} at 0 -5
					\pinlabel \scalebox{0.7}{$\textcolor{blue}{1}$} at 12 -5
                 	\pinlabel \scalebox{0.9}{$\lambda$} at 25 8  
					\endlabellist
					\centering
					\includegraphics[scale=1.3]{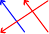}
			}\endxy} 
		& & \underline{\F_2\E_{21} \oneid_\lambda}\langle -S\rangle 
		\ar@<1ex>[uu]_{\xy (0,0)*{
					\labellist
					\small\hair 2pt
					\pinlabel \scalebox{0.7}{$\textcolor{red}{2}$} at 12 -5
					\pinlabel \scalebox{0.7}{$\textcolor{red}{2}$} at 0 -5
					\pinlabel \scalebox{0.7}{$\textcolor{blue}{1}$} at 23 -5
                 	\pinlabel \scalebox{0.9}{$\lambda$} at 25 8
					\endlabellist
					\centering
                    \includegraphics[scale=1.3]{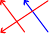}
			}\endxy
	}}
\\[1ex]
\Ev \left(\xy (0,0)*{
		\labellist
		\small\hair 2pt
		\pinlabel \scalebox{0.7}{$\textcolor{black}{3}$} at 0 -5
		\pinlabel \scalebox{0.7}{$\textcolor{red}{2}$} at 16 -5
      	\pinlabel \scalebox{0.9}{$\lambda$} at 18 9
		\endlabellist
		\centering
		\includegraphics[scale=1]{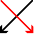}}\endxy\;\right)
  &=
  \xymatrix{\F_2\E_{12} \oneid_\lambda\langle -S\rangle\ar[rr]^-{\xy (0,0)*{
				\labellist
				\small\hair 2pt
				\pinlabel \scalebox{0.7}{$\textcolor{blue}{1}$} at 8 -5
				\pinlabel \scalebox{0.7}{$\textcolor{red}{2}$} at 24 -5
				\pinlabel \scalebox{0.7}{$\textcolor{red}{2}$} at 1 -5
               	\pinlabel \scalebox{0.9}{$\lambda$} at 25 8
				\endlabellist
				\centering
				\includegraphics[scale=1.3]{./RdBurRul}
		}\endxy} & & \underline{\F_2\E_{21} \oneid_\lambda}\langle -S+1\rangle \\
		& & \\
		\E_{12}\F_2 \oneid_\lambda\langle -S-2\rangle \ar[rr]_-{\xy (0,0)*{
				\labellist
				\small\hair 2pt
				\pinlabel \scalebox{0.7}{$\textcolor{blue}{1}$} at 0 -5
				\pinlabel \scalebox{0.7}{$\textcolor{red}{2}$} at 23 -5
				\pinlabel \scalebox{0.7}{$\textcolor{red}{2}$} at 16 -5
               	\pinlabel \scalebox{0.9}{$\lambda$} at 27 8 
				\endlabellist
				\centering
				\includegraphics[scale=1.3]{./BurRulRd}
		}\endxy} 
		\ar[uu]^{\xy (0,0)*{
					\labellist
					\small\hair 2pt
					\pinlabel \scalebox{0.7}{$\textcolor{red}{2}$} at 23 -5
					\pinlabel \scalebox{0.7}{$\textcolor{red}{2}$} at 12 -5
					\pinlabel \scalebox{0.7}{$\textcolor{blue}{1}$} at 0 -5
                 	\pinlabel \scalebox{0.9}{$\lambda$} at 25 8
					\endlabellist
					\centering
					\includegraphics[scale=1.3]{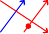}
			}\endxy
            \ \;\, -
            \xy (0,0)*{
					\labellist
					\small\hair 2pt
					\pinlabel \scalebox{0.7}{$\textcolor{red}{2}$} at 23 -5
					\pinlabel \scalebox{0.7}{$\textcolor{red}{2}$} at 12 -5
					\pinlabel \scalebox{0.7}{$\textcolor{blue}{1}$} at 0 -5
                 	\pinlabel \scalebox{0.9}{$\lambda$} at 25 8
					\endlabellist
					\centering
					\includegraphics[scale=1.3]{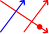}
			}\endxy
            }  & & \underline{\F_{12}\E_1 \oneid_\lambda}\langle -S-1\rangle
		\ar@<1ex>[uu]_{\xy (0,0)*{
				\labellist
				\small\hair 2pt
				\pinlabel \scalebox{0.7}{$\textcolor{red}{2}$} at 23 -5
				\pinlabel \scalebox{0.7}{$\textcolor{red}{2}$} at 0 -5
				\pinlabel \scalebox{0.7}{$\textcolor{blue}{1}$} at 12 -5
               	\pinlabel \scalebox{0.9}{$\lambda$} at 25 8
				\endlabellist
				\centering
				\includegraphics[scale=1.3]{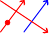}
	}\endxy
    \ \;\, - 
    \xy (0,0)*{
				\labellist
				\small\hair 2pt
				\pinlabel \scalebox{0.7}{$\textcolor{red}{2}$} at 23 -5
				\pinlabel \scalebox{0.7}{$\textcolor{red}{2}$} at 0 -5
				\pinlabel \scalebox{0.7}{$\textcolor{blue}{1}$} at 12 -5
               	\pinlabel \scalebox{0.9}{$\lambda$} at 25 8
				\endlabellist
				\centering
				\includegraphics[scale=1.3]{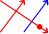}
	}\endxy
    }}
\end{align}
\endgroup

\labelmargin{5.5pt}

\begingroup
\allowdisplaybreaks[0]
\begin{multline}
\Ev \bigl(\xy (0,0)*{
	\labellist
	\small\hair 2pt
	\pinlabel \scalebox{0.7}{$\textcolor{black}{3}$} at 1 12
        \pinlabel \scalebox{0.9}{$\lambda$} at 16 0 
	\endlabellist
	\centering
	\includegraphics[scale=1.1]{./KlCup1}
}\endxy\,\bigr) =\\[-1ex]
\xymatrix@C=1.2mm{ 
	& & \underline{\F_{21}\E_{12} \oneid_\lambda}\langle 1-\overline{\lambda}_3\rangle 
	\ar[rrrd]^{\xy (0,6)*{
			\labellist
			\small\hair 2pt
			\pinlabel \scalebox{0.7}{$\textcolor{blue}{1}$} at 9 -5
			\pinlabel \scalebox{0.7}{$\textcolor{red}{2}$} at 1 -5
			\pinlabel \scalebox{0.7}{$\textcolor{red}{2}$} at 32 -5
			\pinlabel \scalebox{0.7}{$\textcolor{blue}{1}$} at 16 -5
                \pinlabel \scalebox{0.9}{$\lambda$} at 34 8
                \endlabellist
			\centering
			\includegraphics[scale=1.3]{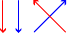}
	}\endxy}
	& & &\\ 
	\F_{12}\E_{12} \oneid_\lambda\langle -\overline{\lambda}_3\rangle 
	\ar[rrrd]_(.4){\xy (0,0)*{
			\labellist
			\small\hair 2pt
			\pinlabel \scalebox{0.7}{$-$} at -2 7
                \pinlabel \scalebox{0.7}{$\textcolor{blue}{1}$} at 1 -5
			\pinlabel \scalebox{0.7}{$\textcolor{red}{2}$} at 9 -5
			\pinlabel \scalebox{0.7}{$\textcolor{red}{2}$} at 32 -5
			\pinlabel \scalebox{0.7}{$\textcolor{blue}{1}$} at 16 -5
                \pinlabel \scalebox{0.9}{$\lambda$} at 34 8
			\endlabellist
			\centering
			\includegraphics[scale=1.3]{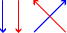}
		}\endxy\mspace{15mu}}
	\ar[rru]^{\xy (0,4)*{
			\labellist
			\small\hair 2pt
			\pinlabel \scalebox{0.7}{$\textcolor{blue}{1}$} at 0 -5
			\pinlabel \scalebox{0.7}{$\textcolor{red}{2}$} at 16 -5
			\pinlabel \scalebox{0.7}{$\textcolor{red}{2}$} at 31 -5
			\pinlabel \scalebox{0.7}{$\textcolor{blue}{1}$} at 23 -5
                \pinlabel \scalebox{0.9}{$\lambda$} at 36 8   
			\endlabellist
			\centering
			\includegraphics[scale=1.3]{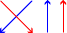}
	\mspace{40mu}}\endxy}
	 & & & & &\F_{21}\E_{21} \oneid_\lambda\langle 2-\overline{\lambda}_3\rangle\\
	& & &\underline{\F_{12}\E_{21} \oneid_\lambda}\langle 1-\overline{\lambda}_3\rangle
	\ar[rru]_{\xy (0,0)*{
			\labellist
			\small\hair 2pt
			\pinlabel \scalebox{0.7}{$\textcolor{blue}{1}$} at 0 -5
			\pinlabel \scalebox{0.7}{$\textcolor{red}{2}$} at 16 -5
			\pinlabel \scalebox{0.7}{$\textcolor{red}{2}$} at 23 -5
			\pinlabel \scalebox{0.7}{$\textcolor{blue}{1}$} at 31 -5
                \pinlabel \scalebox{0.9}{$\lambda$} at 36 8   
			\endlabellist
			\centering
			\includegraphics[scale=1.3]{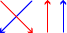}
	}\endxy}  & & \\
		& & & & & \\
	0   \ar[uuu] \ar[rr] & & \underline{\oneid_\lambda}\ar[rrr] \ar@/_1pc/[uur]_(.6){(-1)^{\lambda_3}\xy (0,0)*{
		\labellist
		\small\hair 2pt
		\pinlabel \scalebox{0.7}{$\textcolor{blue}{1}$} at 0 15
		\pinlabel \scalebox{0.7}{$\textcolor{red}{2}$} at 8 15
		\pinlabel \scalebox{0.7}{$\textcolor{red}{2}$} at 24 15
		\pinlabel \scalebox{0.7}{$\textcolor{blue}{1}$} at 32 15
            \pinlabel \scalebox{0.9}{$\lambda$} at 28 2  
		\endlabellist
		\centering
		\includegraphics[scale=1.3]{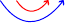}
}\endxy} 
	\ar@/^/[uuuu]^(.15){(-1)^{\lambda_3+1}\xy (0,0)*{
			\labellist
			\small\hair 2pt
			\pinlabel \scalebox{0.7}{$\textcolor{blue}{1}$} at 8 15
			\pinlabel \scalebox{0.7}{$\textcolor{red}{2}$} at 0 15
			\pinlabel \scalebox{0.7}{$\textcolor{red}{2}$} at 32 15
			\pinlabel \scalebox{0.7}{$\textcolor{blue}{1}$} at 24 15
                \pinlabel \scalebox{0.9}{$\lambda$} at 28 2   
			\endlabellist
			\centering
			\includegraphics[scale=1.3]{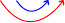}
	}\endxy}
& & & 0 \ar[uuu]}
\end{multline}
\endgroup

\begingroup
\allowdisplaybreaks[0]
\begin{multline}
\Ev \bigl(\xy (0,0)*{
	\labellist
	\small\hair 2pt
	\pinlabel \scalebox{0.7}{$\textcolor{black}{3}$} at 1 12
        \pinlabel \scalebox{0.9}{$\lambda$} at 16 0 
	\endlabellist
	\centering
	\includegraphics[scale=1.1]{./KrCup1}
}\endxy\,\bigr) =\\[-1ex]
\xymatrix@C=1.2mm{ 
	& & \underline{\E_{21}\F_{12} \oneid_\lambda}\langle 1+\overline{\lambda}_3\rangle
	\ar[rrrd]^{\xy (0,6)*{
			\labellist
			\small\hair 2pt
			\pinlabel \scalebox{0.9}{$-$} at -2 7   
			\pinlabel \scalebox{0.7}{$\textcolor{blue}{1}$} at 9 -5
			\pinlabel \scalebox{0.7}{$\textcolor{red}{2}$} at 1 -5
			\pinlabel \scalebox{0.7}{$\textcolor{red}{2}$} at 32 -5
			\pinlabel \scalebox{0.7}{$\textcolor{blue}{1}$} at 16 -5
                \pinlabel \scalebox{0.9}{$\lambda$} at 34 8   
			\endlabellist
			\centering
			\includegraphics[scale=1.3]{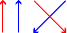}
	}\endxy}
	& & &\\ 
	\E_{12}\F_{12} \oneid_\lambda\langle \overline{\lambda}_3\rangle 
	\ar[rru]^{\xy (0,4)*{
			\labellist
			\small\hair 2pt
			\pinlabel \scalebox{0.7}{$\textcolor{blue}{1}$} at 0 -5
			\pinlabel \scalebox{0.7}{$\textcolor{red}{2}$} at 16 -5
			\pinlabel \scalebox{0.7}{$\textcolor{red}{2}$} at 31 -5
			\pinlabel \scalebox{0.7}{$\textcolor{blue}{1}$} at 23 -5
                \pinlabel \scalebox{0.9}{$\lambda$} at 36 8   
			\endlabellist
			\centering
			\includegraphics[scale=1.3]{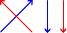}
		\mspace{40mu}}\endxy}
	\ar[rrrd]_(.4){\xy (0,0)*{
		\labellist
		\small\hair 2pt
		\pinlabel \scalebox{0.7}{$\textcolor{blue}{1}$} at 1 -5
		\pinlabel \scalebox{0.7}{$\textcolor{red}{2}$} at 9 -5
		\pinlabel \scalebox{0.7}{$\textcolor{red}{2}$} at 32 -5
		\pinlabel \scalebox{0.7}{$\textcolor{blue}{1}$} at 16 -5
                \pinlabel \scalebox{0.9}{$\lambda$} at 34 8  
		\endlabellist
		\centering
		\includegraphics[scale=1.3]{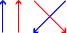}
	}\endxy\mspace{15mu}} & & & & &\E_{21}\F_{21} \oneid_\lambda\langle 2+\overline{\lambda}_3\rangle \ar[ddd]\\
	& & &\underline{\E_{12}\F_{21} \oneid_\lambda}\langle 1+\overline{\lambda}_3\rangle
	\ar[rru]_{\xy (0,0)*{
			\labellist
			\small\hair 2pt
			\pinlabel \scalebox{0.7}{$\textcolor{blue}{1}$} at 0 -5
			\pinlabel \scalebox{0.7}{$\textcolor{red}{2}$} at 16 -5
			\pinlabel \scalebox{0.7}{$\textcolor{red}{2}$} at 23 -5
			\pinlabel \scalebox{0.7}{$\textcolor{blue}{1}$} at 31 -5
                \pinlabel \scalebox{0.9}{$\lambda$} at 36 8   
			\endlabellist
			\centering
			\includegraphics[scale=1.3]{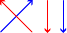}
		}\endxy}  & & \\
	& & & & & \\
	0 \ar[uuu] \ar[rr] & & \underline{\oneid_\lambda}
	\ar@/_1pc/[uur]_(0.65){-\xy (0,0)*{
			\labellist
			\small\hair 2pt
			\pinlabel \scalebox{0.7}{$\textcolor{blue}{1}$} at 0 15
			\pinlabel \scalebox{0.7}{$\textcolor{red}{2}$} at 8 15
			\pinlabel \scalebox{0.7}{$\textcolor{red}{2}$} at 24 15
			\pinlabel \scalebox{0.7}{$\textcolor{blue}{1}$} at 32 15
                \pinlabel \scalebox{0.9}{$\lambda$} at 30 2   
			\endlabellist
			\centering
			\includegraphics[scale=1.3]{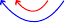}
		}\endxy} 
	\ar@/^/[uuuu]^(.2){\xy (0,0)*{
			\labellist
			\small\hair 2pt
			\pinlabel \scalebox{0.7}{$\textcolor{blue}{1}$} at 8 15
			\pinlabel \scalebox{0.7}{$\textcolor{red}{2}$} at 0 15
			\pinlabel \scalebox{0.7}{$\textcolor{red}{2}$} at 32 15
			\pinlabel \scalebox{0.7}{$\textcolor{blue}{1}$} at 24 15
                \pinlabel \scalebox{0.9}{$\lambda$} at 30 2   
			\endlabellist
			\centering
			\includegraphics[scale=1.3]{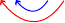}
		}\endxy}
	\ar[rrr] 
	& & & 0 \ar[uuu]}
\end{multline}
\endgroup

\begingroup
\allowdisplaybreaks[0]
\begin{multline}
\Ev \bigl(\xy (0,0)*{
	\labellist
	\small\hair 2pt
	\pinlabel \scalebox{0.7}{$\textcolor{black}{3}$} at 1 -5
   	\pinlabel \scalebox{0.9}{$\lambda$} at 14 10 
	\endlabellist
	\centering
	\includegraphics[scale=1.1]{./KrCap1}
}\endxy\bigr) =\\[-3.7ex]
\xymatrix@C=6.5mm{ 0 \ar[rrr] & & & \underline{\oneid_{\lambda}}\langle 1-\overline{\lambda}_3\rangle\ar[rr] & & 0 \\ & & & & & \\
 & & \underline{\F_{21}\E_{12} \oneid_\lambda} 
\ar[rrrd]^(.6){\xy (0,6)*{
		\labellist
		\small\hair 2pt
		\pinlabel \scalebox{0.7}{$\textcolor{blue}{1}$} at 9 -5
		\pinlabel \scalebox{0.7}{$\textcolor{red}{2}$} at 1 -5
		\pinlabel \scalebox{0.7}{$\textcolor{red}{2}$} at 32 -5
		\pinlabel \scalebox{0.7}{$\textcolor{blue}{1}$} at 16 -5
            \pinlabel \scalebox{0.9}{$\lambda$} at 34 8
            \endlabellist
		\centering
		\includegraphics[scale=1.3]{./RdBdBurRul}
}\endxy}
	\ar@/^1pc/[uur]^(.3){-\xy (0,0)*{
		\labellist
		\small\hair 2pt
		\pinlabel \scalebox{0.7}{$\textcolor{blue}{1}$} at 5 -5
		\pinlabel \scalebox{0.7}{$\textcolor{red}{2}$} at 0 -5
		\pinlabel \scalebox{0.7}{$\textcolor{red}{2}$} at 24 -5
		\pinlabel \scalebox{0.7}{$\textcolor{blue}{1}$} at 19 -5
            \pinlabel \scalebox{0.9}{$\lambda$} at 28 12  
		\endlabellist
		\centering
		\includegraphics[scale=1.3]{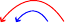}
	}\endxy} 
& & &\\ 
\F_{12}\E_{12} \oneid_\lambda\langle -1\rangle
\ar[rru]^{\xy (0,6)*{
		\labellist
		\small\hair 2pt
		\pinlabel \scalebox{0.7}{$\textcolor{blue}{1}$} at 0 -5
		\pinlabel \scalebox{0.7}{$\textcolor{red}{2}$} at 16 -5
		\pinlabel \scalebox{0.7}{$\textcolor{red}{2}$} at 31 -5
		\pinlabel \scalebox{0.7}{$\textcolor{blue}{1}$} at 23 -5
            \pinlabel \scalebox{0.9}{$\lambda$} at 36 8
            \endlabellist
		\centering
		\includegraphics[scale=1.3]{./BdrRdlBuRu}
}\endxy}
	\ar[uuu]
	\ar[rrrd]_{-\xy (0,0)*{
			\labellist
			\small\hair 2pt
			\pinlabel \scalebox{0.7}{$\textcolor{blue}{1}$} at 1 -5
			\pinlabel \scalebox{0.7}{$\textcolor{red}{2}$} at 9 -5
			\pinlabel \scalebox{0.7}{$\textcolor{red}{2}$} at 32 -5
			\pinlabel \scalebox{0.7}{$\textcolor{blue}{1}$} at 16 -5
                \pinlabel \scalebox{0.9}{$\lambda$} at 34 8
                \endlabellist
			\centering
			\includegraphics[scale=1.3]{./BdRdBurRul}
	}\endxy}
 & & & & &\F_{21}\E_{21} \oneid_\lambda\langle 1\rangle \ar[uuu]\\
			& & &\underline{\F_{12}\E_{21} \oneid_\lambda} 
	\ar@/^/[uuuu]_(.8){\xy (0,0)*{
\labellist
			\small\hair 2pt
			\pinlabel \scalebox{0.7}{$\textcolor{blue}{1}$} at 0 -5
			\pinlabel \scalebox{0.7}{$\textcolor{red}{2}$} at 8 -5
			\pinlabel \scalebox{0.7}{$\textcolor{red}{2}$} at 24 -5
			\pinlabel \scalebox{0.7}{$\textcolor{blue}{1}$} at 32 -5
          	\pinlabel \scalebox{0.9}{$\lambda$} at 28 12  
			\endlabellist
			\centering
			\includegraphics[scale=1.3]{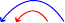}
	}\endxy\mspace{10mu}} 
\ar[rru]_{\xy (0,0)*{
		\labellist
		\small\hair 2pt
		\pinlabel \scalebox{0.7}{$\textcolor{blue}{1}$} at 0 -5
		\pinlabel \scalebox{0.7}{$\textcolor{red}{2}$} at 16 -5
		\pinlabel \scalebox{0.7}{$\textcolor{red}{2}$} at 23 -5
		\pinlabel \scalebox{0.7}{$\textcolor{blue}{1}$} at 31 -5
            \pinlabel \scalebox{0.9}{$\lambda$} at 36 8  
		\endlabellist
		\centering
		\includegraphics[scale=1.3]{./BdrRdlRuBu}
}\endxy}  & & }
\end{multline}
\endgroup

\begingroup
\allowdisplaybreaks[0]
\begin{multline}
\Ev \bigl(\xy (0,0)*{
	\labellist
	\small\hair 2pt
	\pinlabel \scalebox{0.7}{$\textcolor{black}{3}$} at 1 -5
        \pinlabel \scalebox{0.9}{$\lambda$} at 15 10  
	\endlabellist
	\centering
	\includegraphics[scale=1.1]{./KlCap1}
}\endxy\,\bigr) =\\[-3.7ex]
\xymatrix@C=6.5mm{ 0 \ar[rrr] & & & \underline{\oneid_{\lambda}}\langle 1+\overline{\lambda}_3\rangle \ar[rr] & & 0 \\ & & & & & \\
	& & \underline{\E_{21}\F_{12} \oneid_\lambda} 
	\ar[rrrd]^(.6){\xy (0,6)*{
			\labellist
			\small\hair 2pt
   			\pinlabel \scalebox{0.8}{$-$} at -2 7
			\pinlabel \scalebox{0.7}{$\textcolor{blue}{1}$} at 9 -5
			\pinlabel \scalebox{0.7}{$\textcolor{red}{2}$} at 1 -5
			\pinlabel \scalebox{0.7}{$\textcolor{red}{2}$} at 32 -5
			\pinlabel \scalebox{0.7}{$\textcolor{blue}{1}$} at 16 -5
                \pinlabel \scalebox{0.9}{$\lambda$} at 34 8    
			\endlabellist
			\centering
			\includegraphics[scale=1.3]{./RuBuBdrRdl}
	}\endxy}
	\ar@/^1pc/[uur]^(.3){(-1)^{\lambda_3+1}\xy (0,0)*{
			\labellist
			\small\hair 2pt
			\pinlabel \scalebox{0.7}{$\textcolor{blue}{1}$} at 8 -5
			\pinlabel \scalebox{0.7}{$\textcolor{red}{2}$} at 0 -5
			\pinlabel \scalebox{0.7}{$\textcolor{red}{2}$} at 32 -5
			\pinlabel \scalebox{0.7}{$\textcolor{blue}{1}$} at 24 -5
                \pinlabel \scalebox{0.9}{$\lambda$} at 24 12       
			\endlabellist
			\centering
			\includegraphics[scale=1.3]{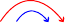}
	}\endxy}
	& & &\\ 
	\E_{12}\F_{12} \oneid_\lambda\langle -1\rangle
	\ar[rru]^{\xy (0,6)*{
			\labellist
			\small\hair 2pt
			\pinlabel \scalebox{0.7}{$\textcolor{blue}{1}$} at 0 -5
			\pinlabel \scalebox{0.7}{$\textcolor{red}{2}$} at 16 -5
			\pinlabel \scalebox{0.7}{$\textcolor{red}{2}$} at 31 -5
			\pinlabel \scalebox{0.7}{$\textcolor{blue}{1}$} at 23 -5
                \pinlabel \scalebox{0.9}{$\lambda$} at 36 8       
			\endlabellist
			\centering
			\includegraphics[scale=1.3]{./BurRulBdRd}
	}\endxy}
	\ar[uuu]
	\ar[rrrd]_{\xy (0,0)*{
			\labellist
			\small\hair 2pt
			\pinlabel \scalebox{0.7}{$\textcolor{blue}{1}$} at 1 -5
			\pinlabel \scalebox{0.7}{$\textcolor{red}{2}$} at 9 -5
			\pinlabel \scalebox{0.7}{$\textcolor{red}{2}$} at 32 -5
			\pinlabel \scalebox{0.7}{$\textcolor{blue}{1}$} at 16 -5
                \pinlabel \scalebox{0.9}{$\lambda$} at 34 8       
			\endlabellist
			\centering
			\includegraphics[scale=1.3]{./BuRuBdrRdl}
	}\endxy} & & & & &\E_{21}\F_{21} \oneid_\lambda\langle 1\rangle \ar[uuu]\\
	& & &\underline{\E_{12}\F_{21} \oneid_\lambda} 
	\ar@/^/[uuuu]_(.8){(-1)^{\lambda_3}\xy (0,0)*{
			\labellist
			\small\hair 2pt
			\pinlabel \scalebox{0.7}{$\textcolor{blue}{1}$} at 0 -5
			\pinlabel \scalebox{0.7}{$\textcolor{red}{2}$} at 8 -5
			\pinlabel \scalebox{0.7}{$\textcolor{red}{2}$} at 24 -5
			\pinlabel \scalebox{0.7}{$\textcolor{blue}{1}$} at 32 -5
                \pinlabel \scalebox{0.9}{$\lambda$} at 28 12    
			\endlabellist
			\centering
			\includegraphics[scale=1.3]{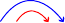}
	}\endxy\mspace{10mu}}
	\ar[rru]_{\xy (0,0)*{
			\labellist
			\small\hair 2pt
			\pinlabel \scalebox{0.7}{$\textcolor{blue}{1}$} at 0 -5
			\pinlabel \scalebox{0.7}{$\textcolor{red}{2}$} at 16 -5
			\pinlabel \scalebox{0.7}{$\textcolor{red}{2}$} at 23 -5
			\pinlabel \scalebox{0.7}{$\textcolor{blue}{1}$} at 31 -5
                \pinlabel \scalebox{0.9}{$\lambda$} at 36 8       
			\endlabellist
			\centering
			\includegraphics[scale=1.3]{./BurRulRdBd}
	}\endxy}  & & }
\end{multline}
\endgroup
\eqskip

The following theorem is the main result of the paper and we will prove in \autoref{sec:EvRelate} that it is a consequence of \autoref{thm:BigThmPrime} and \autoref{lem:EvRel}.

\begin{thm}\label{thm:BigThmMain}
    $\Ev$ is a well-defined 2-functor that decategorifies to $\ev$.
\end{thm}

\vskip0.2cm 
\subsection{The definition of \texorpdfstring{$\Ev'$}{Ev'}}\label{sec:well-def}

To define $\Ev'$, we first introduce some notation. For the rest of the paper, we let $\equiv$ denote $\equiv_4$, that is, congruence modulo 4. Then, we define \begin{equation}\label{eq:kdef} k_i^{a_1,\dots,a_n}(\lambda):=\begin{cases}
    1 & \overline{\lambda}_i \equiv a_1,\dots,a_n\\
    -1 & \text{otherwise}
\end{cases}\end{equation}

\noindent For example, $k_1^{0,1}(\lambda)=\begin{cases}
    1 & \overline{\lambda}_1 \equiv 0,1 \\
    -1 & \overline{\lambda}_1 \equiv 2,3
\end{cases}$. We will often omit the argument when it is $\lambda$. 

\begin{rem}\label{rem:kIdents}
 We have various relations involving the $k_i^{a_1,\dots,a_n}$ which we will be using at various points: 
\begin{gather*}
 k_i^0k_i^2 = (-1)^{\overline{\lambda}_i+1}, \mspace{50mu}   k_i^1k_i^3 = (-1)^{\overline{\lambda}_i} ,  
 \mspace{50mu} k_1^a(\lambda+\alpha_2) = k_1^{a+1}(\lambda),
\\
 k_1^a(s_1(\lambda)) = k_1^{-a}(\lambda), \mspace{50mu} k_1^a(s_2(\lambda))=k_3^{-a}(\lambda) .
\end{gather*}
\end{rem}

We will also be using the more compact notation for 2-morphisms between complexes found in \cite{abram2022categorification}, where they are presented as ordered tuples (most commonly ordered pairs).

We now define $\Ev'$ to be identical to $\Ev$ on objects, 1-morphisms and all generating 2-morphisms except for the following:
\begin{itemize}[wide,labelindent=0pt]
    \item $\Ev' \left(\xy (0,0)*{
		\labellist
		\small\hair 2pt
		\pinlabel \scalebox{0.7}{$\textcolor{black}{3}$} at 16 -5
		\pinlabel \scalebox{0.7}{$\textcolor{red}{2}$} at 0 -5
       	\pinlabel \scalebox{0.9}{$\lambda$} at 18 9
		\endlabellist
		\centering
		\includegraphics[scale=1]{./RdrKdl}}\endxy\;\right)=
        \left(-k_3^{0,3}k_1^{0,3}\xy (0,0)*{
					\labellist
					\small\hair 2pt
					\pinlabel \scalebox{0.7}{$\textcolor{red}{2}$} at 23 -5
					\pinlabel \scalebox{0.7}{$\textcolor{red}{2}$} at 0 -5
					\pinlabel \scalebox{0.7}{$\textcolor{blue}{1}$} at 12 -5
                 	\pinlabel \scalebox{0.9}{$\lambda$} at 25 8  
					\endlabellist
					\centering
					\includegraphics[scale=1.3]{./RdrrBulRul}
			}\endxy\;\;\;, -k_3^{0,3}k_1^{0,1}\xy (0,0)*{
					\labellist
					\small\hair 2pt
					\pinlabel \scalebox{0.7}{$\textcolor{red}{2}$} at 12 -5
					\pinlabel \scalebox{0.7}{$\textcolor{red}{2}$} at 0 -5
					\pinlabel \scalebox{0.7}{$\textcolor{blue}{1}$} at 23 -5
                 	\pinlabel \scalebox{0.9}{$\lambda$} at 25 8
					\endlabellist
					\centering
                    \includegraphics[scale=1.3]{./RdrrRulBul}
			}\endxy
	\;\;\right)$
\item[]
\item[]
    \item $\Ev' \left(\xy (0,0)*{
		\labellist
		\small\hair 2pt
		\pinlabel \scalebox{0.7}{$\textcolor{black}{3}$} at 0 -5
		\pinlabel \scalebox{0.7}{$\textcolor{red}{2}$} at 16 -5
      	\pinlabel \scalebox{0.9}{$\lambda$} at 18 9
		\endlabellist
		\centering
		\includegraphics[scale=1]{./KdrRdl}}\endxy\;\right) = \left(k_3^{0,3}k_1^{0,3}\left(\xy (0,0)*{
					\labellist
					\small\hair 2pt
					\pinlabel \scalebox{0.7}{$\textcolor{red}{2}$} at 23 -5
					\pinlabel \scalebox{0.7}{$\textcolor{red}{2}$} at 12 -5
					\pinlabel \scalebox{0.7}{$\textcolor{blue}{1}$} at 0 -5
                 	\pinlabel \scalebox{0.9}{$\lambda$} at 25 8
					\endlabellist
					\centering
					\includegraphics[scale=1.3]{./BurRurRodll}
			}\endxy
            \ \;\, -
            \xy (0,0)*{
					\labellist
					\small\hair 2pt
					\pinlabel \scalebox{0.7}{$\textcolor{red}{2}$} at 23 -5
					\pinlabel \scalebox{0.7}{$\textcolor{red}{2}$} at 12 -5
					\pinlabel \scalebox{0.7}{$\textcolor{blue}{1}$} at 0 -5
                 	\pinlabel \scalebox{0.9}{$\lambda$} at 25 8
					\endlabellist
					\centering
					\includegraphics[scale=1.3]{./BurRourRdll}
			}\endxy
            \;\;\right.\right), \left.k_3^{0,3}k_1^{0,1}\left(\xy (0,0)*{
				\labellist
				\small\hair 2pt
				\pinlabel \scalebox{0.7}{$\textcolor{red}{2}$} at 23 -5
				\pinlabel \scalebox{0.7}{$\textcolor{red}{2}$} at 0 -5
				\pinlabel \scalebox{0.7}{$\textcolor{blue}{1}$} at 12 -5
               	\pinlabel \scalebox{0.9}{$\lambda$} at 25 8
				\endlabellist
				\centering
				\includegraphics[scale=1.3]{./RurBurRodll}
	}\endxy\ \;\, - 
    \xy (0,0)*{
				\labellist
				\small\hair 2pt
				\pinlabel \scalebox{0.7}{$\textcolor{red}{2}$} at 23 -5
				\pinlabel \scalebox{0.7}{$\textcolor{red}{2}$} at 0 -5
				\pinlabel \scalebox{0.7}{$\textcolor{blue}{1}$} at 12 -5
               	\pinlabel \scalebox{0.9}{$\lambda$} at 25 8
				\endlabellist
				\centering
				\includegraphics[scale=1.3]{./RourBurRdll}
	}\endxy
    \;\;
    \right)
        \right)$
    \item[]
    \item[]
    \item $\Ev' \bigl(\xy (0,0)*{
	\labellist
	\small\hair 2pt
	\pinlabel \scalebox{0.7}{$\textcolor{black}{3}$} at 1 12
        \pinlabel \scalebox{0.9}{$\lambda$} at 16 0 
	\endlabellist
	\centering
	\includegraphics[scale=1.1]{./KlCup1}
}\endxy\,\bigr) =(-1)^{\lambda_3+1}\left(k_1^0\;\xy (0,0)*{
		\labellist
		\small\hair 2pt
		\pinlabel \scalebox{0.7}{$\textcolor{blue}{1}$} at 0 15
		\pinlabel \scalebox{0.7}{$\textcolor{red}{2}$} at 8 15
		\pinlabel \scalebox{0.7}{$\textcolor{red}{2}$} at 24 15
		\pinlabel \scalebox{0.7}{$\textcolor{blue}{1}$} at 32 15
            \pinlabel \scalebox{0.9}{$\lambda$} at 28 2  
		\endlabellist
		\centering
		\includegraphics[scale=1.3]{./BrCup3RrCup1}
}\endxy \;\; -k_1^1\;\xy (0,0)*{
			\labellist
			\small\hair 2pt
			\pinlabel \scalebox{0.7}{$\textcolor{blue}{1}$} at 8 15
			\pinlabel \scalebox{0.7}{$\textcolor{red}{2}$} at 0 15
			\pinlabel \scalebox{0.7}{$\textcolor{red}{2}$} at 32 15
			\pinlabel \scalebox{0.7}{$\textcolor{blue}{1}$} at 24 15
                \pinlabel \scalebox{0.9}{$\lambda$} at 28 2   
			\endlabellist
			\centering
			\includegraphics[scale=1.3]{./RrCup3BrCup1}
	}\endxy\right)$
    \item[]
    \item $\Ev' \bigl(\xy (0,0)*{
	\labellist
	\small\hair 2pt
	\pinlabel \scalebox{0.7}{$\textcolor{black}{3}$} at 1 12
        \pinlabel \scalebox{0.9}{$\lambda$} at 16 0 
	\endlabellist
	\centering
	\includegraphics[scale=1.1]{./KrCup1}
}\endxy\,\bigr) =k_1^0\;\xy (0,0)*{
			\labellist
			\small\hair 2pt
			\pinlabel \scalebox{0.7}{$\textcolor{blue}{1}$} at 0 15
			\pinlabel \scalebox{0.7}{$\textcolor{red}{2}$} at 8 15
			\pinlabel \scalebox{0.7}{$\textcolor{red}{2}$} at 24 15
			\pinlabel \scalebox{0.7}{$\textcolor{blue}{1}$} at 32 15
                \pinlabel \scalebox{0.9}{$\lambda$} at 30 2   
			\endlabellist
			\centering
			\includegraphics[scale=1.3]{./BlCup3RlCup1}
		}\endxy\;\; - k_1^3\;\xy (0,0)*{
			\labellist
			\small\hair 2pt
			\pinlabel \scalebox{0.7}{$\textcolor{blue}{1}$} at 8 15
			\pinlabel \scalebox{0.7}{$\textcolor{red}{2}$} at 0 15
			\pinlabel \scalebox{0.7}{$\textcolor{red}{2}$} at 32 15
			\pinlabel \scalebox{0.7}{$\textcolor{blue}{1}$} at 24 15
                \pinlabel \scalebox{0.9}{$\lambda$} at 30 2   
			\endlabellist
			\centering
			\includegraphics[scale=1.3]{./RlCup3BlCup1}
		}\endxy$
        \item[]
        \item $\Ev' \bigl(\xy (0,0)*{
	\labellist
	\small\hair 2pt
	\pinlabel \scalebox{0.7}{$\textcolor{black}{3}$} at 1 -5
   	\pinlabel \scalebox{0.9}{$\lambda$} at 14 10 
	\endlabellist
	\centering
	\includegraphics[scale=1.1]{./KrCap1}
}\endxy\bigr) = k_1^3\;\xy (0,0)*{
		\labellist
		\small\hair 2pt
		\pinlabel \scalebox{0.7}{$\textcolor{blue}{1}$} at 5 -5
		\pinlabel \scalebox{0.7}{$\textcolor{red}{2}$} at 0 -5
		\pinlabel \scalebox{0.7}{$\textcolor{red}{2}$} at 24 -5
		\pinlabel \scalebox{0.7}{$\textcolor{blue}{1}$} at 19 -5
            \pinlabel \scalebox{0.9}{$\lambda$} at 28 12  
		\endlabellist
		\centering
		\includegraphics[scale=1.3]{./RlCap3BlCap1Big}
	}\endxy \;\; - k_1^2\; \xy (0,0)*{
\labellist
			\small\hair 2pt
			\pinlabel \scalebox{0.7}{$\textcolor{blue}{1}$} at 0 -5
			\pinlabel \scalebox{0.7}{$\textcolor{red}{2}$} at 8 -5
			\pinlabel \scalebox{0.7}{$\textcolor{red}{2}$} at 24 -5
			\pinlabel \scalebox{0.7}{$\textcolor{blue}{1}$} at 32 -5
          	\pinlabel \scalebox{0.9}{$\lambda$} at 28 12  
			\endlabellist
			\centering
			\includegraphics[scale=1.3]{./BlCap3RlCap1Big}
	}\endxy$
    \item[]
    \item $\Ev' \bigl(\xy (0,0)*{
	\labellist
	\small\hair 2pt
	\pinlabel \scalebox{0.7}{$\textcolor{black}{3}$} at 1 -5
        \pinlabel \scalebox{0.9}{$\lambda$} at 15 10  
	\endlabellist
	\centering
	\includegraphics[scale=1.1]{./KlCap1}
}\endxy\,\bigr) = (-1)^{\lambda_3}\left(k_1^1\;\xy (0,0)*{
			\labellist
			\small\hair 2pt
			\pinlabel \scalebox{0.7}{$\textcolor{blue}{1}$} at 8 -5
			\pinlabel \scalebox{0.7}{$\textcolor{red}{2}$} at 0 -5
			\pinlabel \scalebox{0.7}{$\textcolor{red}{2}$} at 32 -5
			\pinlabel \scalebox{0.7}{$\textcolor{blue}{1}$} at 24 -5
                \pinlabel \scalebox{0.9}{$\lambda$} at 28 12       
			\endlabellist
			\centering
			\includegraphics[scale=1.3]{./RrCap3BrCap1}
	}\endxy\;\; -k_1^2\;\xy (0,0)*{
			\labellist
			\small\hair 2pt
			\pinlabel \scalebox{0.7}{$\textcolor{blue}{1}$} at 0 -5
			\pinlabel \scalebox{0.7}{$\textcolor{red}{2}$} at 8 -5
			\pinlabel \scalebox{0.7}{$\textcolor{red}{2}$} at 24 -5
			\pinlabel \scalebox{0.7}{$\textcolor{blue}{1}$} at 32 -5
                \pinlabel \scalebox{0.9}{$\lambda$} at 28 12    
			\endlabellist
			\centering
			\includegraphics[scale=1.3]{./BrCap3RrCap1}
	}\endxy\;\right)$
    \item[]
\end{itemize}

We will prove the following in \autoref{sec:BigThPrime}.
\begin{thm}\label{thm:BigThmPrime}
    $\Ev'$ is a well-defined 2-functor that decategorifies to $\ev$.
\end{thm}

\subsection{Relating \texorpdfstring{$\Ev$}{Ev} and \texorpdfstring{$\Ev'$}{Ev'}}\label{sec:EvRelate} 

We will now show that $\Ev$ can be given by composing $\Ev'$ with 2-isomorphisms. The first such 2-isomorphism is $\gamma$, which acts as the identity on objects, 1-morphisms, and generating 2-morphisms with the exception of:
\begin{align*}
\xy (0,0)*{
			\labellist
			\small\hair 2pt
			\pinlabel \scalebox{0.7}{$\textcolor{blue}{1}$} at 1 -5
			\pinlabel \scalebox{0.9}{$\lambda$} at 18 8
			\endlabellist 
			\centering 
			\includegraphics[scale=1.3]{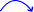}
	}\endxy\;\;
    &\overset{\gamma}{\mapsto}
    -k_1^0\;
     \xy (0,0)*{
			\labellist
			\small\hair 2pt
			\pinlabel \scalebox{0.7}{$\textcolor{blue}{1}$} at 1 -5
			\pinlabel \scalebox{0.9}{$\lambda$} at 18 8
			\endlabellist 
			\centering 
			\includegraphics[scale=1.3]{./BrCap1}
	}\endxy
    &
\xy (0,0)*{
			\labellist
			\small\hair 2pt
			\pinlabel \scalebox{0.7}{$\textcolor{blue}{1}$} at 1 9
			\pinlabel \scalebox{0.9}{$\lambda$} at 18 2
			\endlabellist 
			\centering 
			\includegraphics[scale=1.3]{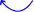}
	}\endxy\;\;
    &\overset{\gamma}{\mapsto}
    -k_1^0\; \xy (0,0)*{
			\labellist
			\small\hair 2pt
			\pinlabel \scalebox{0.7}{$\textcolor{blue}{1}$} at 1 9
			\pinlabel \scalebox{0.9}{$\lambda$} at 18 2
			\endlabellist 
			\centering 
			\includegraphics[scale=1.3]{./BlCup1}
	}\endxy
   \\[1ex]
   \xy (0,0)*{
			\labellist
			\small\hair 2pt
			\pinlabel \scalebox{0.7}{$\textcolor{blue}{1}$} at 1 -5
			\pinlabel \scalebox{0.9}{$\lambda$} at 18 8
			\endlabellist 
			\centering 
			\includegraphics[scale=1.3]{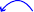}
	}\endxy\;\;
    &\overset{\gamma}{\mapsto}
    -k_1^2 \; \xy (0,0)*{
			\labellist
			\small\hair 2pt
			\pinlabel \scalebox{0.7}{$\textcolor{blue}{1}$} at 1 -5
			\pinlabel \scalebox{0.9}{$\lambda$} at 18 8
			\endlabellist 
			\centering 
			\includegraphics[scale=1.3]{./BlCap1}
	}\endxy
 &
    \xy (0,0)*{
			\labellist
			\small\hair 2pt
			\pinlabel \scalebox{0.7}{$\textcolor{blue}{1}$} at 1 9
			\pinlabel \scalebox{0.9}{$\lambda$} at 18 2
			\endlabellist 
			\centering 
			\includegraphics[scale=1.3]{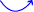}
	}\endxy\;\;&\overset{\gamma}{\mapsto}
    -k_1^2 \; \xy (0,0)*{
			\labellist
			\small\hair 2pt
			\pinlabel \scalebox{0.7}{$\textcolor{blue}{1}$} at 1 9
			\pinlabel \scalebox{0.9}{$\lambda$} at 18 2
			\endlabellist 
			\centering 
			\includegraphics[scale=1.3]{./BrCup1}
	}\endxy
\end{align*}
We abuse notation by also using $\gamma$ to refer to the 2-isomorphism of $K^b(\naffu{3})$ that acts in the same fashion. Similarly to $\beta$, it is straightforward to see that $\gamma$ preserves KM1 through KM9, and is therefore a (pair of) well-defined 2-isomorphism(s).

The second 2-isomorphism is $\delta$, which is again the identity on all objects, 1-morphisms and generating 2-morphisms except:\begin{align*}
    \xy (0,1)*{
		\labellist
		\small\hair 2pt
		\pinlabel \scalebox{0.7}{$\textcolor{black}{3}$} at 0 -5
		\pinlabel \scalebox{0.7}{$\textcolor{red}{2}$} at 16 -5
      	\pinlabel \scalebox{0.9}{$\lambda$} at 18 9
		\endlabellist
		\centering
		\includegraphics[scale=1]{./KdrRdl}}\endxy\;\; \overset{\delta}{\mapsto} -k_3^{0,3}k_1^{0,3}\;\xy (0,0)*{
		\labellist
		\small\hair 2pt
		\pinlabel \scalebox{0.7}{$\textcolor{black}{3}$} at 0 -5
		\pinlabel \scalebox{0.7}{$\textcolor{red}{2}$} at 16 -5
      	\pinlabel \scalebox{0.9}{$\lambda$} at 18 9
		\endlabellist
		\centering
		\includegraphics[scale=1]{./KdrRdl}}\endxy\qquad,\;\;\; &
        \xy (0,1)*{
		\labellist
		\small\hair 2pt
		\pinlabel \scalebox{0.7}{$\textcolor{black}{3}$} at 16 -5
		\pinlabel \scalebox{0.7}{$\textcolor{red}{2}$} at 0 -5
      	\pinlabel \scalebox{0.9}{$\lambda$} at 18 9
		\endlabellist
		\centering
		\includegraphics[scale=1]{./RdrKdl}}\endxy\;\; \overset{\delta}{\mapsto} -k_3^{0,3}k_1^{0,3}\; \xy (0,0)*{
		\labellist
		\small\hair 2pt
		\pinlabel \scalebox{0.7}{$\textcolor{black}{3}$} at 16 -5
		\pinlabel \scalebox{0.7}{$\textcolor{red}{2}$} at 0 -5
      	\pinlabel \scalebox{0.9}{$\lambda$} at 18 9
		\endlabellist
		\centering
		\includegraphics[scale=1]{./RdrKdl}}\endxy
\end{align*}
\eqskip
It is again an easy calculation that $\delta$ preserves KM1 through KM9 and is therefore a well-defined 2-isomorphism.
\begin{lem}\label{lem:EvRel} $\Ev=\gamma\Ev'\delta\gamma$.\end{lem}
\begin{proof} Note that, outside of the generating 2-morphisms that $\Ev'$ sends to a 2-morphism of complexes with a dependency on $\overline{\lambda}_i$ modulo $4$, $\Ev$ and $\gamma\Ev'\delta\gamma$ agree with $\Ev'$. In particular, we have e.g. $$\gamma\Ev'\delta\gamma\left(\xy (0,0)*{
			\labellist
			\small\hair 2pt
			\pinlabel \scalebox{0.7}{$\textcolor{blue}{1}$} at 1 -5
			\pinlabel \scalebox{0.9}{$\lambda$} at 18 8
			\endlabellist 
			\centering 
			\includegraphics[scale=1.3]{./BlCap1}
	}\endxy\;\;\right) = \gamma\Ev'\left( -k_1^2\;\xy (0,0)*{
			\labellist
			\small\hair 2pt
			\pinlabel \scalebox{0.7}{$\textcolor{blue}{1}$} at 1 -5
			\pinlabel \scalebox{0.9}{$\lambda$} at 18 8
			\endlabellist 
			\centering 
			\includegraphics[scale=1.3]{./BlCap1}
	}\endxy\;\;\right) = (k_1^2)^2 \xy (0,0)*{
			\labellist
			\small\hair 2pt
			\pinlabel \scalebox{0.7}{$\textcolor{blue}{1}$} at 1 -5
			\pinlabel \scalebox{0.9}{$\lambda$} at 18 8
			\endlabellist 
			\centering 
			\includegraphics[scale=1.3]{./BlCap1}
	}\endxy \mspace{8mu}= \xy (0,0)*{
			\labellist
			\small\hair 2pt
			\pinlabel \scalebox{0.7}{$\textcolor{blue}{1}$} at 1 -5
			\pinlabel \scalebox{0.9}{$\lambda$} at 18 8
			\endlabellist 
			\centering 
			\includegraphics[scale=1.3]{./BlCap1}
	}\endxy \mspace{8mu}= \Ev'\left(\xy (0,0)*{
			\labellist
			\small\hair 2pt
			\pinlabel \scalebox{0.7}{$\textcolor{blue}{1}$} at 1 -5
			\pinlabel \scalebox{0.9}{$\lambda$} at 18 8
			\endlabellist 
			\centering 
			\includegraphics[scale=1.3]{./BlCap1}
	}\endxy\;\;\right)$$

    For the 2-morphisms that differ we have:
\begingroup\allowdisplaybreaks
        \begin{align}
        \gamma\Ev'\delta\gamma\left(\xy (0,0)*{
			\labellist
			\small\hair 2pt
			\pinlabel \scalebox{0.7}{$\textcolor{black}{3}$} at 1 -5
			\pinlabel \scalebox{0.9}{$\lambda$} at 16 8
			\endlabellist 
			\centering 
			\includegraphics[scale=1.3]{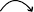}
	}\endxy\;
        \right) & = \gamma\left(k_1^3(\lambda)\xy (0,0)*{
		\labellist
		\small\hair 2pt
		\pinlabel \scalebox{0.7}{$\textcolor{blue}{1}$} at 8 -5
		\pinlabel \scalebox{0.7}{$\textcolor{red}{2}$} at 0 -5
		\pinlabel \scalebox{0.7}{$\textcolor{red}{2}$} at 32 -5
		\pinlabel \scalebox{0.7}{$\textcolor{blue}{1}$} at 24 -5
            \pinlabel \scalebox{0.9}{$\lambda$} at 28 12  
		\endlabellist
		\centering
		\includegraphics[scale=1.3]{./RlCap3BlCap1Big}
	}\endxy - k_1^2(\lambda)\xy (0,0)*{
		\labellist
		\small\hair 2pt
		\pinlabel \scalebox{0.7}{$\textcolor{blue}{1}$} at 0 -5
		\pinlabel \scalebox{0.7}{$\textcolor{red}{2}$} at 8 -5
		\pinlabel \scalebox{0.7}{$\textcolor{red}{2}$} at 24 -5
		\pinlabel \scalebox{0.7}{$\textcolor{blue}{1}$} at 32 -5
            \pinlabel \scalebox{0.9}{$\lambda$} at 28 12  
		\endlabellist
		\centering
		\includegraphics[scale=1.3]{./BlCap3RlCap1Big}
	}\endxy\;
        \right)\\ \nn
        & \\ \nn
        & = -k_1^3(\lambda) k_1^2(\lambda+\alpha_2)\xy (0,0)*{
		\labellist
		\small\hair 2pt
		\pinlabel \scalebox{0.7}{$\textcolor{blue}{1}$} at 8 -5
		\pinlabel \scalebox{0.7}{$\textcolor{red}{2}$} at 0 -5
		\pinlabel \scalebox{0.7}{$\textcolor{red}{2}$} at 32 -5
		\pinlabel \scalebox{0.7}{$\textcolor{blue}{1}$} at 24 -5
            \pinlabel \scalebox{0.9}{$\lambda$} at 28 12  
		\endlabellist
		\centering
		\includegraphics[scale=1.3]{./RlCap3BlCap1Big}
	}\endxy + k_1^2(\lambda)^2 \xy (0,0)*{
		\labellist
		\small\hair 2pt
		\pinlabel \scalebox{0.7}{$\textcolor{blue}{1}$} at 0 -5
		\pinlabel \scalebox{0.7}{$\textcolor{red}{2}$} at 8 -5
		\pinlabel \scalebox{0.7}{$\textcolor{red}{2}$} at 24 -5
		\pinlabel \scalebox{0.7}{$\textcolor{blue}{1}$} at 32 -5
            \pinlabel \scalebox{0.9}{$\lambda$} at 28 12  
		\endlabellist
		\centering
		\includegraphics[scale=1.3]{./BlCap3RlCap1Big}
	}\endxy \\ \nn
   &  \\ \nn
 & = -k_1^3(\lambda)^2 \xy (0,0)*{
		\labellist
		\small\hair 2pt
		\pinlabel \scalebox{0.7}{$\textcolor{blue}{1}$} at 8 -5
		\pinlabel \scalebox{0.7}{$\textcolor{red}{2}$} at 0 -5
		\pinlabel \scalebox{0.7}{$\textcolor{red}{2}$} at 32 -5
		\pinlabel \scalebox{0.7}{$\textcolor{blue}{1}$} at 24 -5
            \pinlabel \scalebox{0.9}{$\lambda$} at 28 12  
		\endlabellist
		\centering
		\includegraphics[scale=1.3]{./RlCap3BlCap1Big}
	}\endxy + k_1^2(\lambda)^2 \xy (0,0)*{
		\labellist
		\small\hair 2pt
		\pinlabel \scalebox{0.7}{$\textcolor{blue}{1}$} at 0 -5
		\pinlabel \scalebox{0.7}{$\textcolor{red}{2}$} at 8 -5
		\pinlabel \scalebox{0.7}{$\textcolor{red}{2}$} at 24 -5
		\pinlabel \scalebox{0.7}{$\textcolor{blue}{1}$} at 32 -5
            \pinlabel \scalebox{0.9}{$\lambda$} at 28 12  
		\endlabellist
		\centering
		\includegraphics[scale=1.3]{./BlCap3RlCap1Big}
	}\endxy \\ \nn
     & \\ \nn
     & = -\xy (0,0)*{
		\labellist
		\small\hair 2pt
		\pinlabel \scalebox{0.7}{$\textcolor{blue}{1}$} at 8 -5
		\pinlabel \scalebox{0.7}{$\textcolor{red}{2}$} at 0 -5
		\pinlabel \scalebox{0.7}{$\textcolor{red}{2}$} at 32 -5
		\pinlabel \scalebox{0.7}{$\textcolor{blue}{1}$} at 24 -5
            \pinlabel \scalebox{0.9}{$\lambda$} at 28 12  
		\endlabellist
		\centering
		\includegraphics[scale=1.3]{./RlCap3BlCap1Big}
	}\endxy + \xy (0,0)*{
		\labellist
		\small\hair 2pt
		\pinlabel \scalebox{0.7}{$\textcolor{blue}{1}$} at 0 -5
		\pinlabel \scalebox{0.7}{$\textcolor{red}{2}$} at 8 -5
		\pinlabel \scalebox{0.7}{$\textcolor{red}{2}$} at 24 -5
		\pinlabel \scalebox{0.7}{$\textcolor{blue}{1}$} at 32 -5
            \pinlabel \scalebox{0.9}{$\lambda$} at 28 12  
		\endlabellist
		\centering
		\includegraphics[scale=1.3]{./BlCap3RlCap1Big}
	}\endxy\\ \nn 
    & \\ \nn
    & = \Ev\left(\xy (0,0)*{
			\labellist
			\small\hair 2pt
			\pinlabel \scalebox{0.7}{$\textcolor{black}{3}$} at 1 -5
			\pinlabel \scalebox{0.9}{$\lambda$} at 16 8
			\endlabellist 
			\centering 
			\includegraphics[scale=1.3]{./KrCap1Thin}
	}\endxy\;\;\right)
       \\
       \gamma\Ev'\delta\gamma\left(\xy (0,0)*{
			\labellist
			\small\hair 2pt
			\pinlabel \scalebox{0.7}{$\textcolor{black}{3}$} at 1 -5
			\pinlabel \scalebox{0.9}{$\lambda$} at 16 8
			\endlabellist 
			\centering 
			\includegraphics[scale=1.3]{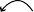}
	}\endxy\;\;
        \right) & = \gamma\left((-1)^{\lambda_3}\left(k_1^1(\lambda)\xy (0,0)*{
		\labellist
		\small\hair 2pt
		\pinlabel \scalebox{0.7}{$\textcolor{blue}{1}$} at 8 -5
		\pinlabel \scalebox{0.7}{$\textcolor{red}{2}$} at 0 -5
		\pinlabel \scalebox{0.7}{$\textcolor{red}{2}$} at 32 -5
		\pinlabel \scalebox{0.7}{$\textcolor{blue}{1}$} at 24 -5
            \pinlabel \scalebox{0.9}{$\lambda$} at 28 12  
		\endlabellist
		\centering
		\includegraphics[scale=1.3]{./RrCap3BrCap1}
	}\endxy -k_1^2(\lambda)\xy (0,0)*{
		\labellist
		\small\hair 2pt
		\pinlabel \scalebox{0.7}{$\textcolor{blue}{1}$} at 0 -5
		\pinlabel \scalebox{0.7}{$\textcolor{red}{2}$} at 8 -5
		\pinlabel \scalebox{0.7}{$\textcolor{red}{2}$} at 24 -5
		\pinlabel \scalebox{0.7}{$\textcolor{blue}{1}$} at 32 -5
            \pinlabel \scalebox{0.9}{$\lambda$} at 28 12  
		\endlabellist
		\centering
		\includegraphics[scale=1.3]{./BrCap3RrCap1}
	}\endxy\right)
        \right)\\ \nn
        & \\ \nn
        & = (-1)^{\lambda_3}\gamma\left(k_1^2(\lambda-\alpha_2)\xy (0,0)*{
		\labellist
		\small\hair 2pt
		\pinlabel \scalebox{0.7}{$\textcolor{blue}{1}$} at 8 -5
		\pinlabel \scalebox{0.7}{$\textcolor{red}{2}$} at 0 -5
		\pinlabel \scalebox{0.7}{$\textcolor{red}{2}$} at 32 -5
		\pinlabel \scalebox{0.7}{$\textcolor{blue}{1}$} at 24 -5
            \pinlabel \scalebox{0.9}{$\lambda$} at 28 12  
		\endlabellist
		\centering
		\includegraphics[scale=1.3]{./RrCap3BrCap1}
	}\endxy - k_1^2(\lambda)\xy (0,0)*{
		\labellist
		\small\hair 2pt
		\pinlabel \scalebox{0.7}{$\textcolor{blue}{1}$} at 0 -5
		\pinlabel \scalebox{0.7}{$\textcolor{red}{2}$} at 8 -5
		\pinlabel \scalebox{0.7}{$\textcolor{red}{2}$} at 24 -5
		\pinlabel \scalebox{0.7}{$\textcolor{blue}{1}$} at 32 -5
            \pinlabel \scalebox{0.9}{$\lambda$} at 28 12  
		\endlabellist
		\centering
		\includegraphics[scale=1.3]{./BrCap3RrCap1}
	}\endxy
        \right)\\ \nn
        & \\ \nn
        & = (-1)^{\lambda_3+1}\left(k_1^2(\lambda-\alpha_2)^2\xy (0,0)*{
		\labellist
		\small\hair 2pt
		\pinlabel \scalebox{0.7}{$\textcolor{blue}{1}$} at 8 -5
		\pinlabel \scalebox{0.7}{$\textcolor{red}{2}$} at 0 -5
		\pinlabel \scalebox{0.7}{$\textcolor{red}{2}$} at 32 -5
		\pinlabel \scalebox{0.7}{$\textcolor{blue}{1}$} at 24 -5
            \pinlabel \scalebox{0.9}{$\lambda$} at 28 12  
		\endlabellist
		\centering
		\includegraphics[scale=1.3]{./RrCap3BrCap1}
	}\endxy - k_1^2(\lambda)^2 \xy (0,0)*{
		\labellist
		\small\hair 2pt
		\pinlabel \scalebox{0.7}{$\textcolor{blue}{1}$} at 0 -5
		\pinlabel \scalebox{0.7}{$\textcolor{red}{2}$} at 8 -5
		\pinlabel \scalebox{0.7}{$\textcolor{red}{2}$} at 24 -5
		\pinlabel \scalebox{0.7}{$\textcolor{blue}{1}$} at 32 -5
            \pinlabel \scalebox{0.9}{$\lambda$} at 28 12  
		\endlabellist
		\centering
		\includegraphics[scale=1.3]{./BrCap3RrCap1}
	}\endxy \right)\\ \nn
     & \\ \nn
     & = (-1)^{\lambda_3+1}\left(\xy (0,0)*{
		\labellist
		\small\hair 2pt
		\pinlabel \scalebox{0.7}{$\textcolor{blue}{1}$} at 8 -5
		\pinlabel \scalebox{0.7}{$\textcolor{red}{2}$} at 0 -5
		\pinlabel \scalebox{0.7}{$\textcolor{red}{2}$} at 32 -5
		\pinlabel \scalebox{0.7}{$\textcolor{blue}{1}$} at 24 -5
            \pinlabel \scalebox{0.9}{$\lambda$} at 28 12  
		\endlabellist
		\centering
		\includegraphics[scale=1.3]{./RrCap3BrCap1}
	}\endxy - \xy (0,0)*{
		\labellist
		\small\hair 2pt
		\pinlabel \scalebox{0.7}{$\textcolor{blue}{1}$} at 0 -5
		\pinlabel \scalebox{0.7}{$\textcolor{red}{2}$} at 8 -5
		\pinlabel \scalebox{0.7}{$\textcolor{red}{2}$} at 24 -5
		\pinlabel \scalebox{0.7}{$\textcolor{blue}{1}$} at 32 -5
            \pinlabel \scalebox{0.9}{$\lambda$} at 28 12  
		\endlabellist
		\centering
		\includegraphics[scale=1.3]{./BrCap3RrCap1}
	}\endxy\right)\\ \nn
    & \\ \nn
    & = \Ev\left(\xy (0,0)*{
			\labellist
			\small\hair 2pt
			\pinlabel \scalebox{0.7}{$\textcolor{black}{3}$} at 1 -5
			\pinlabel \scalebox{0.9}{$\lambda$} at 16 8
			\endlabellist 
			\centering 
			\includegraphics[scale=1.3]{./KlCap1Thin}
	}\endxy\;\;\right)
   \\[1ex]
   \gamma\Ev'\delta\gamma \bigl(\xy (0,0)*{
	\labellist
	\small\hair 2pt
	\pinlabel \scalebox{0.7}{$\textcolor{black}{3}$} at 1 12
        \pinlabel \scalebox{0.9}{$\lambda$} at 16 0 
	\endlabellist
	\centering
	\includegraphics[scale=1.1]{./KrCup1}
}\endxy\,\bigr) & =\gamma\left(k_1^0(\lambda)\xy (0,0)*{
			\labellist
			\small\hair 2pt
			\pinlabel \scalebox{0.7}{$\textcolor{blue}{1}$} at 0 15
			\pinlabel \scalebox{0.7}{$\textcolor{red}{2}$} at 8 15
			\pinlabel \scalebox{0.7}{$\textcolor{red}{2}$} at 24 15
			\pinlabel \scalebox{0.7}{$\textcolor{blue}{1}$} at 32 15
                \pinlabel \scalebox{0.9}{$\lambda$} at 30 2   
			\endlabellist
			\centering
			\includegraphics[scale=1.3]{./BlCup3RlCup1}
		}\endxy\;\; - k_1^0(\lambda-\alpha_2)\xy (0,0)*{
			\labellist
			\small\hair 2pt
			\pinlabel \scalebox{0.7}{$\textcolor{blue}{1}$} at 8 15
			\pinlabel \scalebox{0.7}{$\textcolor{red}{2}$} at 0 15
			\pinlabel \scalebox{0.7}{$\textcolor{red}{2}$} at 32 15
			\pinlabel \scalebox{0.7}{$\textcolor{blue}{1}$} at 24 15
                \pinlabel \scalebox{0.9}{$\lambda$} at 30 2   
			\endlabellist
			\centering
			\includegraphics[scale=1.3]{./RlCup3BlCup1}
		}\endxy\;\right)\\ \nn
        & \\ \nn
        & = -k_1^0(\lambda)^2\xy (0,0)*{
			\labellist
			\small\hair 2pt
			\pinlabel \scalebox{0.7}{$\textcolor{blue}{1}$} at 0 15
			\pinlabel \scalebox{0.7}{$\textcolor{red}{2}$} at 8 15
			\pinlabel \scalebox{0.7}{$\textcolor{red}{2}$} at 24 15
			\pinlabel \scalebox{0.7}{$\textcolor{blue}{1}$} at 32 15
                \pinlabel \scalebox{0.9}{$\lambda$} at 30 2   
			\endlabellist
			\centering
			\includegraphics[scale=1.3]{./BlCup3RlCup1}
		}\endxy\;\; +k_1^0(\lambda-\alpha_2)^2\xy (0,0)*{
			\labellist
			\small\hair 2pt
			\pinlabel \scalebox{0.7}{$\textcolor{blue}{1}$} at 8 15
			\pinlabel \scalebox{0.7}{$\textcolor{red}{2}$} at 0 15
			\pinlabel \scalebox{0.7}{$\textcolor{red}{2}$} at 32 15
			\pinlabel \scalebox{0.7}{$\textcolor{blue}{1}$} at 24 15
                \pinlabel \scalebox{0.9}{$\lambda$} at 30 2   
			\endlabellist
			\centering
			\includegraphics[scale=1.3]{./RlCup3BlCup1}
		}\endxy\\ \nn
        & \\ \nn
        & = -\xy (0,0)*{
			\labellist
			\small\hair 2pt
			\pinlabel \scalebox{0.7}{$\textcolor{blue}{1}$} at 0 15
			\pinlabel \scalebox{0.7}{$\textcolor{red}{2}$} at 8 15
			\pinlabel \scalebox{0.7}{$\textcolor{red}{2}$} at 24 15
			\pinlabel \scalebox{0.7}{$\textcolor{blue}{1}$} at 32 15
                \pinlabel \scalebox{0.9}{$\lambda$} at 30 2   
			\endlabellist
			\centering
			\includegraphics[scale=1.3]{./BlCup3RlCup1}
		}\endxy \;\; + \xy (0,0)*{
			\labellist
			\small\hair 2pt
			\pinlabel \scalebox{0.7}{$\textcolor{blue}{1}$} at 8 15
			\pinlabel \scalebox{0.7}{$\textcolor{red}{2}$} at 0 15
			\pinlabel \scalebox{0.7}{$\textcolor{red}{2}$} at 32 15
			\pinlabel \scalebox{0.7}{$\textcolor{blue}{1}$} at 24 15
                \pinlabel \scalebox{0.9}{$\lambda$} at 30 2   
			\endlabellist
			\centering
			\includegraphics[scale=1.3]{./RlCup3BlCup1}
		}\endxy\\ \nn
        & \\[-2ex] \nn
        & = \Ev\left(\xy (0,0)*{
	\labellist
	\small\hair 2pt
	\pinlabel \scalebox{0.7}{$\textcolor{black}{3}$} at 1 12
        \pinlabel \scalebox{0.9}{$\lambda$} at 16 0 
	\endlabellist
	\centering
	\includegraphics[scale=1.1]{./KrCup1}
}\endxy\;\right)
\\[1ex]
            \gamma\Ev'\delta\gamma \bigl(\xy (0,0)*{
	\labellist
	\small\hair 2pt
	\pinlabel \scalebox{0.7}{$\textcolor{black}{3}$} at 1 12
        \pinlabel \scalebox{0.9}{$\lambda$} at 16 0 
	\endlabellist
	\centering
	\includegraphics[scale=1.1]{./KlCup1}
}\endxy\,\bigr) & =\gamma\left((-1)^{\lambda_3+1}k_1^0(\lambda)\xy (0,0)*{
		\labellist
		\small\hair 2pt
		\pinlabel \scalebox{0.7}{$\textcolor{blue}{1}$} at 0 15
		\pinlabel \scalebox{0.7}{$\textcolor{red}{2}$} at 8 15
		\pinlabel \scalebox{0.7}{$\textcolor{red}{2}$} at 24 15
		\pinlabel \scalebox{0.7}{$\textcolor{blue}{1}$} at 32 15
            \pinlabel \scalebox{0.9}{$\lambda$} at 28 2  
		\endlabellist
		\centering
		\includegraphics[scale=1.3]{./BrCup3RrCup1}
}\endxy \;\; + (-1)^{\lambda_3}k_1^1(\lambda)\xy (0,0)*{
			\labellist
			\small\hair 2pt
			\pinlabel \scalebox{0.7}{$\textcolor{blue}{1}$} at 8 15
			\pinlabel \scalebox{0.7}{$\textcolor{red}{2}$} at 0 15
			\pinlabel \scalebox{0.7}{$\textcolor{red}{2}$} at 32 15
			\pinlabel \scalebox{0.7}{$\textcolor{blue}{1}$} at 24 15
                \pinlabel \scalebox{0.9}{$\lambda$} at 28 2   
			\endlabellist
			\centering
			\includegraphics[scale=1.3]{./RrCup3BrCup1}
	}\endxy\right)\\ \nn
    & \\ \nn 
    & = (-1)^{\lambda_3+1}\left((-1)^{\lambda_3+1}k_1^0(\lambda)\xy (0,0)*{
		\labellist
		\small\hair 2pt
		\pinlabel \scalebox{0.7}{$\textcolor{blue}{1}$} at 0 15
		\pinlabel \scalebox{0.7}{$\textcolor{red}{2}$} at 8 15
		\pinlabel \scalebox{0.7}{$\textcolor{red}{2}$} at 24 15
		\pinlabel \scalebox{0.7}{$\textcolor{blue}{1}$} at 32 15
            \pinlabel \scalebox{0.9}{$\lambda$} at 28 2  
		\endlabellist
		\centering
		\includegraphics[scale=1.3]{./BrCup3RrCup1}
}\endxy \;\; -k_1^0(\lambda+\alpha_2)\xy (0,0)*{
			\labellist
			\small\hair 2pt
			\pinlabel \scalebox{0.7}{$\textcolor{blue}{1}$} at 8 15
			\pinlabel \scalebox{0.7}{$\textcolor{red}{2}$} at 0 15
			\pinlabel \scalebox{0.7}{$\textcolor{red}{2}$} at 32 15
			\pinlabel \scalebox{0.7}{$\textcolor{blue}{1}$} at 24 15
                \pinlabel \scalebox{0.9}{$\lambda$} at 28 2   
			\endlabellist
			\centering
			\includegraphics[scale=1.3]{./RrCup3BrCup1}
	}\endxy
    \right)\\ \nn
    & \\ \nn
    & = (-1)^{\lambda_3+1}\left(-k_1^0(\lambda)^2\;\xy (0,0)*{
		\labellist
		\small\hair 2pt
		\pinlabel \scalebox{0.7}{$\textcolor{blue}{1}$} at 0 15
		\pinlabel \scalebox{0.7}{$\textcolor{red}{2}$} at 8 15
		\pinlabel \scalebox{0.7}{$\textcolor{red}{2}$} at 24 15
		\pinlabel \scalebox{0.7}{$\textcolor{blue}{1}$} at 32 15
            \pinlabel \scalebox{0.9}{$\lambda$} at 28 2  
		\endlabellist
		\centering
		\includegraphics[scale=1.3]{./BrCup3RrCup1}
}\endxy\;\; +k_1^0(\lambda+\alpha_2)^2\;\xy (0,0)*{
			\labellist
			\small\hair 2pt
			\pinlabel \scalebox{0.7}{$\textcolor{blue}{1}$} at 8 15
			\pinlabel \scalebox{0.7}{$\textcolor{red}{2}$} at 0 15
			\pinlabel \scalebox{0.7}{$\textcolor{red}{2}$} at 32 15
			\pinlabel \scalebox{0.7}{$\textcolor{blue}{1}$} at 24 15
                \pinlabel \scalebox{0.9}{$\lambda$} at 28 2   
			\endlabellist
			\centering
			\includegraphics[scale=1.3]{./RrCup3BrCup1}
	}\endxy
    \right)\\ \nn
    & \\ \nn
    & = (-1)^{\lambda_3+1}\left(-\xy (0,0)*{
		\labellist
		\small\hair 2pt
		\pinlabel \scalebox{0.7}{$\textcolor{blue}{1}$} at 0 15
		\pinlabel \scalebox{0.7}{$\textcolor{red}{2}$} at 8 15
		\pinlabel \scalebox{0.7}{$\textcolor{red}{2}$} at 24 15
		\pinlabel \scalebox{0.7}{$\textcolor{blue}{1}$} at 32 15
            \pinlabel \scalebox{0.9}{$\lambda$} at 28 2  
		\endlabellist
		\centering
		\includegraphics[scale=1.3]{./BrCup3RrCup1}
}\endxy\;\; +\xy (0,0)*{
			\labellist
			\small\hair 2pt
			\pinlabel \scalebox{0.7}{$\textcolor{blue}{1}$} at 8 15
			\pinlabel \scalebox{0.7}{$\textcolor{red}{2}$} at 0 15
			\pinlabel \scalebox{0.7}{$\textcolor{red}{2}$} at 32 15
			\pinlabel \scalebox{0.7}{$\textcolor{blue}{1}$} at 24 15
                \pinlabel \scalebox{0.9}{$\lambda$} at 28 2   
			\endlabellist
			\centering
			\includegraphics[scale=1.3]{./RrCup3BrCup1}
	}\endxy
    \right)\\ \nn
    & \\ \nn
    & = \Ev\left(\xy (0,0)*{
	\labellist
	\small\hair 2pt
	\pinlabel \scalebox{0.7}{$\textcolor{black}{3}$} at 1 12
        \pinlabel \scalebox{0.9}{$\lambda$} at 16 0 
	\endlabellist
	\centering
	\includegraphics[scale=1.1]{./KlCup1}
}\endxy\;\right)
        \end{align}\endgroup
    
Recalling that \begin{itemize}[wide,labelindent=0pt]
    \item $\Ev' \left(\xy (0,0)*{
		\labellist
		\small\hair 2pt
		\pinlabel \scalebox{0.7}{$\textcolor{black}{3}$} at 16 -5
		\pinlabel \scalebox{0.7}{$\textcolor{red}{2}$} at 0 -5
       	\pinlabel \scalebox{0.9}{$\lambda$} at 18 9
		\endlabellist
		\centering
		\includegraphics[scale=1]{./RdrKdl}}\endxy\;\right)=
        \left(-k_3^{0,3}k_1^{0,3}\xy (0,0)*{
					\labellist
					\small\hair 2pt
					\pinlabel \scalebox{0.7}{$\textcolor{red}{2}$} at 23 -5
					\pinlabel \scalebox{0.7}{$\textcolor{red}{2}$} at 0 -5
					\pinlabel \scalebox{0.7}{$\textcolor{blue}{1}$} at 12 -5
                 	\pinlabel \scalebox{0.9}{$\lambda$} at 25 8  
					\endlabellist
					\centering
					\includegraphics[scale=1.3]{./RdrrBulRul}
			}\endxy\;\;\;, -k_3^{0,3}k_1^{0,1}\xy (0,0)*{
					\labellist
					\small\hair 2pt
					\pinlabel \scalebox{0.7}{$\textcolor{red}{2}$} at 12 -5
					\pinlabel \scalebox{0.7}{$\textcolor{red}{2}$} at 0 -5
					\pinlabel \scalebox{0.7}{$\textcolor{blue}{1}$} at 23 -5
                 	\pinlabel \scalebox{0.9}{$\lambda$} at 25 8
					\endlabellist
					\centering
                    \includegraphics[scale=1.3]{./RdrrRulBul}
			}\endxy
	\;\;\right)$
\item[]
\item[]
    \item $\Ev' \left(\xy (0,0)*{
		\labellist
		\small\hair 2pt
		\pinlabel \scalebox{0.7}{$\textcolor{black}{3}$} at 0 -5
		\pinlabel \scalebox{0.7}{$\textcolor{red}{2}$} at 16 -5
      	\pinlabel \scalebox{0.9}{$\lambda$} at 18 9
		\endlabellist
		\centering
		\includegraphics[scale=1]{./KdrRdl}}\endxy\;\right) = \left(k_3^{0,3}k_1^{0,3}\left(\xy (0,0)*{
					\labellist
					\small\hair 2pt
					\pinlabel \scalebox{0.7}{$\textcolor{red}{2}$} at 23 -5
					\pinlabel \scalebox{0.7}{$\textcolor{red}{2}$} at 12 -5
					\pinlabel \scalebox{0.7}{$\textcolor{blue}{1}$} at 0 -5
                 	\pinlabel \scalebox{0.9}{$\lambda$} at 25 8
					\endlabellist
					\centering
					\includegraphics[scale=1.3]{./BurRurRodll}
			}\endxy
            \ \;\, -
            \xy (0,0)*{
					\labellist
					\small\hair 2pt
					\pinlabel \scalebox{0.7}{$\textcolor{red}{2}$} at 23 -5
					\pinlabel \scalebox{0.7}{$\textcolor{red}{2}$} at 12 -5
					\pinlabel \scalebox{0.7}{$\textcolor{blue}{1}$} at 0 -5
                 	\pinlabel \scalebox{0.9}{$\lambda$} at 25 8
					\endlabellist
					\centering
					\includegraphics[scale=1.3]{./BurRourRdll}
			}\endxy
            \;\;\right),k_3^{0,3}k_1^{0,1}\left(\xy (0,0)*{
				\labellist
				\small\hair 2pt
				\pinlabel \scalebox{0.7}{$\textcolor{red}{2}$} at 23 -5
				\pinlabel \scalebox{0.7}{$\textcolor{red}{2}$} at 0 -5
				\pinlabel \scalebox{0.7}{$\textcolor{blue}{1}$} at 12 -5
               	\pinlabel \scalebox{0.9}{$\lambda$} at 25 8
				\endlabellist
				\centering
				\includegraphics[scale=1.3]{./RurBurRodll}
	}\endxy\ \;\, - 
    \xy (0,0)*{
				\labellist
				\small\hair 2pt
				\pinlabel \scalebox{0.7}{$\textcolor{red}{2}$} at 23 -5
				\pinlabel \scalebox{0.7}{$\textcolor{red}{2}$} at 0 -5
				\pinlabel \scalebox{0.7}{$\textcolor{blue}{1}$} at 12 -5
               	\pinlabel \scalebox{0.9}{$\lambda$} at 25 8
				\endlabellist
				\centering
				\includegraphics[scale=1.3]{./RourBurRdll}
	}\endxy
    \;\;
    \right)
        \right)$,
        \item[]
        \item[]
\end{itemize}

\noindent It is straightforward to see that $\gamma\Ev'\delta\gamma\left(\xy (0,0)*{
		\labellist
		\small\hair 2pt
		\pinlabel \scalebox{0.7}{$\textcolor{black}{3}$} at 16 -5
		\pinlabel \scalebox{0.7}{$\textcolor{red}{2}$} at 0 -5
       	\pinlabel \scalebox{0.9}{$\lambda$} at 18 9
		\endlabellist
		\centering
		\includegraphics[scale=1]{./RdrKdl}}\endxy\;\right) = \Ev\left(\xy (0,0)*{
		\labellist
		\small\hair 2pt
		\pinlabel \scalebox{0.7}{$\textcolor{black}{3}$} at 16 -5
		\pinlabel \scalebox{0.7}{$\textcolor{red}{2}$} at 0 -5
       	\pinlabel \scalebox{0.9}{$\lambda$} at 18 9
		\endlabellist
		\centering
		\includegraphics[scale=1]{./RdrKdl}}\endxy\;\right)$ and $\gamma\Ev'\delta\gamma\left(\xy (0,0)*{
		\labellist
		\small\hair 2pt
		\pinlabel \scalebox{0.7}{$\textcolor{black}{3}$} at 0 -5
		\pinlabel \scalebox{0.7}{$\textcolor{red}{2}$} at 16 -5
       	\pinlabel \scalebox{0.9}{$\lambda$} at 18 9
		\endlabellist
		\centering
		\includegraphics[scale=1]{./KdrRdl}}\endxy\;\right) = \Ev\left(\xy (0,0)*{
		\labellist
		\small\hair 2pt
		\pinlabel \scalebox{0.7}{$\textcolor{black}{3}$} at 0 -5
		\pinlabel \scalebox{0.7}{$\textcolor{red}{2}$} at 16 -5
       	\pinlabel \scalebox{0.9}{$\lambda$} at 18 9
		\endlabellist
		\centering
		\includegraphics[scale=1]{./KdrRdl}}\endxy\;\right)$. \end{proof}\eqskip

This shows that \autoref{thm:BigThmPrime} implies \autoref{thm:BigThmMain}, and so it suffices to prove the former.

\subsection{Essential uniquenss of the image of the dotted 3-strand}

Finally, let us show that the above choice for the image of a dotted $3$-strand under $\Ev$ is the only one possible, up to multiplication by a scalar. 
We will be using this on occasion in the proof of \autoref{thm:BigThmPrime} and elsewhere. 
\begin{lem}\label{lem:DotShiftBase} Both  $\End_{\cK^b(\naffu{3})}^{*}(\Ev (\E_3\oneid_\lambda))$  and $\End_{\cK^b(\naffu{3})}^{*}(\Ev (\F_3\oneid_\lambda))$ are isomorphic to $\bQ[x]$, where $\deg x=2$.
	\begin{proof} We only prove the result for 
 $\End_{\cK^b(\naffu{3})}^{*}(\Ev (\E_3\oneid_\lambda))$, the 
 proof of the other case being similar. For starters, let us work in the $2$-category of bounded complexes $\cC^b(\naffu{3})$. We claim that  
\[
\End_{\cC^b(\naffu{3})}^{*}(\Ev (\E_3\oneid_\lambda))\cong \bQ[x_1,x_2],
\]
where $\deg x_1=\deg x_2=2$. Note that an element of $\End_{\cC^b(\naffu{3})}^{*}(\Ev (\E_3\oneid_\lambda))$ is a commutative square of 
 the form \newline
\[
 \xymatrix { \underline{\F_{12}\oneid_\lambda}\langle S+r\rangle \ar[rr]^{\xy (0,4)*{
	\labellist
	\small\hair 2pt
	\pinlabel \scalebox{0.7}{$\textcolor{red}{2}$} at 16 -5
	\pinlabel \scalebox{0.7}{$\textcolor{blue}{1}$} at 0 -5
        \pinlabel \scalebox{0.9}{$\lambda$} at 18 8       
        \endlabellist
	\centering
	\includegraphics[scale=1.15]{./BdrRdl}}\endxy} & & \F_{21}\oneid_\lambda\langle S+r+1\rangle \\ \underline{\F_{12}\oneid_\lambda}\langle S\rangle \ar[u]^{f_0} \ar[rr]_{\xy (0,0)*{
	\labellist
	\small\hair 2pt
	\pinlabel \scalebox{0.7}{$\textcolor{red}{2}$} at 16 -5
	\pinlabel \scalebox{0.7}{$\textcolor{blue}{1}$} at 0 -5
        \pinlabel \scalebox{0.9}{$\lambda$} at 18 8       
	\endlabellist
	\centering
	\includegraphics[scale=1.15]{./BdrRdl}}\endxy} & & \F_{21}\oneid_\lambda\langle S+1\rangle. \ar[u]_{f_1}}
\]
\eqskip By \cite[Theorem 2.7]{Kh-L}, the shift $r$ has to be even and $f_0$ is a linear combination of 2-morphisms of the form $g_0=\mspace{15mu}\xy (0,0)*{\xy (0,0)*{
	\labellist
	\small\hair 2pt
        \pinlabel \scalebox{0.9}{$\lambda$} at 18 12       
 \pinlabel \scalebox{0.7}{$\textcolor{red}{2}$} at 9 -5
	\pinlabel \scalebox{0.7}{$\textcolor{blue}{1}$} at 1 -5
 \pinlabel \scalebox{1}{$\textcolor{red}{b}$} at 13 8
	\pinlabel \scalebox{1}{$\textcolor{blue}{a}$} at -5 8
	\endlabellist
	\centering
	\includegraphics[scale=1.15]{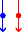}}\endxy}\endxy\mspace{12mu}\;\;$, where $a+b=r/2$.

\noindent The equality 
	\begin{equation*}\xy (0,0)*{
	\labellist
	\small\hair 2pt
	\pinlabel \scalebox{0.7}{$\textcolor{red}{2}$} at 20 -5
	\pinlabel \scalebox{0.7}{$\textcolor{blue}{1}$} at 4 -5
        \pinlabel \scalebox{1}{$f_1$} at 12 26.5
        \pinlabel \scalebox{0.9}{$\lambda$} at 28 24       
        \endlabellist
	\centering
	\includegraphics[scale=1.15]{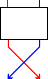}}\endxy
\mspace{14mu} = \ 
\xy (0,0)*{
	\labellist
	\small\hair 2pt
	\pinlabel \scalebox{0.7}{$\textcolor{red}{2}$} at 20 -5
	\pinlabel \scalebox{0.7}{$\textcolor{blue}{1}$} at 4 -5
        \pinlabel \scalebox{1}{$f_0$} at 12 11.5        
        \pinlabel \scalebox{0.9}{$\lambda$} at 26 24       
        \endlabellist
	\centering
	\includegraphics[scale=1.15]{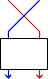}}\endxy
\end{equation*} 
\eqskip implies that, for each summand $g_0$ of $f_0$, there is a corresponding summand $g_1$ of $f_1$ that is determined by the choice of $g_0$, i.e.      
$(g_0,g_1)=\left(\mspace{10mu} \xy (0,1)*{\xy (0,0)*{
	\labellist
	\small\hair 2pt
	\pinlabel \scalebox{0.7}{$\textcolor{red}{2}$} at 9 -5
	\pinlabel \scalebox{0.7}{$\textcolor{blue}{1}$} at 1 -5
 \pinlabel \scalebox{1}{$\textcolor{red}{b}$} at 13 8
	\pinlabel \scalebox{1}{$\textcolor{blue}{a}$} at -5 8
        \pinlabel \scalebox{0.9}{$\lambda$} at 18 12      
 \endlabellist
	\centering
	\includegraphics[scale=1.15]{./BdoRdo}}\endxy}\endxy \mspace{18mu},\mspace{14mu}
 \xy (0,1)*{\xy (0,0)*{
	\labellist
	\small\hair 2pt
	\pinlabel \scalebox{0.7}{$\textcolor{red}{2}$} at 1 -5
	\pinlabel \scalebox{0.7}{$\textcolor{blue}{1}$} at 9 -5
 \pinlabel \scalebox{1}{$\textcolor{red}{b}$} at -5 8
	\pinlabel \scalebox{1}{$\textcolor{blue}{a}$} at 13 8
        \pinlabel \scalebox{0.9}{$\lambda$} at 18 12      
 \endlabellist
	\centering
	\includegraphics[scale=1.15]{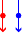}}\endxy}\endxy\mspace{18mu}\right)\vspace{2mm}$, 
 where we use the presentation used in \cite{abram2022categorification} of only giving the vertical 2-morphisms as an ordered pair, for clarity of reading.

\noindent This proves the claim, with 
\[
x_1=\left(\mspace{4mu} \xy (0,0)*{\xy (0,0)*{
	\labellist
	\small\hair 2pt
	\pinlabel \scalebox{0.7}{$\textcolor{red}{2}$} at 9 -5
	\pinlabel \scalebox{0.7}{$\textcolor{blue}{1}$} at 1 -5
       \pinlabel \scalebox{0.9}{$\lambda$} at 14 12      
	\endlabellist
	\centering
	\includegraphics[scale=1.15]{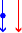}}\endxy}\endxy
 \mspace{14mu},\mspace{6mu} 
\xy (0,0)*{\xy (0,0)*{
	\labellist
	\small\hair 2pt
	\pinlabel \scalebox{0.7}{$\textcolor{red}{2}$} at 1 -5
	\pinlabel \scalebox{0.7}{$\textcolor{blue}{1}$} at 9 -5
        \pinlabel \scalebox{0.9}{$\lambda$} at 15 12   
	\endlabellist
	\centering
	\includegraphics[scale=1.15]{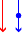}}\endxy}\endxy\mspace{11mu}
\right)
\, , \quad 
x_2=\left(\, \xy (0,0)*{\xy (0,0)*{
	\labellist
	\small\hair 2pt
	\pinlabel \scalebox{0.7}{$\textcolor{red}{2}$} at 9 -5
	\pinlabel \scalebox{0.7}{$\textcolor{blue}{1}$} at 1 -5
        \pinlabel \scalebox{0.9}{$\lambda$} at 15 12   
	\endlabellist
	\centering
	\includegraphics[scale=1.15]{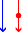}}\endxy}\endxy
 \mspace{14mu},\mspace{6mu}
 \xy (0,0)*{\xy (0,0)*{
	\labellist
	\small\hair 2pt
	\pinlabel \scalebox{0.7}{$\textcolor{red}{2}$} at 1 -5
	\pinlabel \scalebox{0.7}{$\textcolor{blue}{1}$} at 9 -5
        \pinlabel \scalebox{0.9}{$\lambda$} at 14 12   
	\endlabellist
	\centering
	\includegraphics[scale=1.15]{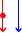}}\endxy}\endxy
\mspace{10mu}\right).
\]
\eqskip
We further claim that $x_1$ and $x_2$ are homotopic. Indeed, consider the diagram
\begin{equation}\label{eq:dotshift}
\xymatrix {\underline{\F_{12} \oneid_\lambda}\langle S+2\rangle \ar[rr]^{\xy (0,4)*{
	\labellist
	\small\hair 2pt
	\pinlabel \scalebox{0.7}{$\textcolor{red}{2}$} at 16 -5
	\pinlabel \scalebox{0.7}{$\textcolor{blue}{1}$} at 0 -5
        \pinlabel \scalebox{0.9}{$\lambda$} at 18 8       
        \endlabellist
	\centering
	\includegraphics[scale=1.15]{./BdrRdl}}\endxy} & & \F_{21} \oneid_\lambda\langle S+3\rangle \\ & & \\ \underline{\F_{12} \oneid_\lambda}\langle S\rangle \ar[uu]^{\xy (0,0)*{
	\labellist
	\small\hair 2pt
	\pinlabel \scalebox{0.7}{$\textcolor{red}{2}$} at 9 -5
	\pinlabel \scalebox{0.7}{$\textcolor{blue}{1}$} at 1 -5
        \pinlabel \scalebox{0.9}{$\lambda$} at 14 8       
        \endlabellist
	\centering
	\includegraphics[scale=1.15]{./BdRdo}}\endxy
    \mspace{18mu} - \
		\xy (0,0)*{
	\labellist
	\small\hair 2pt
	\pinlabel \scalebox{0.7}{$\textcolor{red}{2}$} at 9 -5
	\pinlabel \scalebox{0.7}{$\textcolor{blue}{1}$} at 1 -5
        \pinlabel \scalebox{0.9}{$\lambda$} at 14 8       
	\endlabellist
	\centering
	\includegraphics[scale=1.15]{./BdoRd}}\endxy
\mspace{10mu}} \ar[rr]_{\xy (0,2)*{
	\labellist
	\small\hair 2pt
	\pinlabel \scalebox{0.7}{$\textcolor{red}{2}$} at 16 -5
	\pinlabel \scalebox{0.7}{$\textcolor{blue}{1}$} at 0 -5
        \pinlabel \scalebox{0.9}{$\lambda$} at 18 8       
 \endlabellist
	\centering
	\includegraphics[scale=1.15]{./BdrRdl}}\endxy} & & \F_{21}\oneid_\lambda\langle S+1\rangle. \ar[uu]_{\xy (0,0)*{
	\labellist
	\small\hair 2pt
	\pinlabel \scalebox{0.7}{$\textcolor{red}{2}$} at 1 -5
	\pinlabel \scalebox{0.7}{$\textcolor{blue}{1}$} at 9 -5
        \pinlabel \scalebox{0.9}{$\lambda$} at 14 8       
	\endlabellist
	\centering
	\includegraphics[scale=1.15]{./RdoBd}}\endxy
    \mspace{18mu} - \
   	\xy (0,0)*{
	\labellist
	\small\hair 2pt
	\pinlabel \scalebox{0.7}{$\textcolor{red}{2}$} at 1 -5
	\pinlabel \scalebox{0.7}{$\textcolor{blue}{1}$} at 9 -5
        \pinlabel \scalebox{0.9}{$\lambda$} at 14 8       
	\endlabellist
	\centering
	\includegraphics[scale=1.15]{./RdBdo}}\endxy}
		\ar[lluu]_{\xy (0,4)*{
	\labellist
	\small\hair 2pt
	\pinlabel \scalebox{0.7}{$\textcolor{red}{2}$} at 0 -5
	\pinlabel \scalebox{0.7}{$\textcolor{blue}{1}$} at 16 -5
        \pinlabel \scalebox{0.9}{$\lambda$} at 18 8       
        \endlabellist
	\centering
	\includegraphics[scale=1.15]{./RdrBdl}}\endxy}}
\end{equation}	
One sees that this diagram is commutative, by the downward version of relation \eqref{eq:R2a}, and hence $x_1-x_2\simeq_h 0$, which proves the lemma. 
\end{proof}
\end{lem}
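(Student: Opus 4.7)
The plan is to compute the endomorphism ring first in the strict 2-category $\cC^b(\naffu{3})$ of bounded complexes, where the answer is rigid and combinatorial, and then descend to the homotopy category by producing a single explicit null-homotopy. I will treat $\Ev(\E_3\oneid_\lambda)$ in detail; $\Ev(\F_3\oneid_\lambda)$ is entirely analogous under the symmetry swapping $\E$'s and $\F$'s, and in any case follows from the same line of argument.

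An endomorphism of internal degree $r$ of the two-term complex $\Ev(\E_3\oneid_\lambda)$ is a pair $(f_0,f_1)$ of 2-morphisms, on $\F_{21}\oneid_\lambda$ and $\F_{12}\oneid_\lambda$ (with the appropriate degree shifts), intertwining the differential, which is a crossing of colors $2$ and $1$. Invoking the Khovanov--Lauda basis theorem \cite[Theorem 2.7]{Kh-L}, $r$ is forced to be even and $f_0$ is uniquely a linear combination of dotted $\F_{21}$-identity diagrams parametrised by pairs of nonnegative integers $(a,b)$ with $a+b = r/2$, where $a$ is the number of dots on the $\F_2$-strand and $b$ the number on the $\F_1$-strand. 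The chain-map equation, combined with the dot-slide relation (KM5) on distinct colors, then pins down $f_1$ as the corresponding dotted identity diagram on $\F_{12}$. Hence $f_0$ determines $f_1$, and one obtains
\[
\End^*_{\cC^b(\naffu{3})}\bigl(\Ev(\E_3\oneid_\lambda)\bigr) \cong \bQ[x_1,x_2],
\]
with $\deg x_1 = \deg x_2 = 2$, where $x_1$ places a single dot on the $\F_2$-strand and $x_2$ a single dot on the $\F_1$-strand (in each position of the complex).

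The descent to $\cK^b(\naffu{3})$ then amounts to exhibiting an explicit null-homotopy of $x_1 - x_2$. The natural candidate for the homotopy is the opposite crossing $\F_{12}\oneid_\lambda\langle -S\rangle \to \F_{21}\oneid_\lambda\langle -S+1\rangle$, which has the correct internal degree $2$ and correct homological shift. Expanding the pre- and post-composition with the differential by the downward version of the quadratic KLR relation (KM4) in the case $i\cdot j = -1$ reproduces precisely $x_1 - x_2$, so the homotopy equation $d\circ h + h\circ d = x_1 - x_2$ holds. Therefore $x_1 \simeq_h x_2$ in $\cK^b(\naffu{3})$, and setting $x := [x_1] = [x_2]$ of degree $2$ gives the claimed isomorphism $\End^*_{\cK^b(\naffu{3})}(\Ev(\E_3\oneid_\lambda)) \cong \bQ[x]$.

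The delicate point to check is that no further higher-order homotopy collapses the polynomial algebra $\bQ[x]$ itself, i.e.\ that $x$ really is non-nilpotent modulo homotopy. This will follow because all potential homotopies between the two terms of $\Ev(\E_3\oneid_\lambda)$ are scalar multiples of dotted versions of the opposite crossing, and every image $d\circ h + h\circ d$ is completely described by iterated use of (KM4) and (KM5), which together produce no relations beyond $x_1 = x_2$. I expect this verification, while straightforward, to be the main bookkeeping obstacle; it amounts to listing the hom spaces in the appropriate bidegrees and applying the basis theorem one more time.
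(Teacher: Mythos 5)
Your proposal follows essentially the same route as the paper: compute $\End^*$ in $\cC^b(\naffu{3})$ via the Khovanov--Lauda basis theorem to obtain $\bQ[x_1,x_2]$ with $f_1$ determined by $f_0$, then exhibit the opposite crossing as the homotopy realising $x_1-x_2\simeq_h 0$ via the downward quadratic KLR relation. Your closing remark that one must still check that passing to the homotopy category imposes no relations beyond $x_1=x_2$ is a point the paper leaves implicit, and your sketch — that every null-homotopic endomorphism arises from a dotted opposite crossing and hence lies in the ideal generated by $x_1-x_2$ — is the right way to settle it.
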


A directly analogous result and proof hold for $\Ev'$.

\section{Categorified braid group action}\label{sec:BraidAction}
To categorify the connection between our desired evaluation functor and Lusztig's algebra automorphisms $T_{1,-1}'$ and $T_{2,1}''$, discussed in \autoref{sec:DecatT}, we need to introduce various 2-functors to deal with some complications. While the automorphisms have already been categorified in \cite{abram2022categorification}, that paper works over $\mathfrak{sl}_3$ and does not cover our choice of scalars and bubble parameters. We therefore adapt their constructions to our setup through composition with 2-isomorphisms. 

\subsection{The braid group actions}
Denote by $\taffu{3}$ the $\mathfrak{gl}_3$ version of the (unsigned version of the) 2-category $\mathcal{U}_Q(\mathfrak{sl}_3)$ defined in \cite[Definition 3.3]{abram2022categorification} with the trivial choice of scalars and bubble parameters. For this section, we will be utilising the 2-functors $\mathcal{T}_{1,1}'$ and $\mathcal{T}_{2,1}''$ as defined in \cite[Section 4]{abram2022categorification}, as well as the 2-isomorphisms $\omega$ and $\psi$ defined in \cite{Kh-L-2} (and generalised in \cite[Section 3.5]{abram2022categorification}, though we do not use the more general setting), the latter of which we recall here. The 2-isomorphism $\omega:\taffu{3}\to\taffu{3}$  is 1- and 2-covariant and degree-preserving, and sends a weight $\lambda$ to $-\lambda$, reverses the orientation of 2-morphisms, and scales the $1,1$- and $2,2$-crossings by a factor of $-1$. Similarly, $\psi:\taffu{3}\to\taffu{3}^{co}$ is a 1-covariant, 2-contravariant 2-isomorphism that is the identity on objects, scales weights of 1-morphisms by a factor of $-1$, and reflects diagrams of 2-morphisms in the horizontal axis and then reverses their orientation. We also remind the reader that $k_i^{a_1,\dots,a_n}(\mu)$, defined in \eqref{eq:kdef}, omits the argument $\mu$ when it is equal to $\lambda$, but retains it otherwise (generally when it is $s_1(\lambda)$ or $s_2(\lambda)$).

We also use the 2-isomorphism $\zeta:\taffu{3}\to\naffu{3}$, first defined as $\Sigma$ in \cite[Section 4.2]{Kh-L} and \cite{Kh-L-err}, which is the identity on objects and 1-morphisms, and the identity on 2-morphisms except for the following generating 2-morphisms (and hence the 2-morphisms derived from them): 
\begin{align}
\xy (0,1)*{
			\labellist
			\small\hair 2pt
			\pinlabel \scalebox{0.7}{$\textcolor{blue}{1}$} at 1 -5
			\pinlabel \scalebox{0.9}{$\lambda$} at 6 8
			\endlabellist 
			\centering 
			\includegraphics[scale=1.3]{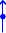}
	}\endxy\;\;&\overset{\zeta}{\mapsto}
    -\,\xy (0,0)*{
			\labellist
			\small\hair 2pt
			\pinlabel \scalebox{0.7}{$\textcolor{blue}{1}$} at 1 -5
			\pinlabel \scalebox{0.9}{$\lambda$} at 6 8
			\endlabellist 
			\centering 
			\includegraphics[scale=1.3]{./Buo}
	}\endxy 
&
\xy (0,1)*{
			\labellist
			\small\hair 2pt
			\pinlabel \scalebox{0.7}{$\textcolor{blue}{1}$} at 0 -5
			\pinlabel \scalebox{0.7}{$\textcolor{blue}{1}$} at 16 -5
			\pinlabel \scalebox{0.9}{$\lambda$} at 18 8
			\endlabellist 
			\centering 
			\includegraphics[scale=1.3]{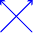}
	}\endxy\;\;&\overset{\zeta}{\mapsto}
    -\xy (0,0)*{
			\labellist
			\small\hair 2pt
			\pinlabel \scalebox{0.7}{$\textcolor{blue}{1}$} at 0 -5
			\pinlabel \scalebox{0.7}{$\textcolor{blue}{1}$} at 16 -5
			\pinlabel \scalebox{0.9}{$\lambda$} at 18 8
			\endlabellist 
			\centering 
			\includegraphics[scale=1.3]{./BurBul}
	}\endxy
\\[2ex]
\xy (0,0)*{
			\labellist
			\small\hair 2pt
			\pinlabel \scalebox{0.7}{$\textcolor{blue}{1}$} at 1 -5
			\pinlabel \scalebox{0.9}{$\lambda$} at 18 8
			\endlabellist 
			\centering 
			\includegraphics[scale=1.3]{./BlCap1}
	}\endxy\;\;&\overset{\zeta}{\mapsto}
    (-1)^{\lambda_1+1}k_1^2\;\;
     \xy (0,0)*{
			\labellist
			\small\hair 2pt
			\pinlabel \scalebox{0.7}{$\textcolor{blue}{1}$} at 1 -5
			\pinlabel \scalebox{0.9}{$\lambda$} at 18 8
			\endlabellist 
			\centering 
			\includegraphics[scale=1.3]{./BlCap1}
	}\endxy
    &\mspace{60mu}
\xy (0,0)*{
			\labellist
			\small\hair 2pt
			\pinlabel \scalebox{0.7}{$\textcolor{blue}{1}$} at 1 10
			\pinlabel \scalebox{0.9}{$\lambda$} at 17 1
			\endlabellist 
			\centering 
			\includegraphics[scale=1.3]{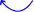}
	}\endxy\;\;&\overset{\zeta}{\mapsto}
    (-1)^{\lambda_1}k_1^0\;\;
    \xy (0,0)*{
			\labellist
			\small\hair 2pt
			\pinlabel \scalebox{0.7}{$\textcolor{blue}{1}$} at 1 10
			\pinlabel \scalebox{0.9}{$\lambda$} at 17 1
			\endlabellist 
			\centering 
			\includegraphics[scale=1.3]{./BlCup1Thin}
	}\endxy    
\\[2ex]
   \xy (0,0)*{
			\labellist
			\small\hair 2pt
			\pinlabel \scalebox{0.7}{$\textcolor{blue}{1}$} at 1 -5
			\pinlabel \scalebox{0.9}{$\lambda$} at 18 8
			\endlabellist 
			\centering 
			\includegraphics[scale=1.3]{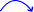}
	}\endxy\;\;&\overset{\zeta}{\mapsto}
    -k_1^0\;\;
    \xy (0,0)*{
			\labellist
			\small\hair 2pt
			\pinlabel \scalebox{0.7}{$\textcolor{blue}{1}$} at 1 -5
			\pinlabel \scalebox{0.9}{$\lambda$} at 18 8
			\endlabellist 
			\centering 
			\includegraphics[scale=1.3]{./BrCap1Thin}
	}\endxy
    &
 \xy (0,0)*{
			\labellist
			\small\hair 2pt
			\pinlabel \scalebox{0.7}{$\textcolor{blue}{1}$} at 1 10
			\pinlabel \scalebox{0.9}{$\lambda$} at 17 1
			\endlabellist 
			\centering 
			\includegraphics[scale=1.3]{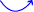}
	}\endxy\;\;&\overset{\zeta}{\mapsto}
    -k_1^2\;\;
    \xy (0,0)*{
			\labellist
			\small\hair 2pt
			\pinlabel \scalebox{0.7}{$\textcolor{blue}{1}$} at 1 10
			\pinlabel \scalebox{0.9}{$\lambda$} at 17 1
			\endlabellist 
			\centering 
			\includegraphics[scale=1.3]{./BrCup1Thin}
	}\endxy   
    \\[2ex]
\xy (0,0)*{
			\labellist
			\small\hair 2pt
			\pinlabel \scalebox{0.7}{$\textcolor{red}{2}$} at 1 -5
			\pinlabel \scalebox{0.9}{$\lambda$} at 18 8
			\endlabellist 
			\centering 
			\includegraphics[scale=1.3]{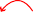}
	}\endxy\;\;&\overset{\zeta}{\mapsto}
    (-1)^{\lambda_3-1}\;\;
    \xy (0,0)*{
			\labellist
			\small\hair 2pt
			\pinlabel \scalebox{0.7}{$\textcolor{red}{2}$} at 1 -5
			\pinlabel \scalebox{0.9}{$\lambda$} at 18 8
			\endlabellist 
			\centering 
			\includegraphics[scale=1.3]{./RlCap1}
	}\endxy
    &
\xy (0,0)*{
			\labellist
			\small\hair 2pt
			\pinlabel \scalebox{0.7}{$\textcolor{red}{2}$} at 1 10
			\pinlabel \scalebox{0.9}{$\lambda$} at 17 1
			\endlabellist 
			\centering 
			\includegraphics[scale=1.3]{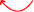}
	}\endxy\;\;&\overset{\zeta}{\mapsto}
    (-1)^{\lambda_3}\;\;
    \xy (0,0)*{
			\labellist
			\small\hair 2pt
			\pinlabel \scalebox{0.7}{$\textcolor{red}{2}$} at 1 10
			\pinlabel \scalebox{0.9}{$\lambda$} at 17 1
			\endlabellist 
			\centering 
			\includegraphics[scale=1.3]{./RlCup1Thin}
	}\endxy    
\end{align}

\eqskip We now define two 2-functors $\tilde{\mathcal{T}}'_{1,-1},\tilde{\mathcal{T}}_{2,1}'':\naffu{3}\to\cK^b(\naffu{3})$ using composites of the above 2-functors. 
Specifically, 
\begin{itemize}[wide,labelindent=0pt,itemsep=5pt]
    \item $\tilde{\mathcal{T}}_{1,-1}':=\zeta\psi\mathcal{T}_{1,1}'\psi\zeta^{-1}=\zeta\mathcal{T}_{1,-1}'\zeta^{-1}$;
    \item $\tilde{\mathcal{T}}_{2,1}'':=\zeta\omega\mathcal{T}_{2,1}'\omega\zeta^{-1}=\zeta\mathcal{T}_{2,1}''\zeta^{-1}$.
\end{itemize}
We let $X[y]\langle z\rangle$ denote the 1-term complex with the 1-morphism at homological degree $-y$ with internal degree shift of $z$. In detail, $\tilde{\mathcal{T}}_{1,-1}'$ acts as follows:
\begin{itemize}[wide,labelindent=0pt,itemsep=5pt]
    \item On objects, $\lambda\xmapsto{\tilde{\mathcal{T}}_{1,-1}'}s_1(\lambda)$.
    \item On 1-morphisms, 

    \begin{gather*}
\E_1 \oneid_\lambda\xmapsto{\tilde{\mathcal{T}}_{1,-1}'}  \F_1\oneid_{s_1(\lambda)}[-1]\langle 2+\bar{\lambda}_1\rangle
\mspace{60mu}
\F_1\oneid_\lambda\xmapsto{\tilde{\mathcal{T}}_{1,-1}'} \E_1\oneid_{s_1(\lambda)}[1]\langle -\bar{\lambda}_1\rangle
\\[1ex]
\E_2\oneid_\lambda \xmapsto{\tilde{\mathcal{T}}_{1,-1}'} 
\Bigg(\xymatrix{\E_{12}\oneid_{s_1(\lambda)}\langle -1\rangle \ar[rr]^(0.55){\xy (0,4.5)*{
		\labellist
		\small\hair 2pt
		\pinlabel \scalebox{0.7}{$\textcolor{red}{2}$} at 16 -5
		\pinlabel \scalebox{0.7}{$\textcolor{blue}{1}$} at 0 -5
		\pinlabel \scalebox{0.9}{$\lambda$} at 16 7
		\endlabellist
		\centering
		\includegraphics[scale=1.3]{./BurRul}
	}\endxy} & & \underline{\E_{21}\oneid_{s_1(\lambda)}}}\Bigg)
\\[1ex]
\F_2\oneid_\lambda \xmapsto{\tilde{\mathcal{T}}_{1,-1}'} 
\Bigg(\xymatrix{\underline{\F_{12}\oneid_{s_1(\lambda)}} \ar[rr]^(0.55){\xy (0,4.5)*{
		\labellist
		\small\hair 2pt
		\pinlabel \scalebox{0.7}{$\textcolor{red}{2}$} at 16 -5
		\pinlabel \scalebox{0.7}{$\textcolor{blue}{1}$} at 0 -5
		\pinlabel \scalebox{0.9}{$\lambda$} at 16 7
		\endlabellist
		\centering
		\includegraphics[scale=1.3]{./BdrRdl}
	}\endxy\;\;\;\;\;} & & \F_{21}\oneid_{s_1(\lambda)}\langle 1\rangle} \Bigg)
\end{gather*}
 
     \item On non-identity generating 2-morphisms:
\begingroup\allowdisplaybreaks
\begin{gather}
    \xy (0,1)*{
			\labellist
			\small\hair 2pt
			\pinlabel \scalebox{0.7}{$\textcolor{blue}{1}$} at 1 -5
			\pinlabel \scalebox{0.9}{$\lambda$} at 6 8
			\endlabellist 
			\centering 
			\includegraphics[scale=1.3]{./Buo}
	}\endxy\;\;\xmapsto{\tilde{\mathcal{T}}_{1,-1}'}
    \,\xy (0,0)*{
			\labellist
			\small\hair 2pt
			\pinlabel \scalebox{0.7}{$\textcolor{blue}{1}$} at 1 -5
			\pinlabel \scalebox{0.9}{$s_1(\lambda)$} at 12 8
			\endlabellist 
			\centering 
			\includegraphics[scale=1.3]{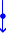}
	}\endxy
\mspace{180mu}
        \xy (0,0)*{
			\labellist
			\small\hair 2pt
			\pinlabel \scalebox{0.7}{$\textcolor{blue}{1}$} at 0 -5
			\pinlabel \scalebox{0.7}{$\textcolor{blue}{1}$} at 16 -5
			\pinlabel \scalebox{0.9}{$\lambda$} at 18 8
			\endlabellist 
			\centering 
			\includegraphics[scale=1.3]{./BurBul}
	}\endxy\;\;\xmapsto{\tilde{\mathcal{T}}_{1,-1}'}
    -\xy (0,0)*{
			\labellist
			\small\hair 2pt
			\pinlabel \scalebox{0.7}{$\textcolor{blue}{1}$} at 0 -5
			\pinlabel \scalebox{0.7}{$\textcolor{blue}{1}$} at 16 -5
			\pinlabel \scalebox{0.9}{$s_1(\lambda)$} at 24 8
			\endlabellist 
			\centering 
			\includegraphics[scale=1.3]{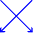}
	}\endxy
\\[2ex]
\xy (0,1)*{
			\labellist
			\small\hair 2pt
			\pinlabel \scalebox{0.7}{$\textcolor{red}{2}$} at 1 -5
			\pinlabel \scalebox{0.9}{$\lambda$} at 6 8
			\endlabellist 
			\centering 
			\includegraphics[scale=1.3]{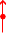}
	}\endxy\;\;\xmapsto{\tilde{\mathcal{T}}_{1,-1}'}
    \,\left(\xy (0,0)*{
		\labellist
		\small\hair 2pt
		\pinlabel \scalebox{0.7}{$\textcolor{blue}{1}$} at 1 -5
		\pinlabel \scalebox{0.7}{$\textcolor{red}{2}$} at 9 -5
       	\pinlabel \scalebox{0.9}{$s_1(\lambda)$} at 20 8
		\endlabellist
		\centering
		\includegraphics[scale=1.3]{./BuRuo}
	}\endxy
    \quad\quad,\xy (0,0)*{
		\labellist
		\small\hair 2pt
		\pinlabel \scalebox{0.7}{$\textcolor{blue}{1}$} at 9 -5
		\pinlabel \scalebox{0.7}{$\textcolor{red}{2}$} at 1 -5
       	\pinlabel \scalebox{0.9}{$s_1(\lambda)$} at 20 8
		\endlabellist
		\centering
		\includegraphics[scale=1.3]{./RuoBu}
	}\endxy\quad\quad
    \right)
\end{gather}\endgroup
%
%
\begin{multline}
\xy (0,1)*{
			\labellist
			\small\hair 2pt
			\pinlabel \scalebox{0.7}{$\textcolor{red}{2}$} at 0 -5
			\pinlabel \scalebox{0.7}{$\textcolor{red}{2}$} at 16 -5
			\pinlabel \scalebox{0.9}{$\lambda$} at 18 8
			\endlabellist 
			\centering 
			\includegraphics[scale=1.3]{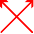}
	}\endxy\;\;\xmapsto{\tilde{\mathcal{T}}_{1,-1}'}
    \\[2ex]
   \Big(\xy (0,0)*{
			\labellist
			\small\hair 2pt 
			\pinlabel \scalebox{0.55}{$\textcolor{blue}{1}$} at 0 -5
			\pinlabel \scalebox{0.55}{$\textcolor{red}{2}$} at 8 -5
			\pinlabel \scalebox{0.55}{$\textcolor{red}{2}$} at 32 -5
			\pinlabel \scalebox{0.55}{$\textcolor{blue}{1}$} at 24 -5
                \pinlabel \scalebox{0.9}{$s_1(\lambda)$} at 40 8   
			\endlabellist
			\centering
			\includegraphics[scale=1]{./BurrRurrBullRull}
	}\endxy
    \quad\quad, 
    -\xy (0,0)*{
			\labellist
			\small\hair 2pt 
			\pinlabel \scalebox{0.55}{$\textcolor{blue}{1}$} at 8 -5
			\pinlabel \scalebox{0.55}{$\textcolor{red}{2}$} at 0 -5
			\pinlabel \scalebox{0.55}{$\textcolor{red}{2}$} at 32 -5
			\pinlabel \scalebox{0.55}{$\textcolor{blue}{1}$} at 24 -5
                \pinlabel \scalebox{0.9}{$s_1(\lambda)$} at 40 8   
			\endlabellist
			\centering
			\includegraphics[scale=1]{./RurrBurrBullRull}
	}\endxy
    \quad\quad -\xy (0,0)*{
			\labellist
			\small\hair 2pt 
			\pinlabel \scalebox{0.55}{$\textcolor{blue}{1}$} at 0 -5
			\pinlabel \scalebox{0.55}{$\textcolor{red}{2}$} at 8 -5
			\pinlabel \scalebox{0.55}{$\textcolor{red}{2}$} at 24 -5
			\pinlabel \scalebox{0.55}{$\textcolor{blue}{1}$} at 32 -5
                \pinlabel \scalebox{0.9}{$s_1(\lambda)$} at 40 8   
			\endlabellist
			\centering
			\includegraphics[scale=1]{./BurrRurrRullBull}
	}\endxy
    \quad\quad + \xy (0,0)*{
				\labellist
				\small\hair 2pt
				\pinlabel \scalebox{0.55}{$\textcolor{blue}{1}$} at 8 -5
				\pinlabel \scalebox{0.55}{$\textcolor{red}{2}$} at 1 -5
				\pinlabel \scalebox{0.55}{$\textcolor{red}{2}$} at 31 -5
				\pinlabel \scalebox{0.55}{$\textcolor{blue}{1}$} at 24 -5
                    \pinlabel \scalebox{0.9}{$s_1(\lambda)$} at 44 8    
				\endlabellist
				\centering
				\includegraphics[scale=1]{./RuBurBulRu}
		}\endxy
        \quad\quad + \xy (0,0)*{
				\labellist
				\small\hair 2pt
				\pinlabel \scalebox{0.55}{$\textcolor{blue}{1}$} at 1 -5
				\pinlabel \scalebox{0.55}{$\textcolor{red}{2}$} at 8 -5
				\pinlabel \scalebox{0.55}{$\textcolor{red}{2}$} at 24 -5
				\pinlabel \scalebox{0.55}{$\textcolor{blue}{1}$} at 31 -5
                    \pinlabel \scalebox{0.9}{$s_1(\lambda)$} at 44 8    
				\endlabellist
				\centering
				\includegraphics[scale=1]{./BuRurRulBu}
		}\endxy
        \quad\quad, -\xy (0,0)*{
			\labellist
			\small\hair 2pt 
			\pinlabel \scalebox{0.55}{$\textcolor{blue}{1}$} at 8 -5
			\pinlabel \scalebox{0.55}{$\textcolor{red}{2}$} at 0 -5
			\pinlabel \scalebox{0.55}{$\textcolor{red}{2}$} at 24 -5
			\pinlabel \scalebox{0.55}{$\textcolor{blue}{1}$} at 32 -5
                \pinlabel \scalebox{0.9}{$s_1(\lambda)$} at 40 8   
			\endlabellist
			\centering
			\includegraphics[scale=1]{./RurrBurrRullBull}
	}\endxy\quad\;\;
    \Big)
\end{multline}
%
%
\begingroup\allowdisplaybreaks
\begin{gather}
    \xy (0,1)*{
			\labellist
			\small\hair 2pt
			\pinlabel \scalebox{0.7}{$\textcolor{red}{2}$} at 0 -5
			\pinlabel \scalebox{0.7}{$\textcolor{blue}{1}$} at 16 -5
			\pinlabel \scalebox{0.9}{$\lambda$} at 18 8
			\endlabellist 
			\centering 
			\includegraphics[scale=1.3]{./RurBul}
	}\endxy\;\;\xmapsto{\tilde{\mathcal{T}}_{1,-1}'}
    \left(k_1^{2,3}
    \xy (0,0)*{
			\labellist
			\small\hair 2pt 
			\pinlabel \scalebox{0.55}{$\textcolor{blue}{1}$} at 24 -5
			\pinlabel \scalebox{0.55}{$\textcolor{blue}{1}$} at 0 -5
			\pinlabel \scalebox{0.55}{$\textcolor{red}{2}$} at 12 -5
                \pinlabel \scalebox{0.9}{$s_1(\lambda)$} at 38 8   
			\endlabellist
			\centering
			\includegraphics[scale=1]{./BurRurBdll}
	}\endxy\quad\quad\;\;,
    k_1^{0,1}
    \xy (0,0)*{
			\labellist
			\small\hair 2pt 
			\pinlabel \scalebox{0.55}{$\textcolor{blue}{1}$} at 24 -5
			\pinlabel \scalebox{0.55}{$\textcolor{blue}{1}$} at 12 -5
			\pinlabel \scalebox{0.55}{$\textcolor{red}{2}$} at 0 -5
                \pinlabel \scalebox{0.9}{$s_1(\lambda)$} at 38 8   
			\endlabellist
			\centering
			\includegraphics[scale=1]{./RurBurBdll}
	}\endxy\quad\;\;\;
    \right)
\\[2ex]
        \xy (0,1)*{
			\labellist
			\small\hair 2pt
			\pinlabel \scalebox{0.7}{$\textcolor{red}{2}$} at 16 -5
			\pinlabel \scalebox{0.7}{$\textcolor{blue}{1}$} at 0 -5
			\pinlabel \scalebox{0.9}{$\lambda$} at 18 8
			\endlabellist 
			\centering 
			\includegraphics[scale=1.3]{./BurRul}
	}\endxy\;\;\xmapsto{\tilde{\mathcal{T}}_{1,-1}'}
     \left(  k_1^{0,1}\left(
    \xy (0,0)*{
			\labellist
			\small\hair 2pt 
			\pinlabel \scalebox{0.55}{$\textcolor{blue}{1}$} at 12 -5
			\pinlabel \scalebox{0.55}{$\textcolor{blue}{1}$} at 0 -5
			\pinlabel \scalebox{0.55}{$\textcolor{red}{2}$} at 24 -5
                \pinlabel \scalebox{0.9}{$s_1(\lambda)$} at 38 8   
			\endlabellist
			\centering
			\includegraphics[scale=1]{./BodrrBulRul}
	}\endxy\quad\quad -
    \xy (0,0)*{
			\labellist
			\small\hair 2pt 
			\pinlabel \scalebox{0.55}{$\textcolor{blue}{1}$} at 12 -5
			\pinlabel \scalebox{0.55}{$\textcolor{blue}{1}$} at 0 -5
			\pinlabel \scalebox{0.55}{$\textcolor{red}{2}$} at 24 -5
                \pinlabel \scalebox{0.9}{$s_1(\lambda)$} at 38 8   
			\endlabellist
			\centering
			\includegraphics[scale=1]{./BdrrBoulRul}
	}\endxy\quad\quad\right),
k_1^{0,1}\left(
    \xy (0,0)*{
			\labellist
			\small\hair 2pt 
			\pinlabel \scalebox{0.55}{$\textcolor{blue}{1}$} at 24 -5
			\pinlabel \scalebox{0.55}{$\textcolor{blue}{1}$} at 0 -5
			\pinlabel \scalebox{0.55}{$\textcolor{red}{2}$} at 12 -5
                \pinlabel \scalebox{0.9}{$s_1(\lambda)$} at 38 8   
			\endlabellist
			\centering
			\includegraphics[scale=1]{./BdrrRulBoul}
	}\endxy\quad\quad -
    \xy (0,0)*{
			\labellist
			\small\hair 2pt 
			\pinlabel \scalebox{0.55}{$\textcolor{blue}{1}$} at 24 -5
			\pinlabel \scalebox{0.55}{$\textcolor{blue}{1}$} at 0 -5
			\pinlabel \scalebox{0.55}{$\textcolor{red}{2}$} at 12 -5
                \pinlabel \scalebox{0.9}{$s_1(\lambda)$} at 38 8   
			\endlabellist
			\centering
			\includegraphics[scale=1]{./BodrrRulBul}
	}\endxy\quad\quad\right)
\right)
\\[2ex]
\xy (0,1)*{
			\labellist
			\small\hair 2pt
			\pinlabel \scalebox{0.7}{$\textcolor{blue}{1}$} at 1 -5
			\pinlabel \scalebox{0.9}{$\lambda$} at 18 8
			\endlabellist 
			\centering 
			\includegraphics[scale=1.3]{./BrCap1}
	}\endxy\;\;\xmapsto{\tilde{\mathcal{T}}_{1,-1}'}
    (-1)^{\lambda_1+1} \xy (0,0)*{
			\labellist
			\small\hair 2pt
			\pinlabel \scalebox{0.7}{$\textcolor{blue}{1}$} at 1 -5
			\pinlabel \scalebox{0.9}{$s_1(\lambda)$} at 22 8
			\endlabellist 
			\centering 
			\includegraphics[scale=1.3]{./BlCap1}
	}\endxy
    \mspace{180mu}
    \xy (0,0)*{
			\labellist
			\small\hair 2pt
			\pinlabel \scalebox{0.7}{$\textcolor{blue}{1}$} at 1 -5
			\pinlabel \scalebox{0.9}{$\lambda$} at 18 8
			\endlabellist 
			\centering 
			\includegraphics[scale=1.3]{./BlCap1}
	}\endxy\;\;\xmapsto{\tilde{\mathcal{T}}_{1,-1}'}
    (-1)^{\lambda_2+1} \xy (0,0)*{
			\labellist
			\small\hair 2pt
			\pinlabel \scalebox{0.7}{$\textcolor{blue}{1}$} at 1 -5
			\pinlabel \scalebox{0.9}{$s_1(\lambda)$} at 22 8
			\endlabellist 
			\centering 
			\includegraphics[scale=1.3]{./BrCap1}
	}\endxy
\\[2ex]
\xy (0,1)*{
			\labellist
			\small\hair 2pt
			\pinlabel \scalebox{0.7}{$\textcolor{blue}{1}$} at 1 9
			\pinlabel \scalebox{0.9}{$\lambda$} at 19 2
			\endlabellist 
			\centering 
			\includegraphics[scale=1.3]{./BrCup1}
	}\endxy\;\;\xmapsto{\tilde{\mathcal{T}}_{1,-1}'}
    (-1)^{\lambda_1} \xy (0,0)*{
			\labellist
			\small\hair 2pt
			\pinlabel \scalebox{0.7}{$\textcolor{blue}{1}$} at 1 9
			\pinlabel \scalebox{0.9}{$s_1(\lambda)$} at 24 2
			\endlabellist 
			\centering 
			\includegraphics[scale=1.3]{./BlCup1}
	}\endxy
   \mspace{190mu}
   \xy (0,0)*{
			\labellist
			\small\hair 2pt
			\pinlabel \scalebox{0.7}{$\textcolor{blue}{1}$} at 1 9
			\pinlabel \scalebox{0.9}{$\lambda$} at 19 2
			\endlabellist 
			\centering 
			\includegraphics[scale=1.3]{./BlCup1}
	}\endxy\;\;\xmapsto{\tilde{\mathcal{T}}_{1,-1}'}
    (-1)^{\lambda_2} \xy (0,0)*{
			\labellist
			\small\hair 2pt
			\pinlabel \scalebox{0.7}{$\textcolor{blue}{1}$} at 1 9
			\pinlabel \scalebox{0.9}{$s_1(\lambda)$} at 24 2
			\endlabellist 
			\centering 
			\includegraphics[scale=1.3]{./BrCup1}
	}\endxy
\\[2ex]
\xy (0,1)*{
			\labellist
			\small\hair 2pt
			\pinlabel \scalebox{0.7}{$\textcolor{red}{2}$} at 1 -5
			\pinlabel \scalebox{0.9}{$\lambda$} at 18 8
			\endlabellist 
			\centering 
			\includegraphics[scale=1.3]{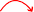}
	}\endxy\;\;\xmapsto{\tilde{\mathcal{T}}_{1,-1}'}
    (-1)^{\overline{\lambda}_2+1}\left(
    k_1^3\;\xy (0,0)*{
			\labellist
			\small\hair 2pt
			\pinlabel \scalebox{0.7}{$\textcolor{red}{2}$} at 0 -5
			\pinlabel \scalebox{0.7}{$\textcolor{blue}{1}$} at 10 -5
			\pinlabel \scalebox{0.9}{$s_1(\lambda)$} at 36 8
			\endlabellist 
			\centering 
			\includegraphics[scale=1.3]{./RrCap3BrCap1}
	}\endxy\quad\;\; -
    k_1^2\;
    \xy (0,0)*{
			\labellist
			\small\hair 2pt
			\pinlabel \scalebox{0.7}{$\textcolor{red}{2}$} at 10 -5
			\pinlabel \scalebox{0.7}{$\textcolor{blue}{1}$} at 0 -5
			\pinlabel \scalebox{0.9}{$s_1(\lambda)$} at 36 8
			\endlabellist 
			\centering 
			\includegraphics[scale=1.3]{./BrCap3RrCap1}
	}\endxy\mspace{27mu}\right)
\\[2ex]
\xy (0,1)*{
			\labellist
			\small\hair 2pt
			\pinlabel \scalebox{0.7}{$\textcolor{red}{2}$} at 1 -5
			\pinlabel \scalebox{0.9}{$\lambda$} at 18 8
			\endlabellist 
			\centering 
			\includegraphics[scale=1.3]{./RlCap1}
	}\endxy\;\;\xmapsto{\tilde{\mathcal{T}}_{1,-1}'}
    (-1)^{\lambda_3}\left(k_1^1\;
    \xy (0,0)*{
			\labellist
			\small\hair 2pt
			\pinlabel \scalebox{0.7}{$\textcolor{red}{2}$} at 0 -5
			\pinlabel \scalebox{0.7}{$\textcolor{blue}{1}$} at 10 -5
			\pinlabel \scalebox{0.9}{$s_1(\lambda)$} at 36 8
			\endlabellist 
			\centering 
			\includegraphics[scale=1.3]{./RlCap3BlCap1Big}
	}\endxy\quad\;\; -
    k_1^2\;
    \xy (0,0)*{
			\labellist
			\small\hair 2pt
			\pinlabel \scalebox{0.7}{$\textcolor{red}{2}$} at 10 -5
			\pinlabel \scalebox{0.7}{$\textcolor{blue}{1}$} at 0 -5
			\pinlabel \scalebox{0.9}{$s_1(\lambda)$} at 36 8
			\endlabellist 
			\centering 
			\includegraphics[scale=1.3]{./BlCap3RlCap1Big}
	}\endxy\mspace{27mu}\right)
\\[2ex]
\xy (0,1)*{
			\labellist
			\small\hair 2pt
			\pinlabel \scalebox{0.7}{$\textcolor{red}{2}$} at 1 10
			\pinlabel \scalebox{0.9}{$\lambda$} at 18 2
			\endlabellist 
			\centering 
			\includegraphics[scale=1.3]{./RlCup1Thin}
	}\endxy\;\;\xmapsto{\tilde{\mathcal{T}}_{1,-1}'}
    (-1)^{\lambda_3}\left(k_1^1\;
    \xy (0,0)*{
			\labellist
			\small\hair 2pt
			\pinlabel \scalebox{0.7}{$\textcolor{red}{2}$} at 0 14
			\pinlabel \scalebox{0.7}{$\textcolor{blue}{1}$} at 10 14
			\pinlabel \scalebox{0.9}{$s_1(\lambda)$} at 36 2
			\endlabellist 
			\centering 
			\includegraphics[scale=1.3]{./RlCup3BlCup1}
	}\endxy\quad\;\; -
   k_1^0\;
    \xy (0,0)*{
			\labellist
			\small\hair 2pt
			\pinlabel \scalebox{0.7}{$\textcolor{red}{2}$} at 10 14
			\pinlabel \scalebox{0.7}{$\textcolor{blue}{1}$} at 0 14
			\pinlabel \scalebox{0.9}{$s_1(\lambda)$} at 36 2
			\endlabellist 
			\centering 
			\includegraphics[scale=1.3]{./BlCup3RlCup1}
	}\endxy\mspace{27mu}\right)
\\[2ex]
\xy (0,1)*{
			\labellist
			\small\hair 2pt
			\pinlabel \scalebox{0.7}{$\textcolor{red}{2}$} at 1 10
			\pinlabel \scalebox{0.9}{$\lambda$} at 18 2
			\endlabellist 
			\centering 
			\includegraphics[scale=1.3]{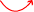}
	}\endxy\;\;\xmapsto{\tilde{\mathcal{T}}_{1,-1}'}
    (-1)^{\overline{\lambda}_2}\left(k_1^3\;
    \xy (0,0)*{
			\labellist
			\small\hair 2pt
			\pinlabel \scalebox{0.7}{$\textcolor{red}{2}$} at 0 14
			\pinlabel \scalebox{0.7}{$\textcolor{blue}{1}$} at 10 14
			\pinlabel \scalebox{0.9}{$s_1(\lambda)$} at 36 2
			\endlabellist 
			\centering 
			\includegraphics[scale=1.3]{./RrCup3BrCup1}
	}\endxy\quad\;\; -
    k_1^0\;
    \xy (0,0)*{
			\labellist
			\small\hair 2pt
			\pinlabel \scalebox{0.7}{$\textcolor{red}{2}$} at 10 14
			\pinlabel \scalebox{0.7}{$\textcolor{blue}{1}$} at 0 14
			\pinlabel \scalebox{0.9}{$s_1(\lambda)$} at 36 2
			\endlabellist 
			\centering 
			\includegraphics[scale=1.3]{./BrCup3RrCup1}
	}\endxy\mspace{27mu}\right)
\end{gather}\endgroup

\end{itemize}

In detail, $\tilde{\mathcal{T}}_{2,1}''$ acts as follows:
\begin{itemize}[wide,labelindent=0pt,itemsep=5pt]
    \item On objects, $\lambda \xmapsto{\tilde{\mathcal{T}}_{2,1}''}  s_2(\lambda)$

    \item On 1-morphisms,
\begin{gather*}
\E_1\oneid_\lambda \xmapsto{\tilde{\mathcal{T}}_{2,1}''} 
\Bigg(\xymatrix{\E_{12}\oneid_{s_2(\lambda)}\langle -1\rangle \ar[rr]^(0.55){\xy (0,4.5)*{
		\labellist
		\small\hair 2pt
		\pinlabel \scalebox{0.7}{$\textcolor{red}{2}$} at 16 -5
		\pinlabel \scalebox{0.7}{$\textcolor{blue}{1}$} at 0 -5
		\pinlabel \scalebox{0.9}{$\lambda$} at 16 7
		\endlabellist
		\centering
		\includegraphics[scale=1.3]{./BurRul}
	}\endxy} & & \underline{\E_{21}\oneid_{s_2(\lambda)}}}\Bigg)
\\[1ex]
\F_1\oneid_\lambda \xmapsto{\tilde{\mathcal{T}}_{2,1}''} 
\Bigg(\xymatrix{\underline{\F_{12}\oneid_{s_2(\lambda)}} \ar[rrrrr]^(0.55){\xy (0,4.5)*{
		\labellist
		\small\hair 2pt
		\pinlabel \scalebox{0.7}{$\textcolor{red}{2}$} at 16 -5
		\pinlabel \scalebox{0.7}{$\textcolor{blue}{1}$} at 0 -5
		\pinlabel \scalebox{0.9}{$\lambda$} at 16 8
            \pinlabel \scalebox{0.9}{$k_3^{2,3}$} at -10 8
            \endlabellist
		\centering
		\includegraphics[scale=1.3]{./BdrRdl}
	}\endxy \mspace{70mu}} & & & & & \F_{21}\oneid_{s_2(\lambda)}\langle 1\rangle}\Bigg)
\\[1ex]
\E_2 \oneid_\lambda\xmapsto{\tilde{\mathcal{T}}_{2,1}''}  \F_2\oneid_{s_2(\lambda)}[-1]\langle\bar{\lambda}_2\rangle
\mspace{60mu}
\F_2\oneid_\lambda\xmapsto{\tilde{\mathcal{T}}_{2,1}''} \E_2\oneid_{s_2(\lambda)}[1]\langle -2-\bar{\lambda}_2\rangle 
\end{gather*}

    \item On non-identity generating 2-morphisms:
\begingroup\allowdisplaybreaks
\begin{gather}
\xy (0,1)*{
			\labellist
			\small\hair 2pt
			\pinlabel \scalebox{0.7}{$\textcolor{blue}{1}$} at 1 -5
			\pinlabel \scalebox{0.9}{$\lambda$} at 6 8
			\endlabellist 
			\centering 
			\includegraphics[scale=1.3]{./Buo}
	}\endxy\;\;\xmapsto{\tilde{\mathcal{T}}_{2,1}''}
    \left(\xy (0,0)*{
		\labellist
		\small\hair 2pt
		\pinlabel \scalebox{0.7}{$\textcolor{blue}{1}$} at 1 -5
		\pinlabel \scalebox{0.7}{$\textcolor{red}{2}$} at 9 -5
       	\pinlabel \scalebox{0.9}{$s_2(\lambda)$} at 20 8
		\endlabellist
		\centering
		\includegraphics[scale=1.3]{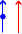}
	}\endxy
    \quad\quad,\xy (0,0)*{
		\labellist
		\small\hair 2pt
		\pinlabel \scalebox{0.7}{$\textcolor{blue}{1}$} at 9 -5
		\pinlabel \scalebox{0.7}{$\textcolor{red}{2}$} at 1 -5
       	\pinlabel \scalebox{0.9}{$s_2(\lambda)$} at 20 8
		\endlabellist
		\centering
		\includegraphics[scale=1.3]{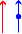}
	}\endxy\quad\quad
    \right)
\mspace{90mu}
\xy (0,0)*{
			\labellist
			\small\hair 2pt
			\pinlabel \scalebox{0.7}{$\textcolor{red}{2}$} at 1 -5
			\pinlabel \scalebox{0.9}{$\lambda$} at 6 8
			\endlabellist 
			\centering 
			\includegraphics[scale=1.3]{./Ruo}
	}\endxy\;\;\xmapsto{\tilde{\mathcal{T}}_{2,1}''}
    \,\xy (0,0)*{
			\labellist
			\small\hair 2pt
			\pinlabel \scalebox{0.7}{$\textcolor{red}{2}$} at 1 -5
			\pinlabel \scalebox{0.9}{$s_2(\lambda)$} at 12 8
			\endlabellist 
			\centering 
			\includegraphics[scale=1.3]{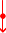}
	}\endxy
\mspace{90mu}
    \xy (0,0)*{
			\labellist
			\small\hair 2pt
			\pinlabel \scalebox{0.7}{$\textcolor{red}{2}$} at 0 -5
			\pinlabel \scalebox{0.7}{$\textcolor{red}{2}$} at 16 -5
			\pinlabel \scalebox{0.9}{$\lambda$} at 18 8
			\endlabellist 
			\centering 
			\includegraphics[scale=1.3]{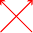}
	}\endxy\;\;\xmapsto{\tilde{\mathcal{T}}_{2,1}''}
    -\xy (0,0)*{
			\labellist
			\small\hair 2pt
			\pinlabel \scalebox{0.7}{$\textcolor{red}{2}$} at 0 -5
			\pinlabel \scalebox{0.7}{$\textcolor{red}{2}$} at 16 -5
			\pinlabel \scalebox{0.9}{$s_2(\lambda)$} at 24 8
			\endlabellist 
			\centering 
			\includegraphics[scale=1.3]{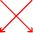}
	}\endxy
\\[2ex] \label{eq:temp-eq}
\xy (0,1)*{
			\labellist
			\small\hair 2pt
			\pinlabel \scalebox{0.7}{$\textcolor{blue}{1}$} at 0 -5
			\pinlabel \scalebox{0.7}{$\textcolor{blue}{1}$} at 16 -5
			\pinlabel \scalebox{0.9}{$\lambda$} at 18 8
			\endlabellist 
			\centering 
			\includegraphics[scale=1.3]{./BurBul}
	}\endxy\;\; \xmapsto{\tilde{\mathcal{T}}_{2,1}''} 
    \bigg(\;\xy (0,0)*{
			\labellist
			\small\hair 2pt 
			\pinlabel \scalebox{0.55}{$\textcolor{blue}{1}$} at 0 -5
			\pinlabel \scalebox{0.55}{$\textcolor{red}{2}$} at 8 -5
			\pinlabel \scalebox{0.55}{$\textcolor{red}{2}$} at 32 -5
			\pinlabel \scalebox{0.55}{$\textcolor{blue}{1}$} at 24 -5
                \pinlabel \scalebox{0.9}{$s_2(\lambda)$} at 40 8   
			\endlabellist
			\centering
			\includegraphics[scale=1]{./BurrRurrBullRull}
	}\endxy
    \quad\quad, 
    k_3^{1,3}\xy (0,0)*{
			\labellist
			\small\hair 2pt 
			\pinlabel \scalebox{0.55}{$\textcolor{blue}{1}$} at 8 -5
			\pinlabel \scalebox{0.55}{$\textcolor{red}{2}$} at 0 -5
			\pinlabel \scalebox{0.55}{$\textcolor{red}{2}$} at 32 -5
			\pinlabel \scalebox{0.55}{$\textcolor{blue}{1}$} at 24 -5
                \pinlabel \scalebox{0.9}{$s_2(\lambda)$} at 40 8   
			\endlabellist
			\centering
			\includegraphics[scale=1]{./RurrBurrBullRull}
	}\endxy
    \quad\quad+ k_3^{1,3}\xy (0,0)*{
			\labellist
			\small\hair 2pt 
			\pinlabel \scalebox{0.55}{$\textcolor{blue}{1}$} at 0 -5
			\pinlabel \scalebox{0.55}{$\textcolor{red}{2}$} at 8 -5
			\pinlabel \scalebox{0.55}{$\textcolor{red}{2}$} at 24 -5
			\pinlabel \scalebox{0.55}{$\textcolor{blue}{1}$} at 32 -5
                \pinlabel \scalebox{0.9}{$s_2(\lambda)$} at 40 8   
			\endlabellist
			\centering
			\includegraphics[scale=1]{./BurrRurrRullBull}
	}\endxy  
\\ \nonumber
      +\;\; \xy (0,0)*{
				\labellist
				\small\hair 2pt
				\pinlabel \scalebox{0.55}{$\textcolor{blue}{1}$} at 8 -5
				\pinlabel \scalebox{0.55}{$\textcolor{red}{2}$} at 1 -5
				\pinlabel \scalebox{0.55}{$\textcolor{red}{2}$} at 31 -5
				\pinlabel \scalebox{0.55}{$\textcolor{blue}{1}$} at 24 -5
                    \pinlabel \scalebox{0.9}{$s_2(\lambda)$} at 44 8    
				\endlabellist
				\centering
				\includegraphics[scale=1]{./RuBurBulRu}
		}\endxy
        \quad\quad + \xy (0,0)*{
				\labellist
				\small\hair 2pt
				\pinlabel \scalebox{0.55}{$\textcolor{blue}{1}$} at 1 -5
				\pinlabel \scalebox{0.55}{$\textcolor{red}{2}$} at 8 -5
				\pinlabel \scalebox{0.55}{$\textcolor{red}{2}$} at 24 -5
				\pinlabel \scalebox{0.55}{$\textcolor{blue}{1}$} at 31 -5
                    \pinlabel \scalebox{0.9}{$s_2(\lambda)$} at 44 8    
				\endlabellist
				\centering
				\includegraphics[scale=1]{./BuRurRulBu}
		}\endxy
        \quad\quad, -\xy (0,0)*{
			\labellist
			\small\hair 2pt 
			\pinlabel \scalebox{0.55}{$\textcolor{blue}{1}$} at 8 -5
			\pinlabel \scalebox{0.55}{$\textcolor{red}{2}$} at 0 -5
			\pinlabel \scalebox{0.55}{$\textcolor{red}{2}$} at 24 -5
			\pinlabel \scalebox{0.55}{$\textcolor{blue}{1}$} at 32 -5
                \pinlabel \scalebox{0.9}{$s_2(\lambda)$} at 40 8   
			\endlabellist
			\centering
			\includegraphics[scale=1]{./RurrBurrRullBull}
	}\endxy\quad\;\;\;
    \bigg)
\\[2ex]
\xy (0,1)*{
			\labellist
			\small\hair 2pt
			\pinlabel \scalebox{0.7}{$\textcolor{red}{2}$} at 0 -5
			\pinlabel \scalebox{0.7}{$\textcolor{blue}{1}$} at 16 -5
			\pinlabel \scalebox{0.9}{$\lambda$} at 18 8
			\endlabellist 
			\centering 
			\includegraphics[scale=1.3]{./RurBul}
	}\endxy\;\;\xmapsto{\tilde{\mathcal{T}}_{2,1}''}
    \left(
    \xy (0,0)*{
			\labellist
			\small\hair 2pt 
			\pinlabel \scalebox{0.55}{$\textcolor{blue}{1}$} at 12 -5
			\pinlabel \scalebox{0.55}{$\textcolor{red}{2}$} at 0 -5
			\pinlabel \scalebox{0.55}{$\textcolor{red}{2}$} at 24 -5
                \pinlabel \scalebox{0.9}{$s_2(\lambda)$} at 38 8   
			\endlabellist
			\centering
			\includegraphics[scale=1]{./RdrrBulRul}
	}\endxy\quad\quad\;,
    -\xy (0,0)*{
			\labellist
			\small\hair 2pt 
			\pinlabel \scalebox{0.55}{$\textcolor{blue}{1}$} at 24 -5
			\pinlabel \scalebox{0.55}{$\textcolor{red}{2}$} at 12 -5
			\pinlabel \scalebox{0.55}{$\textcolor{red}{2}$} at 0 -5
                \pinlabel \scalebox{0.9}{$s_2(\lambda)$} at 38 8   
			\endlabellist
			\centering
			\includegraphics[scale=1]{./RdrrRulBul}
	}\endxy\mspace{37mu}
    \right)
\\[2ex]
       \xy (0,1)*{
			\labellist
			\small\hair 2pt
			\pinlabel \scalebox{0.7}{$\textcolor{red}{2}$} at 16 -5
			\pinlabel \scalebox{0.7}{$\textcolor{blue}{1}$} at 0 -5
			\pinlabel \scalebox{0.9}{$\lambda$} at 18 8
			\endlabellist 
			\centering 
			\includegraphics[scale=1.3]{./BurRul}
	}\endxy\;\;\xmapsto{\tilde{\mathcal{T}}_{2,1}''}
    \left(\;
    \xy (0,0)*{
			\labellist
			\small\hair 2pt 
			\pinlabel \scalebox{0.55}{$\textcolor{red}{2}$} at 12 -5
			\pinlabel \scalebox{0.55}{$\textcolor{blue}{1}$} at 0 -5
			\pinlabel \scalebox{0.55}{$\textcolor{red}{2}$} at 24 -5
                \pinlabel \scalebox{0.9}{$s_2(\lambda)$} at 38 8   
			\endlabellist
			\centering
			\includegraphics[scale=1]{./BurRurRodll}
	}\endxy\quad\quad -
    \xy (0,0)*{
			\labellist
			\small\hair 2pt 
			\pinlabel \scalebox{0.55}{$\textcolor{red}{2}$} at 12 -5
			\pinlabel \scalebox{0.55}{$\textcolor{blue}{1}$} at 0 -5
			\pinlabel \scalebox{0.55}{$\textcolor{red}{2}$} at 24 -5
                \pinlabel \scalebox{0.9}{$s_2(\lambda)$} at 38 8   
			\endlabellist
			\centering
			\includegraphics[scale=1]{./BurRourRdll}
	}\endxy\quad\quad,
    \xy (0,0)*{
			\labellist
			\small\hair 2pt 
			\pinlabel \scalebox{0.55}{$\textcolor{blue}{1}$} at 12 -5
			\pinlabel \scalebox{0.55}{$\textcolor{red}{2}$} at 0 -5
			\pinlabel \scalebox{0.55}{$\textcolor{red}{2}$} at 24 -5
                \pinlabel \scalebox{0.9}{$s_2(\lambda)$} at 38 8   
			\endlabellist
			\centering
			\includegraphics[scale=1]{./RourBurRdll}
	}\endxy\quad\quad -
    \xy (0,0)*{
			\labellist
			\small\hair 2pt 
			\pinlabel \scalebox{0.55}{$\textcolor{blue}{1}$} at 12 -5
			\pinlabel \scalebox{0.55}{$\textcolor{red}{2}$} at 0 -5
			\pinlabel \scalebox{0.55}{$\textcolor{red}{2}$} at 24 -5
                \pinlabel \scalebox{0.9}{$s_2(\lambda)$} at 38 8   
			\endlabellist
			\centering
			\includegraphics[scale=1]{./RurBurRodll}
	}\endxy\quad\quad
\right)
\\[2ex]
\xy (0,1)*{
			\labellist
			\small\hair 2pt
			\pinlabel \scalebox{0.7}{$\textcolor{red}{2}$} at 1 -5
			\pinlabel \scalebox{0.9}{$\lambda$} at 18 8
			\endlabellist 
			\centering 
			\includegraphics[scale=1.3]{./RrCap1Thin}
	}\endxy\;\;\xmapsto{\tilde{\mathcal{T}}_{2,1}''}
    (-1)^{\lambda_2+1} \xy (0,0)*{
			\labellist
			\small\hair 2pt
			\pinlabel \scalebox{0.7}{$\textcolor{red}{2}$} at 1 -5
			\pinlabel \scalebox{0.9}{$s_2(\lambda)$} at 22 8
			\endlabellist 
			\centering 
			\includegraphics[scale=1.3]{./RlCap1}
	}\endxy
\mspace{160mu}
\xy (0,0)*{
			\labellist
			\small\hair 2pt
			\pinlabel \scalebox{0.7}{$\textcolor{red}{2}$} at 1 -5
			\pinlabel \scalebox{0.9}{$\lambda$} at 18 8
			\endlabellist 
			\centering 
			\includegraphics[scale=1.3]{./RlCap1}
	}\endxy\;\;\xmapsto{\tilde{\mathcal{T}}_{2,1}''}
    (-1)^{\lambda_3+1} \xy (0,0)*{
			\labellist
			\small\hair 2pt
			\pinlabel \scalebox{0.7}{$\textcolor{red}{2}$} at 1 -5
			\pinlabel \scalebox{0.9}{$s_2(\lambda)$} at 22 8
			\endlabellist 
			\centering 
			\includegraphics[scale=1.3]{./RrCap1Thin}
	}\endxy
\\[2ex]
\xy (0,1)*{
			\labellist
			\small\hair 2pt
			\pinlabel \scalebox{0.7}{$\textcolor{red}{2}$} at 1 10
			\pinlabel \scalebox{0.9}{$\lambda$} at 19 2
			\endlabellist 
			\centering 
			\includegraphics[scale=1.3]{./RrCup1Thin}
	}\endxy\;\;\xmapsto{\tilde{\mathcal{T}}_{2,1}''}
    (-1)^{\lambda_2} \xy (0,0)*{
			\labellist
			\small\hair 2pt
			\pinlabel \scalebox{0.7}{$\textcolor{red}{2}$} at 1 10
			\pinlabel \scalebox{0.9}{$s_2(\lambda)$} at 24 2
			\endlabellist 
			\centering 
			\includegraphics[scale=1.3]{./RlCup1Thin}
	}\endxy
\mspace{160mu}
\xy (0,0)*{
			\labellist
			\small\hair 2pt
			\pinlabel \scalebox{0.7}{$\textcolor{red}{2}$} at 1 10
			\pinlabel \scalebox{0.9}{$\lambda$} at 19 2
			\endlabellist 
			\centering 
			\includegraphics[scale=1.3]{./RlCup1Thin}
	}\endxy\;\;\xmapsto{\tilde{\mathcal{T}}_{2,1}''}
    (-1)^{\lambda_3} \xy (0,0)*{
			\labellist
			\small\hair 2pt
			\pinlabel \scalebox{0.7}{$\textcolor{red}{2}$} at 1 10
			\pinlabel \scalebox{0.9}{$s_2(\lambda)$} at 24 2
			\endlabellist 
			\centering 
			\includegraphics[scale=1.3]{./RrCup1Thin}
	}\endxy
\\[2ex]
\xy (0,0)*{
			\labellist
			\small\hair 2pt
			\pinlabel \scalebox{0.7}{$\textcolor{blue}{1}$} at 1 -5
			\pinlabel \scalebox{0.9}{$\lambda$} at 14 10
			\endlabellist 
			\centering 
			\includegraphics[scale=1]{./BrCap1}
	}\endxy\xmapsto{\tilde{\mathcal{T}}_{2,1}''}
    -k_1^0\left(k_3^3\;
    \xy (0,0)*{
			\labellist
			\small\hair 2pt
			\pinlabel \scalebox{0.7}{$\textcolor{red}{2}$} at 0 -5
			\pinlabel \scalebox{0.7}{$\textcolor{blue}{1}$} at 10 -5
			\pinlabel \scalebox{0.9}{$s_2(\lambda)$} at 22 16
			\endlabellist 
			\centering 
			\includegraphics[scale=1]{./RrCap3BrCap1}
	}\endxy -
    k_3^2\;
    \xy (0,0)*{
			\labellist
			\small\hair 2pt
			\pinlabel \scalebox{0.7}{$\textcolor{red}{2}$} at 10 -5
			\pinlabel \scalebox{0.7}{$\textcolor{blue}{1}$} at 0 -5
			\pinlabel \scalebox{0.9}{$s_2(\lambda)$} at 22 16
			\endlabellist 
			\centering 
			\includegraphics[scale=1]{./BrCap3RrCap1}
	}\endxy\right)
\\[2ex]
\xy (0,0)*{
			\labellist
			\small\hair 2pt
			\pinlabel \scalebox{0.7}{$\textcolor{blue}{1}$} at 1 -5
			\pinlabel \scalebox{0.9}{$\lambda$} at 14 10
			\endlabellist 
			\centering 
			\includegraphics[scale=1]{./BlCap1}
	}\endxy\xmapsto{\tilde{\mathcal{T}}_{2,1}''}(-1)^{\lambda_1}k_1^2\left(
    k_3^1\;
    \xy (0,0)*{
			\labellist
			\small\hair 2pt
			\pinlabel \scalebox{0.7}{$\textcolor{red}{2}$} at 0 -5
			\pinlabel \scalebox{0.7}{$\textcolor{blue}{1}$} at 10 -5
			\pinlabel \scalebox{0.9}{$s_2(\lambda)$} at 22 16
			\endlabellist 
			\centering 
			\includegraphics[scale=1]{./RlCap3BlCap1Big}
	}\endxy -
    k_3^2\;
    \xy (0,0)*{
			\labellist
			\small\hair 2pt
			\pinlabel \scalebox{0.7}{$\textcolor{red}{2}$} at 10 -5
			\pinlabel \scalebox{0.7}{$\textcolor{blue}{1}$} at 0 -5
			\pinlabel \scalebox{0.9}{$s_2(\lambda)$} at 22 16
			\endlabellist 
			\centering 
			\includegraphics[scale=1]{./BlCap3RlCap1Big}
	}\endxy\;\right)
\\[2ex]
\xy (0,0)*{
			\labellist
			\small\hair 2pt
			\pinlabel \scalebox{0.7}{$\textcolor{blue}{1}$} at 1 9
			\pinlabel \scalebox{0.9}{$\lambda$} at 12 -4
			\endlabellist 
			\centering 
			\includegraphics[scale=1]{./BlCup1}
	}\endxy\xmapsto{\tilde{\mathcal{T}}_{2,1}''}
    (-1)^{\lambda_1}k_1^0\left( k_3^1\;
    \xy (0,0)*{
			\labellist
			\small\hair 2pt
			\pinlabel \scalebox{0.7}{$\textcolor{red}{2}$} at 0 14
			\pinlabel \scalebox{0.7}{$\textcolor{blue}{1}$} at 10 14
			\pinlabel \scalebox{0.9}{$s_2(\lambda)$} at 22 -6
			\endlabellist 
			\centering 
			\includegraphics[scale=1]{./RlCup3BlCup1}
	}\endxy -
    k_3^0\;
    \xy (0,0)*{
			\labellist
			\small\hair 2pt
			\pinlabel \scalebox{0.7}{$\textcolor{red}{2}$} at 10 14
			\pinlabel \scalebox{0.7}{$\textcolor{blue}{1}$} at 0 14
			\pinlabel \scalebox{0.9}{$s_2(\lambda)$} at 22 -6
			\endlabellist 
			\centering 
			\includegraphics[scale=1]{./BlCup3RlCup1}
	}\endxy\;\right)
\\[2ex]
\xy (0,0)*{
			\labellist
			\small\hair 2pt
			\pinlabel \scalebox{0.7}{$\textcolor{blue}{1}$} at 1 9
			\pinlabel \scalebox{0.9}{$\lambda$} at 12 -4
			\endlabellist 
			\centering 
			\includegraphics[scale=1]{./BrCup1}
	}\endxy\xmapsto{\tilde{\mathcal{T}}_{2,1}''}
     k_1^2\left(k_3^3\;
    \xy (0,0)*{
			\labellist
			\small\hair 2pt
			\pinlabel \scalebox{0.7}{$\textcolor{red}{2}$} at 0 14
			\pinlabel \scalebox{0.7}{$\textcolor{blue}{1}$} at 10 14
			\pinlabel \scalebox{0.9}{$s_2(\lambda)$} at 22 -6
			\endlabellist 
			\centering 
			\includegraphics[scale=1]{./RrCup3BrCup1}
	}\endxy -
    k_3^0\;
    \xy (0,0)*{
			\labellist
			\small\hair 2pt
			\pinlabel \scalebox{0.7}{$\textcolor{red}{2}$} at 10 14
			\pinlabel \scalebox{0.7}{$\textcolor{blue}{1}$} at 0 14
			\pinlabel \scalebox{0.9}{$s_2(\lambda)$} at 22 -6
			\endlabellist 
			\centering 
			\includegraphics[scale=1]{./BrCup3RrCup1}
	}\endxy\right)
\end{gather}
\endgroup
 
\end{itemize}
\eqskip

\subsection{Relating the two actions}
We define a 2-automorphism $\beta:\naffu{3}\to\naffu{3}$, which is the identity on objects, 1-morphisms and all generating 2-morphisms with the exception of:
\begin{align*}
\xy (0,0)*{
			\labellist
			\small\hair 2pt
			\pinlabel \scalebox{0.7}{$\textcolor{blue}{1}$} at 1 -5
			\pinlabel \scalebox{0.9}{$\lambda$} at 18 8
			\endlabellist 
			\centering 
			\includegraphics[scale=1.3]{./BrCap1}
	}\endxy\;\;
    &\overset{\beta}{\mapsto}
    -k_1^2\; \xy (0,0)*{
			\labellist
			\small\hair 2pt
			\pinlabel \scalebox{0.7}{$\textcolor{blue}{1}$} at 1 -5
			\pinlabel \scalebox{0.9}{$\lambda$} at 18 8
			\endlabellist 
			\centering 
			\includegraphics[scale=1.3]{./BrCap1}
	}\endxy
    &
\xy (0,0)*{
			\labellist
			\small\hair 2pt
			\pinlabel \scalebox{0.7}{$\textcolor{blue}{1}$} at 1 9
			\pinlabel \scalebox{0.9}{$\lambda$} at 18 2
			\endlabellist 
			\centering 
			\includegraphics[scale=1.3]{./BlCup1}
	}\endxy\;\;
    &\overset{\beta}{\mapsto}
    -k_1^2\; \xy (0,0)*{
			\labellist
			\small\hair 2pt
			\pinlabel \scalebox{0.7}{$\textcolor{blue}{1}$} at 1 9
			\pinlabel \scalebox{0.9}{$\lambda$} at 18 2
			\endlabellist 
			\centering 
			\includegraphics[scale=1.3]{./BlCup1}
	}\endxy
   \\[1ex]
   \xy (0,0)*{
			\labellist
			\small\hair 2pt
			\pinlabel \scalebox{0.7}{$\textcolor{blue}{1}$} at 1 -5
			\pinlabel \scalebox{0.9}{$\lambda$} at 18 8
			\endlabellist 
			\centering 
			\includegraphics[scale=1.3]{./BlCap1}
	}\endxy\;\;
    &\overset{\beta}{\mapsto}
    -k_1^0\; \xy (0,0)*{
			\labellist
			\small\hair 2pt
			\pinlabel \scalebox{0.7}{$\textcolor{blue}{1}$} at 1 -5
			\pinlabel \scalebox{0.9}{$\lambda$} at 18 8
			\endlabellist 
			\centering 
			\includegraphics[scale=1.3]{./BlCap1}
	}\endxy
 &
    \xy (0,0)*{
			\labellist
			\small\hair 2pt
			\pinlabel \scalebox{0.7}{$\textcolor{blue}{1}$} at 1 9
			\pinlabel \scalebox{0.9}{$\lambda$} at 18 2
			\endlabellist 
			\centering 
			\includegraphics[scale=1.3]{./BrCup1}
	}\endxy\;\;&\overset{\beta}{\mapsto}
    -k_1^0\; \xy (0,0)*{
			\labellist
			\small\hair 2pt
			\pinlabel \scalebox{0.7}{$\textcolor{blue}{1}$} at 1 9
			\pinlabel \scalebox{0.9}{$\lambda$} at 18 2
			\endlabellist 
			\centering 
			\includegraphics[scale=1.3]{./BrCup1}
	}\endxy
\end{align*}
\eqskip
It is a straightforward calculation to confirm that this preserves the axioms KM1-3 and KM7-9 and is therefore a well-defined 2-automorphism.\par

We also define a 2-automorphism $\alpha:\naffu{3}\to \naffu{3}$ which is the identity on all objects and 1-morphisms and on all generating 2-morphisms except:
\begin{equation*}
    \xy (0,0)*{
			\labellist
			\small\hair 2pt
			\pinlabel \scalebox{0.7}{$\textcolor{red}{2}$} at 0 -5
			\pinlabel \scalebox{0.7}{$\textcolor{blue}{1}$} at 16 -5
			\pinlabel \scalebox{0.9}{$\lambda$} at 18 8
			\endlabellist 
			\centering 
			\includegraphics[scale=1.3]{./RurBul}
	}\endxy\;\;\overset{\alpha}{\mapsto}
    k_1^{0,1}\;
    \xy (0,0)*{
			\labellist
			\small\hair 2pt
			\pinlabel \scalebox{0.7}{$\textcolor{red}{2}$} at 0 -5
			\pinlabel \scalebox{0.7}{$\textcolor{blue}{1}$} at 16 -5
			\pinlabel \scalebox{0.9}{$\lambda$} at 18 8
			\endlabellist 
			\centering 
			\includegraphics[scale=1.3]{./RurBul}
	}\endxy
\mspace{120mu}
        \xy (0,0)*{
			\labellist
			\small\hair 2pt
			\pinlabel \scalebox{0.7}{$\textcolor{red}{2}$} at 16 -5
			\pinlabel \scalebox{0.7}{$\textcolor{blue}{1}$} at 0 -5
			\pinlabel \scalebox{0.9}{$\lambda$} at 18 8
			\endlabellist 
			\centering 
			\includegraphics[scale=1.3]{./BurRul}
	}\endxy\;\;\overset{\alpha}{\mapsto}
    k_1^{0,1}\;
    \xy (0,0)*{
			\labellist
			\small\hair 2pt
			\pinlabel \scalebox{0.7}{$\textcolor{red}{2}$} at 16 -5
			\pinlabel \scalebox{0.7}{$\textcolor{blue}{1}$} at 0 -5
			\pinlabel \scalebox{0.9}{$\lambda$} at 18 8
			\endlabellist 
			\centering 
			\includegraphics[scale=1.3]{./BurRul}
	}\endxy
\end{equation*}
\eqskip
It is straightforward to see that $\alpha$ preserves axioms KM3-7, and is therefore a 2-automorphism. We also use $\alpha$ to denote its extension to a 2-automorphism of $K^b(\naffu{3})$.

We now define some notation for ease of stating the following lemma. Let $D_i(\lambda)$ denote a 2-morphism diagram with strands mono-coloured in colour $i$ such that the weight to the right of the diagram is $\lambda$.

\begin{lem}\label{lem:DiagMatch}
For any diagram $D_i(\lambda)$, 
$$\tilde{\mathcal{T}}_{1,-1}'(D_2(s_1(\lambda))) \sim_h \alpha\tilde{\mathcal{T}}_{2,1}''(\beta(D_1(s_2(\lambda)))).$$
\end{lem}
\begin{proof}
The proof follows from calculations that, while not strictly complicated, are liable to confuse. We therefore present the calculations below.

\begin{align*}
\alpha\tilde{\mathcal{T}}_{2,1}''\left(\beta\left(\xy (0,0)*{
			\labellist
			\small\hair 2pt
			\pinlabel \scalebox{0.7}{$\textcolor{blue}{1}$} at 0 -5
			\pinlabel \scalebox{0.7}{$\textcolor{blue}{1}$} at 16 -5
			\pinlabel \scalebox{0.9}{$s_2(\lambda)$} at 22 8
			\endlabellist 
			\centering 
			\includegraphics[scale=1.3]{./BurBul}
	}\endxy\quad\;\;\right)\right)  
    = 
    \alpha\tilde{\mathcal{T}}_{2,1}''\left(\xy (0,0)*{
			\labellist
			\small\hair 2pt
			\pinlabel \scalebox{0.7}{$\textcolor{blue}{1}$} at 0 -5
			\pinlabel \scalebox{0.7}{$\textcolor{blue}{1}$} at 16 -5
			\pinlabel \scalebox{0.9}{$s_2(\lambda)$} at 22 8
			\endlabellist 
			\centering 
			\includegraphics[scale=1.3]{./BurBul}
	}\endxy\quad\;\;\right) \mspace{-180mu}& \\[1ex]
     = & \alpha\Big(\xy (0,0)*{
			\labellist
			\small\hair 2pt 
			\pinlabel \scalebox{0.55}{$\textcolor{blue}{1}$} at 0 -5
			\pinlabel \scalebox{0.55}{$\textcolor{red}{2}$} at 8 -5
			\pinlabel \scalebox{0.55}{$\textcolor{red}{2}$} at 32 -5
			\pinlabel \scalebox{0.55}{$\textcolor{blue}{1}$} at 24 -5
                \pinlabel \scalebox{0.9}{$\lambda$} at 36 8   
			\endlabellist
			\centering
			\includegraphics[scale=1]{./BurrRurrBullRull}
	}\endxy
    \;\;\;\;, 
    k_3^{1,3}(s_2(\lambda))\xy (0,0)*{
			\labellist
			\small\hair 2pt 
			\pinlabel \scalebox{0.55}{$\textcolor{blue}{1}$} at 8 -5
			\pinlabel \scalebox{0.55}{$\textcolor{red}{2}$} at 0 -5
			\pinlabel \scalebox{0.55}{$\textcolor{red}{2}$} at 32 -5
			\pinlabel \scalebox{0.55}{$\textcolor{blue}{1}$} at 24 -5
                \pinlabel \scalebox{0.9}{$\lambda$} at 34 8   
			\endlabellist
			\centering
			\includegraphics[scale=1]{./RurrBurrBullRull}
	}\endxy
    \;\;\;+  k_3^{1,3}(s_2(\lambda))\xy (0,0)*{
			\labellist
			\small\hair 2pt 
			\pinlabel \scalebox{0.55}{$\textcolor{blue}{1}$} at 0 -5
			\pinlabel \scalebox{0.55}{$\textcolor{red}{2}$} at 8 -5
			\pinlabel \scalebox{0.55}{$\textcolor{red}{2}$} at 24 -5
			\pinlabel \scalebox{0.55}{$\textcolor{blue}{1}$} at 32 -5
                \pinlabel \scalebox{0.9}{$\lambda$} at 34 8   
			\endlabellist
			\centering
			\includegraphics[scale=1]{./BurrRurrRullBull}
	}\endxy \\
    &  + \xy (0,0)*{
				\labellist
				\small\hair 2pt
				\pinlabel \scalebox{0.55}{$\textcolor{blue}{1}$} at 8 -5
				\pinlabel \scalebox{0.55}{$\textcolor{red}{2}$} at 1 -5
				\pinlabel \scalebox{0.55}{$\textcolor{red}{2}$} at 31 -5
				\pinlabel \scalebox{0.55}{$\textcolor{blue}{1}$} at 24 -5
                    \pinlabel \scalebox{0.9}{$\lambda$} at 40 8    
				\endlabellist
				\centering
				\includegraphics[scale=1]{./RuBurBulRu}
		}\endxy
        \;\;\; + \xy (0,0)*{
				\labellist
				\small\hair 2pt
				\pinlabel \scalebox{0.55}{$\textcolor{blue}{1}$} at 1 -5
				\pinlabel \scalebox{0.55}{$\textcolor{red}{2}$} at 8 -5
				\pinlabel \scalebox{0.55}{$\textcolor{red}{2}$} at 24 -5
				\pinlabel \scalebox{0.55}{$\textcolor{blue}{1}$} at 31 -5
                    \pinlabel \scalebox{0.9}{$\lambda$} at 40 8    
				\endlabellist
				\centering
				\includegraphics[scale=1]{./BuRurRulBu}
		}\endxy
        \;\;\;\;, -\xy (0,0)*{
			\labellist
			\small\hair 2pt 
			\pinlabel \scalebox{0.55}{$\textcolor{blue}{1}$} at 8 -5
			\pinlabel \scalebox{0.55}{$\textcolor{red}{2}$} at 0 -5
			\pinlabel \scalebox{0.55}{$\textcolor{red}{2}$} at 24 -5
			\pinlabel \scalebox{0.55}{$\textcolor{blue}{1}$} at 32 -5
                \pinlabel \scalebox{0.9}{$\lambda$} at 36 8   
			\endlabellist
			\centering
			\includegraphics[scale=1]{./RurrBurrRullBull}
	}\endxy\;\;
    \Big) \\
    & \\
    = &
    \Big(\xy (0,0)*{
			\labellist
			\small\hair 2pt 
			\pinlabel \scalebox{0.55}{$\textcolor{blue}{1}$} at 0 -5
			\pinlabel \scalebox{0.55}{$\textcolor{red}{2}$} at 8 -5
			\pinlabel \scalebox{0.55}{$\textcolor{red}{2}$} at 32 -5
			\pinlabel \scalebox{0.55}{$\textcolor{blue}{1}$} at 24 -5
                \pinlabel \scalebox{0.9}{$\lambda$} at 36 8   
			\endlabellist
			\centering
			\includegraphics[scale=1]{./BurrRurrBullRull}
	}\endxy
    \;\;\;,
    -\xy (0,0)*{
			\labellist
			\small\hair 2pt 
			\pinlabel \scalebox{0.55}{$\textcolor{blue}{1}$} at 8 -5
			\pinlabel \scalebox{0.55}{$\textcolor{red}{2}$} at 0 -5
			\pinlabel \scalebox{0.55}{$\textcolor{red}{2}$} at 32 -5
			\pinlabel \scalebox{0.55}{$\textcolor{blue}{1}$} at 24 -5
                \pinlabel \scalebox{0.9}{$\lambda$} at 36 8   
			\endlabellist
			\centering
			\includegraphics[scale=1]{./RurrBurrBullRull}
	}\endxy
    \;\;\; -\xy (0,0)*{
			\labellist
			\small\hair 2pt 
			\pinlabel \scalebox{0.55}{$\textcolor{blue}{1}$} at 0 -5
			\pinlabel \scalebox{0.55}{$\textcolor{red}{2}$} at 8 -5
			\pinlabel \scalebox{0.55}{$\textcolor{red}{2}$} at 24 -5
			\pinlabel \scalebox{0.55}{$\textcolor{blue}{1}$} at 32 -5
                \pinlabel \scalebox{0.9}{$\lambda$} at 36 8   
			\endlabellist
			\centering
			\includegraphics[scale=1]{./BurrRurrRullBull}
	}\endxy
    \;\;\; + \xy (0,0)*{
				\labellist
				\small\hair 2pt
				\pinlabel \scalebox{0.55}{$\textcolor{blue}{1}$} at 8 -5
				\pinlabel \scalebox{0.55}{$\textcolor{red}{2}$} at 1 -5
				\pinlabel \scalebox{0.55}{$\textcolor{red}{2}$} at 31 -5
				\pinlabel \scalebox{0.55}{$\textcolor{blue}{1}$} at 24 -5
                    \pinlabel \scalebox{0.9}{$\lambda$} at 40 8    
				\endlabellist
				\centering
				\includegraphics[scale=1]{./RuBurBulRu}
		}\endxy
        \;\;\; + \xy (0,0)*{
				\labellist
				\small\hair 2pt
				\pinlabel \scalebox{0.55}{$\textcolor{blue}{1}$} at 1 -5
				\pinlabel \scalebox{0.55}{$\textcolor{red}{2}$} at 8 -5
				\pinlabel \scalebox{0.55}{$\textcolor{red}{2}$} at 24 -5
				\pinlabel \scalebox{0.55}{$\textcolor{blue}{1}$} at 31 -5
                    \pinlabel \scalebox{0.9}{$\lambda$} at 40 8    
				\endlabellist
				\centering
				\includegraphics[scale=1]{./BuRurRulBu}
		}\endxy
        \;\;\;, -\xy (0,0)*{
			\labellist
			\small\hair 2pt 
			\pinlabel \scalebox{0.55}{$\textcolor{blue}{1}$} at 8 -5
			\pinlabel \scalebox{0.55}{$\textcolor{red}{2}$} at 0 -5
			\pinlabel \scalebox{0.55}{$\textcolor{red}{2}$} at 24 -5
			\pinlabel \scalebox{0.55}{$\textcolor{blue}{1}$} at 32 -5
                \pinlabel \scalebox{0.9}{$\lambda$} at 36 8   
			\endlabellist
			\centering
			\includegraphics[scale=1]{./RurrBurrRullBull}
	}\endxy\;\;
    \Big) \\
    & \\
    = & \tilde{\mathcal{T}}_{1,-1}'\left(\xy (0,0)*{
			\labellist
			\small\hair 2pt
			\pinlabel \scalebox{0.7}{$\textcolor{red}{2}$} at 0 -5
			\pinlabel \scalebox{0.7}{$\textcolor{red}{2}$} at 16 -5
			\pinlabel \scalebox{0.9}{$s_1(\lambda)$} at 22 8
			\endlabellist 
			\centering 
			\includegraphics[scale=1.3]{./RurRul}
	}\endxy\quad\;\;\right)
    \end{align*}

\begin{align*}
\alpha\tilde{\mathcal{T}}_{2,1}''\left(\beta\left(\xy (0,0)*{
			\labellist
			\small\hair 2pt
			\pinlabel \scalebox{0.7}{$\textcolor{blue}{1}$} at 1 -5
			\pinlabel \scalebox{0.9}{$s_2(\lambda)$} at 22 7
			\endlabellist 
			\centering 
			\includegraphics[scale=1.3]{./BrCap1}
	}\endxy\quad\;\;\right)\right) 
    =& 
    \alpha\tilde{\mathcal{T}}_{2,1}''\left(-k_1^2(s_2(\lambda))
\xy (0,0)*{
			\labellist
			\small\hair 2pt
			\pinlabel \scalebox{0.7}{$\textcolor{blue}{1}$} at 1 -5
			\pinlabel \scalebox{0.9}{$s_2(\lambda)$} at 24 8
			\endlabellist 
			\centering 
			\includegraphics[scale=1.3]{./BrCap1}
	}\endxy\quad\;\;\;\right)
    \\[1ex]
    =& \alpha\left(-k_1^2(s_2(\lambda))k_1^0(s_2(\lambda)\left(-k_3^3(s_2(\lambda))
    \xy (0,0)*{
			\labellist
			\small\hair 2pt
			\pinlabel \scalebox{0.7}{$\textcolor{red}{2}$} at 0 -5
			\pinlabel \scalebox{0.7}{$\textcolor{blue}{1}$} at 10 -5
			\pinlabel \scalebox{0.9}{$\lambda$} at 32 8
			\endlabellist 
			\centering 
			\includegraphics[scale=1.3]{./RrCap3BrCap1}
	}\endxy\; +
    k_3^2(s_2(\lambda))
    \xy (0,0)*{
			\labellist
			\small\hair 2pt
			\pinlabel \scalebox{0.7}{$\textcolor{red}{2}$} at 10 -5
			\pinlabel \scalebox{0.7}{$\textcolor{blue}{1}$} at 0 -5
			\pinlabel \scalebox{0.9}{$\lambda$} at 32 8
			\endlabellist 
			\centering 
			\includegraphics[scale=1.3]{./BrCap3RrCap1}
	}\endxy\mspace{12mu}\right)\right)
    \\[1ex]
   = & (-1)^{\overline{\lambda}_3+1}\left(k_1^3(s_1(\lambda))
    \xy (0,0)*{
			\labellist
			\small\hair 2pt
			\pinlabel \scalebox{0.7}{$\textcolor{red}{2}$} at 0 -5
			\pinlabel \scalebox{0.7}{$\textcolor{blue}{1}$} at 10 -5
			\pinlabel \scalebox{0.9}{$\lambda$} at 32 8
			\endlabellist 
			\centering 
			\includegraphics[scale=1.3]{./RrCap3BrCap1}
	}\endxy\;\; -
    k_1^2(s_1(\lambda))
    \xy (0,0)*{
			\labellist
			\small\hair 2pt
			\pinlabel \scalebox{0.7}{$\textcolor{red}{2}$} at 10 -5
			\pinlabel \scalebox{0.7}{$\textcolor{blue}{1}$} at 0 -5
			\pinlabel \scalebox{0.9}{$\lambda$} at 32 8
			\endlabellist 
			\centering 
			\includegraphics[scale=1.3]{./BrCap3RrCap1}
	}\endxy\;\;\right) 
    \\[1ex]
     = & \tilde{\mathcal{T}}_{1,-1}'\left(\xy (0,0)*{
			\labellist
			\small\hair 2pt
			\pinlabel \scalebox{0.7}{$\textcolor{red}{2}$} at 1 -5
			\pinlabel \scalebox{0.9}{$s_1(\lambda)$} at 22 7
			\endlabellist 
			\centering 
			\includegraphics[scale=1.3]{./RrCap1Thin}
	}\endxy\quad\;\;
    \right)
    \end{align*}

    \begin{align*}
\alpha\tilde{\mathcal{T}}_{2,1}''\left(\beta\left(\xy (0,0)*{
			\labellist
			\small\hair 2pt
			\pinlabel \scalebox{0.7}{$\textcolor{blue}{1}$} at 1 9
			\pinlabel \scalebox{0.9}{$s_2(\lambda)$} at 22 2
			\endlabellist 
			\centering 
			\includegraphics[scale=1.3]{./BlCup1}
	}\endxy\quad\;\;\right)\right) 
    &= \alpha\tilde{\mathcal{T}}_{2,1}''\left(-k_1^2(s_2(\lambda))
\xy (0,0)*{
			\labellist
			\small\hair 2pt
			\pinlabel \scalebox{0.7}{$\textcolor{blue}{1}$} at 1 9
			\pinlabel \scalebox{0.9}{$s_2(\lambda)$} at 24 2
			\endlabellist 
			\centering 
			\includegraphics[scale=1.3]{./BlCup1}
	}\endxy\quad\;\;\;\right)
    \\[1ex]
    & \mspace{-60mu}= \alpha\left((-1)^{s_2(\lambda)_1+1}k_1^2(s_2(\lambda))k_1^0(s_2(\lambda))\left(k_3^1(s_2(\lambda))
    \xy (0,0)*{
			\labellist
			\small\hair 2pt
			\pinlabel \scalebox{0.7}{$\textcolor{red}{2}$} at 0 14
			\pinlabel \scalebox{0.7}{$\textcolor{blue}{1}$} at 10 14
			\pinlabel \scalebox{0.9}{$\lambda$} at 31 6
			\endlabellist 
			\centering 
			\includegraphics[scale=1.3]{./RlCup3BlCup1}
	}\endxy\;
-
    k_3^0(s_2(\lambda))
    \xy (0,0)*{
			\labellist
			\small\hair 2pt
			\pinlabel \scalebox{0.7}{$\textcolor{red}{2}$} at 10 14
			\pinlabel \scalebox{0.7}{$\textcolor{blue}{1}$} at 0 14
			\pinlabel \scalebox{0.9}{$\lambda$} at 31 6
			\endlabellist 
			\centering 
			\includegraphics[scale=1.3]{./BlCup3RlCup1}
	}\endxy\mspace{10mu}\right)\right)\\
     \\[1ex]
    & \mspace{-60mu}=(-1)^{\lambda_3}\left(k_1^1(s_1(\lambda))
    \xy (0,0)*{
			\labellist
			\small\hair 2pt
			\pinlabel \scalebox{0.7}{$\textcolor{red}{2}$} at 0 14
			\pinlabel \scalebox{0.7}{$\textcolor{blue}{1}$} at 10 14
			\pinlabel \scalebox{0.9}{$\lambda$} at 31 6
			\endlabellist 
			\centering 
			\includegraphics[scale=1.3]{./RlCup3BlCup1}
	}\endxy\;\; -
    k_1^0(s_1(\lambda))
    \xy (0,0)*{
			\labellist
			\small\hair 2pt
			\pinlabel \scalebox{0.7}{$\textcolor{red}{2}$} at 10 14
			\pinlabel \scalebox{0.7}{$\textcolor{blue}{1}$} at 0 14
			\pinlabel \scalebox{0.9}{$\lambda$} at 32 8
			\endlabellist 
			\centering 
			\includegraphics[scale=1.3]{./BlCup3RlCup1}
	}\endxy\mspace{10mu}\right) 
    \\[1ex]
    & \mspace{-60mu}= \tilde{\mathcal{T}}_{1,-1}'\left(\xy (0,0)*{
			\labellist
			\small\hair 2pt
			\pinlabel \scalebox{0.7}{$\textcolor{red}{2}$} at 1 11
			\pinlabel \scalebox{0.9}{$s_1(\lambda)$} at 24 2
			\endlabellist 
			\centering 
			\includegraphics[scale=1.3]{./RlCup1Thin}
	}\endxy\quad\;\;\;
    \right)
    \end{align*}

    \begin{align*}
\alpha\tilde{\mathcal{T}}_{2,1}''\left(\beta\left(\xy (0,0)*{
			\labellist
			\small\hair 2pt
			\pinlabel \scalebox{0.7}{$\textcolor{blue}{1}$} at 1 -5
			\pinlabel \scalebox{0.9}{$s_2(\lambda)$} at 22 7
			\endlabellist 
			\centering 
			\includegraphics[scale=1.3]{./BlCap1}
	}\endxy\quad\;\;\right)\right)  
    &= 
    \alpha\tilde{\mathcal{T}}_{2,1}''\left(-k_1^0(s_2(\lambda))
\xy (0,0)*{
			\labellist
			\small\hair 2pt
			\pinlabel \scalebox{0.7}{$\textcolor{blue}{1}$} at 1 -5
			\pinlabel \scalebox{0.9}{$s_2(\lambda)$} at 24 8
			\endlabellist 
			\centering 
			\includegraphics[scale=1.3]{./BlCap1}
	}\endxy\quad\;\;\;\right) \mspace{-420mu} 
    \\[1ex]
    & \mspace{-80mu}= \alpha\left((-1)^{s_2(\lambda)_1+1}k_1^0(s_2(\lambda))k_1^2(s_2(\lambda))\left(k_3^1(s_2(\lambda))
    \xy (0,0)*{
			\labellist
			\small\hair 2pt
			\pinlabel \scalebox{0.7}{$\textcolor{red}{2}$} at 0 -5
			\pinlabel \scalebox{0.7}{$\textcolor{blue}{1}$} at 10 -5
			\pinlabel \scalebox{0.9}{$\lambda$} at 32 8
			\endlabellist 
			\centering 
			\includegraphics[scale=1.3]{./RlCap3BlCap1Big}
	}\endxy\;\; 
       -
   k_3^2(s_2(\lambda))
    \xy (0,0)*{
			\labellist
			\small\hair 2pt
			\pinlabel \scalebox{0.7}{$\textcolor{red}{2}$} at 10 -5
			\pinlabel \scalebox{0.7}{$\textcolor{blue}{1}$} at 0 -5
			\pinlabel \scalebox{0.9}{$\lambda$} at 32 8
			\endlabellist 
			\centering 
			\includegraphics[scale=1.3]{./BlCap3RlCap1Big}
	}\endxy\mspace{10mu}\right)\right)
     \\[1ex]
    & \mspace{-80mu}=(-1)^{\lambda_3}\left(k_1^1(s_1(\lambda))
    \xy (0,0)*{
			\labellist
			\small\hair 2pt
			\pinlabel \scalebox{0.7}{$\textcolor{red}{2}$} at 0 -5
			\pinlabel \scalebox{0.7}{$\textcolor{blue}{1}$} at 10 -5
			\pinlabel \scalebox{0.9}{$\lambda$} at 32 8
			\endlabellist 
			\centering 
			\includegraphics[scale=1.3]{./RlCap3BlCap1Big}
	}\endxy\;\; -
    k_1^2(s_1(\lambda))
    \xy (0,0)*{
			\labellist
			\small\hair 2pt
			\pinlabel \scalebox{0.7}{$\textcolor{red}{2}$} at 10 -5
			\pinlabel \scalebox{0.7}{$\textcolor{blue}{1}$} at 0 -5
			\pinlabel \scalebox{0.9}{$\lambda$} at 32 8
			\endlabellist 
			\centering 
			\includegraphics[scale=1.3]{./BlCap3RlCap1Big}
	}\endxy\;\;\right) 
    \\[1ex]
    & \mspace{-80mu}= \tilde{\mathcal{T}}_{1,-1}'\left(\xy (0,0)*{
			\labellist
			\small\hair 2pt
			\pinlabel \scalebox{0.7}{$\textcolor{red}{2}$} at 1 -5
			\pinlabel \scalebox{0.9}{$s_1(\lambda)$} at 22 7
			\endlabellist 
			\centering 
			\includegraphics[scale=1.3]{./RlCap1}
	}\endxy\quad\;\;\right)
    \end{align*}

    \begin{align*}
\alpha\tilde{\mathcal{T}}_{2,1}''\left(\beta\left(\xy (0,0)*{
			\labellist
			\small\hair 2pt
			\pinlabel \scalebox{0.7}{$\textcolor{blue}{1}$} at 1 9
			\pinlabel \scalebox{0.9}{$s_2(\lambda)$} at 22 2
			\endlabellist 
			\centering 
			\includegraphics[scale=1.3]{./BrCup1}
	}\endxy\quad\;\;\right)\right)  
    &= 
    \alpha\tilde{\mathcal{T}}_{2,1}''\left(-k_1^0(s_2(\lambda))
\xy (0,0)*{
			\labellist
			\small\hair 2pt
			\pinlabel \scalebox{0.7}{$\textcolor{blue}{1}$} at 1 9
			\pinlabel \scalebox{0.9}{$s_2(\lambda)$} at 24 2
			\endlabellist 
			\centering 
			\includegraphics[scale=1.3]{./BrCup1}
	}\endxy\mspace{32mu}\right) 
     \\[1ex]
    & = \alpha\left(-k_1^0(s_2(\lambda))k_1^2(s_2(\lambda))\left(k_3^3(s_2(\lambda))
    \xy (0,0)*{
			\labellist
			\small\hair 2pt
			\pinlabel \scalebox{0.7}{$\textcolor{red}{2}$} at 0 14
			\pinlabel \scalebox{0.7}{$\textcolor{blue}{1}$} at 10 14
			\pinlabel \scalebox{0.9}{$\lambda$} at 32 2
			\endlabellist 
			\centering 
			\includegraphics[scale=1.3]{./RrCup3BrCup1}
	}\endxy\;\; 
   -
    k_3^0(s_2(\lambda))
    \xy (0,0)*{
			\labellist
			\small\hair 2pt
			\pinlabel \scalebox{0.7}{$\textcolor{red}{2}$} at 10 14
			\pinlabel \scalebox{0.7}{$\textcolor{blue}{1}$} at 0 14
			\pinlabel \scalebox{0.9}{$\lambda$} at 32 8
			\endlabellist 
			\centering 
			\includegraphics[scale=1.3]{./BrCup3RrCup1}
	}\endxy\mspace{10mu}\right)\right)
    \\[1ex]
        & =(-1)^{\overline{\lambda}_3}\left(k_1^3(s_1(\lambda))
    \xy (0,0)*{
			\labellist
			\small\hair 2pt
			\pinlabel \scalebox{0.7}{$\textcolor{red}{2}$} at 0 14
			\pinlabel \scalebox{0.7}{$\textcolor{blue}{1}$} at 10 14
			\pinlabel \scalebox{0.9}{$\lambda$} at 32 2
			\endlabellist 
			\centering 
			\includegraphics[scale=1.3]{./RrCup3BrCup1}
	}\endxy\;\; -
   k_1^0(s_1(\lambda))
    \xy (0,0)*{
			\labellist
			\small\hair 2pt
			\pinlabel \scalebox{0.7}{$\textcolor{red}{2}$} at 10 14
			\pinlabel \scalebox{0.7}{$\textcolor{blue}{1}$} at 0 14
			\pinlabel \scalebox{0.9}{$\lambda$} at 32 2
			\endlabellist 
			\centering 
			\includegraphics[scale=1.3]{./BrCup3RrCup1}
	}\endxy\;\;\right) 
    \\[1ex]
    & = \tilde{\mathcal{T}}_{1,-1}'\left(\xy (0,0)*{
			\labellist
			\small\hair 2pt
			\pinlabel \scalebox{0.7}{$\textcolor{red}{2}$} at 1 9
			\pinlabel \scalebox{0.9}{$s_1(\lambda)$} at 24 2
			\endlabellist 
			\centering 
			\includegraphics[scale=1.3]{./RrCup1Thin}
	}\endxy\quad\;\;\;
    \right)
    \end{align*}

The remaining element of the proof is comparing $$\tilde{\mathcal{T}}_{1,-1}'\left(\xy (0,0)*{
			\labellist
			\small\hair 2pt
			\pinlabel \scalebox{0.7}{$\textcolor{red}{2}$} at 1 -5
			\pinlabel \scalebox{0.9}{$s_1(\lambda)$} at 12 8
			\endlabellist 
			\centering 
			\includegraphics[scale=1.3]{./Ruo}
	}\endxy\quad\;\;\;\right)=\left(\xy (0,0)*{
		\labellist
		\small\hair 2pt
		\pinlabel \scalebox{0.7}{$\textcolor{blue}{1}$} at 1 -5
		\pinlabel \scalebox{0.7}{$\textcolor{red}{2}$} at 9 -5
       	\pinlabel \scalebox{0.9}{$\lambda$} at 14 8
		\endlabellist
		\centering
		\includegraphics[scale=1.3]{./BuRuo}
	}\endxy
    \;\;\;,\xy (0,0)*{
		\labellist
		\small\hair 2pt
		\pinlabel \scalebox{0.7}{$\textcolor{blue}{1}$} at 9 -5
		\pinlabel \scalebox{0.7}{$\textcolor{red}{2}$} at 1 -5
       	\pinlabel \scalebox{0.9}{$\lambda$} at 14 8
		\endlabellist
		\centering
		\includegraphics[scale=1.3]{./RuoBu}
	}\endxy\;\;
    \right)$$ and $$\alpha\tilde{\mathcal{T}}_{2,1}''\left(\beta\left( \xy (0,0)*{
			\labellist
			\small\hair 2pt
			\pinlabel \scalebox{0.7}{$\textcolor{blue}{1}$} at 1 -5
			\pinlabel \scalebox{0.9}{$s_2(\lambda)$} at 12 8
			\endlabellist 
			\centering 
			\includegraphics[scale=1.3]{./Buo}
	}\endxy\quad\quad \right)\right)=\left(\xy (0,0)*{
		\labellist
		\small\hair 2pt
		\pinlabel \scalebox{0.7}{$\textcolor{blue}{1}$} at 1 -5
		\pinlabel \scalebox{0.7}{$\textcolor{red}{2}$} at 9 -5
       	\pinlabel \scalebox{0.9}{$\lambda$} at 14 8
		\endlabellist
		\centering
		\includegraphics[scale=1.3]{./BuoRu}
	}\endxy
    \;\;\;,\xy (0,0)*{
		\labellist
		\small\hair 2pt
		\pinlabel \scalebox{0.7}{$\textcolor{blue}{1}$} at 9 -5
		\pinlabel \scalebox{0.7}{$\textcolor{red}{2}$} at 1 -5
       	\pinlabel \scalebox{0.9}{$\lambda$} at 14 8
		\endlabellist
		\centering
		\includegraphics[scale=1.3]{./RuBuo}
	}\endxy\;\;\;
    \right),$$
\eqskip 
which are equal up to homotopy by \eqref{eq:dotshift}.
\end{proof}

We also mention the following result that we will be using often to prove the main theorem:
\begin{lem}\label{lem:KMPreserve}
    $\tilde{\mathcal{T}}_{1,-1}'$ and $\alpha\tilde{\mathcal{T}}_{2,1}''\beta$ preserve all the KM identities.
\end{lem}
\begin{proof}
    All the component 2-functors of these 2-functors are either 2-isomorphisms (which clearly preserve defining axioms) or preserve the KM identities by \cite[Section 4]{abram2022categorification}.
\end{proof}

\section{The Proof of \autoref{thm:BigThmPrime}}\label{sec:MainProof}

With the various definitions of \autoref{sec:BraidAction} and \autoref{Sec:Eval2Fr}, as well as \autoref{lem:EvRel}, we are now almost prepared to prove \autoref{thm:BigThmPrime} - we merely require some extra 2-functors to account for $\Ev'$ having a different domain 2-category to $\tilde{\mathcal{T}}_{1,-1}'$ and $\tilde{\mathcal{T}}_{2,1}''$, and for categorifying the powers of $q$ and signs found in \autoref{sec:DecatT}. We also remind the reader that $k_i^{a_1,\dots,a_n}(\mu)$ omits the argument $\mu$ when it is equal to $\lambda$, but retains it otherwise (generally when it is $s_1(\lambda)$ or $s_2(\lambda)$).

\subsection{Embeddings}

We define 2-embeddings $\iota, \iota':\naffu{3}\to\affu{3}$ using other component 2-functors. First, we define $\tilde{\omega}=\zeta\omega\zeta^{-1}:\naffu{3}\to\naffu{3}$. Second, we define 2-functors $\eta,\eta':\naffu{3}\to\affu{3}$ as follows:

For $\eta$,\begin{itemize}[wide,labelindent=0pt,itemsep=5pt]
    \item On objects, $\lambda\overset{\eta}{\mapsto} -s_1(\lambda)$.
    \item On 1-morphisms,
\begin{gather*}
\E_1\oneid_\lambda\overset{\eta}{\mapsto} \E_1\oneid_{-s_1(\lambda)} ,
\mspace{35mu}
\F_1\oneid_\lambda\overset{\eta}{\mapsto} \F_1\oneid_{-s_1(\lambda)} ,
\mspace{35mu}
\E_2\oneid_\lambda\overset{\eta}{\mapsto} \E_3\oneid_{-s_1(\lambda)} ,
\mspace{35mu}
\F_2\oneid_\lambda\overset{\eta}{\mapsto} \F_3\oneid_{-s_1(\lambda)} .
\end{gather*}

    \item On 2-morphisms, 
\begingroup\allowdisplaybreaks
\begin{align}
\xy (0,1)*{
			\labellist
			\small\hair 2pt
			\pinlabel \scalebox{0.7}{$\textcolor{blue}{1}$} at 1 -5
			\pinlabel \scalebox{0.9}{$\lambda$} at 6 8
			\endlabellist 
			\centering 
			\includegraphics[scale=1.3]{./Buo}
	}\endxy\;\;
    &\overset{\eta}{\mapsto}
    \,\xy (0,0)*{
			\labellist
			\small\hair 2pt
			\pinlabel \scalebox{0.7}{$\textcolor{blue}{1}$} at 1 -5
			\pinlabel \scalebox{0.9}{$-s_1(\lambda)$} at 16 8
			\endlabellist 
			\centering 
			\includegraphics[scale=1.3]{./Buo}
	}\endxy
    &
\xy (0,1)*{
			\labellist
			\small\hair 2pt
			\pinlabel \scalebox{0.7}{$\textcolor{red}{2}$} at 1 -5
			\pinlabel \scalebox{0.9}{$\lambda$} at 6 8
			\endlabellist 
			\centering 
			\includegraphics[scale=1.3]{./Ruo}
	}\endxy\;\;
    &\overset{\eta}{\mapsto}
    \,\xy (0,0)*{
			\labellist
			\small\hair 2pt
			\pinlabel \scalebox{0.7}{$\textcolor{black}{3}$} at 1 -5
			\pinlabel \scalebox{0.9}{$-s_1(\lambda)$} at 16 8
			\endlabellist 
			\centering 
			\includegraphics[scale=1.3]{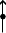}
	}\endxy    
\\[2ex]
\xy (0,1)*{
			\labellist
			\small\hair 2pt
			\pinlabel \scalebox{0.7}{$\textcolor{blue}{1}$} at 0 -5
			\pinlabel \scalebox{0.7}{$\textcolor{blue}{1}$} at 16 -5
			\pinlabel \scalebox{0.9}{$\lambda$} at 18 8
			\endlabellist 
			\centering 
			\includegraphics[scale=1.3]{./BurBul}
	}\endxy\;\; 
    &\overset{\eta}{\mapsto}
    \xy (0,0)*{
			\labellist
			\small\hair 2pt
			\pinlabel \scalebox{0.7}{$\textcolor{blue}{1}$} at 0 -5
			\pinlabel \scalebox{0.7}{$\textcolor{blue}{1}$} at 16 -5
			\pinlabel \scalebox{0.9}{$-s_1(\lambda)$} at 24 8
			\endlabellist 
			\centering 
			\includegraphics[scale=1.3]{./BurBul}
	}\endxy
    &
\xy (0,1)*{
			\labellist
			\small\hair 2pt
			\pinlabel \scalebox{0.7}{$\textcolor{red}{2}$} at 0 -5
			\pinlabel \scalebox{0.7}{$\textcolor{red}{2}$} at 16 -5
			\pinlabel \scalebox{0.9}{$\lambda$} at 18 8
			\endlabellist 
			\centering 
			\includegraphics[scale=1.3]{./RurRul}
	}\endxy\;\;&\overset{\eta}{\mapsto}
    \xy (0,0)*{
			\labellist
			\small\hair 2pt
			\pinlabel \scalebox{0.7}{$\textcolor{black}{3}$} at 0 -5
			\pinlabel \scalebox{0.7}{$\textcolor{black}{3}$} at 16 -5
			\pinlabel \scalebox{0.9}{$-s_1(\lambda)$} at 24 8
			\endlabellist 
			\centering 
			\includegraphics[scale=1.3]{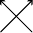}
	}\endxy
\\[2ex]
\xy (0,1)*{
			\labellist
			\small\hair 2pt
			\pinlabel \scalebox{0.7}{$\textcolor{blue}{1}$} at 0 -5
			\pinlabel \scalebox{0.7}{$\textcolor{red}{2}$} at 16 -5
			\pinlabel \scalebox{0.9}{$\lambda$} at 18 8
			\endlabellist 
			\centering 
			\includegraphics[scale=1.3]{./BurRul}
	}\endxy\;\;
    &\overset{\eta}{\mapsto}
    \xy (0,0)*{
			\labellist
			\small\hair 2pt
			\pinlabel \scalebox{0.7}{$\textcolor{blue}{1}$} at 0 -5
			\pinlabel \scalebox{0.7}{$\textcolor{black}{3}$} at 16 -5
			\pinlabel \scalebox{0.9}{$-s_1(\lambda)$} at 24 8
			\endlabellist 
			\centering 
			\includegraphics[scale=1.3]{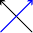}
	}\endxy
    &
    \xy (0,1)*{
			\labellist
			\small\hair 2pt
			\pinlabel \scalebox{0.7}{$\textcolor{blue}{1}$} at 16 -5
			\pinlabel \scalebox{0.7}{$\textcolor{red}{2}$} at 0 -5
			\pinlabel \scalebox{0.9}{$\lambda$} at 18 8
			\endlabellist 
			\centering 
			\includegraphics[scale=1.3]{./RurBul}
	}\endxy\;\;&\overset{\eta}{\mapsto}
    -\xy (0,0)*{
			\labellist
			\small\hair 2pt
			\pinlabel \scalebox{0.7}{$\textcolor{blue}{1}$} at 16 -5
			\pinlabel \scalebox{0.7}{$\textcolor{black}{3}$} at 0 -5
			\pinlabel \scalebox{0.9}{$-s_1(\lambda)$} at 24 8
			\endlabellist 
			\centering 
			\includegraphics[scale=1.3]{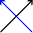}
	}\endxy
\\[2ex]
\xy (0,1)*{
			\labellist
			\small\hair 2pt
			\pinlabel \scalebox{0.7}{$\textcolor{blue}{1}$} at 1 -5
			\pinlabel \scalebox{0.9}{$\lambda$} at 18 8
			\endlabellist 
			\centering 
			\includegraphics[scale=1.3]{./BrCap1}
	}\endxy\;\;
    &\overset{\eta}{\mapsto}
     \xy (0,0)*{
			\labellist
			\small\hair 2pt
			\pinlabel \scalebox{0.7}{$\textcolor{blue}{1}$} at 1 -5
			\pinlabel \scalebox{0.9}{$-s_1(\lambda)$} at 24 8
			\endlabellist 
			\centering 
			\includegraphics[scale=1.3]{./BrCap1}
	}\endxy
    &
    \xy (0,1)*{
			\labellist
			\small\hair 2pt
			\pinlabel \scalebox{0.7}{$\textcolor{blue}{1}$} at 1 -5
			\pinlabel \scalebox{0.9}{$\lambda$} at 18 8
			\endlabellist 
			\centering 
			\includegraphics[scale=1.3]{./BlCap1}
	}\endxy\;\;
    &\overset{\eta}{\mapsto}
    (-1)^{\bar{\lambda}_1} \xy (0,0)*{
			\labellist
			\small\hair 2pt
			\pinlabel \scalebox{0.7}{$\textcolor{blue}{1}$} at 1 -5
			\pinlabel \scalebox{0.9}{$-s_1(\lambda)$} at 24 8
			\endlabellist 
			\centering 
			\includegraphics[scale=1.3]{./BlCap1}
	}\endxy
\\[2ex]
\xy (0,1)*{
			\labellist
			\small\hair 2pt
			\pinlabel \scalebox{0.7}{$\textcolor{blue}{1}$} at 1 9
			\pinlabel \scalebox{0.9}{$\lambda$} at 19 2
			\endlabellist 
			\centering 
			\includegraphics[scale=1.3]{./BrCup1}
	}\endxy\;\;
    &\overset{\eta}{\mapsto}
    \xy (0,0)*{
			\labellist
			\small\hair 2pt
			\pinlabel \scalebox{0.7}{$\textcolor{blue}{1}$} at 1 9
			\pinlabel \scalebox{0.9}{$-s_1(\lambda)$} at 26 2
			\endlabellist 
			\centering 
			\includegraphics[scale=1.3]{./BrCup1}
	}\endxy
    &
    \xy (0,1)*{
			\labellist
			\small\hair 2pt
			\pinlabel \scalebox{0.7}{$\textcolor{blue}{1}$} at 1 9
			\pinlabel \scalebox{0.9}{$\lambda$} at 19 2
			\endlabellist 
			\centering 
			\includegraphics[scale=1.3]{./BlCup1}
	}\endxy\;\;
    &\overset{\eta}{\mapsto}
    (-1)^{\bar{\lambda}_1} \xy (0,0)*{
			\labellist
			\small\hair 2pt
			\pinlabel \scalebox{0.7}{$\textcolor{blue}{1}$} at 1 9
			\pinlabel \scalebox{0.9}{$-s_1(\lambda)$} at 26 2
			\endlabellist 
			\centering 
			\includegraphics[scale=1.3]{./BlCup1}
	}\endxy
\\[2ex]
    \xy (0,1)*{
			\labellist
			\small\hair 2pt
			\pinlabel \scalebox{0.7}{$\textcolor{red}{2}$} at 1 -4
			\pinlabel \scalebox{0.9}{$\lambda$} at 18 8
			\endlabellist 
			\centering 
			\includegraphics[scale=1.3]{./RrCap1Thin}
	}\endxy\;\;
    &\overset{\eta}{\mapsto}
     \xy (0,0)*{
			\labellist
			\small\hair 2pt
			\pinlabel \scalebox{0.7}{$\textcolor{black}{3}$} at 1 -5
			\pinlabel \scalebox{0.9}{$-s_1(\lambda)$} at 24 8
			\endlabellist 
			\centering 
			\includegraphics[scale=1.3]{./KrCap1Thin}
	}\endxy
&
\xy (0,1)*{
			\labellist
			\small\hair 2pt
			\pinlabel \scalebox{0.7}{$\textcolor{red}{2}$} at 1 -4
			\pinlabel \scalebox{0.9}{$\lambda$} at 18 8
			\endlabellist 
			\centering 
			\includegraphics[scale=1.3]{./RlCap1}
	}\endxy\;\;
    &\overset{\eta}{\mapsto}
    (-1)^{\bar{\lambda}_2} \xy (0,0)*{
			\labellist
			\small\hair 2pt
			\pinlabel \scalebox{0.7}{$\textcolor{black}{3}$} at 1 -5
			\pinlabel \scalebox{0.9}{$-s_1(\lambda)$} at 24 8
			\endlabellist 
			\centering 
			\includegraphics[scale=1.3]{./KlCap1Thin}
	}\endxy
\\[2ex]
    \xy (0,1)*{
			\labellist
			\small\hair 2pt
			\pinlabel \scalebox{0.7}{$\textcolor{red}{2}$} at 1 10
			\pinlabel \scalebox{0.9}{$\lambda$} at 19 2
			\endlabellist 
			\centering 
			\includegraphics[scale=1.3]{./RrCup1Thin}
	}\endxy\;\;
    &\overset{\eta}{\mapsto}
    \xy (0,0)*{
			\labellist
			\small\hair 2pt
			\pinlabel \scalebox{0.7}{$\textcolor{black}{3}$} at 1 9
			\pinlabel \scalebox{0.9}{$-s_1(\lambda)$} at 26 2
			\endlabellist 
			\centering 
			\includegraphics[scale=1.3]{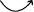}
	}\endxy
&
 \xy (0,1)*{
			\labellist
			\small\hair 2pt
			\pinlabel \scalebox{0.7}{$\textcolor{red}{2}$} at 1 10
			\pinlabel \scalebox{0.9}{$\lambda$} at 19 2
			\endlabellist 
			\centering 
			\includegraphics[scale=1.3]{./RlCup1Thin}
	}\endxy\;\;
    &\overset{\eta}{\mapsto}
    (-1)^{\bar{\lambda}_2} \xy (0,0)*{
			\labellist
			\small\hair 2pt
			\pinlabel \scalebox{0.7}{$\textcolor{black}{3}$} at 1 9
			\pinlabel \scalebox{0.9}{$-s_1(\lambda)$} at 26 2
			\endlabellist 
			\centering 
			\includegraphics[scale=1.3]{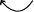}
	}\endxy
\end{align}
\endgroup

\end{itemize}

For $\eta'$:\begin{itemize}[wide,labelindent=0pt,itemsep=5pt]
    \item On objects, $\lambda\overset{\eta'}{\mapsto} -s_2(\lambda)$.
    \item On 1-morphisms,    
\begin{gather*}
\E_2\oneid_\lambda\overset{\eta'}{\mapsto} \E_2\oneid_{-s_2(\lambda)} ,
\mspace{35mu}
\F_2\oneid_\lambda\overset{\eta'}{\mapsto} \F_2\oneid_{-s_2(\lambda)} ,
\mspace{35mu}
\E_1\oneid_\lambda\overset{\eta'}{\mapsto} \E_3\oneid_{-s_2(\lambda)} ,
\mspace{35mu}
\F_1\oneid_\lambda\overset{\eta'}{\mapsto} \F_3\oneid_{-s_2(\lambda)} .
\end{gather*}

    \item On 2-morphisms, 
\begingroup\allowdisplaybreaks
\begin{align}
\xy (0,1)*{
			\labellist
			\small\hair 2pt
			\pinlabel \scalebox{0.7}{$\textcolor{red}{2}$} at 1 -5
			\pinlabel \scalebox{0.9}{$\lambda$} at 6 8
			\endlabellist 
			\centering 
			\includegraphics[scale=1.3]{./Ruo}
	}\endxy\;\;
    &\overset{\eta'}{\mapsto}
    \,\xy (0,0)*{
			\labellist
			\small\hair 2pt
			\pinlabel \scalebox{0.7}{$\textcolor{red}{2}$} at 1 -5
			\pinlabel \scalebox{0.9}{$-s_2(\lambda)$} at 16 8
			\endlabellist 
			\centering 
			\includegraphics[scale=1.3]{./Ruo}
	}\endxy
&
\xy (0,1)*{
			\labellist
			\small\hair 2pt
			\pinlabel \scalebox{0.7}{$\textcolor{blue}{1}$} at 1 -5
			\pinlabel \scalebox{0.9}{$\lambda$} at 6 8
			\endlabellist 
			\centering 
			\includegraphics[scale=1.3]{./Buo}
	}\endxy\;\;
    &\overset{\eta'}{\mapsto}
    \,\xy (0,0)*{
			\labellist
			\small\hair 2pt
			\pinlabel \scalebox{0.7}{$\textcolor{black}{3}$} at 1 -5
			\pinlabel \scalebox{0.9}{$-s_2(\lambda)$} at 16 8
			\endlabellist 
			\centering 
			\includegraphics[scale=1.3]{./KuoThin}
	}\endxy
\\[2ex]
\xy (0,1)*{
			\labellist
			\small\hair 2pt
			\pinlabel \scalebox{0.7}{$\textcolor{red}{2}$} at 0 -5
			\pinlabel \scalebox{0.7}{$\textcolor{red}{2}$} at 16 -5
			\pinlabel \scalebox{0.9}{$\lambda$} at 18 8
			\endlabellist 
			\centering 
			\includegraphics[scale=1.3]{./RurRul}
	}\endxy\;\; 
    &\overset{\eta'}{\mapsto}
    \xy (0,0)*{
			\labellist
			\small\hair 2pt
			\pinlabel \scalebox{0.7}{$\textcolor{red}{2}$} at 0 -5
			\pinlabel \scalebox{0.7}{$\textcolor{red}{2}$} at 16 -5
			\pinlabel \scalebox{0.9}{$-s_2(\lambda)$} at 24 8
			\endlabellist 
			\centering 
			\includegraphics[scale=1.3]{./RurRul}
	}\endxy
&
\xy (0,1)*{
			\labellist
			\small\hair 2pt
			\pinlabel \scalebox{0.7}{$\textcolor{blue}{1}$} at 0 -5
			\pinlabel \scalebox{0.7}{$\textcolor{blue}{1}$} at 16 -5
			\pinlabel \scalebox{0.9}{$\lambda$} at 18 8
			\endlabellist 
			\centering 
			\includegraphics[scale=1.3]{./BurBul}
	}\endxy\;\;
    &\overset{\eta'}{\mapsto}
    \xy (0,0)*{
			\labellist
			\small\hair 2pt
			\pinlabel \scalebox{0.7}{$\textcolor{black}{3}$} at 0 -5
			\pinlabel \scalebox{0.7}{$\textcolor{black}{3}$} at 16 -5
			\pinlabel \scalebox{0.9}{$-s_2(\lambda)$} at 24 8
			\endlabellist 
			\centering 
			\includegraphics[scale=1.3]{./KurKul}
	}\endxy
\\[2ex]
\xy (0,1)*{
			\labellist
			\small\hair 2pt
			\pinlabel \scalebox{0.7}{$\textcolor{blue}{1}$} at 16 -5
			\pinlabel \scalebox{0.7}{$\textcolor{red}{2}$} at 0 -5
			\pinlabel \scalebox{0.9}{$\lambda$} at 18 8
			\endlabellist 
			\centering 
			\includegraphics[scale=1.3]{./RurBul}
	}\endxy\;\;
    &\overset{\eta'}{\mapsto}
    \xy (0,0)*{
			\labellist
			\small\hair 2pt
			\pinlabel \scalebox{0.7}{$\textcolor{red}{2}$} at 0 -5
			\pinlabel \scalebox{0.7}{$\textcolor{black}{3}$} at 16 -5
			\pinlabel \scalebox{0.9}{$-s_2(\lambda)$} at 24 8
			\endlabellist 
			\centering 
			\includegraphics[scale=1.3]{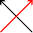}
	}\endxy
&
\xy (0,1)*{
			\labellist
			\small\hair 2pt
			\pinlabel \scalebox{0.7}{$\textcolor{blue}{1}$} at 0 -5
			\pinlabel \scalebox{0.7}{$\textcolor{red}{2}$} at 16 -5
			\pinlabel \scalebox{0.9}{$\lambda$} at 18 8
			\endlabellist 
			\centering 
			\includegraphics[scale=1.3]{./BurRul}
	}\endxy\;\;
    &\overset{\eta'}{\mapsto}
    -\xy (0,0)*{
			\labellist
			\small\hair 2pt
			\pinlabel \scalebox{0.7}{$\textcolor{red}{2}$} at 16 -5
			\pinlabel \scalebox{0.7}{$\textcolor{black}{3}$} at 0 -5
			\pinlabel \scalebox{0.9}{$-s_2(\lambda)$} at 24 8
			\endlabellist 
			\centering 
			\includegraphics[scale=1.3]{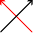}
	}\endxy
\\[2ex]
   \xy (0,1)*{
			\labellist
			\small\hair 2pt
			\pinlabel \scalebox{0.7}{$\textcolor{red}{2}$} at 1 -5
			\pinlabel \scalebox{0.9}{$\lambda$} at 18 8
			\endlabellist 
			\centering 
			\includegraphics[scale=1.3]{./RrCap1Thin}
	}\endxy\;\;
    &\overset{\eta'}{\mapsto}
    \xy (0,0)*{
			\labellist
			\small\hair 2pt
			\pinlabel \scalebox{0.7}{$\textcolor{red}{2}$} at 1 -5
			\pinlabel \scalebox{0.9}{$-s_2(\lambda)$} at 24 8
			\endlabellist 
			\centering 
			\includegraphics[scale=1.3]{./RrCap1Thin}
	}\endxy
&
 \xy (0,1)*{
			\labellist
			\small\hair 2pt
			\pinlabel \scalebox{0.7}{$\textcolor{red}{2}$} at 1 -5
			\pinlabel \scalebox{0.9}{$\lambda$} at 18 8
			\endlabellist 
			\centering 
			\includegraphics[scale=1.3]{./RlCap1}
	}\endxy\;\;
    &\overset{\eta'}{\mapsto}
    (-1)^{\bar{\lambda}_2} \xy (0,0)*{
			\labellist
			\small\hair 2pt
			\pinlabel \scalebox{0.7}{$\textcolor{red}{1}$} at 2 -5
			\pinlabel \scalebox{0.9}{$-s_2(\lambda)$} at 24 8
			\endlabellist 
			\centering 
			\includegraphics[scale=1.3]{./RlCap1}
	}\endxy
\\[2ex]
\xy (0,1)*{
			\labellist
			\small\hair 2pt
			\pinlabel \scalebox{0.7}{$\textcolor{red}{2}$} at 1 10
			\pinlabel \scalebox{0.9}{$\lambda$} at 19 2
			\endlabellist 
			\centering 
			\includegraphics[scale=1.3]{./RrCup1Thin}
	}\endxy\;\;
    &\overset{\eta'}{\mapsto}
    \xy (0,0)*{
			\labellist
			\small\hair 2pt
			\pinlabel \scalebox{0.7}{$\textcolor{red}{2}$} at 1 10
			\pinlabel \scalebox{0.9}{$-s_2(\lambda)$} at 26 2
			\endlabellist 
			\centering 
			\includegraphics[scale=1.3]{./RrCup1Thin}
	}\endxy
&
 \xy (0,1)*{
			\labellist
			\small\hair 2pt
			\pinlabel \scalebox{0.7}{$\textcolor{red}{2}$} at 1 10
			\pinlabel \scalebox{0.9}{$\lambda$} at 19 2
			\endlabellist 
			\centering 
			\includegraphics[scale=1.3]{./RlCup1Thin}
	}\endxy\;\;
    &\overset{\eta'}{\mapsto}
    (-1)^{\bar{\lambda}_2} \xy (0,0)*{
			\labellist
			\small\hair 2pt
			\pinlabel \scalebox{0.7}{$\textcolor{red}{2}$} at 1 10
			\pinlabel \scalebox{0.9}{$-s_2(\lambda)$} at 26 2
			\endlabellist 
			\centering 
			\includegraphics[scale=1.3]{./RlCup1Thin}
	}\endxy
\\[2ex]
\xy (0,1)*{
			\labellist
			\small\hair 2pt
			\pinlabel \scalebox{0.7}{$\textcolor{blue}{1}$} at 1 -5
			\pinlabel \scalebox{0.9}{$\lambda$} at 18 8
			\endlabellist 
			\centering 
			\includegraphics[scale=1.3]{./BrCap1}
	}\endxy\;\;
    &\overset{\eta'}{\mapsto}
    \xy (0,0)*{
			\labellist
			\small\hair 2pt
			\pinlabel \scalebox{0.7}{$\textcolor{black}{3}$} at 1 -5
			\pinlabel \scalebox{0.9}{$-s_2(\lambda)$} at 24 8
			\endlabellist 
			\centering 
			\includegraphics[scale=1.3]{./KrCap1Thin}
	}\endxy
&
 \xy (0,1)*{
			\labellist
			\small\hair 2pt
			\pinlabel \scalebox{0.7}{$\textcolor{blue}{1}$} at 1 -5
			\pinlabel \scalebox{0.9}{$\lambda$} at 18 8
			\endlabellist 
			\centering 
			\includegraphics[scale=1.3]{./BlCap1}
	}\endxy\;\;
    &\overset{\eta'}{\mapsto}
    (-1)^{\bar{\lambda}_1} \xy (0,0)*{
			\labellist
			\small\hair 2pt
			\pinlabel \scalebox{0.7}{$\textcolor{black}{3}$} at 1 -5
			\pinlabel \scalebox{0.9}{$-s_2(\lambda)$} at 24 8
			\endlabellist 
			\centering 
			\includegraphics[scale=1.3]{./KlCap1Thin}
	}\endxy
\\[2ex]
\xy (0,1)*{
			\labellist
			\small\hair 2pt
			\pinlabel \scalebox{0.7}{$\textcolor{blue}{1}$} at 1 9
			\pinlabel \scalebox{0.9}{$\lambda$} at 19 2
			\endlabellist 
			\centering 
			\includegraphics[scale=1.3]{./BrCup1}
	}\endxy\;\;
    &\overset{\eta'}{\mapsto}
     \xy (0,0)*{
			\labellist
			\small\hair 2pt
			\pinlabel \scalebox{0.7}{$\textcolor{black}{3}$} at 1 9
			\pinlabel \scalebox{0.9}{$-s_2(\lambda)$} at 26 2
			\endlabellist 
			\centering 
			\includegraphics[scale=1.3]{./KrCup1Thin}
	}\endxy
&
\xy (0,1)*{
			\labellist
			\small\hair 2pt
			\pinlabel \scalebox{0.7}{$\textcolor{blue}{1}$} at 1 9
			\pinlabel \scalebox{0.9}{$\lambda$} at 19 2
			\endlabellist 
			\centering 
			\includegraphics[scale=1.3]{./BlCup1}
	}\endxy\;\;
    &\overset{\eta'}{\mapsto}
    (-1)^{\bar{\lambda}_1} \xy (0,0)*{
			\labellist
			\small\hair 2pt
			\pinlabel \scalebox{0.7}{$\textcolor{black}{3}$} at 1 9
			\pinlabel \scalebox{0.9}{$-s_2(\lambda)$} at 26 2
			\endlabellist 
			\centering 
			\includegraphics[scale=1.3]{./KlCup1Thin}
	}\endxy
\end{align}\endgroup    

\end{itemize}
It is straightforward to check that $\eta$ and $\eta'$ preserve KM1-9 and are therefore are well-defined.

We now define $\iota=\eta\tilde{\omega}$ and $\iota'=\eta'\tilde{\omega}$. Explicitly, $\iota$ is given by:\begin{itemize}[wide,labelindent=0pt,itemsep=5pt]
    \item On objects, $\lambda\overset{\iota}{\mapsto} s_1(\lambda)$.
    \item On 1-morphisms,
\begin{gather*}
\E_1\oneid_\lambda\overset{\iota}{\mapsto} \F_1\oneid_{s_1(\lambda)} ,
\mspace{35mu}
\F_1\oneid_\lambda\overset{\iota}{\mapsto} \E_1\oneid_{s_1(\lambda)} ,
\mspace{35mu}
\E_2\oneid_\lambda\overset{\iota}{\mapsto} \F_3\oneid_{s_1(\lambda)} ,
\mspace{35mu}
\F_2\oneid_\lambda\overset{\iota}{\mapsto} \E_3\oneid_{s_1(\lambda)} .
\end{gather*}

\item On 2-morphisms, 
\begingroup\allowdisplaybreaks
\begin{align}
\xy (0,1)*{
			\labellist
			\small\hair 2pt
			\pinlabel \scalebox{0.7}{$\textcolor{blue}{1}$} at 1 -5
			\pinlabel \scalebox{0.9}{$\lambda$} at 6 8
			\endlabellist 
			\centering 
			\includegraphics[scale=1.3]{./Buo}
	}\endxy\;\;
    &\overset{\iota}{\mapsto}
    \,\xy (0,0)*{
			\labellist
			\small\hair 2pt
			\pinlabel \scalebox{0.7}{$\textcolor{blue}{1}$} at 1 -5
			\pinlabel \scalebox{0.9}{$s_1(\lambda)$} at 12 8
			\endlabellist 
			\centering 
			\includegraphics[scale=1.3]{./Bdo}
	}\endxy
&
\xy (0,1)*{
			\labellist
			\small\hair 2pt
			\pinlabel \scalebox{0.7}{$\textcolor{red}{2}$} at 1 -5
			\pinlabel \scalebox{0.9}{$\lambda$} at 6 8
			\endlabellist 
			\centering 
			\includegraphics[scale=1.3]{./Ruo}
	}\endxy\;\;
    &\overset{\iota}{\mapsto}
    \,\xy (0,0)*{
			\labellist
			\small\hair 2pt
			\pinlabel \scalebox{0.7}{$\textcolor{black}{3}$} at 1 -5
			\pinlabel \scalebox{0.9}{$s_1(\lambda)$} at 12 8
			\endlabellist 
			\centering 
			\includegraphics[scale=1.3]{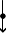}
	}\endxy
\\[2ex]
\xy (0,1)*{
			\labellist
			\small\hair 2pt
			\pinlabel \scalebox{0.7}{$\textcolor{blue}{1}$} at 0 -5
			\pinlabel \scalebox{0.7}{$\textcolor{blue}{1}$} at 16 -5
			\pinlabel \scalebox{0.9}{$\lambda$} at 18 8
			\endlabellist 
			\centering 
			\includegraphics[scale=1.3]{./BurBul}
	}\endxy\;\; 
    &\overset{\iota}{\mapsto}
    -\xy (0,0)*{
			\labellist
			\small\hair 2pt
			\pinlabel \scalebox{0.7}{$\textcolor{blue}{1}$} at 0 -5
			\pinlabel \scalebox{0.7}{$\textcolor{blue}{1}$} at 16 -5
			\pinlabel \scalebox{0.9}{$s_1(\lambda)$} at 22 8
			\endlabellist 
			\centering 
			\includegraphics[scale=1.3]{./BdrBdl}
	}\endxy
&
\xy (0,1)*{
			\labellist
			\small\hair 2pt
			\pinlabel \scalebox{0.7}{$\textcolor{red}{2}$} at 0 -5
			\pinlabel \scalebox{0.7}{$\textcolor{red}{2}$} at 16 -5
			\pinlabel \scalebox{0.9}{$\lambda$} at 18 8
			\endlabellist 
			\centering 
			\includegraphics[scale=1.3]{./RurRul}
	}\endxy\;\;
    &\overset{\iota}{\mapsto}
    -\xy (0,0)*{
			\labellist
			\small\hair 2pt
			\pinlabel \scalebox{0.7}{$\textcolor{black}{3}$} at 0 -5
			\pinlabel \scalebox{0.7}{$\textcolor{black}{3}$} at 16 -5
			\pinlabel \scalebox{0.9}{$s_1(\lambda)$} at 22 8
			\endlabellist 
			\centering 
			\includegraphics[scale=1.3]{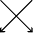}
	}\endxy
\\[2ex]
\xy (0,1)*{
			\labellist
			\small\hair 2pt
			\pinlabel \scalebox{0.7}{$\textcolor{blue}{1}$} at 0 -5
			\pinlabel \scalebox{0.7}{$\textcolor{red}{2}$} at 16 -5
			\pinlabel \scalebox{0.9}{$\lambda$} at 18 8
			\endlabellist 
			\centering 
			\includegraphics[scale=1.3]{./BurRul}
	}\endxy\;\;
    &\overset{\iota}{\mapsto}
   k_1^{2,3}\xy (0,0)*{
			\labellist
			\small\hair 2pt
			\pinlabel \scalebox{0.7}{$\textcolor{blue}{1}$} at 0 -5
			\pinlabel \scalebox{0.7}{$\textcolor{black}{3}$} at 16 -5
			\pinlabel \scalebox{0.9}{$s_1(\lambda)$} at 22 8
			\endlabellist 
			\centering 
			\includegraphics[scale=1.3]{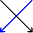}
	}\endxy
&
\xy (0,1)*{
			\labellist
			\small\hair 2pt
			\pinlabel \scalebox{0.7}{$\textcolor{blue}{1}$} at 16 -5
			\pinlabel \scalebox{0.7}{$\textcolor{red}{2}$} at 0 -5
			\pinlabel \scalebox{0.9}{$\lambda$} at 18 8
			\endlabellist 
			\centering 
			\includegraphics[scale=1.3]{./RurBul}
	}\endxy\;\;
    &\overset{\iota}{\mapsto}
    k_1^{0,1}\xy (0,0)*{
			\labellist
			\small\hair 2pt
			\pinlabel \scalebox{0.7}{$\textcolor{blue}{1}$} at 16 -5
			\pinlabel \scalebox{0.7}{$\textcolor{black}{3}$} at 0 -5
			\pinlabel \scalebox{0.9}{$s_1(\lambda)$} at 22 8
			\endlabellist 
			\centering 
			\includegraphics[scale=1.3]{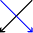}
	}\endxy
\\[2ex]
\xy (0,1)*{
			\labellist
			\small\hair 2pt
			\pinlabel \scalebox{0.7}{$\textcolor{blue}{1}$} at 1 -5
			\pinlabel \scalebox{0.9}{$\lambda$} at 18 8
			\endlabellist 
			\centering 
			\includegraphics[scale=1.3]{./BrCap1}
	}\endxy\;\;
    &\overset{\iota}{\mapsto}
    (-1)^{\lambda_1+1} \xy (0,0)*{
			\labellist
			\small\hair 2pt
			\pinlabel \scalebox{0.7}{$\textcolor{blue}{1}$} at 1 -5
			\pinlabel \scalebox{0.9}{$s_1(\lambda)$} at 22 8
			\endlabellist 
			\centering 
			\includegraphics[scale=1.3]{./BlCap1}
	}\endxy
&
\xy (0,1)*{
			\labellist
			\small\hair 2pt
			\pinlabel \scalebox{0.7}{$\textcolor{blue}{1}$} at 1 -5
			\pinlabel \scalebox{0.9}{$\lambda$} at 18 8
			\endlabellist 
			\centering 
			\includegraphics[scale=1.3]{./BlCap1}
	}\endxy\;\;
    &\overset{\iota}{\mapsto}
    (-1)^{\lambda_2+1} \xy (0,0)*{
			\labellist
			\small\hair 2pt
			\pinlabel \scalebox{0.7}{$\textcolor{blue}{1}$} at 1 -5
			\pinlabel \scalebox{0.9}{$s_1(\lambda)$} at 22 8
			\endlabellist 
			\centering 
			\includegraphics[scale=1.3]{./BrCap1}
	}\endxy
\\[2ex]
\xy (0,1)*{
			\labellist
			\small\hair 2pt
			\pinlabel \scalebox{0.7}{$\textcolor{blue}{1}$} at 1 9
			\pinlabel \scalebox{0.9}{$\lambda$} at 19 2
			\endlabellist 
			\centering 
			\includegraphics[scale=1.3]{./BrCup1}
	}\endxy\;\;
    &\overset{\iota}{\mapsto}
    (-1)^{\lambda_1} \xy (0,0)*{
			\labellist
			\small\hair 2pt
			\pinlabel \scalebox{0.7}{$\textcolor{blue}{1}$} at 1 9
			\pinlabel \scalebox{0.9}{$s_1(\lambda)$} at 24 2
			\endlabellist 
			\centering 
			\includegraphics[scale=1.3]{./BlCup1}
	}\endxy
&
\xy (0,1)*{
			\labellist
			\small\hair 2pt
			\pinlabel \scalebox{0.7}{$\textcolor{blue}{1}$} at 1 9
			\pinlabel \scalebox{0.9}{$\lambda$} at 19 2
			\endlabellist 
			\centering 
			\includegraphics[scale=1.3]{./BlCup1}
	}\endxy\;\;
    &\overset{\iota}{\mapsto}
    (-1)^{\lambda_2} \xy (0,0)*{
			\labellist
			\small\hair 2pt
			\pinlabel \scalebox{0.7}{$\textcolor{blue}{1}$} at 1 9
			\pinlabel \scalebox{0.9}{$s_1(\lambda)$} at 24 2
			\endlabellist 
			\centering 
			\includegraphics[scale=1.3]{./BrCup1}
	}\endxy
\\[2ex]
\xy (0,1)*{
			\labellist
			\small\hair 2pt
			\pinlabel \scalebox{0.7}{$\textcolor{red}{2}$} at 1 -5
			\pinlabel \scalebox{0.9}{$\lambda$} at 18 8
			\endlabellist 
			\centering 
			\includegraphics[scale=1.3]{./RrCap1Thin}
	}\endxy\;\;
    &\overset{\iota}{\mapsto}
    (-1)^{\lambda_2+1} \xy (0,0)*{
			\labellist
			\small\hair 2pt
			\pinlabel \scalebox{0.7}{$\textcolor{black}{3}$} at 1 -5
			\pinlabel \scalebox{0.9}{$s_1(\lambda)$} at 22 8
			\endlabellist 
			\centering 
			\includegraphics[scale=1.3]{./KlCap1Thin}
	}\endxy
&
\xy (0,1)*{
			\labellist
			\small\hair 2pt
			\pinlabel \scalebox{0.7}{$\textcolor{red}{2}$} at 1 -5
			\pinlabel \scalebox{0.9}{$\lambda$} at 18 8
			\endlabellist 
			\centering 
			\includegraphics[scale=1.3]{./RlCap1}
	}\endxy\;\;
    &\overset{\iota}{\mapsto}
    (-1)^{\lambda_3+1} \xy (0,0)*{
			\labellist
			\small\hair 2pt
			\pinlabel \scalebox{0.7}{$\textcolor{black}{3}$} at 1 -5
			\pinlabel \scalebox{0.9}{$s_1(\lambda)$} at 22 8
			\endlabellist 
			\centering 
			\includegraphics[scale=1.3]{./KrCap1Thin}
	}\endxy
\\[2ex]
\xy (0,1)*{
			\labellist
			\small\hair 2pt
			\pinlabel \scalebox{0.7}{$\textcolor{red}{2}$} at 1 9
			\pinlabel \scalebox{0.9}{$\lambda$} at 19 2
			\endlabellist 
			\centering 
			\includegraphics[scale=1.3]{./RrCup1Thin}
	}\endxy\;\;
    &\overset{\iota}{\mapsto}
    (-1)^{\lambda_2} \xy (0,0)*{
			\labellist
			\small\hair 2pt
			\pinlabel \scalebox{0.7}{$\textcolor{black}{3}$} at 1 9
			\pinlabel \scalebox{0.9}{$s_1(\lambda)$} at 24 2
			\endlabellist 
			\centering 
			\includegraphics[scale=1.3]{./KlCup1Thin}
	}\endxy
&
\xy (0,1)*{
			\labellist
			\small\hair 2pt
			\pinlabel \scalebox{0.7}{$\textcolor{red}{2}$} at 1 9
			\pinlabel \scalebox{0.9}{$\lambda$} at 19 2
			\endlabellist 
			\centering 
			\includegraphics[scale=1.3]{./RlCup1Thin}
	}\endxy\;\;
    &\overset{\iota}{\mapsto}
    (-1)^{\lambda_3} \xy (0,0)*{
			\labellist
			\small\hair 2pt
			\pinlabel \scalebox{0.7}{$\textcolor{black}{3}$} at 1 9
			\pinlabel \scalebox{0.9}{$s_1(\lambda)$} at 24 2
			\endlabellist 
			\centering 
			\includegraphics[scale=1.3]{./KrCup1Thin}
	}\endxy
\end{align}\endgroup

\end{itemize}

Explicitly, $\iota'$ is given by:

\begin{itemize}[wide,labelindent=0pt,itemsep=5pt]

    \item On objects, $\lambda\overset{\iota'}{\mapsto} s_2(\lambda)$.

    \item On 1-morphisms,

\begin{gather*}
\E_2\oneid_\lambda\overset{\iota'}{\mapsto} \F_2\oneid_{s_2(\lambda)} ,
\mspace{35mu}
\F_2\oneid_\lambda\overset{\iota'}{\mapsto} \E_2\oneid_{s_2(\lambda)} ,
\mspace{35mu}
\E_1\oneid_\lambda\overset{\iota'}{\mapsto} \F_3\oneid_{s_2(\lambda)} ,
\mspace{35mu}
\F_1\oneid_\lambda\overset{\iota'}{\mapsto} \E_3\oneid_{s_2(\lambda)} .
\end{gather*}   

    \item On 2-morphisms, 

\begingroup\allowdisplaybreaks
\begin{align}
\xy (0,1)*{
			\labellist
			\small\hair 2pt
			\pinlabel \scalebox{0.7}{$\textcolor{red}{2}$} at 1 -5
			\pinlabel \scalebox{0.9}{$\lambda$} at 6 8
			\endlabellist 
			\centering 
			\includegraphics[scale=1.3]{./Ruo}
	}\endxy\;\;
    &\overset{\iota'}{\mapsto}
    \,\xy (0,0)*{
			\labellist
			\small\hair 2pt
			\pinlabel \scalebox{0.7}{$\textcolor{red}{2}$} at 1 -5
			\pinlabel \scalebox{0.9}{$s_2(\lambda)$} at 12 8
			\endlabellist 
			\centering 
			\includegraphics[scale=1.3]{./Rdo}
	}\endxy
&  
\xy (0,1)*{
			\labellist
			\small\hair 2pt
			\pinlabel \scalebox{0.7}{$\textcolor{blue}{1}$} at 1 -5
			\pinlabel \scalebox{0.9}{$\lambda$} at 6 8
			\endlabellist 
			\centering 
			\includegraphics[scale=1.3]{./Buo}
	}\endxy\;\;
    &\overset{\iota'}{\mapsto}
    \,\xy (0,0)*{
			\labellist
			\small\hair 2pt
			\pinlabel \scalebox{0.7}{$\textcolor{black}{3}$} at 1 -5
			\pinlabel \scalebox{0.9}{$s_2(\lambda)$} at 12 8
			\endlabellist 
			\centering 
			\includegraphics[scale=1.3]{./KdoThin}
	}\endxy
\\[2ex]
\xy (0,1)*{
			\labellist
			\small\hair 2pt
			\pinlabel \scalebox{0.7}{$\textcolor{red}{2}$} at 0 -5
			\pinlabel \scalebox{0.7}{$\textcolor{red}{2}$} at 16 -5
			\pinlabel \scalebox{0.9}{$\lambda$} at 18 8
			\endlabellist 
			\centering 
			\includegraphics[scale=1.3]{./RurRul}
	}\endxy\;\; 
    &\overset{\iota'}{\mapsto}
    -\xy (0,0)*{
			\labellist
			\small\hair 2pt
			\pinlabel \scalebox{0.7}{$\textcolor{red}{2}$} at 0 -5
			\pinlabel \scalebox{0.7}{$\textcolor{red}{2}$} at 16 -5
			\pinlabel \scalebox{0.9}{$s_2(\lambda)$} at 22 8
			\endlabellist 
			\centering 
			\includegraphics[scale=1.3]{./RdrRdl}
	}\endxy
&
\xy (0,1)*{
			\labellist
			\small\hair 2pt
			\pinlabel \scalebox{0.7}{$\textcolor{blue}{1}$} at 0 -5
			\pinlabel \scalebox{0.7}{$\textcolor{blue}{1}$} at 16 -5
			\pinlabel \scalebox{0.9}{$\lambda$} at 18 8
			\endlabellist 
			\centering 
			\includegraphics[scale=1.3]{./BurBul}
	}\endxy\;\;
    &\overset{\iota'}{\mapsto}
    -\xy (0,0)*{
			\labellist
			\small\hair 2pt
			\pinlabel \scalebox{0.7}{$\textcolor{black}{3}$} at 0 -5
			\pinlabel \scalebox{0.7}{$\textcolor{black}{3}$} at 16 -5
			\pinlabel \scalebox{0.9}{$s_2(\lambda)$} at 22 8
			\endlabellist 
			\centering 
			\includegraphics[scale=1.3]{./KdrKdl2}
	}\endxy
\\[2ex]
\xy (0,1)*{
			\labellist
			\small\hair 2pt
			\pinlabel \scalebox{0.7}{$\textcolor{blue}{1}$} at 16 -5
			\pinlabel \scalebox{0.7}{$\textcolor{red}{2}$} at 0 -5
			\pinlabel \scalebox{0.9}{$\lambda$} at 18 8
			\endlabellist 
			\centering 
			\includegraphics[scale=1.3]{./RurBul}
	}\endxy\;\;
    &\overset{\iota'}{\mapsto}
   k_1^{2,3}\xy (0,0)*{
			\labellist
			\small\hair 2pt
			\pinlabel \scalebox{0.7}{$\textcolor{red}{2}$} at 0 -5
			\pinlabel \scalebox{0.7}{$\textcolor{black}{3}$} at 16 -5
			\pinlabel \scalebox{0.9}{$s_2(\lambda)$} at 22 8
			\endlabellist 
			\centering 
			\includegraphics[scale=1.3]{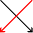}
	}\endxy
&
\xy (0,1)*{
			\labellist
			\small\hair 2pt
			\pinlabel \scalebox{0.7}{$\textcolor{blue}{1}$} at 0 -5
			\pinlabel \scalebox{0.7}{$\textcolor{red}{2}$} at 16 -5
			\pinlabel \scalebox{0.9}{$\lambda$} at 18 8
			\endlabellist 
			\centering 
			\includegraphics[scale=1.3]{./BurRul}
	}\endxy\;\;
    &\overset{\iota'}{\mapsto}
    k_1^{0,1}\xy (0,0)*{
			\labellist
			\small\hair 2pt
			\pinlabel \scalebox{0.7}{$\textcolor{red}{2}$} at 16 -5
			\pinlabel \scalebox{0.7}{$\textcolor{black}{3}$} at 0 -5
			\pinlabel \scalebox{0.9}{$s_2(\lambda)$} at 22 8
			\endlabellist 
			\centering 
			\includegraphics[scale=1.3]{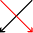}
	}\endxy
\\[2ex]
\xy (0,1)*{
			\labellist
			\small\hair 2pt
			\pinlabel \scalebox{0.7}{$\textcolor{red}{2}$} at 1 -5
			\pinlabel \scalebox{0.9}{$\lambda$} at 18 8
			\endlabellist 
			\centering 
			\includegraphics[scale=1.3]{./RrCap1Thin}
	}\endxy\;\;
    &\overset{\iota'}{\mapsto}
    (-1)^{\lambda_2+1} \xy (0,0)*{
			\labellist
			\small\hair 2pt
			\pinlabel \scalebox{0.7}{$\textcolor{red}{2}$} at 1 -5
			\pinlabel \scalebox{0.9}{$s_2(\lambda)$} at 22 8
			\endlabellist 
			\centering 
			\includegraphics[scale=1.3]{./RlCap1}
	}\endxy
&
\xy (0,1)*{
			\labellist
			\small\hair 2pt
			\pinlabel \scalebox{0.7}{$\textcolor{red}{2}$} at 1 -5
			\pinlabel \scalebox{0.9}{$\lambda$} at 18 8
			\endlabellist 
			\centering 
			\includegraphics[scale=1.3]{./RlCap1}
	}\endxy\;\;
    &\overset{\iota'}{\mapsto}
    (-1)^{\lambda_3+1} \xy (0,0)*{
			\labellist
			\small\hair 2pt
			\pinlabel \scalebox{0.7}{$\textcolor{red}{1}$} at 2 -5
			\pinlabel \scalebox{0.9}{$s_2(\lambda)$} at 22 8
			\endlabellist 
			\centering 
			\includegraphics[scale=1.3]{./RrCap1Thin}
	}\endxy
\\[2ex]
\xy (0,1)*{
			\labellist
			\small\hair 2pt
			\pinlabel \scalebox{0.7}{$\textcolor{red}{2}$} at 1 9
			\pinlabel \scalebox{0.9}{$\lambda$} at 19 2
			\endlabellist 
			\centering 
			\includegraphics[scale=1.3]{./RrCup1Thin}
	}\endxy\;\;
    &\overset{\iota'}{\mapsto}
    (-1)^{\lambda_2} \xy (0,0)*{
			\labellist
			\small\hair 2pt
			\pinlabel \scalebox{0.7}{$\textcolor{red}{2}$} at 1 9
			\pinlabel \scalebox{0.9}{$s_3(\lambda)$} at 24 2
			\endlabellist 
			\centering 
			\includegraphics[scale=1.3]{./RlCup1Thin}
	}\endxy
&
\xy (0,1)*{
			\labellist
			\small\hair 2pt
			\pinlabel \scalebox{0.7}{$\textcolor{red}{2}$} at 1 9
			\pinlabel \scalebox{0.9}{$\lambda$} at 19 2
			\endlabellist 
			\centering 
			\includegraphics[scale=1.3]{./RlCup1Thin}
	}\endxy\;\;
    &\overset{\iota'}{\mapsto}
    (-1)^{\lambda_3} \xy (0,0)*{
			\labellist
			\small\hair 2pt
			\pinlabel \scalebox{0.7}{$\textcolor{red}{2}$} at 1 9
			\pinlabel \scalebox{0.9}{$s_2(\lambda)$} at 24 2
			\endlabellist 
			\centering 
			\includegraphics[scale=1.3]{./RrCup1Thin}
	}\endxy
\\[2ex]
\xy (0,1)*{
			\labellist
			\small\hair 2pt
			\pinlabel \scalebox{0.7}{$\textcolor{blue}{1}$} at 1 -5
			\pinlabel \scalebox{0.9}{$\lambda$} at 18 8
			\endlabellist 
			\centering 
			\includegraphics[scale=1.3]{./BrCap1}
	}\endxy\;\;
    &\overset{\iota'}{\mapsto}
    (-1)^{\lambda_1+1} \xy (0,0)*{
			\labellist
			\small\hair 2pt
			\pinlabel \scalebox{0.7}{$\textcolor{black}{3}$} at 1 -5
			\pinlabel \scalebox{0.9}{$s_2(\lambda)$} at 22 8
			\endlabellist 
			\centering 
			\includegraphics[scale=1.3]{./KlCap1Thin}
	}\endxy
&
\xy (0,1)*{
			\labellist
			\small\hair 2pt
			\pinlabel \scalebox{0.7}{$\textcolor{blue}{1}$} at 1 -5
			\pinlabel \scalebox{0.9}{$\lambda$} at 18 8
			\endlabellist 
			\centering 
			\includegraphics[scale=1.3]{./BlCap1}
	}\endxy\;\;
    &\overset{\iota'}{\mapsto}
    (-1)^{\lambda_2+1} \xy (0,0)*{
			\labellist
			\small\hair 2pt
			\pinlabel \scalebox{0.7}{$\textcolor{black}{3}$} at 1 -5
			\pinlabel \scalebox{0.9}{$s_2(\lambda)$} at 22 8
			\endlabellist 
			\centering 
			\includegraphics[scale=1.3]{./KrCap1Thin}
	}\endxy
\\[2ex]
\xy (0,1)*{
			\labellist
			\small\hair 2pt
			\pinlabel \scalebox{0.7}{$\textcolor{blue}{1}$} at 1 9
			\pinlabel \scalebox{0.9}{$\lambda$} at 19 2
			\endlabellist 
			\centering 
			\includegraphics[scale=1.3]{./BrCup1}
	}\endxy\;\;
    &\overset{\iota'}{\mapsto}
    (-1)^{\lambda_1} \xy (0,0)*{
			\labellist
			\small\hair 2pt
			\pinlabel \scalebox{0.7}{$\textcolor{black}{3}$} at 1 9
			\pinlabel \scalebox{0.9}{$s_2(\lambda)$} at 24 2
			\endlabellist 
			\centering 
			\includegraphics[scale=1.3]{./KlCup1Thin}
	}\endxy
&
\xy (0,1)*{
			\labellist
			\small\hair 2pt
			\pinlabel \scalebox{0.7}{$\textcolor{blue}{1}$} at 1 9
			\pinlabel \scalebox{0.9}{$\lambda$} at 19 2
			\endlabellist 
			\centering 
			\includegraphics[scale=1.3]{./BlCup1}
	}\endxy\;\;
    &\overset{\iota'}{\mapsto}
    (-1)^{\lambda_2} \xy (0,0)*{
			\labellist
			\small\hair 2pt
			\pinlabel \scalebox{0.7}{$\textcolor{black}{3}$} at 1 9
			\pinlabel \scalebox{0.9}{$s_2(\lambda)$} at 24 2
			\endlabellist 
			\centering 
			\includegraphics[scale=1.3]{./KrCup1Thin}
	}\endxy
\end{align}\endgroup

\end{itemize}

\subsection{Degree shifts}
Finally, we introduce two `shift' 2-isomorphisms $\sigma_1,\sigma_2:K^b(\naffu{3})\to K^b(\naffu{3})$. These shift the homological degree and internal degree of complexes, but otherwise act as the identity. We recall that $X[y]\langle z\rangle$ is the 1-term complex with the 1-morphism at homological degree $-y$ with grade shift $z$. Specifically, we define $\sigma_1$ on the generating 1-morphisms by:\begin{itemize}
    \item $\E_1\oneid_\lambda[y]\langle z\rangle \overset{\sigma_1}{\mapsto} \E_1\oneid_\lambda[y-1]\langle z-\overline{\lambda}_1\rangle$
    \item $\F_1\oneid_\lambda[y]\langle z\rangle\overset{\sigma_1}{\mapsto}\F_1\oneid_\lambda[y+1]\langle z-2+\overline{\lambda}_1\rangle$
    \item $\E_2\oneid_\lambda[y]\langle z \rangle\overset{\sigma_1}{\mapsto} \E_2\oneid_\lambda[y+1]\langle z-\lambda_2-\lambda_3-t\rangle$
    \item $\F_2\oneid_\lambda[y]\langle z\rangle \overset{\sigma_1}{\mapsto} \F_2\oneid_\lambda[y-1]\langle z+\lambda_2+\lambda_3+t\rangle$
\end{itemize}
and $\sigma_2$ on the generating 1-morphisms by:\begin{itemize}
    \item $\E_1\oneid_\lambda[y]\langle z\rangle\overset{\sigma_2}{\mapsto}\E_1\oneid_\lambda[y+1]\langle z-2\lambda_3-\overline{\lambda}_1-t+4\rangle$
    \item $\F_1\oneid_\lambda[y]\langle z\rangle\overset{\sigma_2}{\mapsto}\F_1\oneid_\lambda[y-1]\langle z+2\lambda_3+\overline{\lambda}_1+t-4\rangle$
    \item $\E_2\oneid_\lambda[y]\langle z\rangle\overset{\sigma_2}{\mapsto}\E_2\oneid_\lambda[y+1]\langle z+2-\overline{\lambda}_2\rangle$
    \item $\F_2\oneid_\lambda[y]\langle z\rangle\overset{\sigma_2}{\mapsto}\F_2\oneid_\lambda[y-1]\langle z+\overline{\lambda}_2\rangle$,
\end{itemize}
each extended to compositions and complexes in the obvious fashion. We will use these to match the homological degree and grading of the image of 1-morphisms under $\tilde{\mathcal{T}}_{1,-1}'$ and $\tilde{\mathcal{T}}_{2,1}''$ to their image under $\Ev$. 

\subsection{Auxiliary result}
We present here the categorification of \autoref{sec:DecatT}.
\begin{prop}\label{thm:EvIota}\begin{enumerate}
    \item $\Ev'\circ\iota$ and $\sigma_1\circ\tilde{\mathcal{T}}_{1,-1}'$ are equal on objects and generating 1-morphisms in $\naffu{3}$, and are equal up to homotopy on generating 2-morphisms in $\naffu{3}$.
    \item $\Ev'\circ\iota'$ and $\sigma_2\circ\alpha\circ\tilde{\mathcal{T}}_{2,1}''\circ\beta$ are equal on objects and generating 1-morphisms in $\naffu{3}$, and are equal up to homotopy on generating 2-morphisms in $\naffu{3}$.
\end{enumerate}
\end{prop}
\begin{rem}
    We phrase the proposition in this fashion because we have not yet proved that $\Ev'$ is a well-defined 2-functor (and indeed, we will use this proposition to do so), and so it would not be accurate to claim that the two sides are equivalent 2-functors.
\end{rem}
\begin{proof}
It is immediately clear from the definitions that both sides of each equation agree on objects. Further, we note that $\sigma_1$ and $\sigma_2$ only affect 1-morphisms, and do not change the generating 2-morphisms. We recall that $S(\lambda)=\lambda_1+\lambda_3+t-1$.

\smallskip

For $\Ev'\circ\iota$:
\begingroup\allowdisplaybreaks    
\begin{align}
\Ev'\iota(\E_1\oneid_\lambda)
\mspace{-70mu}&\mspace{70mu}=\Ev'(\F_1\oneid_{s_1(\lambda)})=\underline{\F_1\oneid_{s_1(\lambda)}}=\sigma_1\tilde{\mathcal{T}}_{1,-1}'(\E_1\oneid_\lambda)
\\ \Ev'\iota(\F_1\oneid_\lambda)
\mspace{-70mu}&\mspace{70mu}=\Ev'(\E_1\oneid_{s_1(\lambda)})=\underline{\E_1\oneid_{s_1(\lambda)}}=\sigma_1\tilde{\mathcal{T}}_{1,-1}'(\F_1\oneid_\lambda)
\\[2ex]
\Ev'\iota(\E_2\oneid_\lambda)\mspace{-70mu}&\mspace{70mu} =\Ev'(\F_3\oneid_{s_1(\lambda)})\\ \nn
    & =\xymatrix{\E_{12}\oneid_{s_1(\lambda)}\langle -S(s_1(\lambda))-1\rangle 
 \ar[rr]^(0.45){\xy (0,4.5)*{
 		\labellist
		\small\hair 2pt
		\pinlabel \scalebox{0.7}{$\textcolor{red}{2}$} at 0 -5
		\pinlabel \scalebox{0.7}{$\textcolor{blue}{1}$} at 16 -5
		\pinlabel \scalebox{0.9}{$s_1(\lambda)$} at 22 7
		\endlabellist
		\centering
		\includegraphics[scale=1.3]{./RurBul}
	}\endxy} & & \underline{\E_{21}\oneid_{s_1(\lambda)}}\langle -S(s_1(\lambda))\rangle}=\sigma_1\tilde{\mathcal{T}}_{1,-1}'(\E_2\oneid_\lambda)
\\[2ex]
\Ev'\iota(\F_2\oneid_\lambda)\mspace{-70mu}&\mspace{70mu} =\Ev'(\E_3\oneid_{s_1(\lambda)})\\ \nn
    & =\xymatrix{\underline{\F_{12}\oneid_{s_1(\lambda)}}\langle S(s_1(\lambda))\rangle \ar[rr]^(0.55){\xy (0,4.5)*{
		\labellist
		\small\hair 2pt
		\pinlabel \scalebox{0.7}{$\textcolor{red}{2}$} at 0 -5
		\pinlabel \scalebox{0.7}{$\textcolor{blue}{1}$} at 16 -5
		\pinlabel \scalebox{0.9}{$s_1(\lambda)$} at 22 7
		\endlabellist
		\centering
		\includegraphics[scale=1.3]{./RdlBdr}
	}\endxy} & & \F_{21}\oneid_{s_1(\lambda)}\langle S(s_1(\lambda))+1\rangle} = \sigma_1\tilde{\mathcal{T}}_{1,-1}'(\F_2\oneid_\lambda)
 \end{align}\endgroup

\smallskip

\begingroup\allowdisplaybreaks    
\endgroup

\smallskip

For $\Ev'\circ\iota':$ 
\begingroup\allowdisplaybreaks    
\begin{align}    
    \Ev'\iota'(\E_1\oneid_\lambda) \mspace{-70mu}&\mspace{70mu} =\Ev'(\F_3\oneid_{s_2(\lambda)})
    \\ \nn
& =\xymatrix{\E_{12}\oneid_{s_2(\lambda)}\langle -S(s_2(\lambda))-1\rangle 
 \ar[rr]^(0.50){\xy (0,5)*{
 		\labellist
		\small\hair 2pt
		\pinlabel \scalebox{0.7}{$\textcolor{red}{2}$} at 0 -5
		\pinlabel \scalebox{0.7}{$\textcolor{blue}{1}$} at 16 -5
		\pinlabel \scalebox{0.9}{$s_2(\lambda)$} at 22 8
		\endlabellist
		\centering
		\includegraphics[scale=1.3]{./RurBul}
	}\endxy} & & \underline{\E_{21}\oneid_{s_2(\lambda)}}\langle -S(s_2(\lambda))\rangle} = \sigma_2\alpha\tilde{\mathcal{T}}_{2,1}''\beta(\E_1\oneid_\lambda)
\\
\Ev'\iota'(\F_1\oneid_\lambda) \mspace{-70mu}&\mspace{70mu} =\Ev'(\E_3\oneid_{s_2(\lambda)})
\\ \nn
    & =\xymatrix{\underline{\F_{12}\oneid_{s_2(\lambda)}}\langle S(s_2(\lambda))\rangle \ar[rr]^(0.48){\xy (0,5)*{
		\labellist
		\small\hair 2pt
		\pinlabel \scalebox{0.7}{$\textcolor{red}{2}$} at 0 -5
		\pinlabel \scalebox{0.7}{$\textcolor{blue}{1}$} at 16 -5
		\pinlabel \scalebox{0.9}{$s_2(\lambda)$} at 22 8
		\endlabellist
		\centering
		\includegraphics[scale=1.3]{./RdlBdr}
	}\endxy} & & \F_{21}\oneid_{s_2(\lambda)}\langle S(s_2(\lambda))+1\rangle} = \sigma_2\alpha\tilde{\mathcal{T}}_{2,1}''\beta(\F_1\oneid_\lambda)
\\
\Ev'\iota'(\E_2\oneid_\lambda)
\mspace{-70mu}&\mspace{70mu}=\Ev'(\F_2\oneid_\lambda)=\underline{\F_2\oneid_{s_2(\lambda)}}=\sigma_2\alpha\tilde{\mathcal{T}}_{2,1}''\beta(\E_2\oneid_\lambda)
\\ \Ev'\iota'(\F_2\oneid_\lambda)
\mspace{-70mu}&\mspace{70mu}=\Ev'(\E_2\oneid_\lambda)=\underline{\E_2\oneid_{s_2(\lambda)}}=\sigma_2\alpha\tilde{\mathcal{T}}_{2,1}''\beta(\F_2\oneid_\lambda)
\end{align}\endgroup   
%
%
\begingroup
\endgroup   
This finishes the proof.
\end{proof}

\subsection{Proof of \autoref{thm:BigThmPrime}}\label{sec:BigThPrime}
    Because $\iota$ reverses the orientation of the diagrams, we felt that this proof would be clearer to the reader if we proved that $\Ev'$ preserved the 180 degree rotated versions of relations \eqref{eq:adjunction}-\eqref{eq:Grassmannian}. By the cyclicity relations KM2 and KM3, this is equivalent to proving the original relations are preserved.
    
    For any KM relation that only involves strands labelled $1$ and $2$ and does not involve a crossing of a $1$-strand and a $2$-strand, $\Ev'$ acts as the identity and therefore trivially preserves the relation. For relations that do involve these crossings, the calculations are generally straightforward. For example, \begin{align*}\Ev'\left(\xy (0,0)*{
			\labellist
			\small\hair 2pt
			\pinlabel \scalebox{0.7}{$\textcolor{red}{2}$} at 0 -5
                \pinlabel \scalebox{0.7}{$\textcolor{blue}{1}$} at 11 -5
			\pinlabel \scalebox{0.9}{$\lambda$} at 58 25
			\endlabellist 
			\centering 
			\includegraphics[scale=1.3]{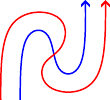}
	}\endxy\;\;\right)
  & = (-1)^{\overline{\lambda}_1(\overline{\lambda}_2+1)}\xy (0,0)*{
			\labellist
			\small\hair 2pt
			\pinlabel \scalebox{0.7}{$\textcolor{red}{2}$} at 0 -5
                \pinlabel \scalebox{0.7}{$\textcolor{blue}{1}$} at 11 -5
			\pinlabel \scalebox{0.9}{$\lambda$} at 58 25
			\endlabellist 
			\centering 
			\includegraphics[scale=1.3]{./RBUpRightCross}
	}\endxy\\
    & \\
    & = 
 (-1)^{\overline{\lambda}_1(\overline{\lambda}_2+1)}\xy (0,0)*{
			\labellist
			\small\hair 2pt
			\pinlabel \scalebox{0.7}{$\textcolor{red}{2}$} at 43 -5
                \pinlabel \scalebox{0.7}{$\textcolor{blue}{1}$} at 53 -5
			\pinlabel \scalebox{0.9}{$\lambda$} at 58 25
			\endlabellist 
			\centering 
			\includegraphics[scale=1.3]{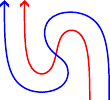}
	}\endxy
 \;\; = \Ev'\left(
 \xy (0,0)*{
			\labellist
			\small\hair 2pt
			\pinlabel \scalebox{0.7}{$\textcolor{red}{2}$} at 43 -5
                \pinlabel \scalebox{0.7}{$\textcolor{blue}{1}$} at 53 -5
			\pinlabel \scalebox{0.9}{$\lambda$} at 58 25
			\endlabellist 
			\centering 
			\includegraphics[scale=1.3]{./RBUpLeftCross}
	}\endxy\;\;\right)\end{align*}
    \eqskip
    with the other cyclicity relations following similarly, as does the relevant KM5 relation (since $(-1)^{\overline{\lambda}_1(\overline{\lambda}_2+1)}\cdot 0 = 0$). For KM4, we have $$\Ev'\left(\xy (0,0)*{
			\labellist
			\small\hair 2pt
			\pinlabel \scalebox{0.7}{$\textcolor{red}{2}$} at 0 -5
			\pinlabel \scalebox{0.7}{$\textcolor{blue}{1}$} at 16 -5
			\pinlabel \scalebox{0.9}{$\lambda$} at 18 16
			\endlabellist 
			\centering 
			\includegraphics[scale=1.3]{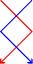}
	}\endxy\;\;\right) = ((-1)^{\overline{\lambda}_1(\overline{\lambda}_2+1)})^2\xy (0,0)*{
			\labellist
			\small\hair 2pt
			\pinlabel \scalebox{0.7}{$\textcolor{red}{2}$} at 0 -5
			\pinlabel \scalebox{0.7}{$\textcolor{blue}{1}$} at 16 -5
			\pinlabel \scalebox{0.9}{$\lambda$} at 18 16
			\endlabellist 
			\centering 
			\includegraphics[scale=1.3]{./RdrdlBdldr}
	}\endxy \;\; = \xy (0,0)*{
			\labellist
			\small\hair 2pt
			\pinlabel \scalebox{0.7}{$\textcolor{red}{2}$} at 0 -5
			\pinlabel \scalebox{0.7}{$\textcolor{blue}{1}$} at 13 -5
			\pinlabel \scalebox{0.9}{$\lambda$} at 17 16
			\endlabellist 
			\centering 
			\includegraphics[scale=1.3]{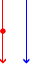}
	}\endxy
\ \ \  - \
 \xy (0,0)*{
			\labellist
			\small\hair 2pt
			\pinlabel \scalebox{0.7}{$\textcolor{red}{2}$} at 0 -5
			\pinlabel \scalebox{0.7}{$\textcolor{blue}{1}$} at 13 -5
			\pinlabel \scalebox{0.9}{$\lambda$} at 17 16
			\endlabellist 
			\centering 
			\includegraphics[scale=1.3]{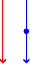}
	}\endxy\;\; = \Ev'\left(\xy (0,0)*{
			\labellist
			\small\hair 2pt
			\pinlabel \scalebox{0.7}{$\textcolor{red}{2}$} at 0 -5
			\pinlabel \scalebox{0.7}{$\textcolor{blue}{1}$} at 13 -5
			\pinlabel \scalebox{0.9}{$\lambda$} at 17 16
			\endlabellist 
			\centering 
			\includegraphics[scale=1.3]{./RdodBdd}
	}\endxy
\ \ \  - \
 \xy (0,0)*{
			\labellist
			\small\hair 2pt
			\pinlabel \scalebox{0.7}{$\textcolor{red}{2}$} at 0 -5
			\pinlabel \scalebox{0.7}{$\textcolor{blue}{1}$} at 13 -5
			\pinlabel \scalebox{0.9}{$\lambda$} at 17 16
			\endlabellist 
			\centering 
			\includegraphics[scale=1.3]{./RddBdod}
	}\endxy\;\;\right).$$
    \eqskip
    KM7 is similar, as is KM6; any (multicolour) cubic KLR diagram consisting only of strands labelled 1 and 2 will have precisely two multicoloured crossings, leading to a similar squaring of the sign.
    It therefore remains to consider only those diagrams with at least one strand labelled 3.
    
For most of the KM identities discussed below, we are able to use that $\iota$ and $\iota'$ are locally faithful 2-functors, and therefore we are able to consider the unique pre-image of any 2-morphism in their images. The results will then follow from liberal use of \autoref{thm:EvIota} (we give an example in first equation below of where it is used). We also implicitly make use of \autoref{lem:DiagMatch} when there is a diagram in the image of both $\iota$ and $\iota'$. We will present a representative sampling of the identities of each KM axiom. The exception is the six instances of KM6 where, using the notation of \eqref{eq:R3}, $\{i,j,k\}=\{1,2,3\}$, since such 2-morphisms are not in the image of either $\iota$ or $\iota'$. In these cases, we will be proving directly that $\Ev'$ preserves KM6.
    \begin{itemize}[wide,labelindent=0pt]
        \item[KM1, 2:] \begin{align*}\Ev'\left(\xy (0,0)*{
			\labellist
			\small\hair 2pt
			\pinlabel \scalebox{0.7}{$3$} at 13 -5
			\pinlabel \scalebox{0.9}{$\lambda$} at 19 12
			\endlabellist 
			\centering 
			\includegraphics[scale=1.4]{./down_right}
	}\endxy\;\;\right)
    &=(-1)^{s_1(\lambda)_2+1}(-1)^{(s_1(\lambda)+\alpha_2)_2}\Ev'\iota\left(\xy (0,0)*{
			\labellist
			\small\hair 2pt
			\pinlabel \scalebox{0.7}{$\textcolor{red}{2}$} at 13 -5
			\pinlabel \scalebox{0.9}{$s_1(\lambda)$} at 25 12
			\endlabellist 
			\centering 
			\includegraphics[scale=1.4]{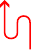}
	}\endxy\qquad
    \right)\overset{\ref{thm:EvIota}}{=}\sigma_1\tilde{\mathcal{T}}_{1,-1}'\left(\xy (0,0)*{
			\labellist
			\small\hair 2pt
			\pinlabel \scalebox{0.7}{$\textcolor{red}{2}$} at 13 -5
			\pinlabel \scalebox{0.9}{$s_1(\lambda)$} at 25 12
			\endlabellist 
			\centering 
			\includegraphics[scale=1.4]{./RUpLeft}
	}\endxy\qquad
    \right)\\
    \\
    &\sim_h \sigma_1\tilde{\mathcal{T}}_{1,-1}'\left(\xy (0,0)*{
			\labellist
			\small\hair 2pt
			\pinlabel \scalebox{0.7}{$\textcolor{red}{2}$} at 2 -5
			\pinlabel \scalebox{0.9}{$s_1(\lambda)$} at 12 12
			\endlabellist 
			\centering 
			\includegraphics[scale=1.4]{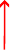}
	}\endxy\quad\;\;
    \right)\overset{\ref{thm:EvIota}}{=}\Ev'\iota\left(\xy (0,0)*{
			\labellist
			\small\hair 2pt
			\pinlabel \scalebox{0.7}{$\textcolor{red}{2}$} at 2 -5
			\pinlabel \scalebox{0.9}{$s_1(\lambda)$} at 12 12
			\endlabellist 
			\centering 
			\includegraphics[scale=1.4]{./RUp}
	}\endxy\quad\;\;
    \right)=\Ev'\left(\xy (0,0)*{
			\labellist
			\small\hair 2pt
			\pinlabel \scalebox{0.7}{$3$} at 2 -5
			\pinlabel \scalebox{0.9}{$\lambda$} at 6 12
			\endlabellist 
			\centering 
			\includegraphics[scale=1.4]{./down}}\endxy\;
    \right)\end{align*} 
    \eqskip 
    where the homotopy (and all future homotopies in this proof) follows from \autoref{lem:KMPreserve} and from the $\sigma_i$ being 2-isomorphisms. The other adjunction relation and dot cyclicity work similarly.
    \item[KM3:]\begin{align*}
       & \Ev'\left(\xy (0,0)*{
			\labellist
			\small\hair 2pt
			\pinlabel \scalebox{0.7}{$\textcolor{blue}{1}$} at 42 -5
                \pinlabel \scalebox{0.7}{$\textcolor{black}{3}$} at 52 -5
			\pinlabel \scalebox{0.9}{$\lambda$} at 58 25
			\endlabellist 
			\centering 
			\includegraphics[scale=1.3]{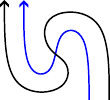}
	}\endxy\;\;\right) 
     \\  &= k_1^{2,3}(s_1(\lambda))(-1)^{s_1(\lambda)_2+1+(s_1(\lambda)-\alpha_2)_1+1+(s_1(\lambda)+\alpha_3)_2+(s_1(\lambda)-\alpha_1)_1}\Ev'\iota\left(\xy (0,0)*{
			\labellist
			\small\hair 2pt
			\pinlabel \scalebox{0.7}{$\textcolor{blue}{1}$} at 40 -5
                \pinlabel \scalebox{0.7}{$\textcolor{red}{2}$} at 50 -5
			\pinlabel \scalebox{0.9}{$s_1(\lambda)$} at 64 25
			\endlabellist 
			\centering 
			\includegraphics[scale=1.3]{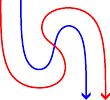}
	}\endxy\qquad
    \right)\\
     \\
     &= k_1^{2,3}(s_1(\lambda))\sigma_1\tilde{\mathcal{T}}_{1,-1}'\left(\xy (0,0)*{
			\labellist
			\small\hair 2pt
			\pinlabel \scalebox{0.7}{$\textcolor{blue}{1}$} at 40 -5
                \pinlabel \scalebox{0.7}{$\textcolor{red}{2}$} at 50 -5
			\pinlabel \scalebox{0.9}{$s_1(\lambda)$} at 64 25
			\endlabellist 
			\centering 
			\includegraphics[scale=1.3]{./BRDownRightCross}
	}\endxy\qquad
    \right)
     \sim_h k_1^{2,3}(s_1(\lambda))\sigma_1\tilde{\mathcal{T}}_{1,-1}'\left(\xy (0,0)*{
			\labellist
			\small\hair 2pt
			\pinlabel \scalebox{0.7}{$\textcolor{red}{2}$} at 16 -5
			\pinlabel \scalebox{0.7}{$\textcolor{blue}{1}$} at 0 -5
			\pinlabel \scalebox{0.9}{$s_1(\lambda)$} at 24 8
			\endlabellist 
			\centering 
			\includegraphics[scale=1.3]{./BdrRdl}
	}\endxy\qquad\right)\\
     \\
     &= k_1^{2,3}(s_1(\lambda))\Ev'\iota\left(\xy (0,0)*{
			\labellist
			\small\hair 2pt
			\pinlabel \scalebox{0.7}{$\textcolor{red}{2}$} at 16 -5
			\pinlabel \scalebox{0.7}{$\textcolor{blue}{1}$} at 0 -5
			\pinlabel \scalebox{0.9}{$s_1(\lambda)$} at 24 8
			\endlabellist 
			\centering 
			\includegraphics[scale=1.3]{./BdrRdl}
	}\endxy\qquad\right)
    = \Ev'\left(\xy (0,0)*{
			\labellist
			\small\hair 2pt
			\pinlabel \scalebox{0.7}{$\textcolor{black}{3}$} at 16 -5
			\pinlabel \scalebox{0.7}{$\textcolor{blue}{1}$} at 0 -5
			\pinlabel \scalebox{0.9}{$\lambda$} at 18 8
			\endlabellist 
			\centering 
			\includegraphics[scale=1.3]{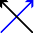}
	}\endxy\;\;
    \right)
    \end{align*}
    \begin{align*}
       & \Ev'\left(\xy (0,0)*{
			\labellist
			\small\hair 2pt
			\pinlabel \scalebox{0.7}{$\textcolor{red}{2}$} at 52 -5
                \pinlabel \scalebox{0.7}{$\textcolor{black}{3}$} at 42 -5
			\pinlabel \scalebox{0.9}{$\lambda$} at 58 25
			\endlabellist 
			\centering 
			\includegraphics[scale=1.3]{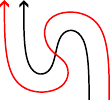}
	}\endxy\;\;\right)
    \\ &= k_1^{0,1}(s_2(\lambda))(-1)^{s_2(\lambda)_2+1+(s_2(\lambda)-\alpha_2)_1+1+(s_2(\lambda)+\alpha_3)_2+(s_2(\lambda)-\alpha_1)_1}\Ev'\iota'\left(\xy (0,0)*{
			\labellist
			\small\hair 2pt
			\pinlabel \scalebox{0.7}{$\textcolor{blue}{1}$} at 40 -5
                \pinlabel \scalebox{0.7}{$\textcolor{red}{2}$} at 50 -5
			\pinlabel \scalebox{0.9}{$s_2(\lambda)$} at 64 25
			\endlabellist 
			\centering 
			\includegraphics[scale=1.3]{./BRDownRightCross}
	}\endxy\qquad
    \right)\\
     \\
     &= k_1^{0,1}(s_2(\lambda))\sigma_2\alpha\tilde{\mathcal{T}}_{2,1}''\beta\left(\xy (0,0)*{
			\labellist
			\small\hair 2pt
			\pinlabel \scalebox{0.7}{$\textcolor{blue}{1}$} at 40 -5
                \pinlabel \scalebox{0.7}{$\textcolor{red}{2}$} at 50 -5
			\pinlabel \scalebox{0.9}{$s_1(\lambda)$} at 64 25
			\endlabellist 
			\centering 
			\includegraphics[scale=1.3]{./BRDownRightCross}
	}\endxy\qquad
    \right)
     \sim_hk_1^{0,1}(s_2(\lambda))\sigma_1\tilde{\mathcal{T}}_{1,-1}'\left(\xy (0,0)*{
			\labellist
			\small\hair 2pt
			\pinlabel \scalebox{0.7}{$\textcolor{red}{2}$} at 16 -5
			\pinlabel \scalebox{0.7}{$\textcolor{blue}{1}$} at 0 -5
			\pinlabel \scalebox{0.9}{$s_1(\lambda)$} at 24 8
			\endlabellist 
			\centering 
			\includegraphics[scale=1.3]{./BdrRdl}
	}\endxy\qquad\right)\\
     \\
     &= k_1^{0,1}(s_2(\lambda))\Ev'\iota'\left(\xy (0,0)*{
			\labellist
			\small\hair 2pt
			\pinlabel \scalebox{0.7}{$\textcolor{red}{2}$} at 16 -5
			\pinlabel \scalebox{0.7}{$\textcolor{blue}{1}$} at 0 -5
			\pinlabel \scalebox{0.9}{$s_1(\lambda)$} at 24 8
			\endlabellist 
			\centering 
			\includegraphics[scale=1.3]{./BdrRdl}
	}\endxy\qquad\right)
    = \Ev'\left(\xy (0,0)*{
			\labellist
			\small\hair 2pt
			\pinlabel \scalebox{0.7}{$\textcolor{black}{3}$} at 16 -5
			\pinlabel \scalebox{0.7}{$\textcolor{blue}{1}$} at 0 -5
			\pinlabel \scalebox{0.9}{$\lambda$} at 18 8
			\endlabellist 
			\centering 
			\includegraphics[scale=1.3]{./BurKul2}
	}\endxy\;\;
    \right)
    \end{align*}
    \begin{align*}
        \Ev'\left(\xy (0,0)*{
			\labellist
			\small\hair 2pt
			\pinlabel \scalebox{0.7}{$\textcolor{black}{3}$} at 42 -5
                \pinlabel \scalebox{0.7}{$\textcolor{black}{3}$} at 52 -5
			\pinlabel \scalebox{0.9}{$\lambda$} at 58 25
			\endlabellist 
			\centering 
			\includegraphics[scale=1.3]{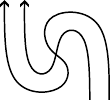}
	}\endxy\;\;\right)  &= (-1)^{s_1(\lambda)_2+1+2(s_1(\lambda)-\alpha_2)_2+1+(s_1(\lambda)-2\alpha_2)_2}\Ev'\iota\left(\xy (0,0)*{
			\labellist
			\small\hair 2pt
			\pinlabel \scalebox{0.7}{$\textcolor{red}{2}$} at 40 -5
                \pinlabel \scalebox{0.7}{$\textcolor{red}{2}$} at 50 -5
			\pinlabel \scalebox{0.9}{$s_1(\lambda)$} at 64 25
			\endlabellist 
			\centering 
			\includegraphics[scale=1.3]{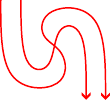}
	}\endxy\qquad
    \right)\\
     \\
     &= \sigma_1\tilde{\mathcal{T}}_{1,-1}'\left(\xy (0,0)*{
			\labellist
			\small\hair 2pt
			\pinlabel \scalebox{0.7}{$\textcolor{red}{2}$} at 40 -5
                \pinlabel \scalebox{0.7}{$\textcolor{red}{2}$} at 50 -5
			\pinlabel \scalebox{0.9}{$s_1(\lambda)$} at 64 25
			\endlabellist 
			\centering 
			\includegraphics[scale=1.3]{./RRDownRightCross}
	}\endxy\qquad
    \right)
     \sim_h\sigma_1\tilde{\mathcal{T}}_{1,-1}'\left(\xy (0,0)*{
			\labellist
			\small\hair 2pt
			\pinlabel \scalebox{0.7}{$\textcolor{red}{2}$} at 16 -5
			\pinlabel \scalebox{0.7}{$\textcolor{red}{2}$} at 0 -5
			\pinlabel \scalebox{0.9}{$s_1(\lambda)$} at 24 8
			\endlabellist 
			\centering 
			\includegraphics[scale=1.3]{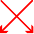}
	}\endxy\qquad\right)\\
     \\
     &= \Ev'\iota\left(\xy (0,0)*{
			\labellist
			\small\hair 2pt
			\pinlabel \scalebox{0.7}{$\textcolor{red}{2}$} at 16 -5
			\pinlabel \scalebox{0.7}{$\textcolor{red}{2}$} at 0 -5
			\pinlabel \scalebox{0.9}{$s_1(\lambda)$} at 24 8
			\endlabellist 
			\centering 
			\includegraphics[scale=1.3]{./RdrRdl2}
	}\endxy\qquad\right)
    = \Ev'\left(\xy (0,0)*{
			\labellist
			\small\hair 2pt
			\pinlabel \scalebox{0.7}{$\textcolor{black}{3}$} at 16 -5
			\pinlabel \scalebox{0.7}{$\textcolor{black}{3}$} at 0 -5
			\pinlabel \scalebox{0.9}{$\lambda$} at 18 8
			\endlabellist 
			\centering 
			\includegraphics[scale=1.3]{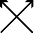}
	}\endxy\;\;
    \right)
    \end{align*}
\eqskip
    The other crossing cyclicity identities are similar.
    \item[KM4:] \begin{align*}
        \Ev'\left(\xy (0,0)*{
			\labellist
			\small\hair 2pt
			\pinlabel \scalebox{0.7}{$\textcolor{black}{3}$} at 0 -5
			\pinlabel \scalebox{0.7}{$\textcolor{blue}{1}$} at 16 -5
			\pinlabel \scalebox{0.9}{$\lambda$} at 18 16
			\endlabellist 
			\centering 
			\includegraphics[scale=1.3]{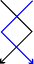}
	}\endxy\;\;
        \right)& = k_1^{0,1}(s_1(\lambda))k_1^{2,3}(s_1(\lambda))\Ev'\iota\left(\xy (0,0)*{
			\labellist
			\small\hair 2pt
			\pinlabel \scalebox{0.7}{$\textcolor{red}{2}$} at 0 -5
			\pinlabel \scalebox{0.7}{$\textcolor{blue}{1}$} at 16 -5
			\pinlabel \scalebox{0.9}{$s_1(\lambda)$} at 26 16
			\endlabellist 
			\centering 
			\includegraphics[scale=1.3]{./RurulBulur}
	}\endxy\qquad
        \right)
         = -\sigma_1\tilde{\mathcal{T}}_{1,-1}'\left(\xy (0,0)*{
			\labellist
			\small\hair 2pt
			\pinlabel \scalebox{0.7}{$\textcolor{red}{2}$} at 0 -5
			\pinlabel \scalebox{0.7}{$\textcolor{blue}{1}$} at 16 -5
			\pinlabel \scalebox{0.9}{$s_1(\lambda)$} at 26 16
			\endlabellist 
			\centering 
			\includegraphics[scale=1.3]{./RurulBulur}
	}\endxy\qquad
        \right) \\
        & \\
        & \sim_h-\sigma_1\tilde{\mathcal{T}}_{1,-1}'\left(\xy (0,0)*{
			\labellist
			\small\hair 2pt
			\pinlabel \scalebox{0.7}{$\textcolor{red}{2}$} at 0 -5
			\pinlabel \scalebox{0.7}{$\textcolor{blue}{1}$} at 13 -5
			\pinlabel \scalebox{0.9}{$s_1(\lambda)$} at 24 16
			\endlabellist 
			\centering 
			\includegraphics[scale=1.3]{./RuouBuu}
	}\endxy\qquad
    -\xy (0,0)*{
			\labellist
			\small\hair 2pt
			\pinlabel \scalebox{0.7}{$\textcolor{red}{2}$} at 0 -5
			\pinlabel \scalebox{0.7}{$\textcolor{blue}{1}$} at 13 -5
			\pinlabel \scalebox{0.9}{$s_1(\lambda)$} at 24 16
			\endlabellist 
			\centering 
			\includegraphics[scale=1.3]{./RuuBuou}
	}\endxy\qquad\right)\\
    & \\
    & = \Ev'\iota\left(\xy (0,0)*{
			\labellist
			\small\hair 2pt
			\pinlabel \scalebox{0.7}{$\textcolor{red}{2}$} at 0 -5
			\pinlabel \scalebox{0.7}{$\textcolor{blue}{1}$} at 13 -5
			\pinlabel \scalebox{0.9}{$s_1(\lambda)$} at 24 16
			\endlabellist 
			\centering 
			\includegraphics[scale=1.3]{./RuuBuou}
	}\endxy\qquad
    -\xy (0,0)*{
			\labellist
			\small\hair 2pt
			\pinlabel \scalebox{0.7}{$\textcolor{red}{2}$} at 0 -5
			\pinlabel \scalebox{0.7}{$\textcolor{blue}{1}$} at 13 -5
			\pinlabel \scalebox{0.9}{$s_1(\lambda)$} at 24 16
			\endlabellist 
			\centering 
			\includegraphics[scale=1.3]{./RuouBuu}
	}\endxy\qquad\right)\\
    & \\
    & =\Ev'\left(\xy (0,0)*{
			\labellist
			\small\hair 2pt
			\pinlabel \scalebox{0.7}{$\textcolor{black}{3}$} at 0 -5
			\pinlabel \scalebox{0.7}{$\textcolor{blue}{1}$} at 13 -5
			\pinlabel \scalebox{0.9}{$\lambda$} at 18 16
			\endlabellist 
			\centering 
			\includegraphics[scale=1.3]{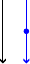}
	}\endxy\;\;
    -\xy (0,0)*{
			\labellist
			\small\hair 2pt
			\pinlabel \scalebox{0.7}{$\textcolor{black}{3}$} at 0 -5
			\pinlabel \scalebox{0.7}{$\textcolor{blue}{1}$} at 13 -5
			\pinlabel \scalebox{0.9}{$\lambda$} at 18 16
			\endlabellist 
			\centering 
			\includegraphics[scale=1.3]{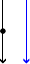}
	}\endxy\;\;\right)
    \end{align*}
    \begin{align*}
        \Ev'\left(\xy (0,0)*{
			\labellist
			\small\hair 2pt
			\pinlabel \scalebox{0.7}{$\textcolor{black}{3}$} at 0 -5
			\pinlabel \scalebox{0.7}{$\textcolor{red}{2}$} at 16 -5
			\pinlabel \scalebox{0.9}{$\lambda$} at 18 16
			\endlabellist 
			\centering 
			\includegraphics[scale=1.3]{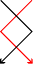}
	}\endxy\;\;
        \right)& = k_1^{0,1}(s_2(\lambda))k_1^{2,3}(s_2(\lambda))\Ev'\iota'\left(\xy (0,0)*{
			\labellist
			\small\hair 2pt
			\pinlabel \scalebox{0.7}{$\textcolor{red}{2}$} at 16 -5
			\pinlabel \scalebox{0.7}{$\textcolor{blue}{1}$} at 0 -5
			\pinlabel \scalebox{0.9}{$s_2(\lambda)$} at 26 16
			\endlabellist 
			\centering 
			\includegraphics[scale=1.3]{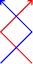}
	}\endxy\qquad
        \right)
         = -\sigma_2\alpha\tilde{\mathcal{T}}_{2,1}''\beta\left(\xy (0,0)*{
			\labellist
			\small\hair 2pt
			\pinlabel \scalebox{0.7}{$\textcolor{red}{2}$} at 16 -5
			\pinlabel \scalebox{0.7}{$\textcolor{blue}{1}$} at 0 -5
			\pinlabel \scalebox{0.9}{$s_2(\lambda)$} at 26 16
			\endlabellist 
			\centering 
			\includegraphics[scale=1.3]{./BurulRulur}
	}\endxy\qquad
        \right) \\
        & \\
        & \sim_h-\sigma_2\alpha\tilde{\mathcal{T}}_{2,1}''\beta\left(\xy (0,0)*{
			\labellist
			\small\hair 2pt
			\pinlabel \scalebox{0.7}{$\textcolor{red}{2}$} at 13 -5
			\pinlabel \scalebox{0.7}{$\textcolor{blue}{1}$} at 0 -5
			\pinlabel \scalebox{0.9}{$s_2(\lambda)$} at 24 16
			\endlabellist 
			\centering 
			\includegraphics[scale=1.3]{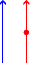}
	}\endxy\qquad
    -\xy (0,0)*{
			\labellist
			\small\hair 2pt
			\pinlabel \scalebox{0.7}{$\textcolor{red}{2}$} at 13 -5
			\pinlabel \scalebox{0.7}{$\textcolor{blue}{1}$} at 0 -5
			\pinlabel \scalebox{0.9}{$s_2(\lambda)$} at 24 16
			\endlabellist 
			\centering 
			\includegraphics[scale=1.3]{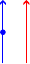}
	}\endxy\qquad\right)\\
    & \\
    & = \Ev'\iota'\left(\xy (0,0)*{
			\labellist
			\small\hair 2pt
			\pinlabel \scalebox{0.7}{$\textcolor{red}{2}$} at 13 -5
			\pinlabel \scalebox{0.7}{$\textcolor{blue}{1}$} at 0 -5
			\pinlabel \scalebox{0.9}{$s_2(\lambda)$} at 24 16
			\endlabellist 
			\centering 
			\includegraphics[scale=1.3]{./BuouRuu}
	}\endxy\qquad
    -\xy (0,0)*{
			\labellist
			\small\hair 2pt
			\pinlabel \scalebox{0.7}{$\textcolor{red}{2}$} at 13 -5
			\pinlabel \scalebox{0.7}{$\textcolor{blue}{1}$} at 0 -5
			\pinlabel \scalebox{0.9}{$s_2(\lambda)$} at 24 16
			\endlabellist 
			\centering 
			\includegraphics[scale=1.3]{./BuuRuou}
	}\endxy\qquad\right)\\
    & \\
    & =\Ev'\left(\xy (0,0)*{
			\labellist
			\small\hair 2pt
			\pinlabel \scalebox{0.7}{$\textcolor{black}{3}$} at 0 -5
			\pinlabel \scalebox{0.7}{$\textcolor{red}{2}$} at 13 -5
			\pinlabel \scalebox{0.9}{$\lambda$} at 18 16
			\endlabellist 
			\centering 
			\includegraphics[scale=1.3]{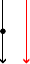}
	}\endxy\;\;
    -\xy (0,0)*{
			\labellist
			\small\hair 2pt
			\pinlabel \scalebox{0.7}{$\textcolor{black}{3}$} at 0 -5
			\pinlabel \scalebox{0.7}{$\textcolor{red}{2}$} at 13 -5
			\pinlabel \scalebox{0.9}{$\lambda$} at 18 16
			\endlabellist 
			\centering 
			\includegraphics[scale=1.3]{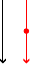}
	}\endxy\;\;\right)
\\[1ex]
        \Ev'\left(\xy (0,0)*{
			\labellist
			\small\hair 2pt
			\pinlabel \scalebox{0.7}{$\textcolor{black}{3}$} at 0 -5
			\pinlabel \scalebox{0.7}{$\textcolor{black}{3}$} at 16 -5
			\pinlabel \scalebox{0.9}{$\lambda$} at 18 16
			\endlabellist 
			\centering 
			\includegraphics[scale=1.3]{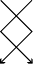}
	}\endxy\;\;
        \right)& = \Ev'\iota\left(\xy (0,0)*{
			\labellist
			\small\hair 2pt
			\pinlabel \scalebox{0.7}{$\textcolor{red}{2}$} at 16 -5
			\pinlabel \scalebox{0.7}{$\textcolor{red}{2}$} at 0 -5
			\pinlabel \scalebox{0.9}{$s_1(\lambda)$} at 26 16
			\endlabellist 
			\centering 
			\includegraphics[scale=1.3]{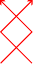}
	}\endxy\qquad
        \right) = \Ev'\iota(0)=0=\Ev'(0)
        \end{align*}
\eqskip
        The other two identities are similar.
        \item[KM5:] \begin{align*}
            \Ev'\left(\xy (0,0)*{
			\labellist
			\small\hair 2pt
			\pinlabel \scalebox{0.7}{$\textcolor{black}{3}$} at 0 -5
			\pinlabel \scalebox{0.7}{$\textcolor{blue}{1}$} at 16 -5
			\pinlabel \scalebox{0.9}{$\lambda$} at 18 7
			\endlabellist 
			\centering 
			\includegraphics[scale=1.3]{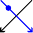}
	}\endxy
 \;\;\; - \ 
    \xy (0,0)*{
			\labellist
			\small\hair 2pt
			\pinlabel \scalebox{0.7}{$\textcolor{black}{3}$} at 0 -5
			\pinlabel \scalebox{0.7}{$\textcolor{blue}{1}$} at 16 -5
			\pinlabel \scalebox{0.9}{$\lambda$} at 18 7
			\endlabellist 
			\centering 
			\includegraphics[scale=1.3]{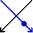}
	}\endxy\;\;\right) & = k_1^{0,1}(s_1(\lambda))\Ev'\iota\left(\xy (0,0)*{
			\labellist
			\small\hair 2pt
			\pinlabel \scalebox{0.7}{$\textcolor{red}{2}$} at 0 -5
			\pinlabel \scalebox{0.7}{$\textcolor{blue}{1}$} at 16 -5
			\pinlabel \scalebox{0.9}{$s_1(\lambda)$} at 24 7
			\endlabellist 
			\centering 
			\includegraphics[scale=1.3]{./RurBulo}
	}\endxy
 \qquad - \ 
    \xy (0,0)*{
			\labellist
			\small\hair 2pt
			\pinlabel \scalebox{0.7}{$\textcolor{red}{2}$} at 0 -5
			\pinlabel \scalebox{0.7}{$\textcolor{blue}{1}$} at 16 -5
			\pinlabel \scalebox{0.9}{$s_1(\lambda)$} at 24 7
			\endlabellist 
			\centering 
			\includegraphics[scale=1.3]{./RurBoul}
	}\endxy\qquad\right) = \Ev'\iota(0)=0=\Ev'(0)
        \end{align*}
        \begin{align*}
            \Ev'\left(\xy (0,0)*{
			\labellist
			\small\hair 2pt
			\pinlabel \scalebox{0.7}{$\textcolor{black}{3}$} at 16 -5
			\pinlabel \scalebox{0.7}{$\textcolor{red}{2}$} at 0 -5
			\pinlabel \scalebox{0.9}{$\lambda$} at 18 7
			\endlabellist 
			\centering 
			\includegraphics[scale=1.3]{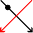}
	}\endxy
 \;\;\; - \ 
    \xy (0,0)*{
			\labellist
			\small\hair 2pt
			\pinlabel \scalebox{0.7}{$\textcolor{black}{3}$} at 16 -5
			\pinlabel \scalebox{0.7}{$\textcolor{red}{2}$} at 0 -5
			\pinlabel \scalebox{0.9}{$\lambda$} at 18 7
			\endlabellist 
			\centering 
			\includegraphics[scale=1.3]{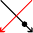}
	}\endxy\;\;\right) & = k_1^{0,1}(s_2(\lambda))\Ev'\iota'\left(\xy (0,0)*{
			\labellist
			\small\hair 2pt
			\pinlabel \scalebox{0.7}{$\textcolor{red}{2}$} at 0 -5
			\pinlabel \scalebox{0.7}{$\textcolor{blue}{1}$} at 16 -5
			\pinlabel \scalebox{0.9}{$s_2(\lambda)$} at 24 7
			\endlabellist 
			\centering 
			\includegraphics[scale=1.3]{./RurBulo}
	}\endxy
 \qquad - \ 
    \xy (0,0)*{
			\labellist
			\small\hair 2pt
			\pinlabel \scalebox{0.7}{$\textcolor{red}{2}$} at 0 -5
			\pinlabel \scalebox{0.7}{$\textcolor{blue}{1}$} at 16 -5
			\pinlabel \scalebox{0.9}{$s_2(\lambda)$} at 24 7
			\endlabellist 
			\centering 
			\includegraphics[scale=1.3]{./RurBoul}
	}\endxy\qquad\right) = \Ev'\iota(0)=0=\Ev'(0)
        \end{align*}

        \begin{align*}
            \Ev'\left(\xy (0,0)*{
			\labellist
			\small\hair 2pt
			\pinlabel \scalebox{0.7}{$\textcolor{black}{3}$} at 16 -5
			\pinlabel \scalebox{0.7}{$\textcolor{black}{3}$} at 0 -5
			\pinlabel \scalebox{0.9}{$\lambda$} at 18 7
			\endlabellist 
			\centering 
			\includegraphics[scale=1.3]{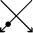}
	}\endxy
 \;\;\; - \ 
    \xy (0,0)*{
			\labellist
			\small\hair 2pt
			\pinlabel \scalebox{0.7}{$\textcolor{black}{3}$} at 16 -5
			\pinlabel \scalebox{0.7}{$\textcolor{black}{3}$} at 0 -5
			\pinlabel \scalebox{0.9}{$\lambda$} at 18 7
			\endlabellist 
			\centering 
			\includegraphics[scale=1.3]{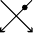}
	}\endxy\;\;\right) = \Ev'\iota\left(\xy (0,0)*{
			\labellist
			\small\hair 2pt
			\pinlabel \scalebox{0.7}{$\textcolor{red}{2}$} at 0 -5
			\pinlabel \scalebox{0.7}{$\textcolor{red}{2}$} at 16 -5
			\pinlabel \scalebox{0.9}{$s_1(\lambda)$} at 24 7
			\endlabellist 
			\centering 
			\includegraphics[scale=1.3]{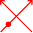}
	}\endxy
 \qquad - \ 
    \xy (0,0)*{
			\labellist
			\small\hair 2pt
			\pinlabel \scalebox{0.7}{$\textcolor{red}{2}$} at 0 -5
			\pinlabel \scalebox{0.7}{$\textcolor{red}{2}$} at 16 -5
			\pinlabel \scalebox{0.9}{$s_1(\lambda)$} at 24 7
			\endlabellist 
			\centering 
			\includegraphics[scale=1.3]{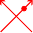}
	}\endxy\qquad\right)\\
    \\
    =\sigma_1\tilde{\mathcal{T}}_{1,-1}'\left(\xy (0,0)*{
			\labellist
			\small\hair 2pt
			\pinlabel \scalebox{0.7}{$\textcolor{red}{2}$} at 0 -5
			\pinlabel \scalebox{0.7}{$\textcolor{red}{2}$} at 16 -5
			\pinlabel \scalebox{0.9}{$s_1(\lambda)$} at 24 7
			\endlabellist 
			\centering 
			\includegraphics[scale=1.3]{./RourRul}
	}\endxy
 \qquad - \ 
    \xy (0,0)*{
			\labellist
			\small\hair 2pt
			\pinlabel \scalebox{0.7}{$\textcolor{red}{2}$} at 0 -5
			\pinlabel \scalebox{0.7}{$\textcolor{red}{2}$} at 16 -5
			\pinlabel \scalebox{0.9}{$s_1(\lambda)$} at 24 7
			\endlabellist 
			\centering 
			\includegraphics[scale=1.3]{./RuroRul}
	}\endxy\qquad\right)\sim_h \sigma_1\tilde{\mathcal{T}}_{1,-1}'\left( \xy (0,0)*{
			\labellist
			\small\hair 2pt
			\pinlabel \scalebox{0.7}{$\textcolor{red}{2}$} at 12 -5
			\pinlabel \scalebox{0.7}{$\textcolor{red}{2}$} at 0 -5
			\pinlabel \scalebox{0.9}{$s_1(\lambda)$} at 25 7
			\endlabellist 
			\centering 
			\includegraphics[scale=1.3]{./RuRu}
	}\endxy\qquad\right)\\
    \\
    =\Ev'\iota\left(\xy (0,0)*{
			\labellist
			\small\hair 2pt
			\pinlabel \scalebox{0.7}{$\textcolor{red}{2}$} at 12 -5
			\pinlabel \scalebox{0.7}{$\textcolor{red}{2}$} at 0 -5
			\pinlabel \scalebox{0.9}{$s_1(\lambda)$} at 25 7
			\endlabellist 
			\centering 
			\includegraphics[scale=1.3]{./RuRu}
	}\endxy\qquad\right)=\Ev'\left(\xy (0,0)*{
			\labellist
			\small\hair 2pt
			\pinlabel \scalebox{0.7}{$\textcolor{black}{3}$} at 12 -5
			\pinlabel \scalebox{0.7}{$\textcolor{black}{3}$} at 0 -5
			\pinlabel \scalebox{0.9}{$\lambda$} at 18 7
			\endlabellist 
			\centering 
			\includegraphics[scale=1.3]{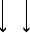}
	}\endxy\;\;\right)
        \end{align*}
        The other two identities are similar.
        \item[KM6:]{\quad}\newline
        $\Ev'\left(\xy (0,0)*{
			\labellist
			\small\hair 2pt
			\pinlabel \scalebox{0.7}{$\textcolor{blue}{1}$} at 0 -5
			\pinlabel \scalebox{0.7}{$\textcolor{black}{3}$} at 16 -5
            \pinlabel \scalebox{0.7}{$\textcolor{blue}{1}$} at 32 -5
			\pinlabel \scalebox{0.9}{$\lambda$} at 34 16
			\endlabellist 
			\centering 
			\includegraphics[scale=1.3]{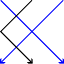}
	}\endxy
 \;\; - \, 
 \xy (0,0)*{
			\labellist
			\small\hair 2pt
			\pinlabel \scalebox{0.7}{$\textcolor{blue}{1}$} at 0 -5
			\pinlabel \scalebox{0.7}{$\textcolor{black}{3}$} at 16 -5
            \pinlabel \scalebox{0.7}{$\textcolor{blue}{1}$} at 32 -5
			\pinlabel \scalebox{0.9}{$\lambda$} at 34 16
			\endlabellist 
			\centering 
			\includegraphics[scale=1.3]{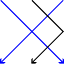}
	}\endxy\;\;\right) $\begin{align*}
         &= \Ev'\iota\left(-k_1^{0,1}(s_1(\lambda)+\alpha_1)k_1^{2,3}(s_1(\lambda)+\alpha_1)\xy (0,0)*{
			\labellist
			\small\hair 2pt
			\pinlabel \scalebox{0.7}{$\textcolor{blue}{1}$} at 0 -5
			\pinlabel \scalebox{0.7}{$\textcolor{red}{2}$} at 16 -5
            \pinlabel \scalebox{0.7}{$\textcolor{blue}{1}$} at 32 -5
			\pinlabel \scalebox{0.9}{$s_1(\lambda)$} at 33 16
			\endlabellist 
			\centering 
			\includegraphics[scale=1.3]{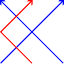}
	}\endxy
 \mspace{20mu} +k_1^{0,1}(s_1(\lambda))k_1^{2,3}(s_1(\lambda)) \, 
 \xy (0,0)*{
			\labellist
			\small\hair 2pt
			\pinlabel \scalebox{0.7}{$\textcolor{blue}{1}$} at 0 -5
			\pinlabel \scalebox{0.7}{$\textcolor{red}{2}$} at 16 -5
            \pinlabel \scalebox{0.7}{$\textcolor{blue}{1}$} at 32 -5
			\pinlabel \scalebox{0.9}{$s_1(\lambda)$} at 40 16
			\endlabellist 
			\centering 
			\includegraphics[scale=1.3]{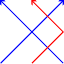}
	}\endxy\mspace{30mu}
    \right)\\
    & \\
    & =\sigma_1\tilde{\mathcal{T}}_{1,-1}'\left(\xy (0,0)*{
			\labellist
			\small\hair 2pt
			\pinlabel \scalebox{0.7}{$\textcolor{blue}{1}$} at 0 -5
			\pinlabel \scalebox{0.7}{$\textcolor{red}{2}$} at 16 -5
            \pinlabel \scalebox{0.7}{$\textcolor{blue}{1}$} at 32 -5
			\pinlabel \scalebox{0.9}{$s_1(\lambda)$} at 40 16
			\endlabellist 
			\centering 
			\includegraphics[scale=1.3]{./BuurrRulurBuull}
	}\endxy
 \qquad - \, 
 \xy (0,0)*{
			\labellist
			\small\hair 2pt
			\pinlabel \scalebox{0.7}{$\textcolor{blue}{1}$} at 0 -5
			\pinlabel \scalebox{0.7}{$\textcolor{red}{2}$} at 16 -5
            \pinlabel \scalebox{0.7}{$\textcolor{blue}{1}$} at 32 -5
			\pinlabel \scalebox{0.9}{$s_1(\lambda)$} at 40 16
			\endlabellist 
			\centering 
			\includegraphics[scale=1.3]{./BuurrRurulBuull}
	}\endxy\qquad
     \right)
     =-\sigma_1\tilde{\mathcal{T}}_{1,-1}'\left(\xy (0,0)*{
			\labellist
			\small\hair 2pt
			\pinlabel \scalebox{0.7}{$\textcolor{blue}{1}$} at 0 -5
			\pinlabel \scalebox{0.7}{$\textcolor{red}{2}$} at 12 -5
            \pinlabel \scalebox{0.7}{$\textcolor{blue}{1}$} at 24 -5
			\pinlabel \scalebox{0.9}{$s_1(\lambda)$} at 34 16
			\endlabellist 
			\centering 
			\includegraphics[scale=1.3]{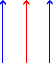}
	}\endxy\qquad\right)\\
  &  \\
   & =-\Ev'\iota\left(\xy (0,0)*{
			\labellist
			\small\hair 2pt
			\pinlabel \scalebox{0.7}{$\textcolor{blue}{1}$} at 0 -5
			\pinlabel \scalebox{0.7}{$\textcolor{red}{2}$} at 12 -5
            \pinlabel \scalebox{0.7}{$\textcolor{blue}{1}$} at 24 -5
			\pinlabel \scalebox{0.9}{$s_1(\lambda)$} at 34 16
			\endlabellist 
			\centering 
			\includegraphics[scale=1.3]{./BuuRuuBuu}
	}\endxy\qquad\right)
    =-\Ev'\left(\xy (0,0)*{
			\labellist
			\small\hair 2pt
			\pinlabel \scalebox{0.7}{$\textcolor{blue}{1}$} at 0 -5
			\pinlabel \scalebox{0.7}{$\textcolor{black}{3}$} at 12 -5
            \pinlabel \scalebox{0.7}{$\textcolor{blue}{1}$} at 24 -5
			\pinlabel \scalebox{0.9}{$s_1(\lambda)$} at 34 16
			\endlabellist 
			\centering 
			\includegraphics[scale=1.3]{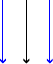}
	}\endxy\qquad\right)
        \end{align*}
\begin{align*}
        \Ev'\left(\xy (0,0)*{
			\labellist
			\small\hair 2pt
			\pinlabel \scalebox{0.7}{$\textcolor{red}{2}$} at 0 -5
			\pinlabel \scalebox{0.7}{$\textcolor{red}{2}$} at 16 -5
            \pinlabel \scalebox{0.7}{$\textcolor{black}{3}$} at 32 -5
			\pinlabel \scalebox{0.9}{$\lambda$} at 34 16
			\endlabellist 
			\centering 
			\includegraphics[scale=1.3]{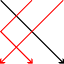}
	}\endxy
 \;\; - \, 
 \xy (0,0)*{
			\labellist
			\small\hair 2pt
			\pinlabel \scalebox{0.7}{$\textcolor{red}{2}$} at 0 -5
			\pinlabel \scalebox{0.7}{$\textcolor{red}{2}$} at 16 -5
            \pinlabel \scalebox{0.7}{$\textcolor{black}{3}$} at 32 -5
			\pinlabel \scalebox{0.9}{$\lambda$} at 34 16
			\endlabellist 
			\centering 
			\includegraphics[scale=1.3]{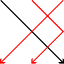}
	}\endxy\;\;\right) \mspace{-50mu} &
    \\[1ex] & = -k_1^{2,3}(s_2(\lambda)+\alpha_2)k_1^{2,3}(s_2(\lambda))\Ev'\iota'\left(\xy (0,0)*{
			\labellist
			\small\hair 2pt
			\pinlabel \scalebox{0.7}{$\textcolor{red}{2}$} at 0 -5
			\pinlabel \scalebox{0.7}{$\textcolor{red}{2}$} at 16 -5
            \pinlabel \scalebox{0.7}{$\textcolor{blue}{1}$} at 32 -5
			\pinlabel \scalebox{0.9}{$s_2(\lambda)$} at 40 16
			\endlabellist 
			\centering 
			\includegraphics[scale=1.3]{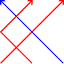}
	}\endxy
 \qquad - \, 
 \xy (0,0)*{
			\labellist
			\small\hair 2pt
			\pinlabel \scalebox{0.7}{$\textcolor{red}{2}$} at 0 -5
			\pinlabel \scalebox{0.7}{$\textcolor{red}{2}$} at 16 -5
            \pinlabel \scalebox{0.7}{$\textcolor{blue}{1}$} at 32 -5
			\pinlabel \scalebox{0.9}{$s_2(\lambda)$} at 40 16
			\endlabellist 
			\centering 
			\includegraphics[scale=1.3]{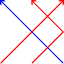}
	}\endxy\qquad
    \right)\\
    & \\
    & = \Ev'\iota'(0)=0=\Ev'(0)
    \end{align*}
    \begin{align*}
        \Ev'\left(\xy (0,0)*{
			\labellist
			\small\hair 2pt
			\pinlabel \scalebox{0.7}{$\textcolor{black}{3}$} at 0 -5
			\pinlabel \scalebox{0.7}{$\textcolor{black}{3}$} at 16 -5
            \pinlabel \scalebox{0.7}{$\textcolor{black}{3}$} at 32 -5
			\pinlabel \scalebox{0.9}{$\lambda$} at 34 16
			\endlabellist 
			\centering 
			\includegraphics[scale=1.3]{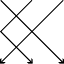}
	}\endxy
 \;\; - \, 
 \xy (0,0)*{
			\labellist
			\small\hair 2pt
			\pinlabel \scalebox{0.7}{$\textcolor{black}{3}$} at 0 -5
			\pinlabel \scalebox{0.7}{$\textcolor{black}{3}$} at 16 -5
            \pinlabel \scalebox{0.7}{$\textcolor{black}{3}$} at 32 -5
			\pinlabel \scalebox{0.9}{$\lambda$} at 34 16
			\endlabellist 
			\centering 
			\includegraphics[scale=1.3]{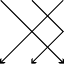}
	}\endxy\;\;\right) &= \Ev'\iota\left(\xy (0,0)*{
			\labellist
			\small\hair 2pt
			\pinlabel \scalebox{0.7}{$\textcolor{red}{2}$} at 0 -5
			\pinlabel \scalebox{0.7}{$\textcolor{red}{2}$} at 16 -5
            \pinlabel \scalebox{0.7}{$\textcolor{red}{2}$} at 32 -5
			\pinlabel \scalebox{0.9}{$s_1(\lambda)$} at 40 16
			\endlabellist 
			\centering 
			\includegraphics[scale=1.3]{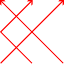}
	}\endxy
 \qquad - \, 
 \xy (0,0)*{
			\labellist
			\small\hair 2pt
			\pinlabel \scalebox{0.7}{$\textcolor{red}{2}$} at 0 -5
			\pinlabel \scalebox{0.7}{$\textcolor{red}{2}$} at 16 -5
            \pinlabel \scalebox{0.7}{$\textcolor{red}{2}$} at 32 -5
			\pinlabel \scalebox{0.9}{$s_1(\lambda)$} at 40 16
			\endlabellist 
			\centering 
			\includegraphics[scale=1.3]{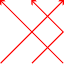}
	}\endxy\qquad
    \right)\\
    & \\
    & = \Ev'\iota'(0)=0=\Ev'(0)
    \end{align*}
    With the exception of the three-coloured identities discussed below, the other cubic KLR relations are similar. We prove three of these identities directly; the other three are similar.
 \begin{align*}
        \Ev'\left(\xy (0,0)*{
	\labellist
	\small\hair 2pt
	\pinlabel \scalebox{0.7}{$\textcolor{black}{3}$} at 12 -5
	\pinlabel \scalebox{0.7}{$\textcolor{blue}{1}$} at 24 -5
	\pinlabel \scalebox{0.7}{$\textcolor{red}{2}$} at 0 -5
        \pinlabel \scalebox{0.9}{$\lambda$} at 24 12
        \endlabellist
	\centering
	\includegraphics[scale=1]{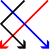} }\endxy\;\;\right)& = k_3^{0,3}(\lambda-\alpha_1)k_1^{0,3}(\lambda-\alpha_1)(-1)^{\overline{\lambda}_1(\overline{\lambda}_2+1)}\left(\xy (0,0)*{
			\labellist
			\small\hair 2pt
			\pinlabel \scalebox{0.7}{$\textcolor{blue}{1}$} at 12 -5
			\pinlabel \scalebox{0.7}{$\textcolor{red}{2}$} at 0 -5
			\pinlabel \scalebox{0.7}{$\textcolor{red}{2}$} at 24 -5
			\pinlabel \scalebox{0.7}{$\textcolor{blue}{1}$} at 35 -5
                \pinlabel \scalebox{0.9}{$\lambda$} at 36 24
                \endlabellist
			\centering
			\includegraphics[scale=1.15]{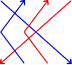}
	}\endxy\;\;\;, \xy (0,0)*{
			\labellist
			\small\hair 2pt
			\pinlabel \scalebox{0.7}{$\textcolor{blue}{1}$} at 24 -5
			\pinlabel \scalebox{0.7}{$\textcolor{red}{2}$} at 0 -5
			\pinlabel \scalebox{0.7}{$\textcolor{red}{2}$} at 12 -5
			\pinlabel \scalebox{0.7}{$\textcolor{blue}{1}$} at 35 -5
                \pinlabel \scalebox{0.9}{$\lambda$} at 36 24
                \endlabellist
			\centering
			\includegraphics[scale=1.15]{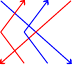}
	}\endxy\;\;\right)
\end{align*}
and 
\begin{align*}
        \Ev'\left(\xy (0,0)*{
	\labellist
	\small\hair 2pt
	\pinlabel \scalebox{0.7}{$\textcolor{black}{3}$} at 12 -5
	\pinlabel \scalebox{0.7}{$\textcolor{blue}{1}$} at 24 -5
	\pinlabel \scalebox{0.7}{$\textcolor{red}{2}$} at 0 -5
        \pinlabel \scalebox{0.9}{$\lambda$} at 28 12
        \endlabellist
	\centering
	\includegraphics[scale=1]{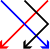} }\endxy\;\;\right)& = k_3^{0,3}(\lambda)k_1^{0,3}(\lambda)(-1)^{(\overline{\lambda-\alpha_3})_1((\overline{\lambda-\alpha_3})_2+1)}\left(\xy (0,0)*{
			\labellist
			\small\hair 2pt
			\pinlabel \scalebox{0.7}{$\textcolor{blue}{1}$} at 12 -5
			\pinlabel \scalebox{0.7}{$\textcolor{red}{2}$} at 0 -5
			\pinlabel \scalebox{0.7}{$\textcolor{red}{2}$} at 24 -5
			\pinlabel \scalebox{0.7}{$\textcolor{blue}{1}$} at 35 -5
                \pinlabel \scalebox{0.9}{$\lambda$} at 36 24
                \endlabellist
			\centering
			\includegraphics[scale=1.15]{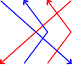}
	}\endxy\;\;\;, \xy (0,0)*{
			\labellist
			\small\hair 2pt
			\pinlabel \scalebox{0.7}{$\textcolor{blue}{1}$} at 24 -5
			\pinlabel \scalebox{0.7}{$\textcolor{red}{2}$} at 0 -5
			\pinlabel \scalebox{0.7}{$\textcolor{red}{2}$} at 12 -5
			\pinlabel \scalebox{0.7}{$\textcolor{blue}{1}$} at 35 -5
                \pinlabel \scalebox{0.9}{$\lambda$} at 36 24
                \endlabellist
			\centering
			\includegraphics[scale=1.15]{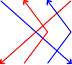}
	}\endxy\;\;\right)
\end{align*}
\eqskip
It is straightforward to check that the signs are both equal to $(-1)^{\overline{\lambda}_1}k_2^{0,1}$, and the diagrams themselves are clearly equal via mixed cubic KLR relations.

\smallskip 

Using KM6 and KM5, it is straightforward to show that 
\begin{align*} &\Ev'\left(\xy (0,0)*{
	\labellist
	\small\hair 2pt
	\pinlabel \scalebox{0.7}{$\textcolor{black}{3}$} at 0 -5
	\pinlabel \scalebox{0.7}{$\textcolor{blue}{1}$} at 12 -5
	\pinlabel \scalebox{0.7}{$\textcolor{red}{2}$} at 24 -5
        \pinlabel \scalebox{0.9}{$\lambda$} at 28 12
        \endlabellist
	\centering
	\includegraphics[scale=1]{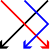} }\endxy\;\;\right)-\Ev'\left(\xy (0,0)*{
	\labellist
	\small\hair 2pt
	\pinlabel \scalebox{0.7}{$\textcolor{black}{3}$} at 0 -5
	\pinlabel \scalebox{0.7}{$\textcolor{blue}{1}$} at 12 -5
	\pinlabel \scalebox{0.7}{$\textcolor{red}{2}$} at 24 -5
        \pinlabel \scalebox{0.9}{$\lambda$} at 28 12
        \endlabellist
	\centering
	\includegraphics[scale=1]{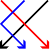} }\endxy\;\;\right)\\
    & \\
    &= (-1)^{\overline{\lambda}_1}k_2^{0,1}\left(\xy (0,0)*{
			\labellist
			\small\hair 2pt
			\pinlabel \scalebox{0.7}{$\textcolor{blue}{1}$} at 24 -5
			\pinlabel \scalebox{0.7}{$\textcolor{red}{2}$} at 35 -5
			\pinlabel \scalebox{0.7}{$\textcolor{red}{2}$} at 12 -5
			\pinlabel \scalebox{0.7}{$\textcolor{blue}{1}$} at 0 -5
                \pinlabel \scalebox{0.9}{$\lambda$} at 36 24
                \endlabellist
			\centering
			\includegraphics[scale=1.15]{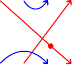}
	}\endxy\;\;-\xy (0,0)*{
			\labellist
			\small\hair 2pt
			\pinlabel \scalebox{0.7}{$\textcolor{blue}{1}$} at 24 -5
			\pinlabel \scalebox{0.7}{$\textcolor{red}{2}$} at 35 -5
			\pinlabel \scalebox{0.7}{$\textcolor{red}{2}$} at 12 -5
			\pinlabel \scalebox{0.7}{$\textcolor{blue}{1}$} at 0 -5
                \pinlabel \scalebox{0.9}{$\lambda$} at 36 24
                \endlabellist
			\centering
			\includegraphics[scale=1.15]{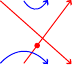}
	}\endxy\;\;\;, \xy (0,0)*{
			\labellist
			\small\hair 2pt
			\pinlabel \scalebox{0.7}{$\textcolor{blue}{1}$} at 24 -5
			\pinlabel \scalebox{0.7}{$\textcolor{red}{2}$} at 35 -5
			\pinlabel \scalebox{0.7}{$\textcolor{red}{2}$} at 0 -5
			\pinlabel \scalebox{0.7}{$\textcolor{blue}{1}$} at 12 -5
                \pinlabel \scalebox{0.9}{$\lambda$} at 36 24
                \endlabellist
			\centering
			\includegraphics[scale=1.15]{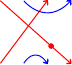}
	}\endxy\;\;-\xy (0,0)*{
			\labellist
			\small\hair 2pt
			\pinlabel \scalebox{0.7}{$\textcolor{blue}{1}$} at 24 -5
			\pinlabel \scalebox{0.7}{$\textcolor{red}{2}$} at 35 -5
			\pinlabel \scalebox{0.7}{$\textcolor{red}{2}$} at 0 -5
			\pinlabel \scalebox{0.7}{$\textcolor{blue}{1}$} at 12 -5
                \pinlabel \scalebox{0.9}{$\lambda$} at 36 24
                \endlabellist
			\centering
			\includegraphics[scale=1.15]{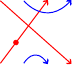}
	}\endxy\;\;
    \right)
\end{align*}
\eqskip
and similarly straightforward to see that this is null-homotopic via the homotopy $$(-1)^{\overline{\lambda}_1}k_2^{0,1}\left(\xy (0,0)*{
			\labellist
			\small\hair 2pt
			\pinlabel \scalebox{0.7}{$\textcolor{blue}{1}$} at 24 -5
			\pinlabel \scalebox{0.7}{$\textcolor{red}{2}$} at 35 -5
			\pinlabel \scalebox{0.7}{$\textcolor{red}{2}$} at 0 -5
			\pinlabel \scalebox{0.7}{$\textcolor{blue}{1}$} at 12 -5
                \pinlabel \scalebox{0.9}{$\lambda$} at 36 24
                \endlabellist
			\centering
			\includegraphics[scale=1.15]{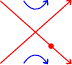}
	}\endxy\;\;- \xy (0,0)*{
			\labellist
			\small\hair 2pt
			\pinlabel \scalebox{0.7}{$\textcolor{blue}{1}$} at 24 -5
			\pinlabel \scalebox{0.7}{$\textcolor{red}{2}$} at 35 -5
			\pinlabel \scalebox{0.7}{$\textcolor{red}{2}$} at 0 -5
			\pinlabel \scalebox{0.7}{$\textcolor{blue}{1}$} at 12 -5
                \pinlabel \scalebox{0.9}{$\lambda$} at 36 24
                \endlabellist
			\centering
			\includegraphics[scale=1.15]{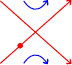}
	}\endxy\;\;\right)$$\eqskip 
    as required. Also,
    \begin{align*} 
    \Ev'\left(\xy (0,0)*{
	\labellist
	\small\hair 2pt
	\pinlabel \scalebox{0.7}{$\textcolor{black}{3}$} at 12 -5
	\pinlabel \scalebox{0.7}{$\textcolor{blue}{1}$} at 0 -5
	\pinlabel \scalebox{0.7}{$\textcolor{red}{2}$} at 24 -5
        \pinlabel \scalebox{0.9}{$\lambda$} at 28 12
        \endlabellist
	\centering
	\includegraphics[scale=1]{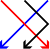} }\endxy\;\;\right)-\Ev'&\left(\xy (0,0)*{
	\labellist
	\small\hair 2pt
	\pinlabel \scalebox{0.7}{$\textcolor{black}{3}$} at 12 -5
	\pinlabel \scalebox{0.7}{$\textcolor{blue}{1}$} at 0 -5
	\pinlabel \scalebox{0.7}{$\textcolor{red}{2}$} at 24 -5
        \pinlabel \scalebox{0.9}{$\lambda$} at 28 12
        \endlabellist
	\centering
	\includegraphics[scale=1]{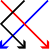} }\endxy\;\;\right) = (-1)^{\overline{\lambda}_1}k_2^{0,1}\left(\xy (0,0)*{
			\labellist
			\small\hair 2pt
			\pinlabel \scalebox{0.7}{$\textcolor{blue}{1}$} at 0 -5
			\pinlabel \scalebox{0.7}{$\textcolor{red}{2}$} at 35 -5
			\pinlabel \scalebox{0.7}{$\textcolor{red}{2}$} at 24 -5
			\pinlabel \scalebox{0.7}{$\textcolor{blue}{1}$} at 12 -5
                \pinlabel \scalebox{0.9}{$\lambda$} at 36 24
                \endlabellist
			\centering
			\includegraphics[scale=1.15]{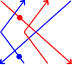}
	}\endxy\;\;+\xy (0,0)*{
			\labellist
			\small\hair 2pt
			\pinlabel \scalebox{0.7}{$\textcolor{blue}{1}$} at 0 -5
			\pinlabel \scalebox{0.7}{$\textcolor{red}{2}$} at 35 -5
			\pinlabel \scalebox{0.7}{$\textcolor{red}{2}$} at 24 -5
			\pinlabel \scalebox{0.7}{$\textcolor{blue}{1}$} at 12 -5
                \pinlabel \scalebox{0.9}{$\lambda$} at 36 24
                \endlabellist
			\centering
			\includegraphics[scale=1.15]{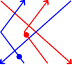}
	}\endxy\;\;+\xy (0,0)*{
			\labellist
			\small\hair 2pt
			\pinlabel \scalebox{0.7}{$\textcolor{blue}{1}$} at 0 -5
			\pinlabel \scalebox{0.7}{$\textcolor{red}{2}$} at 35 -5
			\pinlabel \scalebox{0.7}{$\textcolor{red}{2}$} at 24 -5
			\pinlabel \scalebox{0.7}{$\textcolor{blue}{1}$} at 12 -5
                \pinlabel \scalebox{0.9}{$\lambda$} at 36 24
                \endlabellist
			\centering
			\includegraphics[scale=1.15]{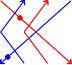}
	}\endxy
    \right.\\
    & \\
    & + \xy (0,0)*{
			\labellist
			\small\hair 2pt
			\pinlabel \scalebox{0.7}{$\textcolor{blue}{1}$} at 0 -5
			\pinlabel \scalebox{0.7}{$\textcolor{red}{2}$} at 35 -5
			\pinlabel \scalebox{0.7}{$\textcolor{red}{2}$} at 24 -5
			\pinlabel \scalebox{0.7}{$\textcolor{blue}{1}$} at 12 -5
                \pinlabel \scalebox{0.9}{$\lambda$} at 36 24
                \endlabellist
			\centering
			\includegraphics[scale=1.15]{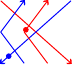}
	}\endxy\;\; - \xy (0,0)*{
			\labellist
			\small\hair 2pt
			\pinlabel \scalebox{0.7}{$\textcolor{blue}{1}$} at 0 -5
			\pinlabel \scalebox{0.7}{$\textcolor{red}{2}$} at 35 -5
			\pinlabel \scalebox{0.7}{$\textcolor{red}{2}$} at 24 -5
			\pinlabel \scalebox{0.7}{$\textcolor{blue}{1}$} at 12 -5
                \pinlabel \scalebox{0.9}{$\lambda$} at 36 24
                \endlabellist
			\centering
			\includegraphics[scale=1.15]{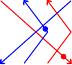}
	}\endxy\;\;-\xy (0,0)*{
			\labellist
			\small\hair 2pt
			\pinlabel \scalebox{0.7}{$\textcolor{blue}{1}$} at 0 -5
			\pinlabel \scalebox{0.7}{$\textcolor{red}{2}$} at 35 -5
			\pinlabel \scalebox{0.7}{$\textcolor{red}{2}$} at 24 -5
			\pinlabel \scalebox{0.7}{$\textcolor{blue}{1}$} at 12 -5
                \pinlabel \scalebox{0.9}{$\lambda$} at 36 24
                \endlabellist
			\centering
			\includegraphics[scale=1.15]{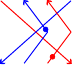}
	}\endxy\;\;-\xy (0,0)*{
			\labellist
			\small\hair 2pt
			\pinlabel \scalebox{0.7}{$\textcolor{blue}{1}$} at 0 -5
			\pinlabel \scalebox{0.7}{$\textcolor{red}{2}$} at 35 -5
			\pinlabel \scalebox{0.7}{$\textcolor{red}{2}$} at 24 -5
			\pinlabel \scalebox{0.7}{$\textcolor{blue}{1}$} at 12 -5
                \pinlabel \scalebox{0.9}{$\lambda$} at 36 24
                \endlabellist
			\centering
			\includegraphics[scale=1.15]{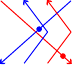}
	}\endxy\;\;-\xy (0,0)*{
			\labellist
			\small\hair 2pt
			\pinlabel \scalebox{0.7}{$\textcolor{blue}{1}$} at 0 -5
			\pinlabel \scalebox{0.7}{$\textcolor{red}{2}$} at 35 -5
			\pinlabel \scalebox{0.7}{$\textcolor{red}{2}$} at 24 -5
			\pinlabel \scalebox{0.7}{$\textcolor{blue}{1}$} at 12 -5
                \pinlabel \scalebox{0.9}{$\lambda$} at 36 24
                \endlabellist
			\centering
			\includegraphics[scale=1.15]{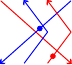}
	}\endxy\;\;\;,\\
    &\\
    & \xy (0,0)*{
			\labellist
			\small\hair 2pt
			\pinlabel \scalebox{0.7}{$\textcolor{blue}{1}$} at 0 -5
			\pinlabel \scalebox{0.7}{$\textcolor{red}{2}$} at 35 -5
			\pinlabel \scalebox{0.7}{$\textcolor{red}{2}$} at 12 -5
			\pinlabel \scalebox{0.7}{$\textcolor{blue}{1}$} at 24 -5
                \pinlabel \scalebox{0.9}{$\lambda$} at 36 24
                \endlabellist
			\centering
			\includegraphics[scale=1.15]{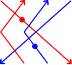}
	}\endxy\;\;+\xy (0,0)*{
			\labellist
			\small\hair 2pt
			\pinlabel \scalebox{0.7}{$\textcolor{blue}{1}$} at 0 -5
			\pinlabel \scalebox{0.7}{$\textcolor{red}{2}$} at 35 -5
			\pinlabel \scalebox{0.7}{$\textcolor{red}{2}$} at 12 -5
			\pinlabel \scalebox{0.7}{$\textcolor{blue}{1}$} at 24 -5
                \pinlabel \scalebox{0.9}{$\lambda$} at 36 24
                \endlabellist
			\centering
			\includegraphics[scale=1.15]{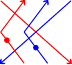}
	}\endxy\;\;+\xy (0,0)*{
			\labellist
			\small\hair 2pt
			\pinlabel \scalebox{0.7}{$\textcolor{blue}{1}$} at 0 -5
			\pinlabel \scalebox{0.7}{$\textcolor{red}{2}$} at 35 -5
			\pinlabel \scalebox{0.7}{$\textcolor{red}{2}$} at 12 -5
			\pinlabel \scalebox{0.7}{$\textcolor{blue}{1}$} at 24 -5
                \pinlabel \scalebox{0.9}{$\lambda$} at 36 24
                \endlabellist
			\centering
			\includegraphics[scale=1.15]{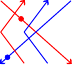}
	}\endxy\;\;+ \xy (0,0)*{
			\labellist
			\small\hair 2pt
			\pinlabel \scalebox{0.7}{$\textcolor{blue}{1}$} at 0 -5
			\pinlabel \scalebox{0.7}{$\textcolor{red}{2}$} at 35 -5
			\pinlabel \scalebox{0.7}{$\textcolor{red}{2}$} at 12 -5
			\pinlabel \scalebox{0.7}{$\textcolor{blue}{1}$} at 24 -5
                \pinlabel \scalebox{0.9}{$\lambda$} at 36 24
                \endlabellist
			\centering
			\includegraphics[scale=1.15]{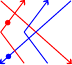}
	}\endxy\;\; - \xy (0,0)*{
			\labellist
			\small\hair 2pt
			\pinlabel \scalebox{0.7}{$\textcolor{blue}{1}$} at 0 -5
			\pinlabel \scalebox{0.7}{$\textcolor{red}{2}$} at 35 -5
			\pinlabel \scalebox{0.7}{$\textcolor{red}{2}$} at 12 -5
			\pinlabel \scalebox{0.7}{$\textcolor{blue}{1}$} at 24 -5
                \pinlabel \scalebox{0.9}{$\lambda$} at 36 24
                \endlabellist
			\centering
			\includegraphics[scale=1.15]{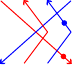}
	}\endxy\\
    & \\
    & \left. -\xy (0,0)*{
			\labellist
			\small\hair 2pt
			\pinlabel \scalebox{0.7}{$\textcolor{blue}{1}$} at 0 -5
			\pinlabel \scalebox{0.7}{$\textcolor{red}{2}$} at 35 -5
			\pinlabel \scalebox{0.7}{$\textcolor{red}{2}$} at 12 -5
			\pinlabel \scalebox{0.7}{$\textcolor{blue}{1}$} at 24 -5
                \pinlabel \scalebox{0.9}{$\lambda$} at 36 24
                \endlabellist
			\centering
			\includegraphics[scale=1.15]{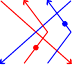}
	}\endxy\;\;-\xy (0,0)*{
			\labellist
			\small\hair 2pt
			\pinlabel \scalebox{0.7}{$\textcolor{blue}{1}$} at 0 -5
			\pinlabel \scalebox{0.7}{$\textcolor{red}{2}$} at 35 -5
			\pinlabel \scalebox{0.7}{$\textcolor{red}{2}$} at 12 -5
			\pinlabel \scalebox{0.7}{$\textcolor{blue}{1}$} at 24 -5
                \pinlabel \scalebox{0.9}{$\lambda$} at 36 24
                \endlabellist
			\centering
			\includegraphics[scale=1.15]{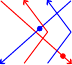}
	}\endxy\;\;-\xy (0,0)*{
			\labellist
			\small\hair 2pt
			\pinlabel \scalebox{0.7}{$\textcolor{blue}{1}$} at 0 -5
			\pinlabel \scalebox{0.7}{$\textcolor{red}{2}$} at 35 -5
			\pinlabel \scalebox{0.7}{$\textcolor{red}{2}$} at 12 -5
			\pinlabel \scalebox{0.7}{$\textcolor{blue}{1}$} at 24 -5
                \pinlabel \scalebox{0.9}{$\lambda$} at 36 24
                \endlabellist
			\centering
			\includegraphics[scale=1.15]{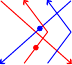}
	}\endxy\;\;
    \right)
    \end{align*}
\eqskip
    We note that, with generous use of KM5, all the dots in the above can be moved to either the top or bottom of their arrow in a consistent fashion. It then follows from mixed direction cubic KLR that the difference is equal to zero as required.
    
        \item[KM7] 
        \begin{align*}
        \Ev'\left(\xy (0,0)*{
			\labellist
			\small\hair 2pt
			\pinlabel \scalebox{0.7}{$\textcolor{black}{3}$} at 0 -5
			\pinlabel \scalebox{0.7}{$\textcolor{blue}{1}$} at 16 -5
			\pinlabel \scalebox{0.9}{$\lambda$} at 18 16
			\endlabellist 
			\centering 
			\includegraphics[scale=1.3]{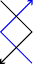}
	}\endxy\;\;
        \right)& =-k_1^{0,1}(s_1(\lambda))k_1^{2,3}(s_1(\lambda))\Ev'\iota\left(\xy (0,0)*{
			\labellist
			\small\hair 2pt
			\pinlabel \scalebox{0.7}{$\textcolor{red}{2}$} at 0 -5
			\pinlabel \scalebox{0.7}{$\textcolor{blue}{1}$} at 16 -5
			\pinlabel \scalebox{0.9}{$s_1(\lambda)$} at 24 16
			\endlabellist 
			\centering 
			\includegraphics[scale=1.3]{./RurulBdldr}
	}\endxy\qquad
        \right)\\
        & \\
        &=\sigma_1\tilde{\mathcal{T}}_{1,-1}'\left(\xy (0,0)*{
			\labellist
			\small\hair 2pt
			\pinlabel \scalebox{0.7}{$\textcolor{red}{2}$} at 0 -5
			\pinlabel \scalebox{0.7}{$\textcolor{blue}{1}$} at 16 -5
			\pinlabel \scalebox{0.9}{$s_1(\lambda)$} at 24 16
			\endlabellist 
			\centering 
			\includegraphics[scale=1.3]{./RurulBdldr}
	}\endxy\qquad
        \right)\sim_h\sigma_1\tilde{\mathcal{T}}_{1,-1}'\left(\xy (0,0)*{
			\labellist
			\small\hair 2pt
			\pinlabel \scalebox{0.7}{$\textcolor{red}{2}$} at 0 -5
			\pinlabel \scalebox{0.7}{$\textcolor{blue}{1}$} at 12 -5
			\pinlabel \scalebox{0.9}{$s_1(\lambda)$} at 24 16
			\endlabellist 
			\centering 
			\includegraphics[scale=1.3]{./RuuBdd}
	}\endxy\qquad\right)\\
    & \\
    & =\Ev'\iota\left(\xy (0,0)*{
			\labellist
			\small\hair 2pt
			\pinlabel \scalebox{0.7}{$\textcolor{red}{2}$} at 0 -5
			\pinlabel \scalebox{0.7}{$\textcolor{blue}{1}$} at 12 -5
			\pinlabel \scalebox{0.9}{$s_1(\lambda)$} at 24 16
			\endlabellist 
			\centering 
			\includegraphics[scale=1.3]{./RuuBdd}
	}\endxy\qquad\right) = \Ev'\left(\xy (0,0)*{
			\labellist
			\small\hair 2pt
			\pinlabel \scalebox{0.7}{$\textcolor{black}{3}$} at 0 -5
			\pinlabel \scalebox{0.7}{$\textcolor{blue}{1}$} at 12 -5
			\pinlabel \scalebox{0.9}{$\lambda$} at 18 16
			\endlabellist 
			\centering 
			\includegraphics[scale=1.3]{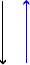}
	}\endxy\;\;\right)
    \end{align*}
    \begin{align*}
        \Ev'\left(\xy (0,0)*{
			\labellist
			\small\hair 2pt
			\pinlabel \scalebox{0.7}{$\textcolor{black}{3}$} at 16 -5
			\pinlabel \scalebox{0.7}{$\textcolor{red}{2}$} at 0 -5
			\pinlabel \scalebox{0.9}{$\lambda$} at 18 16
			\endlabellist 
			\centering 
			\includegraphics[scale=1.3]{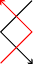}
	}\endxy\;\;
        \right)& =-k_1^{0,1}(s_1(\lambda))k_1^{2,3}(s_1(\lambda))\Ev'\iota'\left(\xy (0,0)*{
			\labellist
			\small\hair 2pt
			\pinlabel \scalebox{0.7}{$\textcolor{red}{2}$} at 0 -5
			\pinlabel \scalebox{0.7}{$\textcolor{blue}{1}$} at 16 -5
			\pinlabel \scalebox{0.9}{$s_2(\lambda)$} at 24 16
			\endlabellist 
			\centering 
			\includegraphics[scale=1.3]{./RdrdlBulur}
	}\endxy\qquad
        \right)\\
        & \\
        &=\sigma_2\alpha\tilde{\mathcal{T}}_{2,1}''\beta\left(\xy (0,0)*{
			\labellist
			\small\hair 2pt
			\pinlabel \scalebox{0.7}{$\textcolor{red}{2}$} at 0 -5
			\pinlabel \scalebox{0.7}{$\textcolor{blue}{1}$} at 16 -5
			\pinlabel \scalebox{0.9}{$s_2(\lambda)$} at 24 16
			\endlabellist 
			\centering 
			\includegraphics[scale=1.3]{./RdrdlBulur}
	}\endxy\qquad
        \right)\sim_h\left(\xy (0,0)*{
			\labellist
			\small\hair 2pt
			\pinlabel \scalebox{0.7}{$\textcolor{red}{2}$} at 0 -5
			\pinlabel \scalebox{0.7}{$\textcolor{blue}{1}$} at 12 -5
			\pinlabel \scalebox{0.9}{$s_2(\lambda)$} at 24 16
			\endlabellist 
			\centering 
			\includegraphics[scale=1.3]{./RddBuu}
	}\endxy\qquad\right)\\
    & \\
    & = \Ev'\iota'\left(\xy (0,0)*{
			\labellist
			\small\hair 2pt
			\pinlabel \scalebox{0.7}{$\textcolor{red}{2}$} at 0 -5
			\pinlabel \scalebox{0.7}{$\textcolor{blue}{1}$} at 12 -5
			\pinlabel \scalebox{0.9}{$s_2(\lambda)$} at 24 16
			\endlabellist 
			\centering 
			\includegraphics[scale=1.3]{./RddBuu}
	}\endxy\qquad\right) = \Ev'\left(\xy (0,0)*{
			\labellist
			\small\hair 2pt
			\pinlabel \scalebox{0.7}{$\textcolor{red}{2}$} at 0 -5
			\pinlabel \scalebox{0.7}{$\textcolor{black}{3}$} at 12 -5
			\pinlabel \scalebox{0.9}{$\lambda$} at 18 16
			\endlabellist 
			\centering 
			\includegraphics[scale=1.3]{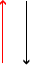}
	}\endxy\;;\right)
    \end{align*}
    \begin{align*}
    \Ev'\left(\xy (0,0)*{
			\labellist
			\small\hair 2pt
			\pinlabel \scalebox{0.7}{$\textcolor{black}{3}$} at 0 -5
			\pinlabel \scalebox{0.7}{$\textcolor{black}{3}$} at 16 -5
			\pinlabel \scalebox{0.9}{$\lambda$} at 18 16
			\endlabellist 
			\centering 
			\includegraphics[scale=1.3]{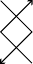}
	}\endxy\;\;
        \right)& =\Ev'\iota\left(\xy (0,0)*{
			\labellist
			\small\hair 2pt
			\pinlabel \scalebox{0.7}{$\textcolor{red}{2}$} at 0 -5
			\pinlabel \scalebox{0.7}{$\textcolor{red}{2}$} at 16 -5
			\pinlabel \scalebox{0.9}{$s_1(\lambda)$} at 24 16
			\endlabellist 
			\centering 
			\includegraphics[scale=1.3]{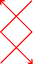}
	}\endxy\qquad
        \right)\\
        & \\
        &=\sigma_1\tilde{\mathcal{T}}_{1,-1}'\left(\xy (0,0)*{
			\labellist
			\small\hair 2pt
			\pinlabel \scalebox{0.7}{$\textcolor{red}{2}$} at 0 -5
			\pinlabel \scalebox{0.7}{$\textcolor{red}{2}$} at 16 -5
			\pinlabel \scalebox{0.9}{$s_1(\lambda)$} at 24 16
			\endlabellist 
			\centering 
			\includegraphics[scale=1.3]{./RurulRdldr}
	}\endxy\qquad
        \right)\sim_h \sigma_1\tilde{\mathcal{T}}_{1,-1}'\left(\xy (0,0)*{
			\labellist
			\small\hair 2pt
			\pinlabel \scalebox{0.7}{$\textcolor{red}{2}$} at 0 -5
			\pinlabel \scalebox{0.7}{$\textcolor{red}{2}$} at 12 -5
			\pinlabel \scalebox{0.9}{$s_1(\lambda)$} at 24 18
			\endlabellist 
			\centering 
			\includegraphics[scale=1.3]{./RuuRdd}
	}\endxy
 \qquad - \ 
 \sum\limits_{a+b+c=\overline{s_1(\lambda)}_2-1} \xy (0,0)*{
			\labellist
			\small\hair 2pt
			\pinlabel \scalebox{0.7}{$\textcolor{red}{2}$} at 0 -5
			\pinlabel \scalebox{0.7}{$\textcolor{red}{2}$} at 0 31
            \pinlabel \scalebox{0.7}{$\textcolor{red}{2}$} at 19 22
            \pinlabel \scalebox{0.7}{$\textcolor{red}{+c}$} at 34 5
			\pinlabel \scalebox{0.9}{$s_1(\lambda)$} at 43 16
            \pinlabel \scalebox{0.7}{$\textcolor{red}{a}$} at 4 8
            \pinlabel \scalebox{0.7}{$\textcolor{red}{b}$} at 4 18
			\endlabellist 
			\centering 
			\includegraphics[scale=1.3]{./RAClockSum}
	}\endxy\qquad\right)\\
    & \\
    & = \Ev'\iota\left(\xy (0,0)*{
			\labellist
			\small\hair 2pt
			\pinlabel \scalebox{0.7}{$\textcolor{red}{2}$} at 0 -5
			\pinlabel \scalebox{0.7}{$\textcolor{red}{2}$} at 12 -5
			\pinlabel \scalebox{0.9}{$s_1(\lambda)$} at 24 18
			\endlabellist 
			\centering 
			\includegraphics[scale=1.3]{./RuuRdd}
	}\endxy
 \qquad - \ 
 \sum\limits_{a+b+c=\overline{s_1(\lambda)}_2-1} \xy (0,0)*{
			\labellist
			\small\hair 2pt
			\pinlabel \scalebox{0.7}{$\textcolor{red}{2}$} at 0 -5
			\pinlabel \scalebox{0.7}{$\textcolor{red}{2}$} at 0 31
            \pinlabel \scalebox{0.7}{$\textcolor{red}{2}$} at 19 22
            \pinlabel \scalebox{0.7}{$\textcolor{red}{+c}$} at 34 5
			\pinlabel \scalebox{0.9}{$s_1(\lambda)$} at 43 16
            \pinlabel \scalebox{0.7}{$\textcolor{red}{a}$} at 4 8
            \pinlabel \scalebox{0.7}{$\textcolor{red}{b}$} at 4 18
			\endlabellist 
			\centering 
			\includegraphics[scale=1.3]{./RAClockSum}
	}\endxy\qquad\right)\\
    & \\
    & = \Ev'\left(\xy (0,0)*{
			\labellist
			\small\hair 2pt
			\pinlabel \scalebox{0.7}{$\textcolor{black}{3}$} at 0 -5
			\pinlabel \scalebox{0.7}{$\textcolor{black}{3}$} at 12 -5
			\pinlabel \scalebox{0.9}{$\lambda$} at 18 18
			\endlabellist 
			\centering 
			\includegraphics[scale=1.3]{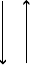}
	}\endxy
 \;\; - \ 
 \sum\limits_{a+b+c=-\olambda_3-1} \xy (0,0)*{
			\labellist
			\small\hair 2pt
			\pinlabel \scalebox{0.7}{$\textcolor{black}{3}$} at 0 -5
			\pinlabel \scalebox{0.7}{$\textcolor{black}{3}$} at 0 31
            \pinlabel \scalebox{0.7}{$\textcolor{black}{3}$} at 19 22
            \pinlabel \scalebox{0.7}{$\textcolor{black}{+c}$} at 34 5
			\pinlabel \scalebox{0.9}{$s_1(\lambda)$} at 43 16
            \pinlabel \scalebox{0.7}{$\textcolor{black}{a}$} at 4 8
            \pinlabel \scalebox{0.7}{$\textcolor{black}{b}$} at 4 18
			\endlabellist 
			\centering 
			\includegraphics[scale=1.3]{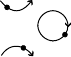}
	}\endxy\qquad\right)
    \end{align*}
\eqskip
    The other two identities are similar.
    \item[KM8:] \begin{align*}\Ev'\left(\xy (0,0)*{
			\labellist
			\small\hair 2pt
			\pinlabel \scalebox{0.7}{$\textcolor{black}{3}$} at 0 14
			\pinlabel \scalebox{0.7}{$\textcolor{black}{+m}$} at 16 -1
			\pinlabel \scalebox{0.9}{$\lambda$} at 18 12
			\endlabellist 
			\centering 
			\includegraphics[scale=1.3]{./KClockClub}
	}\endxy\;\;\right) &=(-1)^{\overline{\lambda}_3+1}\Ev'\iota\left(\xy (0,0)*{
			\labellist
			\small\hair 2pt
			\pinlabel \scalebox{0.7}{$\textcolor{red}{2}$} at 0 14
			\pinlabel \scalebox{0.7}{$\textcolor{red}{+m}$} at 16 -1
			\pinlabel \scalebox{0.9}{$s_1(\lambda)$} at 24 12
			\endlabellist 
			\centering 
			\includegraphics[scale=1.3]{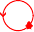}
	}\endxy\qquad\right)=(-1)^{\overline{\lambda}_3+1}\sigma_1\tilde{\mathcal{T}}_{1,-1}'\left(\xy (0,0)*{
			\labellist
			\small\hair 2pt
			\pinlabel \scalebox{0.7}{$\textcolor{red}{2}$} at 0 14
			\pinlabel \scalebox{0.7}{$\textcolor{red}{+m}$} at 16 -1
			\pinlabel \scalebox{0.9}{$s_1(\lambda)$} at 24 12
			\endlabellist 
			\centering 
			\includegraphics[scale=1.3]{./RAClockClub}
	}\endxy\qquad\right)\\
    & \\
    & = \sigma_1\tilde{\mathcal{T}}_{1,-1}'\left(\begin{cases}
        (-1)^{s_1(\lambda)_2} & m=0\\
        0 & m<0
    \end{cases}\right)=\Ev'\left(\begin{cases}
        (-1)^{\lambda_1} & m=0\\
        0 &m<0
    \end{cases}\right)\end{align*}
    and the anti-clockwise bubble is similar.
    \item[KM9:]\begin{align*}\Ev'&\left(\left(\;\xy (0,0)*{
			\labellist
			\small\hair 2pt
			\pinlabel \scalebox{0.7}{$\textcolor{black}{3}$} at 0 14
			\pinlabel \scalebox{0.7}{$\textcolor{black}{+0}$} at 16 -2
			\pinlabel \scalebox{0.9}{$\lambda$} at 18 12
			\endlabellist 
			\centering 
			\includegraphics[scale=1.3]{./KAClockClub}
	}\endxy
 \ +
 \xy (0,0)*{
			\labellist
			\small\hair 2pt
			\pinlabel \scalebox{0.7}{$\textcolor{black}{3}$} at 0 14
			\pinlabel \scalebox{0.7}{$\textcolor{black}{+1}$} at 16 -2
			\pinlabel \scalebox{0.9}{$\lambda$} at 18 12
			\endlabellist 
			\centering 
			\includegraphics[scale=1.3]{./KAClockClub}
	}\endxy
 \ \ t + \dots \right)\!\!
 \left(\;\xy (0,0)*{
			\labellist
			\small\hair 2pt
			\pinlabel \scalebox{0.7}{$\textcolor{black}{3}$} at 0 14
			\pinlabel \scalebox{0.7}{$\textcolor{black}{+0}$} at 16 -2
			\pinlabel \scalebox{0.9}{$\lambda$} at 18 12
			\endlabellist 
			\centering 
			\includegraphics[scale=1.3]{./KClockClub}
	}\endxy
 \ +
 \xy (0,0)*{
			\labellist
			\small\hair 2pt
			\pinlabel \scalebox{0.7}{$\textcolor{black}{3}$} at 0 14
			\pinlabel \scalebox{0.7}{$\textcolor{black}{+1}$} at 16 -2
			\pinlabel \scalebox{0.9}{$\lambda$} at 18 12
			\endlabellist 
			\centering 
			\includegraphics[scale=1.3]{./KClockClub}
	}\endxy
 \ \ t + \dots\right)\right)\\
 &\\
 &=(-1)^{2\overline{\lambda}_3+2}\Ev'\iota\left(\left(\;\xy (0,0)*{
			\labellist
			\small\hair 2pt
			\pinlabel \scalebox{0.7}{$\textcolor{red}{2}$} at 0 14
			\pinlabel \scalebox{0.7}{$\textcolor{red}{+0}$} at 16 -2
			\pinlabel \scalebox{0.9}{$s_1(\lambda)$} at 24 12
			\endlabellist 
			\centering 
			\includegraphics[scale=1.3]{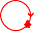}
	}\endxy
 \qquad +
 \xy (0,0)*{
			\labellist
			\small\hair 2pt
			\pinlabel \scalebox{0.7}{$\textcolor{red}{2}$} at 0 14
			\pinlabel \scalebox{0.7}{$\textcolor{red}{+1}$} at 16 -2
			\pinlabel \scalebox{0.9}{$s_1(\lambda)$} at 24 12
			\endlabellist 
			\centering 
			\includegraphics[scale=1.3]{./RClockClub}
	}\endxy
 \qquad t + \dots \right)\!\!
 \left(\;\xy (0,0)*{
			\labellist
			\small\hair 2pt
			\pinlabel \scalebox{0.7}{$\textcolor{red}{2}$} at 0 14
			\pinlabel \scalebox{0.7}{$\textcolor{red}{+0}$} at 16 -2
			\pinlabel \scalebox{0.9}{$s_1(\lambda)$} at 24 12
			\endlabellist 
			\centering 
			\includegraphics[scale=1.3]{./RAClockClub}
	}\endxy
 \qquad +
 \xy (0,0)*{
			\labellist
			\small\hair 2pt
			\pinlabel \scalebox{0.7}{$\textcolor{red}{2}$} at 0 14
			\pinlabel \scalebox{0.7}{$\textcolor{red}{+1}$} at 16 -2
			\pinlabel \scalebox{0.9}{$s_1(\lambda)$} at 24 12
			\endlabellist 
			\centering 
			\includegraphics[scale=1.3]{./RAClockClub}
	}\endxy
 \qquad t + \dots\right)\right)\\
 & \\
 & = \sigma_1\tilde{\mathcal{T}}_{1,-1}'\left(\left(\;\xy (0,0)*{
			\labellist
			\small\hair 2pt
			\pinlabel \scalebox{0.7}{$\textcolor{red}{2}$} at 0 14
			\pinlabel \scalebox{0.7}{$\textcolor{red}{+0}$} at 16 -2
			\pinlabel \scalebox{0.9}{$s_1(\lambda)$} at 24 12
			\endlabellist 
			\centering 
			\includegraphics[scale=1.3]{./RClockClub}
	}\endxy
 \qquad +
 \xy (0,0)*{
			\labellist
			\small\hair 2pt
			\pinlabel \scalebox{0.7}{$\textcolor{red}{2}$} at 0 14
			\pinlabel \scalebox{0.7}{$\textcolor{red}{+1}$} at 16 -2
			\pinlabel \scalebox{0.9}{$s_1(\lambda)$} at 24 12
			\endlabellist 
			\centering 
			\includegraphics[scale=1.3]{./RClockClub}
	}\endxy
 \qquad t + \dots \right)\!\!
 \left(\;\xy (0,0)*{
			\labellist
			\small\hair 2pt
			\pinlabel \scalebox{0.7}{$\textcolor{red}{2}$} at 0 14
			\pinlabel \scalebox{0.7}{$\textcolor{red}{+0}$} at 16 -2
			\pinlabel \scalebox{0.9}{$s_1(\lambda)$} at 24 12
			\endlabellist 
			\centering 
			\includegraphics[scale=1.3]{./RAClockClub}
	}\endxy
 \qquad +
 \xy (0,0)*{
			\labellist
			\small\hair 2pt
			\pinlabel \scalebox{0.7}{$\textcolor{red}{2}$} at 0 14
			\pinlabel \scalebox{0.7}{$\textcolor{red}{+1}$} at 16 -2
			\pinlabel \scalebox{0.9}{$s_1(\lambda)$} at 24 12
			\endlabellist 
			\centering 
			\includegraphics[scale=1.3]{./RAClockClub}
	}\endxy
 \qquad t + \dots\right)\right)\\
 & \\
 & = \sigma_1\tilde{\mathcal{T}}_{1,-1}'(-1)=-1=\Ev'(-1)
    \end{align*}
    \end{itemize}

\appendix

\section{A remark on 2-isomorphism classes}\label{sec:No2Iso}

In \cite[Definition 3.1]{Kh-L}, Khovanov and Lauda chose a different set of scalars and bubble parameters for the Kac--Moody 2-categories than those we use in this paper. We were unable to define an evaluation 2-functor for their choice and we conjecture that no such evaluation 2-functor exists. Although we have yet to prove this conjecture, this would be consistent with the first paragraph of \cite[Page 2699]{Kh-L-2}, which mentions the existence of a one-parameter family of mutually non-2-isomorphic sub-2-categories of the affine type A Kac--Moody 2-categories categorifying the Borel sub-algebra in affine type A. This contrasts with the finite type A case, where \cite[Theorem 3.5]{lauda2020parameters} proves that any two choices of scalars and bubble parameters yield 2-isomorphic Kac--Moody 2-categories. Since Khovanov and Lauda did not include a proof in the above paper, we prove here that the two different choices discussed do lead to non-2-isomorphic Kac--Moody 2-categories in affine type A.

First, a small comment on the choice of weights for the 2-categories in question. The objects of the cyclic $2$-categories $\naffu{n}$ and $\affu{n}$ from Definition~\ref{defn:KLR-2cats} are $\mathfrak{gl}_n$-weights and level-zero $\widehat{\mathfrak{gl}}_n$-weights, respectively, which coincide (as explained in \ref{sec:decat}). Moreover, the relations satisfied by the generating $2$-morphisms of those two $2$-categories really depend on those weights, and not on the induced $\mathfrak{sl}_n$-weights and level-zero $\widehat{\mathfrak{sl}}_n$-weights, respectively. Specifically, the degree-zero $i$-coloured bubbles, in a region labelled by $\lambda\in \mathbb{Z}^n$, are equal to $(-1)^{\lambda_{i+1}}$ or $(-1)^{\lambda_{i+1}-1}$ (depending on orientation), so they cannot be expressed in terms of $\overline{\lambda}$ (unless we choose and fix a certain Schur level). 

On the other hand, the cyclic $2$-categories $\mathcal{U}_Q(\mathfrak{sl}_n)$ and $\mathcal{U}_Q(\widehat{\mathfrak{sl}}_n)$, defined in~\cite[Definition 1.3]{Beliakova-Habiro-Lauda-Webster16} (generalizing~\cite[Definition 3.1]{Kh-L}), trivially induce cyclic $2$-categories $\mathcal{U}_Q(\mathfrak{gl}_n)$ and $\mathcal{U}_Q(\widehat{\mathfrak{gl}}'_n)$ whose objects are $\mathfrak{gl}_n$-weights and 
level-zero $\widehat{\mathfrak{gl}}_n$-weights, respectively: Simply label the regions of the string diagrams by $\lambda\in \mathbb{Z}^n$ and 
let the relations be those for $\overline{\lambda}$, see Section~\ref{sec:decat} for the notation. We write  
$\mathcal{U}_Q(\widehat{\mathfrak{gl}}'_n)$ to indicate that it is actually an extended version of 
$\mathcal{U}_Q(\widehat{\mathfrak{sl}}_n)$ rather than the full $\mathcal{U}_Q(\widehat{\mathfrak{gl}}_n)$ (whatever that would be), see remarks 
below Definition~\ref{d:dotun}.

Recall that, following \cite{lauda2020parameters}, $\mathcal{U}_Q(\mathfrak{sl}_n)$ and $\mathcal{U}_Q(\widehat{\mathfrak{sl}}_n)$ depend on a choice of scalars $t_{ij}\in \mathbb{Q}^\times$ satisfying $t_{ii}=1$ and $t_{ij}=t_{ji}$ when $j\ne i\pm 1 \bmod n$, and bubble parameters $\beta_i=\beta_{i,\lambda}$, $c^+_{i,\lambda},c^-_{i,\lambda}\in \mathbb{Q}^\times$ satisfying \begin{itemize}
    \item $c^+_{i,\lambda} c^-_{i,\lambda}=\frac{-1}{\beta_i}=\frac{1}{t_{ii}}$.
    \item $c^\pm_{i,\lambda+\alpha_j}=t_{ij}c^\pm_{i,\lambda}$.
\end{itemize}
Here $i,j\in 1,\ldots, n-1$ and $\lambda\in \mathbb{Z}^{n-1}$, for $\mathfrak{sl}_n$, and $i,j\in 1,\ldots, n$ and $\lambda\in \mathbb{Z}^{n}$, for $\widehat{\mathfrak{sl}}_n$. For Khovanov and Lauda's original choice in~\cite[Definition 3.1]{Kh-L}, with 
all scalars and bubble parameters equal to one, we will follow their notation and denote the corresponding 2-categories by $\mathcal{U}(\mathfrak{sl}_n)$ and $\mathcal{U}(\widehat{\mathfrak{sl}}_n)$, and the trivially induced $\mathfrak{gl}_n$ versions of these by $\mathcal{U}(\mathfrak{gl}_n)$ and $\mathcal{U}(\widehat{\mathfrak{gl}}'_n)$, respectively. The $2$-categories $\naffu{n}$ and $\affu{n}$ correspond to the 
choice $t_{ii}=-1=t_{i,i+1}=-1$ and $t_{ij}=1$ for all $i$ and $j\ne i,i+1$ in the respective ranges, and $c^+_{i,\lambda}=(-1)^{\lambda_{i+1}}=-c^-_{i,\lambda}$ for all $i$ in the respective ranges.

For any $n\in \mathbb{N}_{\geq 2}$, the $2$-categories $\naffu{n}$ and $\mathcal{U}(\mathfrak{gl}_n)$ are $2$-isomorphic, with the $2$-isomorphism being obtained by composing the $2$-isomorphism from \cite[(6)]{msv-schur} and the $2$-isomorphism $\Sigma$ from \cite[Section 4.2.1]{Kh-L} (see also \cite{Kh-L-err}). When $n\in \mathbb{N}_{\geq 2}$ is even, that composite $2$-isomorphism extends to a $2$-isomorphism between $\affu{n}$ and 
$\mathcal{U}_Q(\widehat{\mathfrak{gl}}'_n)$. When $n$ is odd, it does not extend to the affine 2-categories, because Khovanov and Lauda's $2$-isomorphism $\Sigma$ is no longer well-defined in that case. The reason is that in the definition of $\Sigma$ occur factors like $(-1)^i$, for $i=1,\ldots, n-1$, which are not well-defined for $i\in \mathbb{Z}/n\mathbb{Z}$ when $n$ is odd. 

We are now going to show that there is no $2$-isomorphism between $\affu{n}$ and $\mathcal{U}_Q(\widehat{\mathfrak{gl}}'_n)$ for odd $n$, 
for any choice of scalars and bubble parameters satisfying the above conditions.

\begin{lem}\label{Lem:2IsoNice}
    Let $Q$ be a choice of scalars and bubble parameters for $\widehat{\sln}$ and let $\Xi:\mathcal{U}_Q(\widehat{\mathfrak{gl}}'_n)\to \affu{n}$ be a 2-isomorphism which is the identity on objects and 1-morphisms. Then 
    \begin{equation}\label{eq:2IsoNice}
    \Xi\left(\xy (0,0)*{
			\labellist
			\small\hair 2pt
                \pinlabel \scalebox{0.7}{$i$} at 2 -5
			\pinlabel \scalebox{0.9}{$\lambda$} at 10 8
			\endlabellist 
			\centering 
			\includegraphics[scale=1.3]{./Kuo}
	}\endxy\quad\right) 
 = o_i(\lambda)\;\xy (0,0)*{
			\labellist
			\small\hair 2pt
                \pinlabel \scalebox{0.7}{$i$} at 2 -5
			\pinlabel \scalebox{0.9}{$\lambda$} at 10 8
			\endlabellist 
			\centering 
			\includegraphics[scale=1.3]{./Kuo}
	}\endxy
 \qquad\text{and}\qquad  
 \Xi\left(\xy (0,0)*{
			\labellist
			\small\hair 2pt
   \pinlabel \scalebox{0.7}{$i$} at -2 -5
   \pinlabel \scalebox{0.7}{$j$} at 15 -5
			\pinlabel \scalebox{0.9}{$\lambda$} at 22 9
			\endlabellist 
			\centering 
			\includegraphics[scale=1.3]{./RurBul}
	}\endxy\quad\right) 
 = f_{ij}(\lambda)\;\xy (0,0)*{
			\labellist
			\small\hair 2pt
   \pinlabel \scalebox{0.7}{$i$} at -2 -5
   \pinlabel \scalebox{0.7}{$j$} at 15 -5
			\pinlabel \scalebox{0.9}{$\lambda$} at 22 9
			\endlabellist 
			\centering 
			\includegraphics[scale=1.3]{./RurBul}
	}\endxy
 \end{equation}
 \eqskip for some $o_i(\lambda), f_{ij}(\lambda)\in\mathbb{Q}^\times$ and for all $i,j\in\{1,\dots,n\}$ and all $\lambda\in \mathbb{Z}^n$. Moreover, 
 these scalars satisfy $o_i(\lambda) f_{ii}(\lambda)=1$ for all $i\in \widehat{I}$ and all $\lambda\in \mathbb{Z}^n$.
\end{lem}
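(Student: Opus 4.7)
The plan is to combine the basis theorem for 2-hom spaces in Kac--Moody 2-categories with an analysis of the nil-Hecke relation \eqref{eq:NilHecke}. Since $\Xi$ is a $\mathbb{Q}$-linear, grading-preserving 2-functor fixing all objects and 1-morphisms, it restricts on each 2-hom space to a graded linear automorphism
\[
\Xi_{X,Y}\colon \Hom^{\bullet}_{\mathcal{U}_Q(\widehat{\mathfrak{gl}}'_n)}(X,Y)\;\xrightarrow{\ \sim\ }\;\Hom^{\bullet}_{\affu{n}}(X,Y).
\]
By the standard basis theorem for Kac--Moody 2-categories (see, e.g., \cite{Beliakova-Habiro-Lauda-Webster16}), the degree $-i\cdot j$ component of $\Hom(E_iE_j\oneid_\lambda,\,E_jE_i\oneid_\lambda)$ is one-dimensional, spanned by the crossing, both in the source and in the target. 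Hence $\Xi$ must send the crossing generator to a nonzero scalar multiple of the crossing, which defines $f_{ij}(\lambda)\in\mathbb{Q}^{\times}$.

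For the dot the analysis is more delicate: the degree-two component of $\End(E_i\oneid_\lambda)$ contains, besides the dot $y$, horizontally-composed degree-two bubbles (one per color), so a priori $\Xi(y)$ could admit bubble admixtures. To rule these out, I would exploit 2-functoriality together with the bubble-slide relations \eqref{eq:bubblesliderla}--\eqref{eq:bubblesliderlb}. Writing $\Xi(y)=o_i(\lambda)y+\sum_j c_j(\lambda)\mathfrak{b}_j$ with $\mathfrak{b}_j$ the degree-two $j$-colored side-bubble and substituting into the two-strand nil-Hecke relation \eqref{eq:NilHecke} at $i=j$, using the already-established purity of $\Xi(\psi)$, the bubble terms slide through the crossing producing contributions that live in linearly independent graded summands of $\End(E_iE_i\oneid_\lambda)$ and are not proportional to the parallel-strands identity. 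Since $\Xi$ fixes the identity 2-morphism, each such contribution must vanish separately, forcing $c_j(\lambda)=0$ for all $j$. This is the technical heart of the proof and is where the argument really uses that $\Xi$ is a \emph{2-functor} and not merely a graded linear isomorphism of hom-spaces. One concludes $\Xi(y_\lambda)=o_i(\lambda)\cdot y_\lambda$ with $o_i(\lambda)\in\mathbb{Q}^{\times}$.

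With the pure-scalar forms on dots and crossings in hand, I apply $\Xi$ to \eqref{eq:NilHecke} for $i=j$. Keeping track of the two dot positions (one on the strand labelled by weight $\lambda$, the other on the strand labelled by weight $\lambda+\alpha_i$), the two resulting terms on the left are scaled by $f_{ii}(\lambda)o_i(\lambda)$ and $f_{ii}(\lambda)o_i(\lambda+\alpha_i)$ respectively, while the right-hand side (the parallel-strand identity) is fixed by $\Xi$. Matching coefficients against the linearly independent expansion of the nil-Hecke identity in $\End(E_iE_i\oneid_\lambda)_{\deg 0}$ (spanned by $\text{id}$ and $\psi y_1$) gives simultaneously $o_i(\lambda+\alpha_i)=o_i(\lambda)$ and $f_{ii}(\lambda)o_i(\lambda)=1$, the second of which is the desired conclusion.

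The main obstacle is the middle paragraph: carefully tracking how bubble admixtures in $\Xi(y)$ propagate through the crossing via \eqref{eq:bubblesliderla}--\eqref{eq:bubblesliderlb} and showing no nontrivial cancellation can occur compatibly with $\Xi$ being a 2-functor. The first and last paragraphs are, by contrast, dimension counting plus direct coefficient matching.
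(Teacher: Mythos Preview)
Your overall strategy coincides with the paper's: degree considerations pin down the crossing up to a nonzero scalar, the dot a priori admits degree-two bubble admixtures, and the nil-Hecke relation \eqref{eq:NilHecke} is the main tool for constraining the dot and extracting $o_i(\lambda)f_{ii}(\lambda)=1$.

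There is, however, a genuine gap in your middle paragraph. You claim that after sliding the bubble admixtures through the $ii$-crossing, the resulting contributions land in linearly independent summands of $\End(\E_i\E_i\oneid_\lambda)$ and must therefore each vanish, forcing $c_j(\lambda)=0$ for \emph{all} $j$. This fails for bubble colors $j$ with $i\cdot j=0$ (which exist as soon as $n\ge 4$): by the last case of \eqref{eq:bubblesliderla} and \eqref{eq:bubbleslidelrb}, such a $j$-bubble slides freely across an $i$-strand, so the two bubble contributions coming from the two dot positions in \eqref{eq:NilHecke} are \emph{equal} and cancel in the difference. The nil-Hecke relation thus imposes no constraint whatsoever on $c_j(\lambda)$ for these distant colors---their total contribution to $\Xi(\text{LHS})$ is identically zero regardless of the value of $c_j(\lambda)$. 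The paper uses nil-Hecke exactly as you do to obtain $o_if_{ii}=1$ and to kill $b_{ij}$ for $j\in\{i-1,i,i+1\}$, but then invokes the quadratic KLR relation \eqref{eq:R2a} at the adjacent pair $(i,i+1)$: applying $\Xi$ to both sides and comparing coefficients against linearly independent bubble-times-parallel-strand terms yields $b_{i,i+2}=0$, $b_{i+1,i-1}=0$, and a recursion $t_{i,i+1}b_{ik}+t_{i+1,i}b_{i+1,k}=0$ for $k\notin\{i-1,i,i+1,i+2\}$, which together with the previously obtained vanishings force all bubble admixtures to zero.
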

\begin{proof} For degree reasons, the second equality in \eqref{eq:2IsoNice} is immediate, but the first one requires an argument. A priori, 
we have 
\[
 \Xi\left(\xy (0,0)*{
			\labellist
			\small\hair 2pt
                \pinlabel \scalebox{0.7}{$i$} at 2 -5
			\pinlabel \scalebox{0.9}{$\lambda$} at 10 8
			\endlabellist 
			\centering 
			\includegraphics[scale=1.3]{./Kuo}
	}\endxy\quad\right) 
 = o_i(\lambda)\;\xy (0,0)*{
			\labellist
			\small\hair 2pt
                \pinlabel \scalebox{0.7}{$i$} at 2 -5
			\pinlabel \scalebox{0.9}{$\lambda$} at 10 8
			\endlabellist 
			\centering 
			\includegraphics[scale=1.3]{./Kuo}
	}\endxy 
 \quad+
 \sum_{j=1}^n b_{ij}(\lambda) \xy (0,0)*{
			\labellist
			\small\hair 2pt
			\pinlabel \scalebox{0.7}{$\textcolor{black}{i}$} at 24 -5
			\pinlabel \scalebox{0.7}{$\textcolor{red}{j}$} at 1 24
            \pinlabel \scalebox{0.7}{$\textcolor{red}{+1}$} at 15 6
			\pinlabel \scalebox{0.9}{$\lambda$} at 27 18
			\endlabellist 
			\centering 
			\includegraphics[scale=1.3]{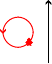}
	}\endxy
\]

Now consider the image of the nil-Hecke relation:
\begin{gather*}
\Xi\left(\xy (0,0)*{
			\labellist
			\small\hair 2pt
			\pinlabel \scalebox{0.7}{${i}$} at 0 -5
			\pinlabel \scalebox{0.7}{${i}$} at 16 -5
			\pinlabel \scalebox{0.9}{$\lambda$} at 18 7
			\endlabellist 
			\centering 
			\includegraphics[scale=1.3]{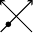}
	}\endxy
 \quad - \ 
    \xy (0,0)*{
			\labellist
			\small\hair 2pt
			\pinlabel \scalebox{0.7}{${i}$} at 0 -5
			\pinlabel \scalebox{0.7}{${i}$} at 16 -5
			\pinlabel \scalebox{0.9}{$\lambda$} at 18 7
			\endlabellist 
			\centering 
			\includegraphics[scale=1.3]{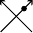}
	}\endxy\quad\right)
 =\\[2ex]
 o_i(\lambda)f_{ii}(\lambda) \left(\xy (0,0)*{
			\labellist
			\small\hair 2pt
			\pinlabel \scalebox{0.7}{${i}$} at 0 -5
			\pinlabel \scalebox{0.7}{${i}$} at 16 -5
			\pinlabel \scalebox{0.9}{$\lambda$} at 18 7
			\endlabellist 
			\centering 
			\includegraphics[scale=1.3]{./KourKul}
	}\endxy
 \quad - \ 
    \xy (0,0)*{
			\labellist
			\small\hair 2pt
			\pinlabel \scalebox{0.7}{${i}$} at 0 -5
			\pinlabel \scalebox{0.7}{${i}$} at 16 -5
			\pinlabel \scalebox{0.9}{$\lambda$} at 18 7
			\endlabellist 
			\centering 
			\includegraphics[scale=1.3]{./KuroKul}
	}\endxy\quad\right) + 
 f_{ii}(\lambda)\sum_{j=1}^n b_{ij}(\lambda) \left(\xy (0,0)*{
			\labellist
			\small\hair 2pt
			\pinlabel \scalebox{0.7}{${i}$} at 24 -5
			\pinlabel \scalebox{0.7}{${i}$} at 40 -5
			\pinlabel \scalebox{0.9}{$\lambda$} at 42 7
            \pinlabel \scalebox{0.7}{$\textcolor{red}{j}$} at 1 16
            \pinlabel \scalebox{0.7}{$\textcolor{red}{+1}$} at 15 -2
			\endlabellist 
			\centering 
			\includegraphics[scale=1.3]{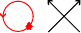}
	}\endxy\;\; - \xy (0,0)*{
			\labellist
			\small\hair 2pt
			\pinlabel \scalebox{0.7}{${i}$} at 0 -5
			\pinlabel \scalebox{0.7}{${i}$} at 32 -5
			\pinlabel \scalebox{0.9}{$\lambda$} at 36 16
            \pinlabel \scalebox{0.7}{$\textcolor{red}{j}$} at 7 29
            \pinlabel \scalebox{0.7}{$\textcolor{red}{+1}$} at 24 14
			\endlabellist 
			\centering 
			\includegraphics[scale=1.3]{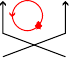}
	}\endxy\;\;\right)
 = \\[2ex]
 o_i(\lambda) f_{ii}(\lambda) \xy (0,0)*{
			\labellist
			\small\hair 2pt
			\pinlabel \scalebox{0.7}{${i}$} at 0 -5
			\pinlabel \scalebox{0.7}{${i}$} at 16 -5
			\pinlabel \scalebox{0.9}{$\lambda$} at 18 7
			\endlabellist 
			\centering 
			\includegraphics[scale=1.3]{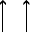}
	}\endxy\;\; + 2f_{ii}(\lambda) 
 \left(b_{ii}(\lambda)\;\xy (0,0)*{
			\labellist
			\small\hair 2pt
            \pinlabel \scalebox{0.7}{$\textcolor{black}{i}$} at 1 16
            \pinlabel \scalebox{0.7}{$\textcolor{black}{+1}$} at 15 -2
			\endlabellist 
			\centering 
			\includegraphics[scale=1.3]{./KAClockClub}
   }\endxy\;\; + b_{i,i+1}(\lambda)\; \xy (0,0)*{
			\labellist
			\small\hair 2pt
            \pinlabel \scalebox{0.7}{$\textcolor{blue}{i+1}$} at -1 16
            \pinlabel \scalebox{0.7}{$\textcolor{blue}{+1}$} at 15 -2
			\endlabellist 
			\centering 
			\includegraphics[scale=1.3]{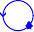}
   }\endxy
\;\;\right)\xy (0,0)*{
			\labellist
			\small\hair 2pt
			\pinlabel \scalebox{0.7}{${i}$} at 0 -5
			\pinlabel \scalebox{0.7}{${i}$} at 16 -5
			\pinlabel \scalebox{0.9}{$\lambda$} at 18 7
			\endlabellist 
			\centering 
			\includegraphics[scale=1.3]{./KurKul}
	}\endxy 
 \;\; +\\[2ex]
 f_{ii}(\lambda)(2b_{ii}(\lambda)+b_{i,i-1}(\lambda)-b_{i,i+1}(\lambda)\left(\xy (0,0)*{
			\labellist
			\small\hair 2pt
			\pinlabel \scalebox{0.7}{${i}$} at 0 -5
			\pinlabel \scalebox{0.7}{${i}$} at 16 -5
			\pinlabel \scalebox{0.9}{$\lambda$} at 18 7
			\endlabellist 
			\centering 
			\includegraphics[scale=1.3]{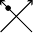}
	}\endxy\;\;\right) . 
 \end{gather*}
\eqskip
The fact that $\Xi$ has to preserve the nil-Hecke relation implies that $o_i(\lambda)f_{ii}(\lambda)=1$ and $b_{ii}(\lambda)=b_{i,i-1}(\lambda)=b_{i,i+1}(\lambda)=0$ for all $i\in \widehat{I}$ and all $\lambda\in \mathbb{Z}^n$. 

To see this, first note that 
\[
\xy (0,0)*{
			\labellist
			\small\hair 2pt
			\pinlabel \scalebox{0.7}{${i}$} at 1 -4
			\pinlabel \scalebox{0.7}{${i}$} at 12.5 -4
			\pinlabel \scalebox{0.9}{$\lambda$} at 18 7
			\endlabellist 
			\centering 
			\includegraphics[scale=1.3]{./KuKu}
	}\endxy
 \qquad , \qquad 
\xy (0,0)*{
			\labellist
			\small\hair 2pt
            \pinlabel \scalebox{0.7}{$\textcolor{black}{i}$} at 1 16
            \pinlabel \scalebox{0.7}{$\textcolor{black}{+1}$} at 15 -2
			\endlabellist 
			\centering 
			\includegraphics[scale=1.3]{./KAClockClub}
   }\endxy \;\; 
   \xy (0,0)*{
			\labellist
			\small\hair 2pt
			\pinlabel \scalebox{0.7}{${i}$} at 0 -4
			\pinlabel \scalebox{0.7}{${i}$} at 15 -4
			\pinlabel \scalebox{0.9}{$\lambda$} at 18 7
			\endlabellist 
			\centering 
			\includegraphics[scale=1.3]{./KurKul}
	}\endxy
 \qquad,\qquad 
    \xy (0,0)*{
			\labellist
			\small\hair 2pt
            \pinlabel \scalebox{0.7}{$\textcolor{blue}{i+1}$} at -1 16
            \pinlabel \scalebox{0.7}{$\textcolor{blue}{+1}$} at 15 -2
			\endlabellist 
			\centering 
			\includegraphics[scale=1.3]{./BAClockClub}
   }\endxy \;\;
   \xy (0,0)*{
			\labellist
			\small\hair 2pt
			\pinlabel \scalebox{0.7}{${i}$} at 0 -4
			\pinlabel \scalebox{0.7}{${i}$} at 15 -4
			\pinlabel \scalebox{0.9}{$\lambda$} at 18 7
			\endlabellist 
			\centering 
			\includegraphics[scale=1.3]{./KurKul}
	}\endxy
 \qquad , \qquad
 \xy (0,0)*{
			\labellist
			\small\hair 2pt
			\pinlabel \scalebox{0.7}{${i}$} at 0 -4
			\pinlabel \scalebox{0.7}{${i}$} at 15 -4
			\pinlabel \scalebox{0.9}{$\lambda$} at 18 7
			\endlabellist 
			\centering 
			\includegraphics[scale=1.3]{./KurKulo}
	}\endxy
\]
\eqskip are linearly independent in $\mathrm{Hom}_{\affu{n}}(\E_{ii}\oneid_\lambda, \E_{ii}\oneid_\lambda)$. Just as in the proof of \cite[Lemma 6.16]{Kh-L}, this follows from looking at their images under the $2$-representation $\mathcal{F}_{\mathrm{Bim}}$ from \cite[Section 4.2]{msv-schur} (in particular, see (45) in that paper), and its extension for the affine case in \cite[Definition 5.6]{mt-affine-schur}.  

The condition $o_i(\lambda)f_{ii}(\lambda)=1$ is therefore immediate. Further, for each $i\in \widehat{I}$, linear independence of the 
two degree-two bubbles above, coloured $i-1$ and $i$ implies that 
$b_{i,i+1}=b_{ii}=0$. 
Using this and 
\[
2b_{ii}(\lambda)+b_{i,i-1}(\lambda)-b_{i,i+1}(\lambda)=0, 
\]
we see that $b_{i,i-1}=0$ as well.  

Next we are going to show that $b_{ij}(\lambda)=0$ for all $i,j\in \widehat{I}$, using the fact that $\Xi$ has to satisfy
\[
            \Xi\left(\;\xy (0,0)*{
			\labellist
			\small\hair 2pt
			\pinlabel \scalebox{0.7}{$\textcolor{red}{i}$} at 0 -5
			\pinlabel \scalebox{0.7}{$\textcolor{blue}{i+1}$} at 16 -5
			\pinlabel \scalebox{0.9}{$\lambda$} at 18 16
			\endlabellist 
			\centering 
			\includegraphics[scale=1.3]{./RurulBulur}
	}\endxy\;\;\right)
= \Xi\left(t_{i,i+1}\;\xy (0,0)*{
			\labellist
			\small\hair 2pt
			\pinlabel \scalebox{0.7}{$\textcolor{red}{i}$} at 0 -5
			\pinlabel \scalebox{0.7}{$\textcolor{blue}{i+1}$} at 13 -5
			\pinlabel \scalebox{0.9}{$\lambda$} at 17 16
			\endlabellist 
			\centering 
			\includegraphics[scale=1.3]{./RuouBuu}
	}\endxy
\quad +  \
 t_{i+1,i}\;\xy (0,0)*{
			\labellist
			\small\hair 2pt
			\pinlabel \scalebox{0.7}{$\textcolor{red}{i}$} at 0 -5
			\pinlabel \scalebox{0.7}{$\textcolor{blue}{i+1}$} at 13 -5
			\pinlabel \scalebox{0.9}{$\lambda$} at 17 16
			\endlabellist 
			\centering 
			\includegraphics[scale=1.3]{./RuuBuou}
	}\endxy\;\;
\;\;
 \right)
\]
\eqskip for $j=i+1$. To shorten notation, put $g_{ij}(\lambda):=f_{ij}(\lambda)f_{ji}(\lambda)$ for all $i,j\in \widehat{I}$. On the 
one hand, we have 
\begin{equation*}
\Xi\left(\;\xy (0,0)*{
			\labellist
			\small\hair 2pt
			\pinlabel \scalebox{0.7}{$\textcolor{red}{i}$} at 0 -5
			\pinlabel \scalebox{0.7}{$\textcolor{blue}{j}$} at 16 -5
			\pinlabel \scalebox{0.9}{$\lambda$} at 18 16
			\endlabellist 
			\centering 
			\includegraphics[scale=1.3]{./RurulBulur}
	}\endxy\;\;\right)
  =g_{i,i+1}(\lambda)\;\;\xy (0,0)*{
			\labellist
			\small\hair 2pt
			\pinlabel \scalebox{0.7}{$\textcolor{red}{i}$} at 0 -5
			\pinlabel \scalebox{0.7}{$\textcolor{blue}{i+1}$} at 16 -5
			\pinlabel \scalebox{0.9}{$\lambda$} at 18 16
			\endlabellist 
			\centering 
			\includegraphics[scale=1.3]{./RurulBulur}
	}\endxy\mspace{10mu}
 =g_{i,i+1}(\lambda)\left(-\;\;\xy (0,0)*{
			\labellist
			\small\hair 2pt
			\pinlabel \scalebox{0.7}{$\textcolor{red}{i}$} at 0 -5
			\pinlabel \scalebox{0.7}{$\textcolor{blue}{i+1}$} at 13 -5
			\pinlabel \scalebox{0.9}{$\lambda$} at 17 16
			\endlabellist 
			\centering 
			\includegraphics[scale=1.3]{./RuouBuu}
	}\endxy
\quad +  \
 \xy (0,0)*{
			\labellist
			\small\hair 2pt
			\pinlabel \scalebox{0.7}{$\textcolor{red}{i}$} at 0 -5
			\pinlabel \scalebox{0.7}{$\textcolor{blue}{i+1}$} at 13 -5
			\pinlabel \scalebox{0.9}{$\lambda$} at 17 16
			\endlabellist 
			\centering 
			\includegraphics[scale=1.3]{./RuuBuou}
	}\endxy\;\;\right)\\
\end{equation*}
\eqskip
and on the other hand, we have 
        \begin{gather*}
       \resizebox{0.3\textwidth}{!}{$\Xi\left(t_{i,i+1}\;\xy (0,0)*{
			\labellist
			\small\hair 2pt
			\pinlabel \scalebox{0.7}{$\textcolor{red}{i}$} at 0 -5
			\pinlabel \scalebox{0.7}{$\textcolor{blue}{i+1}$} at 13 -5
			\pinlabel \scalebox{0.9}{$\lambda$} at 17 16
			\endlabellist 
			\centering 
			\includegraphics[scale=1.3]{./RuouBuu}
	}\endxy
\quad +  \; 
 t_{i+1,i}\;\xy (0,0)*{
			\labellist
			\small\hair 2pt
			\pinlabel \scalebox{0.7}{$\textcolor{red}{i}$} at 0 -5
			\pinlabel \scalebox{0.7}{$\textcolor{blue}{i+1}$} at 13 -5
			\pinlabel \scalebox{0.9}{$\lambda$} at 17 16
			\endlabellist 
			\centering 
			\includegraphics[scale=1.3]{./RuuBuou}
	}\endxy\;\;
\;\;
 \right)=$}
 \\[2ex]
\resizebox{0.99\textwidth}{!}{$t_{i,i+1} o_i(\lambda)\;\xy (0,0)*{
			\labellist
			\small\hair 2pt
			\pinlabel \scalebox{0.7}{$\textcolor{red}{i}$} at 0 -5
			\pinlabel \scalebox{0.7}{$\textcolor{blue}{i+1}$} at 13 -5
			\pinlabel \scalebox{0.9}{$\lambda$} at 17 16
			\endlabellist 
			\centering 
			\includegraphics[scale=1.3]{./RuouBuu}
	}\endxy
\;\; +t_{i+1,i} o_{i+1}(\lambda)\;  \
 \xy (0,0)*{
			\labellist
			\small\hair 2pt
			\pinlabel \scalebox{0.7}{$\textcolor{red}{i}$} at 0 -5
			\pinlabel \scalebox{0.7}{$\textcolor{blue}{j}$} at 13 -5
			\pinlabel \scalebox{0.9}{$\lambda$} at 17 16
			\endlabellist 
			\centering 
			\includegraphics[scale=1.3]{./RuuBuou}
	}\endxy
 \; \; +
 t_{i,i+1} \displaystyle\sum_{k\ne i, i\pm 1} b_{ik}(\lambda)\; \xy (0,0)*{
			\labellist
			\small\hair 2pt
			\pinlabel \scalebox{0.7}{$\textcolor{red}{i}$} at 22 -5
                \pinlabel \scalebox{0.7}{$\textcolor{blue}{i+1}$} at 38 -5
			\pinlabel \scalebox{0.7}{$\textcolor{black}{k}$} at 5 25
            \pinlabel \scalebox{0.7}{$\textcolor{black}{+1}$} at 15 5
			\pinlabel \scalebox{0.9}{$\lambda$} at 42 16
			\endlabellist 
			\centering 
            \includegraphics[scale=1.3]{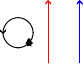}
	}\endxy
 \;\;+
t_{i+1,i} \displaystyle\sum_{\ell\ne i, i+1, i+2} b_{i+1,\ell}(\lambda)\; \xy (0,0)*{
			\labellist
			\small\hair 2pt
			\pinlabel \scalebox{0.7}{$\textcolor{red}{i}$} at 2 -5
                \pinlabel \scalebox{0.7}{$\textcolor{blue}{i+1}$} at 33 -5
			\pinlabel \scalebox{0.7}{$\textcolor{black}{\ell}$} at 11 25
            \pinlabel \scalebox{0.7}{$\textcolor{black}{+1}$} at 23 5
			\pinlabel \scalebox{0.9}{$\lambda$} at 36 18
			\endlabellist 
			\centering 
\includegraphics[scale=1.3]{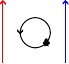}
	}\endxy\;\;=$}
 \\[2ex] 
\resizebox{.82\textwidth}{!}{$t_{i,i+1} o_i(\lambda)\;\;\xy (0,0)*{
			\labellist
			\small\hair 2pt
			\pinlabel \scalebox{0.7}{$\textcolor{red}{i}$} at 0 -5
			\pinlabel \scalebox{0.7}{$\textcolor{blue}{i+1}$} at 13 -5
			\pinlabel \scalebox{0.9}{$\lambda$} at 17 16
			\endlabellist 
			\centering 
			\includegraphics[scale=1.3]{./RuouBuu}
	}\endxy
\;\; +t_{i+1,i}o_{i+1}(\lambda)  \
 \xy (0,0)*{
			\labellist
			\small\hair 2pt
			\pinlabel \scalebox{0.7}{$\textcolor{red}{i}$} at 0 -5
			\pinlabel \scalebox{0.7}{$\textcolor{blue}{j}$} at 13 -5
			\pinlabel \scalebox{0.9}{$\lambda$} at 17 16
			\endlabellist 
			\centering 
			\includegraphics[scale=1.3]{./RuuBuou}
	}\endxy
 \; \;+
 \displaystyle\sum_{k\ne i, i\pm 1, i+2} (t_{i,i+1}b_{ik}(\lambda) + t_{i+1,i}b_{i+1,k})\; \xy (0,0)*{
			\labellist
			\small\hair 2pt
			\pinlabel \scalebox{0.7}{$\textcolor{red}{i}$} at 22 -5
                \pinlabel \scalebox{0.7}{$\textcolor{blue}{i+1}$} at 38 -5
			\pinlabel \scalebox{0.7}{$\textcolor{black}{k}$} at 5 25
            \pinlabel \scalebox{0.7}{$\textcolor{black}{+1}$} at 15 5
			\pinlabel \scalebox{0.9}{$\lambda$} at 42 16
			\endlabellist 
			\centering 
            \includegraphics[scale=1.3]{./KAClockClubRuuBuu}
	}\endxy \;\;+$}
\\[2ex]
\resizebox{0.82\textwidth}{!}{$t_{i,i+1}b_{i,i+2}(\lambda)\; \xy (0,0)*{
			\labellist
			\small\hair 2pt
			\pinlabel \scalebox{0.7}{$\textcolor{red}{i}$} at 22 -5
                \pinlabel \scalebox{0.7}{$\textcolor{blue}{i+1}$} at 38 -5
			\pinlabel \scalebox{0.7}{$\textcolor{myorange}{i+2}$} at 5 25
            \pinlabel \scalebox{0.7}{$\textcolor{myorange}{+1}$} at 15 5
			\pinlabel \scalebox{0.9}{$\lambda$} at 42 16
			\endlabellist 
			\centering 
            \includegraphics[scale=1.3]{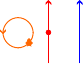}
	}\endxy
 \; \;+
  t_{i+1,i}b_{i+1,i-1}(\lambda)\; \xy (0,0)*{
			\labellist
			\small\hair 2pt
			\pinlabel \scalebox{0.7}{$\textcolor{red}{i}$} at 1 -5
                \pinlabel \scalebox{0.7}{$\textcolor{blue}{i+1}$} at 17 -5
			\pinlabel \scalebox{0.9}{$\lambda$} at 21 16
			\endlabellist 
			\centering 
            \includegraphics[scale=1.3]{./RuouBuu}
	}\endxy
\;\; +
 t_{i+1,i} b_{i+1,i-1}(\lambda)\; \xy (0,0)*{
			\labellist
			\small\hair 2pt
			\pinlabel \scalebox{0.7}{$\textcolor{red}{i}$} at 22 -5
                \pinlabel \scalebox{0.7}{$\textcolor{blue}{i+1}$} at 38 -5
			\pinlabel \scalebox{0.7}{$\textcolor{mypurple}{i-1}$} at 5 25
            \pinlabel \scalebox{0.7}{$\textcolor{mypurple}{+1}$} at 15 5
			\pinlabel \scalebox{0.9}{$\lambda$} at 42 16
			\endlabellist 
			\centering 
            \includegraphics[scale=1.3]{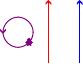}
	}\endxy$}
\end{gather*}
\eqskip By linear independence of the different terms of each expression and comparing corresponding terms in both expressions, as above, we get 
\[
b_{i,i+2}(\lambda)=0,\quad b_{i+1,i-1}(\lambda)=0, \quad t_{i,i+1}b_{ik}(\lambda) + t_{i+1,i}b_{i+1,k}(\lambda)=0
\]
for all $i,k\in \widehat{I}$ such that $k\ne i, i\pm 1, i+2$. Together with the previous results, these equations 
imply that $b_{ij}(\lambda)=0$ for all $i,j\in \widehat{I}$ and all $\lambda\in \mathbb{Z}^n$.
\end{proof}

Note that the above proof also shows that the following equations must hold:
 \begin{equation}\label{eq:2Iso1} g_{i,i+1}(\lambda)=-t_{i,i+1}o_i(\lambda)=t_{i+1,i}o_{i+1}(\lambda)
\end{equation} 
for all $i\in \widehat{I}$.\par

\begin{thm}\label{thm:NoIsomorphism} When $n$ is odd, there does not exist a 2-isomorphism $\Xi:\mathcal{U}_Q(\widehat{\mathfrak{gl}}'_n)\to \affu{n}$ which is the identity on objects and 1-morphisms, for any 
choice of scalars $Q$ with compatible bubble parameters.
\end{thm}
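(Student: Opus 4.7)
The plan is to exhibit a parity obstruction by combining two necessary conditions on the source scalars $t_{ij}$, whose signs become incompatible exactly when $n$ is odd.

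First, I will apply \autoref{Lem:2IsoNice} and in particular the identity \eqref{eq:2Iso1} established in its proof, namely $-t_{i,i+1}o_i(\lambda) = t_{i+1,i}o_{i+1}(\lambda)$ for every $i\in\mathbb{Z}/n\mathbb{Z}$. Rewriting this as $o_{i+1}(\lambda)/o_i(\lambda) = -t_{i,i+1}/t_{i+1,i}$ and iterating around the affine Dynkin cycle (so that $o_{n+1}=o_1$) yields the necessary condition
\[
(-1)^n\prod_{i=1}^{n}\frac{t_{i,i+1}}{t_{i+1,i}}=1,
\]
which for odd $n$ reads $\prod_i t_{i,i+1}/t_{i+1,i}=-1$.

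Second, since the source bubble parameters $c^\pm_{i,\lambda}$ are inherited from $\mathcal{U}_Q(\widehat{\mathfrak{sl}}_n)$ and depend on $\lambda$ only through $\overline{\lambda}$, and since the affine simple roots satisfy $\sum_{j=1}^n\alpha_j=0$, iterating the transport rule $c^\pm_{i,\lambda+\alpha_j}=t_{ij}c^\pm_{i,\lambda}$ along this null cycle forces the consistency condition $\prod_{j=1}^n t_{ij}=1$ for each $i$. Multiplying these $n$ identities together, cancelling the diagonal factors $t_{ii}=1$, and using the symmetry $t_{ij}=t_{ji}$ for $j\not\equiv i\pm 1\pmod n$, each cyclically non-adjacent unordered pair $\{i,k\}$ contributes $t_{ik}^2$ while each adjacent pair contributes $t_{i,i+1}t_{i+1,i}$. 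This yields
\[
\prod_{i=1}^{n}\bigl(t_{i,i+1}\,t_{i+1,i}\bigr)=B^{-2} \qquad\text{for some } B\in\mathbb{Q}^\times,
\]
so this product is a strictly positive rational.

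Finally, for $n$ odd, I multiply the two displayed identities to obtain
\[
\Bigl(\prod_{i=1}^{n} t_{i,i+1}\Bigr)^{\!2}=\prod_{i=1}^{n}\!\left(\frac{t_{i,i+1}}{t_{i+1,i}}\cdot t_{i,i+1}\,t_{i+1,i}\right)=(-1)\cdot B^{-2}<0,
\]
contradicting the fact that the left-hand side is the square of a nonzero rational and hence strictly positive. This rules out the existence of $\Xi$ for any choice of $Q$. The main delicate point I anticipate is pinning down the integrability condition $\prod_j t_{ij}=1$ within the $\mathcal{U}_Q(\widehat{\mathfrak{gl}}'_n)$ framework, since the objects there are $\widehat{\mathfrak{gl}}_n$-weights rather than $\widehat{\mathfrak{sl}}_n$-weights; this should follow immediately from the fact that the $c^\pm$ are inherited unchanged from $\mathcal{U}_Q(\widehat{\mathfrak{sl}}_n)$ and thus genuinely are functions of $\overline{\lambda}$, after which the remainder of the argument is a short sign computation.
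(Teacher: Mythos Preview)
Your proof is correct and follows essentially the same approach as the paper: both arguments extract the cyclic condition $\prod_i t_{i,i+1}/t_{i+1,i}=(-1)^n$ from \eqref{eq:2Iso1}, derive $\prod_j t_{ij}=1$ from the bubble-parameter transport rule along the null root $\sum_j\alpha_j=0$, and combine these to force a nonzero rational square to be negative when $n$ is odd. The only cosmetic difference is which side of the adjacent pair you square at the end---the paper arrives at $\prod_i t_{i+1,i}^2=(-1)^n x^{-2}$ whereas you arrive at $\bigl(\prod_i t_{i,i+1}\bigr)^2=-B^{-2}$---but the contradiction is identical.
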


\begin{proof}
    Assume for contradiction that such a 2-isomorphism $\Xi$ exists for some choice of scalars with compatible bubble parameters, with $\Xi$ having parameters $\{o_i,f_{ij}|i,j=1,\dots,n\}$ as above. Recall that $o_i(\lambda),g_{ij}(\lambda),t_{ij}\in\mathbb{Q}^\times$ 
   and that $g_{ij}=g_{ji}$ for all $i,j\in \widehat{I}$. Thus, suppressing $\lambda$ for readability reasons, we get  
   \begin{align*}
  o_1 &=-g_{12}t_{12}^{-1}= 
  -o_2t_{21}t_{12}^{-1}=(-1)^2g_{23}t_{23}^{-1}t_{21}t_{12}^{-1}=\dots=(-1)^n g_{n,1}t_{n,1}^{-1}\dots t_{21}t_{12}^{-1}
  \\[1ex]
  &
  =(-1)^n o_1\prod_{i\in \widehat{I}} t_{i+1,i}t_{i,i+1}^{-1}.
   \end{align*}   
    This implies that $\prod_{i\in \widehat{I}} t_{i+1,i}t_{i,i+1}^{-1} =(-1)^n$ has to hold. But by the definition of $Q$ and the fact that $\sum_{k=1}^n \alpha_k=0$ in the (level zero) $\widehat{\sln}$-root lattice, we have the following:
    \begin{itemize}[wide,labelindent=0pt]
        \item $t_{ii}=1$ for all $i=1,\dots,n$, so in particular $\prod_{i=1}^n t_{ii}=1$,
        \item $t_{ij}=t_{ji}$ whenever $|i-j|>1$ $\operatorname{mod} n$, so in particular $\displaystyle\prod_{\substack{i,j=1,\dots,n\\|i-j|>1}} t_{ij}=x^2$ for some $x\in \mathbb{Q}^\times$,
        \item $1=\frac{c_{i,\olambda}}{c_{i,\olambda}}=\frac{c_{i,\olambda+\sum_{k=1}^n \alpha_k}}{c_{i,\olambda}}=\prod_{j=1}^n \frac{c_{i,\olambda+\sum_{k=1}^j \alpha_k}}{c_{i,\olambda+\sum_{k=1}^{j-1}\alpha_k}}=\prod_{j=1}^n t_{ij}$ for any $\lambda\in\mathbb{Z}^{n}$ and any $i=1,\dots,n$.
    \end{itemize}
    Therefore, for any $\lambda\in\mathbb{Z}^{n}$ $$\prod_{i=1}^n\frac{c_{i,\olambda}}{c_{i,\olambda}}=\prod_{i,j=1,\dots,n} t_{ij}=1.$$ But $$\prod_{i,j=1,\dots,n} t_{ij}=(\prod_{i=1}^n t_{ii})(\prod_{\substack{i,j=1,\dots,n\\|i-j|>1}} t_{ij})(\prod_{\substack{i=1,\dots,n\\ |i-j|=1}} t_{ij})= x^2 \prod_{\substack{i=1,\dots,n\\ |i-j|=1}} t_{ij},$$ for some $x\in \mathbb{Q}^\times$, by the above remarks. Multiplying this by 
    $\prod_{i\in \widehat{I}} t_{i+1,i}t_{i,i+1}^{-1}$ yields 
    \[
    \prod_{i=1}^n t_{i+1,i}^2=(-1)^n x^{-2},
    \]
    which implies $n$ has to be even, completing our proof.
\end{proof}



\vspace*{1cm}


\end{document}